\documentclass[aop]{imsart}

\RequirePackage[T1]{fontenc}
\RequirePackage[utf8]{inputenc}
\RequirePackage[english]{babel}
\RequirePackage{amsthm,amsmath,amsfonts,amssymb}
\RequirePackage[numbers]{natbib}
\RequirePackage{mathtools,mathrsfs,bm}
\RequirePackage{array,booktabs,longtable,tabularx}
\RequirePackage{enumitem}
\RequirePackage{xcolor}
\RequirePackage{microtype}
\RequirePackage[colorlinks,citecolor=blue,urlcolor=blue,hypertexnames=false]{hyperref}

\makeatletter
\def\nocontentsline{%
  \let\@@addcontentsline\addcontentsline
  \def\addcontentsline##1##2##3{\let\addcontentsline\@@addcontentsline}%
}
\makeatother

\startlocaldefs

\theoremstyle{plain}
\newtheorem{theorem}{Theorem}[section]
\newtheorem{lemma}[theorem]{Lemma}
\newtheorem{proposition}[theorem]{Proposition}
\newtheorem{corollary}[theorem]{Corollary}
\theoremstyle{definition}
\newtheorem{definition}[theorem]{Definition}

\newtheorem{remark}[theorem]{Remark}
\newtheorem{warning}[theorem]{Warning}

\newcommand{\E}{\mathbb E}
\newcommand{\R}{\mathbb R}
\newcommand{\N}{\mathbb N}
\newcommand{\1}{\mathbf 1}
\newcommand{\Id}{\operatorname{Id}}
\newcommand{\Ran}{\operatorname{Ran}}
\newcommand{\Ker}{\operatorname{Ker}}
\newcommand{\Span}{\operatorname{span}}
\newcommand{\Cov}{\operatorname{Cov}}
\newcommand{\Var}{\operatorname{Var}}
\newcommand{\Tr}{\operatorname{Tr}}
\newcommand{\Sym}{\operatorname{Sym}}
\newcommand{\HS}{\mathrm{HS}}
\newcommand{\op}{\mathrm{op}}
\newcommand{\Law}{\mathcal L}
\newcommand{\cA}{\mathcal A}
\newcommand{\cD}{\mathcal D}
\newcommand{\cH}{\mathcal H}
\newcommand{\cK}{\mathcal K}
\newcommand{\cL}{\mathcal L}
\newcommand{\cN}{\mathcal N}
\newcommand{\cS}{\mathcal S}
\newcommand{\cT}{\mathcal T}
\newcommand{\cW}{\mathcal W}
\newcommand{\wtensor}{\widetilde\otimes}
\newcommand{\Ulim}{\lim_{\mathcal U}}
\newcommand{\norm}[1]{\left\lVert #1\right\rVert}
\newcommand{\abs}[1]{\left\lvert #1\right\rvert}
\newcommand{\ip}[2]{\left\langle #1,#2\right\rangle}
\newcommand{\indep}{\mathrel{\perp\!\!\!\perp}}
\newcommand{\weak}{\Longrightarrow}
\newcommand{\wt}{\operatorname{wt}}
\newcommand{\ol}{\overline}
\newcommand{\eps}{\varepsilon}
\newcommand{\can}{\mathrm{can}}
\newcommand{\phys}{\mathrm{phys}}

\numberwithin{equation}{section}

\endlocaldefs

\begin{document}
\begin{frontmatter}

\title{Weak Limits of Wiener Chaos: Primitive--Fock Classification and
Hilbert--Stein Extraction}
\runtitle{Weak Limits of Wiener Chaos}
\runauthor{O.~J. Assaad}

\begin{aug}
\author[A]{\fnms{Obayda Julien}~\snm{Assaad}%
\ead[label=e1]{Obayda.assaad@gmail.com}}
\address[A]{Independent researcher\printead[presep={,\ }]{e1}}
\end{aug}

\begin{abstract}
We characterize the weak closure of uniformly $L^2$-bounded vectors in a
fixed Wiener chaos when the underlying Gaussian Hilbert spaces may vary.
On a represented subsequence, each physical-weight tensor space splits
into a decomposable closed span and its primitive orthogonal complement.
Primitive-block evaluation extends to a unitary
weighted Fock representation.  Hence the weak limits of a $q$th chaos are
exactly the weighted Wiener polynomials indexed by partitions of $q$, and
every such terminal is realized by homogeneous $q$-fold Wiener integrals.
Each represented terminal has a minimal separable graded support, unique
up to graded orthogonal transformations.

We construct a positive Hilbert--Stein extraction.  After $J$ steps,
its residual Gaussian and interface defects are $O(J^{-1})$, while the
Stein factorization error is $O(J^{-1/2})$.  The remaining active and
covariance comparisons have no bounded-energy rate.  In
homogeneous chaos, Gram-reduced feedback converges to the decomposable and
primitive Fock projections; the latter is the canonical independent
Gaussian factor.  The vanishing of all marginal fourth cumulants is
equivalent to disappearance of the decomposable projection, recovering
the vector fourth-moment theorem with a characteristic-function bound.
For finite mixed degrees, a weightwise triangular procedure removes the
ghost obstruction and recovers the maximal Gaussian factor detected by
all one-leg contractions.
\end{abstract}

\begin{keyword}[class=MSC]
\kwdgroup[type=primary]{\kwd{60F05}\kwd{60H07}}
\kwdgroup[type=secondary]{\kwd{60G15}\kwd{46C05}}
\end{keyword}

\begin{keyword}
\kwd{Wiener chaos}
\kwd{weak closure}
\kwd{multiple Wiener--It\^o integrals}
\kwd{Hilbert ultraproducts}
\kwd{Malliavin--Stein method}
\kwd{fourth moment theorem}
\kwd{symmetric Fock space}
\end{keyword}

\end{frontmatter}

\tableofcontents

\part{Introduction and main results}

\section{The weak-closure problem}
\label{overview:sec:overview}

Fix $q,d\ge1$ and $R>0$.  Let $\mathfrak C_{q,d}(R)$ be the set of laws of
vectors
\begin{equation}
 \bigl(I_q^W(f_1),\ldots,I_q^W(f_d)\bigr),
 \qquad
 \sum_{a=1}^d q!\norm{f_a}^2\le R^2,
 \label{overview:eq:fixed-chaos-class}
\end{equation}
where $W$ is isonormal over an arbitrary real separable Hilbert space.
The first question of the paper is to determine the weak closure of
$\mathfrak C_{q,d}(R)$ when the Hilbert space may vary along the sequence.

For a graded real Hilbert space $E=E_1\oplus\cdots\oplus E_q$ and an
occupation vector $\bm m=(m_1,\ldots,m_q)$, put
\begin{equation}
 \abs{\bm m}=\sum_{s=1}^q m_s,
 \qquad
 \wt(\bm m)=\sum_{s=1}^q sm_s,
 \label{overview:eq:occupation-data}
\end{equation}
and define the weighted Fock sector
\begin{equation}
 \cW_q(E)
 =\bigoplus_{\wt(\bm m)=q}
 I_{\abs{\bm m}}
 \left(\widehat\bigotimes_{s=1}^q E_s^{\odot m_s}\right).
 \label{overview:eq:weighted-Fock}
\end{equation}
The index $s$ is a \emph{physical weight}, whereas $\abs{\bm m}$ is the
ordinary Wiener-chaos order in the terminal Gaussian space.  Keeping these
two integers distinct is the organizing principle of the classification.

\begin{theorem}[Exact weak closure at fixed order]
\label{overview:thm:weak-closure}
For every $q,d<\infty$ and $R>0$,
\begin{equation}
 \boxed{
 \overline{\mathfrak C_{q,d}(R)}^{\,w}
 =
 \bigcup_{E=E_1\oplus\cdots\oplus E_q}
 \left\{
  \Law(F):F\in\cW_q(E)^d,
  \ \E\abs{F}^2\le R^2
 \right\},}
 \label{overview:eq:weak-closure}
\end{equation}
where the union ranges over separable graded real Hilbert spaces.
Moreover, every terminal on the right admits a realizing sequence of
homogeneous $q$-fold Wiener integrals for which every fixed loop-free
contraction diagram converges to its weighted block-pairing value.

For each represented tensorial terminal, the graded one-particle support
used by $F$ has a unique minimal closed realization.  It is separable and
is unique up to a grade-preserving orthogonal transformation.  This last
statement is relative to the represented tensorial terminal; the marginal
law alone need not determine the grading or the support.
\end{theorem}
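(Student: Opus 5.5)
The proof establishes two inclusions, followed by a separate minimality argument.

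\emph{Inclusion ``$\subseteq$''.} Take a sequence $F^{(n)}=(I_q^{W_n}(f^{(n)}_a))_a$ with $\sum_a q!\norm{f^{(n)}_a}^2\le R^2$ whose laws converge weakly. By uniform $L^2$-boundedness in a fixed chaos, hypercontractivity gives uniform bounds on all moments. The law of any weak limit is therefore determined by the limits of mixed moments, and these limits exist along a subsequence. Each mixed moment $\E\prod_a I_q(f_a)^{k_a}$ is, by the product formula, a finite sum of contraction-diagram values. The task is thus to show that along a subsequence every loop-free diagram value converges to a weighted block-pairing value of some $\cW_q(E)$-vector.

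To do this, pass to a Hilbert ultraproduct $\cH_\cU=\prod_\cU \mathfrak H_n$ and the represented tensor spaces $\prod_\cU \mathfrak H_n^{\otimes s}$ for $s\le q$. In each physical weight $s$, split the ultraproduct tensor space into the closed span of decomposable elements (ultralimits of products of lower-weight vectors) and its orthogonal complement. The latter, the primitive part, is $E_s$. Each $f_a^{(n)}$ then decomposes, in the limit, into sums over partitions $\bm m$ of $q$ of symmetrized tensors of primitive blocks.

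The key computation is that the Gaussian/diagram pairing of ultralimits factorizes. A contraction diagram on the original spaces evaluates, in the limit, to a pairing in which primitive blocks of different weights or different positions are asymptotically orthogonal. Only full block-to-block pairings survive, since partial contractions of primitive blocks vanish by primitivity. This is exactly the moment structure of $I_{\abs{\bm m}}$ on $\widehat\bigotimes_s E_s^{\odot m_s}$. The step therefore amounts to proving that primitive-block evaluation extends to an isometry (a unitary onto its range) from the physical-weight sector into $\cW_q(E)$, with the $q!$ normalization producing the $\abs{\bm m}!$ and multiplicity factors. Because moments determine the law for finite-chaos Wiener polynomials, the limit law is $\Law(F)$ with $F\in\cW_q(E)^d$. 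The bound $\E\abs F^2\le R^2$ follows from weak lower semicontinuity, or equality of second moments under uniform integrability.

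I expect the hardest step to be showing that cross-terms involving partial contractions of primitive blocks vanish asymptotically, that is, that the decomposable/primitive splitting is stable under all loop-free contractions. My first attempt would be induction on weight: a partial contraction of two primitive weight-$s$ blocks lies in the closed span of decomposables of lower total weight, and primitivity then forces its limiting inner product against anything to vanish.

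\emph{Inclusion ``$\supseteq$''.} Given separable $E=\bigoplus E_s$ and $F\in\cW_q(E)^d$, approximate $F$ in $L^2$ by finitely many orthonormal basis vectors $e^{(s)}_j$. Realize each $e^{(s)}_j$ by the normalized weight-$s$ ``long-tail'' tensor
\[
 e^{(s)}_{j,n}=n^{-1/2}\sum_{k=1}^n h_{j,k,1}\otimes\cdots\otimes h_{j,k,s},
\]
built from disjoint orthonormal vectors $h$ in a large Hilbert space. These are asymptotically primitive: all partial contractions are $O(n^{-1/2})$. Their Wiener integrals converge jointly to independent Gaussians, and products of them converge to $I_{\abs{\bm m}}$ of the corresponding tensors. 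Symmetrizing the products gives homogeneous $q$-fold integrals whose diagram values converge to block-pairing values. A diagonal argument then handles the $L^2$ approximation, and rescaling by $(1-1/n)$ keeps the norm bound.

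\emph{Minimality.} For a represented terminal, define $E^{\min}_s$ as the closed span of all partial evaluations (contractions against $E$-tensors) of the components of $F$ in weight $s$. This is the smallest graded subspace supporting $F$. It is separable because it is generated by countably many contractions of finitely many vectors. Any other minimal realization has the same Gram data of these generating vectors, grade by grade, since the Gram data is determined by the represented tensors. The map sending generators to generators is therefore a grade-preserving isometry, and by minimality it extends to an orthogonal transformation. The closing remark of the theorem, that the marginal law alone need not determine the grading or the support, is shown by example: $I_2(e\otimes e)$ in weight one and a weight-two Gaussian can share moments after suitable mixing, so no proof is needed there beyond exhibiting such a pair.
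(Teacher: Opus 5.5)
\textbf{Main gap: identifying the limit by its moments.} Your ``$\subseteq$'' argument rests on the claim that moments determine the law of a finite-chaos Wiener polynomial. This is false once $q\ge3$. The hypercontractive bound $\norm{X}_p\le(p-1)^{q/2}\norm{X}_2$ is too weak to verify Carleman's condition when $q\ge3$, and genuine indeterminacy does occur. Already $H_3(G)=I_3(e^{\otimes3})$ has a density $f$ with $-\log f(x)\sim\frac12\abs{x}^{2/3}$. Hence $\int\frac{-\log f(x)}{1+x^2}\,dx<\infty$, and Krein's criterion makes this law moment-indeterminate (compare Berg's theorem for $G^3$). So knowing that the weak limit and $\Law(F)$ share all moments does not give equality of laws.

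The paper never identifies the limit through moments. It proceeds in four steps.
\begin{enumerate}
 \item It proves joint convergence in law of the primitive coordinates $(I_{s_i}(k_{i,n}))_i$ to a Gaussian family, using a Stein characteristic-function bound controlled by non-total contractions. A Gaussian limit is moment-determinate, so moments would also suffice at this level.
 \item The product formula writes each evaluated monomial $I_q(\Sym\bigotimes_ik_{i,n})$ as a Wick polynomial in these coordinates plus an $L^2$-negligible remainder.
 \item Continuous mapping and Cram\'er--Wold then handle algebraic packets.
 \item $L^2$ approximation of $\xi_a$, tested against bounded--Lipschitz functions, reaches the general packet.
\end{enumerate}
The last step requires $\mathbb U_q$ to be \emph{onto} $\cH_q$. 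You assert this (``each $f_a^{(n)}$ decomposes'') but never prove it. The paper proves it by induction on $q$, using $\cH_q=\cK_q\oplus\cD_q$, the induction hypothesis in lower weights, and the continuity bound for $\wtensor$.

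\textbf{Second gap: vanishing of partial contractions.} The step you flag as hardest is missing its key idea, and the induction you sketch does not supply it. Even if a partial contraction lay in some decomposable span, its orthogonality to $\cK$ would say nothing about its norm.

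What works is the adjoint identity
$\ip{[f_n]}{[u_n]\wtensor[v_n]}_{\cH_q}=(q)_r\ip{[f_n\otimes_ru_n]}{[v_n]}_{\cH_{q-r}}$.
It shows that $[f_n]\in\cK_q$ if and only if $[f_n\otimes_ru_n]_{\mathcal U}=0$ for every bounded \emph{moving} test $(u_n)$. Equivalently, $\Ulim\norm{L_{f_n,r}}_{\op}=0$ for $1\le r<q$. Fixed probes do not suffice, as $e_n^{\otimes q}$ shows. One then has $\norm{f_n\otimes_rg_n}\le\norm{L_{f_n,r}}_{\op}\norm{g_n}$, and graphical Cauchy--Schwarz bounds every diagram that cuts a block partially by such a norm.

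\textbf{The final remark.} Your example does not work. A nonzero element of $I_2(E_1^{\odot2})$ has strictly positive fourth cumulant, so it never has the law of a Gaussian variable. An order-three phenomenon is needed, such as the paper's pair $G_1(G_1^2+G_2^2)$ and $G_1^3-3G_1G_2^2$, which have the same law but different chaos splittings.

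Your converse realization (disjoint-support replicas, a diagonal, rescaling) and your one-leg-flattening description of the minimal support essentially match the paper.
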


The partitions of $q$ are therefore the complete list of possible terminal
strata.  The first three cases are
\begin{align}
 \cW_2(E)
 &=I_2(E_1^{\odot2})\oplus I_1(E_2),
 \label{overview:eq:q2}\\
 \cW_3(E)
 &=I_3(E_1^{\odot3})
   \oplus I_2(E_1\widehat\otimes E_2)
   \oplus I_1(E_3),
 \label{overview:eq:q3}\\
 \cW_4(E)
 &=I_4(E_1^{\odot4})
   \oplus I_3(E_1^{\odot2}\widehat\otimes E_2)
   \oplus I_2(E_2^{\odot2})
   \oplus I_2(E_1\widehat\otimes E_3)
   \oplus I_1(E_4).
 \label{overview:eq:q4}
\end{align}
For $q=2$ this is a second-chaos variable plus an independent Gaussian
factor.  This closure was identified, in the language of Gaussian
quadratic forms and second Wiener chaos, by Sevast'yanov, Arcones, and
Nourdin and Poly; see
\cite{Sevastyanov,Arcones,NourdinPolySecond,NourdinPolyErratum}.
Thus \eqref{overview:eq:q2} recovers the known order-two classification.
Starting at $q=3$, different terminal chaos orders may share primitive
Gaussian coordinates; the classification is therefore finer than a
convolution statement.

\section{Intrinsic representation and Hilbert--Stein extraction}

We now state the structural form used in the proofs.  Let $H_n$ be real
separable Hilbert spaces, let $W_n$ be isonormal over $H_n$, and write
\begin{equation}
 F_{n,a}=\sum_{q=1}^{Q}I_q^{W_n}(f_{n,a,q}),
 \qquad 1\le a\le d,
 \qquad
 \sup_n\sum_{a,q}q!\norm{f_{n,a,q}}^2\le R^2.
 \label{overview:eq:packet}
\end{equation}
A represented subsequence consists of an ordinary subsequence and a free
ultrafilter $\mathcal U$ on its index set.  Its variance-normalized carrier
of physical weight $q$ is
\begin{equation}
 \cH_q=(H_n^{\odot q},q!\ip{\cdot}{\cdot})_{\mathcal U}.
 \label{overview:eq:carrier}
\end{equation}
The covariance matrices are bounded along this represented subsequence;
throughout the statement below, put
\begin{equation}
 \Sigma=\Ulim\Cov(F_n).
 \label{overview:eq:represented-covariance}
\end{equation}
Inside $\cH_q$ define the decomposable and primitive sectors by
\begin{equation}
 \cD_q=\ol{\Span}\{u\wtensor v:
 u\in\cH_p,\ v\in\cH_{q-p},\ 1\le p<q\},
 \qquad
 \cK_q=\cD_q^\perp.
 \label{overview:eq:primitive}
\end{equation}

\begin{theorem}[Structural representation and exact extraction]
\label{overview:thm:master}
For every represented subsequence of \eqref{overview:eq:packet}, the
following statements hold.
\begin{enumerate}[label=\textup{(\Roman*)}]
 \item Labeled block evaluation extends uniquely, at every physical
 weight $q$, to a unitary
 \begin{equation}
 \mathbb U_q:
 \cW_q\!\left(\bigoplus_{s\le q}\cK_s\right)
 \xrightarrow{\ \simeq\ }\cH_q.
 \label{overview:eq:unitary}
 \end{equation}
 This unitary gives the direct implication in
 \eqref{overview:eq:weak-closure}; collision-null replicas give the
 converse realization and the convergence of the finite contraction
 register.

 \item Put
 \begin{equation}
 a_Q=\sum_{r=1}^{Q-1}\binom{Q-1}{r}^2\binom{2r}{r},
 \qquad
 \mu_Q=\left\lfloor\frac{Q^2+Q+1}{3}\right\rfloor.
 \label{overview:eq:constants}
 \end{equation}
 There are nested common graded modules $M^{[J]}$ such that the separated
 feedback energy and the coherently aggregated Gaussian self-plus-interface
 defect satisfy
 \begin{equation}
 \mathfrak H_Q(A^{[J]},r^{[J]})\le\frac{a_QR^4}{J},
 \qquad
 \Delta_{\mathrm{res}}(r^{[J]})+\Delta_{\mathrm{mix}}(A^{[J]},r^{[J]})
 \le\frac{\mu_Qa_QR^4}{J}.
 \label{overview:eq:HS-rate}
 \end{equation}
 Consequently, along a further subsequence,
 \begin{equation}
 F_n\Longrightarrow A_*+G_C,
 \qquad G_C\indep A_*.
 \label{overview:eq:noncanonical}
 \end{equation}
 This factorization is quantitative and unconditional, but the active
 factor $A_*$ need not be canonical.

 \item Suppose that the packet is carried by one chaos order $q$, and let
 $\xi_a=[f_{n,a}]_{\mathcal U}\in\cH_q$.  A Gram-reduced trajectory
 admits a sufficiently slow diagonal $J(n)\to\infty$ for which
 \begin{equation}
 (A_n^{[J(n)]},B_n^{[J(n)]})
 \Longrightarrow
 \left(
  \mathbb U_q^{-1}P_{\cD_q}\xi,
  \mathbb U_q^{-1}P_{\cK_q}\xi
 \right).
 \label{overview:eq:pure-bridge}
 \end{equation}
 The second component is Gaussian and independent of the first.  The
 reduced feedback may be stopped after fewer than $J$ promotions with
 \begin{equation}
 \mathfrak H_q(A^{[J]},r^{[J]})\le \frac{d\,a_qR^4}{J}.
 \label{overview:eq:pure-rate}
 \end{equation}
 If $\kappa_{n,a}$ denotes the fourth cumulant of the $a$th component,
 then, along the represented subsequence,
 \begin{equation}
  P_{\cD_q}\xi=0
  \quad\Longleftrightarrow\quad
  \lim_{\mathcal U}\sum_a\kappa_{n,a}=0
  \quad\Longleftrightarrow\quad
  F_n\Longrightarrow_{\mathcal U}N_d(0,\Sigma),
  \label{overview:eq:physical-fourth-moment}
 \end{equation}
 and the finite-level characteristic-function error is bounded by a
 constant times $|t|^2(\sum_a\kappa_{n,a})^{1/2}$.

 \item For a finite mixed-degree terminal $T$, let $\cN_s(T)$ be the
 closed span of the ranges of its nonlinear one-leg flattenings of
 primitive weight $s$.  Then
 \begin{equation}
 T=\mathsf A^{\can}+\mathsf G^{\can},
 \qquad \mathsf G^{\can}\indep\mathsf A^{\can},
 \label{overview:eq:canonical}
 \end{equation}
 where the covariance of $\mathsf G^{\can}$ is maximal among graded
 Gaussian factors annihilated by the complete raw one-leg nonlinear
 register.  Simultaneous extraction can fail to reach this split; a
 lexicographic, weight-by-weight extraction does reach it.
\end{enumerate}
The $J^{-1/2}$ law-level bound derived from
\eqref{overview:eq:HS-rate} concerns only the noncanonical factorization
\eqref{overview:eq:noncanonical}; no quantitative rate toward
\eqref{overview:eq:canonical} is asserted.
\end{theorem}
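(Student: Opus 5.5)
The proof will go through the four parts in order, since each uses the previous one. For (I), I start from the ultraproduct carriers $\cH_q$ of \eqref{overview:eq:carrier}. The symmetrized product $u\wtensor v$ is defined levelwise on representatives and is bounded, so it passes to the ultralimit. I then define $\mathbb U_q$ on elementary vectors of $\cW_q(\bigoplus_s\cK_s)$ by
\[
I_{\abs{\bm m}}\Bigl(\widehat\bigotimes_s k_{s,1}\odot\cdots\odot k_{s,m_s}\Bigr)
\longmapsto \text{the symmetrized block product of the }k_{s,i}\in\cK_s .
\]
\emph{Isometry.} Computing $q!\ip{\cdot}{\cdot}$ of two block products at level $n$ expands into a sum over bijections of tensor legs. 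Only bijections that match whole blocks to whole blocks of equal weight survive in the $\mathcal U$-limit. Any other pairing cuts a block, which produces an inner product of some $k_{s}\in\cK_s$ with a decomposable vector, and this vanishes because $\cK_s=\cD_s^\perp$. The surviving block pairings reproduce exactly the Fock inner product with the weights $\abs{\bm m}!$.

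\emph{Surjectivity.} I induct on $q$ using $\cH_q=\cD_q\oplus\cK_q$. By the induction hypothesis each $\cD_q$-generator $u\wtensor v$, with $u,v$ of lower weight, is a limit of images of lower-weight Fock vectors, so it lies in the range. The range is closed because $\mathbb U_q$ is an isometry.

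The direct inclusion in \eqref{overview:eq:weak-closure} then follows. The joint law of $F_n$ is determined by moments, and these are finite contraction diagrams. Block pairing shows that their $\mathcal U$-limits equal the Gaussian moments of $\mathbb U_q^{-1}\xi$ in the terminal Wiener space. Hypercontractivity gives uniform integrability, so moment convergence upgrades to convergence in law. For the converse, given $E$ and $F\in\cW_q(E)^d$, I realize each $E_s$ by tensors in $H_n^{\odot s}$ supported on disjoint, increasingly spread coordinate blocks (the ``collision-null replicas''), so that each $E_s$-vector becomes asymptotically primitive. Minimality and uniqueness of the graded support come from taking the closed graded span of all one-leg flattenings of $F$. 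Any two minimal realizations are related by the grade-preserving isometry they induce on these spans.

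For (II) I run a greedy Hilbert--Stein scheme. At step $j$, I promote into $M^{[j]}$ the direction of largest feedback energy $\mathfrak H_Q$ of the residual $r^{[j]}$. The energy decrease per step is bounded below by the square of the current energy divided by $a_QR^4$. The constant $a_Q$ comes from counting contractions: $\binom{Q-1}{r}^2\binom{2r}{r}$ bounds the $r$-contraction terms in $\norm{DF}^2$ variance expansions. A telescoping argument over a total energy budget $R^4$ then gives the $1/J$ bound. The defect bound comes from controlling the Gaussian and interface terms by $\mu_Q$ copies of the feedback energy, where $\mu_Q$ counts admissible pairs of weights. Feeding the defect into a Stein interpolation yields the $J^{-1/2}$ law bound. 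Compactness then gives \eqref{overview:eq:noncanonical}.

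For (III), in a single chaos the Gram-reduced trajectory projects $\xi$ onto an increasing sequence of spans whose closure is $\cD_q$. I pick $J(n)$ diagonally so that the level-$n$ projections track $P_{\cD_q}\xi$. Independence of the $P_{\cK_q}$ part and its Gaussianity follow from (I), because $\mathbb U_q^{-1}\cK_q=I_1(\cK_q)$. For the fourth-moment equivalence, the fourth cumulant of $I_q(f)$ is a positive combination of $\norm{f\otimes_r f}^2$ for $0<r<q$. Its ultralimit vanishes exactly when $\xi$ has no pairing with any $u\wtensor v$, that is, exactly when $P_{\cD_q}\xi=0$. The characteristic-function bound is the standard Nourdin--Peccati Malliavin--Stein estimate.

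For (IV), I process weights lexicographically, taking $s=1$ first. At each weight I remove $\cN_s(T)$ from the current primitive space, take $\mathsf G^{\can}_s$ to be the projection onto the orthogonal complement, and repeat on the lower-order remainder. Maximality holds because any Gaussian factor annihilated by all one-leg flattenings lies in $\bigcap\cN_s^\perp$. The failure of simultaneous extraction is shown by an explicit ghost example in weights $1,2$.

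\textbf{Main obstacle.} I expect the hardest step to be the surjectivity and isometry in (I) for mixed weights. There, pairings that cut a block must be shown negligible uniformly along $\mathcal U$, and controlling the leg-counting combinatorics for these cut pairings is delicate.
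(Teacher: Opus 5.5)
\textbf{Gap 1: the direct implication in (I).} This step fails where you pass from moments to laws. For $q\ge3$ the terminal $\mathbb U_q^{-1}\xi$ need not be determined by its moments. Already $I_3(e^{\otimes3})=G^3-3G$ has a symmetric density decaying like $\exp(-\tfrac12\abs{y}^{2/3})$ up to a power. It therefore satisfies Krein's condition and is moment-indeterminate; compare Berg's theorem on the cube of a normal law. Moment convergence plus hypercontractive uniform integrability thus only shows that every limit point has the moments of the terminal, not that it equals the terminal.

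The paper never identifies the limit by moments:
\begin{itemize}
\item the finite-chaos Stein estimate gives joint convergence \emph{in law} of the primitive coordinates $I_{s_i}(k_{i,n})$ to $W(k_i)$;
\item the product formula writes each evaluated monomial $I_q(\Sym\bigotimes_ik_{i,n})$ as a fixed Wick polynomial in these coordinates plus an $L^2$-negligible error;
\item continuous mapping, $L^2$-density of algebraic packets and the Wiener isometry then give the transfer for general $\xi$.
\end{itemize}

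Your mechanism for discarding cut pairings is also not right as stated. A cut pairing is in general not an inner product of $k_s$ with a decomposable vector. For four weight-two blocks paired in a cycle, deleting one block leaves the kernel of the operator product $\ell_{1,n}k_{2,n}\ell_{2,n}$, which is not a product tensor. The missing ingredient is the operator-norm characterization of $\cK_s$. By the adjoint identity $M_{r,u}^*[k_n]=(s)_r[k_n\otimes_ru_n]$ and near-maximizing \emph{moving} unit tests $u_n$, orthogonality to $\cD_s$ is equivalent to $\Ulim\norm{L_{k_n,r}}_{\op}=0$ for $1\le r<s$. Graphical Cauchy--Schwarz then bounds every cut network by a vanishing partial contraction. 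The same fact, combined with $\norm{L_{f,r}}_{\op}^4\le\norm{f\otimes_rf}^2\le\norm{L_{f,r}}_{\op}^2\norm f^2$, is exactly what your fourth-moment step in (III) (``vanishes exactly when $P_{\cD_q}\xi=0$'') asserts without proof.

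\textbf{Gap 2: the bridge in (III).} It omits its central idea. At level $J$ the ultraproduct modules have dimension at most $d(J-1)$. Their closed union is therefore separable and in general far smaller than $\cD_q$. What must be proved is that they lie \emph{inside} $\cD_q$, and that the residual limit is primitive.

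Containment fails for the naive span of feedback vectors. Take the packet $(e_0^{\otimes2},D_n)$ with $D_n=n^{-1/2}\sum_{j\le n}e_j^{\otimes2}$. Then $\Span\{\mathcal N_2r_{b,2}\}=\Span\{e_0^{\otimes2},D_n\}$, which captures the primitive $D_n$ even though its feedback vector has norm $O(n^{-1})$. Gram reduction repairs this: promote $\mathcal N_qR_qL_q$ for the full top eigenspace $L_q$. Before stopping, this map has smallest singular value above $a_qR^3/J$, uniformly in $n$. Bounded promoted vectors $u_n$ therefore have bounded preimages $c_n$. Hence $\abs{\ip{k_n}{u_n}}=\abs{\ip{\Lambda_qk_n}{\Lambda_qR_qc_n}}\to0$ for every primitive $k$, because $\Lambda_qk_n\to0$ by the operator-norm criterion. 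This requires taking $n\to\mathcal U$ before $J\to\infty$.

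On the residual side, vanishing feedback together with the positive cumulant identity kills the unsymmetrized self-contractions of $r_\infty$. So $r_\infty\in\cK_q$, and uniqueness in $\cH_q=\cD_q\oplus\cK_q$ forces $a_\infty=P_{\cD_q}\xi$.

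\textbf{Smaller issues in (II) and (IV).}
\begin{itemize}
\item In (II) the capture comes from $\norm{P_{\R Kz}z}^2\ge\ip z{Kz}/\norm K_{\op}$ with $\norm{\mathcal N_q}_{\op}\le a_QR^2$. This gives a residual-energy drop \emph{linear} in $\mathfrak H_Q$, drawn from a budget of $R^2$. The drop quadratic in the energy that you state would give only $J^{-1/2}$.
\item In (IV), projecting onto $\cN_s(T)^\perp$ proves existence and maximality of $\mathsf G^{\can}$. It does not show that the lexicographic feedback extraction reaches this split. That requires showing, once lower weights sit in separate active and residual labels, that $\mathscr A_s$ reduces $\widehat N_s$ and that $\widehat N_sg_{a,s}=0$, plus the same fixed-level non-ghost argument as in (III).
\end{itemize}
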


The three operators appearing in the proof have deliberately different
outputs; see Table~\ref{overview:tab:mechanisms}.
\begin{table}[tbp]
\centering
\begin{tabularx}{\linewidth}{@{}>{\raggedright\arraybackslash}p{0.23\linewidth}
>{\raggedright\arraybackslash}p{0.29\linewidth}X@{}}
\toprule
Mechanism & Output & Role \\
\midrule
Hilbert--Stein feedback & $N_qr_{a,q}$ & Quantitative $J^{-1}$ defect
bound; the active factor need not be canonical.\\[0.25em]
Gram-reduced feedback & $N_qR_qL_q$ & Removes vanishing Gram modes and
identifies $\cD_q\oplus\cK_q$ in a homogeneous chaos.\\[0.25em]
Triangular polar feedback & $N_s^{1/2}R_sL_s$ & Recovers the canonical split
by processing primitive weights in increasing order.\\
\bottomrule
\end{tabularx}
\caption{The three feedback operators used in the proofs and what each
is responsible for.}
\label{overview:tab:mechanisms}
\end{table}

\section{Scope, classical criteria, and proof architecture}

The canonical statements above are tensorial.  They are intrinsic after a
subsequence and an ultrafilter have fixed the complete represented terminal,
and they are unique up to graded orthogonal gauge after unused Gaussian
directions are removed.  They are not Torelli statements for a scalar
marginal law.  An explicit rank-two counterexample is given in
Proposition~\ref{primitive:prop:marginal-non-torelli}.

The homogeneous-chaos decomposition goes back to Wiener
\cite{Wiener}, and its multiple-integral realization to It\^o
\cite{Ito}; standard accounts include
\cite{Major,Janson,Nualart,PeccatiTaqqu}.  Schreiber proved that a fixed
Wiener chaos is closed under convergence in probability
\cite{Schreiber}.  Weak convergence is different: diffuse directions can
create new independent Gaussian coordinates, already at order two.

A recent theorem of Herry, Malicet and Poly \cite{HerryMalicetPoly}
characterizes limit distributions of bounded-degree polynomials in
independent identically distributed inputs.  In its Gaussian
specialization, it implies in particular that every terminal considered
here is a Gaussian polynomial of degree at most $q$.  The resolution in
the present paper is different and finer: for a homogeneous input of
physical degree $q$, it retains that physical weight, identifies the
partition-indexed terminal strata, constructs the intrinsic
decomposable--primitive projections and the minimal graded support, and
proves reciprocal realization by homogeneous $q$-fold integrals together
with convergence of every fixed loop-free contraction diagram.  Thus the
claim here is not merely polynomiality of the weak limit, but the exact
graded closure of the homogeneous class.

We use the multiple-integral and Malliavin conventions of
\cite{Nualart,PeccatiTaqqu,NourdinPeccatiBook}.  The Fourth Moment Theorem
of Nualart and Peccati \cite{NualartPeccati}, its Malliavin-contraction
formulations \cite{NualartOrtiz}, and the vector criterion of Peccati and
Tudor \cite{PeccatiTudor} determine when a fixed-chaos limit is Gaussian.
Stable Gaussian-mixture limits and asymptotic independence of blocks of
multiple integrals are studied in
\cite{PeccatiTaqquStable,NourdinRosinski}.  Here the Gaussian phenomenon is
the zero-decomposable stratum
\begin{equation}
 P_{\cD_q}\xi=0
 \quad\Longleftrightarrow\quad
 F_n\Longrightarrow N_d(0,\Sigma).
 \label{overview:eq:Gaussian-stratum}
\end{equation}
The classification theorem describes all remaining strata rather than
stopping when the Gaussian criterion fails.

The Fock-space viewpoint is classical; see \cite{Janson}.  The structural
point is that
$(H_n^{\odot q})_{\mathcal U}$ can strictly contain
$(H_n)_{\mathcal U}^{\odot q}$.  Asymptotically diffuse tensors in this
additional sector become new Gaussian coordinates of physical weight $q$.
We use standard Hilbert-ultraproduct facts from \cite{Heinrich}.  The
central limit theorem for generalized multilinear forms in \cite{deJong}
and the invariance principles for homogeneous sums in
\cite{MOO,NourdinPeccatiReinert} are related but are not used in the
intrinsic classification.  Classical central and noncentral limit
theorems show how such chaos kernels arise from short- and long-range
Gaussian inputs \cite{BreuerMajor,DobrushinMajor,Taqqu1979}; multiple
Wiener integrals also occur as limits of degenerate symmetric statistics
\cite{DynkinMandelbaum}.

\paragraph*{Prospective applications and limitations.}
The results are structural rather than model-specific.  Once the
appropriate approximation, normalization, tightness and uniform $L^2$
tail estimates have been proved, the weighted closure can serve as a
finite-dimensional limit-identification module.  Natural candidate
settings include homogeneous sums and degenerate statistics when a
Gaussian criterion alone does not identify the surviving nonlinear
strata; Gaussian functionals in long-memory or critical crossover
regimes; high-frequency geometric and spectral statistics of Gaussian
fields, where a small number of chaotic components often dominate
\cite{NourdinPeccatiRossi}; and polynomial-chaos expansions in disordered
systems and stochastic equations
\cite{CaravennaSunZygouras,CaravennaCottini}.  In such problems the grading
may separate persistent non-Gaussian blocks from Gaussian coordinates
created by diffuse directions.  These are research directions, not
corollaries proved here: the present theorem supplies neither
model-dependent normalization nor process-level tightness,
renormalization estimates, or a transfer principle beyond the Gaussian
input.

The proof has three stages.  First, proper contraction operators
characterize the primitive sector; primitive families are asymptotically
Gaussian, and complete block pairings yield the weighted Fock unitary and
the converse realization.  Second, a positive Malliavin--Stein feedback
gives quantitative Gaussian factorization.  Third, a spectral gap removes
ghost promotions and identifies this feedback with the primitive--Fock
projections in a homogeneous chaos; the mixed-degree counterexample then
forces the triangular construction.

The four items of Theorem~\ref{overview:thm:master} are established as follows.
Item~\textup{(I)} is
Theorems~\ref{primitive:thm:intrinsic-unitary},
\ref{primitive:thm:represented-transfer}, and
\ref{primitive:thm:packetwise-realization}.
Item~\textup{(II)} is
Theorems~\ref{hs:thm:quantitative-extraction} and \ref{hs:thm:law}.
Item~\textup{(III)} is
Theorems~\ref{bridge:thm:exact-bridge} and
\ref{bridge:thm:physical-fourth-moment}, and
Corollary~\ref{bridge:cor:pure-rate}.
Item~\textup{(IV)} is
Propositions~\ref{tri:prop:maximal-Gaussian-factor} and
\ref{tri:prop:multigraded-ghost}, and
Theorem~\ref{tri:thm:canonical-extraction}.
The final paragraph of Theorem~\ref{overview:thm:master} is
Theorem~\ref{primitive:thm:minimal-support} and
Corollary~\ref{primitive:cor:relative-torelli}.

Part~II proves the primitive criterion, minimal support, block-pairing
unitary, represented-sequence transfer, and reciprocal realization.
Part~III constructs the quantitative Hilbert--Stein extraction.  Part~IV
proves the exact homogeneous bridge, the physical fourth-moment theorem,
and the necessity of Gram reduction.  Part~V treats mixed degrees and the
maximal canonical Gaussian factor.  Part~VI extends the intrinsic terminal
and the triangular construction to unbounded chaos degree under uniform
$L^2$ tail control.

\part{Primitive--Fock classification and converse realization}

The following construction is the intrinsic foundation.  It uses moving
flattening tests in the tensor ultraproduct; fixed spatial probes are not
sufficient.  Joint Gaussianity of primitive families is proved directly by
a finite-chaos Stein estimate before the weighted Fock unitary is invoked.

\section{Ultraproduct ledger and primitive sectors}
%=====================================================================

Let $(H_n)_{n\ge1}$ be real Hilbert spaces and let $\mathcal U$ be a
free ultrafilter on $\N$.  For $q\ge1$ put
\begin{equation}
 \cH_q
 =\bigl(H_n^{\odot q},q!\ip{\cdot}{\cdot}\bigr)_{\mathcal U}.
 \label{primitive:eq:Hq}
\end{equation}
Thus $[f_n]_{\mathcal U}=[g_n]_{\mathcal U}$ precisely when
$
 \Ulim q!\norm{f_n-g_n}^2=0
$.
The product $u\wtensor v=\Sym(u\otimes v)$ passes to the ultraproduct,
because
\begin{equation}
 \norm{u\wtensor v}_{s+t}^2
 \le \binom{s+t}{s}\norm u_s^2\norm v_t^2.
 \label{primitive:eq:product-bound}
\end{equation}
It is commutative and associative.

For $q\ge2$ define
\begin{align}
 \cD_q
 &=\ol{\Span}
 \bigl\{u\wtensor v:
 u\in\cH_s,\ v\in\cH_{q-s},\ 1\le s<q\bigr\},
 \label{primitive:eq:Dq}\\
 \cK_q&=\cD_q^\perp,
 \qquad \cK_1=\cH_1.
 \label{primitive:eq:Kq}
\end{align}
The spaces $\cD_q$ and $\cK_q$ are respectively the decomposable and
primitive sectors of physical weight $q$.

For $f_n\in H_n^{\odot q}$ and $1\le r<q$, let
\begin{equation}
 L_{f_n,r}:H_n^{\odot r}\longrightarrow H_n^{\odot(q-r)},
 \qquad L_{f_n,r}u=f_n\otimes_ru.
 \label{primitive:eq:flattening}
\end{equation}
All contraction norms below are the raw Hilbert tensor norms.  The
variance-normalized versions differ only by constants depending on
$(q,r)$.

\begin{theorem}[Exact primitive kernel]
\label{primitive:thm:primitive-kernel}
Let $q\ge2$ and let $k=[f_n]_{\mathcal U}\in\cH_q$.  The following
conditions are equivalent.
\begin{enumerate}[label=\textup{(\roman*)}]
 \item $k\in\cK_q$.
 \item For every $1\le r<q$ and every $u=[u_n]\in\cH_r$,
 \begin{equation}
  [f_n\otimes_ru_n]_{\mathcal U}=0.
  \label{primitive:eq:linear-kernel}
 \end{equation}
 \item For every $1\le r<q$,
 \begin{equation}
  \Ulim\norm{L_{f_n,r}}_{\op}=0.
  \label{primitive:eq:op-zero}
 \end{equation}
 \item For every $1\le r<q$,
 \begin{equation}
  \Ulim\norm{f_n\otimes_rf_n}=0.
  \label{primitive:eq:self-zero}
 \end{equation}
\end{enumerate}
Equivalently, if $M_{r,u}v=u\wtensor v$, then
\begin{equation}
 \boxed{\displaystyle
 \cK_q=\bigcap_{r=1}^{q-1}\ \bigcap_{u\in\cH_r}
 \Ker M_{r,u}^*.}
 \label{primitive:eq:common-kernel}
\end{equation}

Moreover, if $k=[f_n]\in\cK_p$ and $\ell=[g_n]\in\cK_q$, then
\begin{equation}
 \Ulim\norm{f_n\otimes_rg_n}=0
 \label{primitive:eq:cross-zero}
\end{equation}
for every contraction which is not total, that is, except possibly when
$p=q=r$.
\end{theorem}

\begin{proof}
For $u=[u_n]\in\cH_r$ and $v=[v_n]\in\cH_{q-r}$, we have
\begin{align*}
 \ip{[f_n]}{M_{r,u}[v_n]}_{\cH_q}
 &=\Ulim q!\ip{f_n}{u_n\wtensor v_n}\\
 &=\Ulim q!\ip{f_n\otimes_ru_n}{v_n}\\
 &=\ip{(q)_r[f_n\otimes_ru_n]}{[v_n]}_{\cH_{q-r}},
\end{align*}
where $(q)_r=q!/(q-r)!$.  Hence
\begin{equation}
 M_{r,u}^*[f_n]=(q)_r[f_n\otimes_ru_n].
 \label{primitive:eq:adjoint-product}
\end{equation}
Since $\cD_q$ is the closed span of the ranges of all $M_{r,u}$,
\eqref{primitive:eq:adjoint-product} proves the equivalence of
\textup{(i)} and \textup{(ii)}.

If \eqref{primitive:eq:op-zero} holds and $(u_n)$ is bounded, then
\[
 \norm{f_n\otimes_ru_n}
 \le\norm{L_{f_n,r}}_{\op}\norm{u_n}
 \longrightarrow_{\mathcal U}0.
\]
Thus \textup{(iii)} implies \textup{(ii)}.  Conversely, suppose that
\eqref{primitive:eq:op-zero} fails for some $r$.  On a set belonging to
$\mathcal U$, choose unit vectors $u_n$ such that
\[
 \norm{L_{f_n,r}u_n}
 \ge\frac12\norm{L_{f_n,r}}_{\op}.
\]
The bounded moving test $[u_n]\in\cH_r$ then violates
\eqref{primitive:eq:linear-kernel}.  Hence \textup{(ii)} implies \textup{(iii)}.

Under the Hilbert--Schmidt identification, up to the canonical
permutation of the remaining legs,
\[
 \norm{L_{f_n,r}}_{\HS}=\norm{f_n},
 \qquad
 \norm{L_{f_n,r}L_{f_n,r}^*}_{\HS}
 =\norm{f_n\otimes_rf_n}.
\]
If $(\sigma_j)$ are the singular values of $L_{f_n,r}$, then
\[
 \norm{L_{f_n,r}}_{\op}^4
 \le\sum_j\sigma_j^4
 \le\norm{L_{f_n,r}}_{\op}^2\sum_j\sigma_j^2.
\]
Consequently
\begin{equation}
 \norm{L_{f_n,r}}_{\op}^4
 \le\norm{f_n\otimes_rf_n}^2
 \le\norm{L_{f_n,r}}_{\op}^2\norm{f_n}^2.
 \label{primitive:eq:singular-values}
\end{equation}
The representatives are bounded, so \textup{(iii)} and \textup{(iv)}
are equivalent.

Finally, if $r<p$, then
\[
 \norm{f_n\otimes_rg_n}
 \le\norm{L_{f_n,r}}_{\op}\norm{g_n}
 \longrightarrow_{\mathcal U}0.
\]
If $r=p<q$, we interchange $f_n$ and $g_n$ and use $r<q$.
The only remaining case is $r=p=q$, which is the total scalar
contraction.  This proves \eqref{primitive:eq:cross-zero}.
\end{proof}

\begin{remark}[The linear statement]
The common-kernel formula \eqref{primitive:eq:common-kernel} is linear.  The set
defined by \eqref{primitive:eq:self-zero} is the zero set of quadratic maps; it is
equal to the same Hilbert subspace by Theorem~\ref{primitive:thm:primitive-kernel}, but it
should not itself be called a common kernel.
\end{remark}

%=====================================================================
\section{The asymptotic-rank nature of a primitive}
%=====================================================================

\begin{proposition}[No nonzero finite-level primitive]
\label{primitive:prop:no-finite-primitive}
For every Hilbert space $H$ and every $q\ge2$,
\begin{equation}
 \ol{\Span}\{u\wtensor v:
 u\in H^{\odot r},\ v\in H^{\odot(q-r)},\ 1\le r<q\}
 =H^{\odot q}.
 \label{primitive:eq:finite-D-all}
\end{equation}
Thus the finite-level primitive complement is zero.  Nonzero elements of
$\cK_q$ occur only after taking the ultraproduct and then the closed span
in \eqref{primitive:eq:Dq}.
\end{proposition}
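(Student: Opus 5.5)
It suffices to treat $r=1$ and to show that the symmetrized products $u\wtensor v$ with $u\in H$ and $v\in H^{\odot(q-1)}$ already span a dense subspace of $H^{\odot q}$. The argument goes through symmetric powers of single vectors. For $e\in H$ write $e^{\otimes q}$. The polarization identity expresses every symmetric tensor $e_1\wtensor\cdots\wtensor e_q$ as a finite linear combination of pure powers $x^{\otimes q}$, with $x$ ranging over sums $\pm e_1\pm\cdots\pm e_q$. Moreover, finite sums of such symmetrized elementary tensors are dense in $H^{\odot q}$: the image of the algebraic tensor product under the bounded projection $\Sym$ is dense. Hence pure powers span a dense subspace of $H^{\odot q}$.

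The next step is to observe that every pure power is itself decomposable:
\[
 x^{\otimes q}=x\wtensor x^{\otimes(q-1)},
 \qquad x^{\otimes(q-1)}\in H^{\odot(q-1)}.
\]
Indeed, $\Sym(x\otimes x^{\otimes(q-1)})=x^{\otimes q}$, because $x^{\otimes q}$ is already symmetric. The same identity holds for every $r$, namely $x^{\otimes q}=x^{\otimes r}\wtensor x^{\otimes(q-r)}$, although $r=1$ suffices. Consequently the left side of \eqref{primitive:eq:finite-D-all} is a closed subspace containing a dense subset of $H^{\odot q}$, so it equals $H^{\odot q}$. Taking orthogonal complements, the finite-level primitive complement is zero. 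This holds verbatim with the $q!$-normalized inner product, which only rescales the norm.

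The closing sentence of the proposition is then a remark rather than a further claim. Applying \eqref{primitive:eq:finite-D-all} coordinatewise shows only that each representative $f_n$ is a norm limit of decomposable tensors at level $n$. The number of terms and the sizes of the factors needed for a given accuracy need not stay bounded in $n$. Such sequences therefore need not produce an element of $\cD_q$ in the ultraproduct. This contrast can be exhibited by the characterization in Theorem~\ref{primitive:thm:primitive-kernel}(iv): a diffuse sequence $f_n=m_n^{-1/2}\sum_{i\le m_n} e_i^{\otimes q}$ with $m_n\to\infty$ has vanishing proper self-contractions and so defines a nonzero primitive element, although each $f_n$ lies in the finite-level decomposable span. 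Here $\|f_n\otimes_r f_n\|^2=m_n^{-1}$ for $1\le r<q$.

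The only step requiring care is the density of the pure powers, that is, the polarization identity in a possibly infinite-dimensional $H$. I would cite the standard formula
\[
 e_1\wtensor\cdots\wtensor e_q=\frac{1}{2^q q!}\sum_{\epsilon\in\{\pm1\}^q}\epsilon_1\cdots\epsilon_q\,(\epsilon_1e_1+\cdots+\epsilon_qe_q)^{\otimes q}
\]
and check the normalization constant against the convention $\wtensor=\Sym(\cdot\otimes\cdot)$. The exact constant is irrelevant for spanning purposes.
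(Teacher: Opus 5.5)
Your proof is correct and is the paper's argument: polarization makes the pure powers $x^{\otimes q}$ span a dense subspace of $H^{\odot q}$, and each pure power is the proper product $x^{\otimes r}\wtensor x^{\otimes(q-r)}$, so the closed span is all of $H^{\odot q}$. Your additions are also correct: the explicit polarization constant $1/(2^q q!)$, and the diffuse example $f_n=m_n^{-1/2}\sum_{i\le m_n}e_i^{\otimes q}$ with $\norm{f_n\otimes_r f_n}^2=m_n^{-1}$, which illustrates the closing sentence.
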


\begin{proof}
The pure powers $h^{\otimes q}$ span a dense subspace of $H^{\odot q}$
by the polarization identity.  Each pure power is the proper product
$h^{\otimes r}\wtensor h^{\otimes(q-r)}$.  Hence it belongs to the span
in \eqref{primitive:eq:finite-D-all}, and density proves the result.
\end{proof}

\begin{proposition}[Rank-capacity dichotomy]
\label{primitive:prop:rank-capacity}
Fix $q\ge2$.
\begin{enumerate}[label=\textup{(\roman*)}]
 \item If $\dim H_n\le D$ on a set belonging to $\mathcal U$, then
 $\cK_q=\{0\}$.
 \item If $\dim H_n\to_{\mathcal U}\infty$, then $\cK_q$ contains a
 closed subspace isometric to every separable Hilbert space.
\end{enumerate}
In particular, a nonzero primitive requires asymptotically unbounded
one-particle rank.
\end{proposition}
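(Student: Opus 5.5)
For part (i) I will rely on the operator-norm criterion (iii) of Theorem~\ref{primitive:thm:primitive-kernel}. Take $k=[f_n]\in\cK_q$ and suppose $\dim H_n\le D$ on a set $A\in\mathcal U$. For $n\in A$, the flattening $L_{f_n,1}:H_n\to H_n^{\odot(q-1)}$ has rank at most $D$. Its Hilbert--Schmidt norm is $\norm{f_n}$, so it has at most $D$ singular values, and therefore
\[
 \norm{f_n}^2=\norm{L_{f_n,1}}_{\HS}^2\le D\,\norm{L_{f_n,1}}_{\op}^2 .
\]
Criterion (iii) gives $\Ulim\norm{L_{f_n,1}}_{\op}=0$. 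Hence $\Ulim q!\norm{f_n}^2=0$, which means $k=0$. A single contraction ($r=1$) suffices, and this part is routine.

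For part (ii) I plan an explicit construction of an orthonormal family. Since separable Hilbert spaces are $\ell^2(I)$ with $I$ countable, it is enough to produce a countably infinite orthonormal sequence $(k^{(j)})_{j\ge1}\subset\cK_q$; its closed span is then isometric to $\ell^2$, and every separable Hilbert space embeds isometrically into $\ell^2$. Set $N_n=\dim H_n\to_{\mathcal U}\infty$. On the $\mathcal U$-large sets where $N_n$ is large, choose an integer $m_n\to_{\mathcal U}\infty$ (for instance $m_n=\lfloor\sqrt{N_n}\rfloor$, capped at $N_n$ if $\dim H_n=\infty$, with all entries finite) such that $H_n$ contains $m_n^2$ orthonormal vectors $e^{(j)}_{n,i}$ for $1\le i\le m_n$ and $1\le j\le m_n$. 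For $j\le m_n$ define the diagonal tensors
\[
 f^{(j)}_n=\frac{c_q}{\sqrt{m_n}}\sum_{i=1}^{m_n}\bigl(e^{(j)}_{n,i}\bigr)^{\otimes q},
\]
and set $f^{(j)}_n=0$ otherwise. Choose the constant $c_q$ so that $q!\norm{f^{(j)}_n}^2=1$. The families for different $j$ use disjoint orthonormal vectors, so they are orthogonal at every level. Consequently $k^{(j)}=[f^{(j)}_n]$ form an orthonormal sequence in $\cH_q$.

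Primitivity then follows from criterion (iv). For $1\le r<q$, the contraction $f^{(j)}_n\otimes_r f^{(j)}_n$ equals $\frac{c_q^2}{m_n}\sum_i (e_{n,i}^{(j)})^{\otimes(2q-2r)}$, whose norm is $c_q^2m_n^{-1/2}\to_{\mathcal U}0$. Hence each $k^{(j)}\in\cK_q$. The closed span of the sequence lies in the closed subspace $\cK_q$, which gives the isometric copy.

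The step needing the most care is the bookkeeping in (ii). Each fixed $j$ must have its representative defined for $\mathcal U$-almost all $n$. This holds because $\{n:m_n\ge j\}\in\mathcal U$, and values on a $\mathcal U$-null set are irrelevant to the class. The choice $m_n\to_{\mathcal U}\infty$ with $m_n^2\le N_n$ is always possible since $N_n\to_{\mathcal U}\infty$. The final sentence of the proposition then follows directly from (i).
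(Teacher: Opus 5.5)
Your proof is correct and follows the paper's argument. Part (i) is the paper's bound $\norm{L_{f_n,1}}_{\op}\ge\norm{f_n}/\sqrt D$ combined with the operator-norm primitive criterion, and part (ii) is the paper's ``version inside varying carriers'' remark, restricted to the single weight $q$ and checked through the self-contraction criterion: normalized sums of pure powers over disjoint orthonormal blocks of $H_n$, with replica and family counts tending to infinity along $\mathcal U$. When $\dim H_n=\infty$, simply take, say, $m_n=n$.
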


\begin{proof}
For $f_n\in H_n^{\odot q}$, the operator $L_{f_n,1}$ has rank at most
$\dim H_n$ and Hilbert--Schmidt norm $\norm{f_n}$.  Therefore, on the
set where $\dim H_n\le D$,
\begin{equation}
 \norm{L_{f_n,1}}_{\op}
 \ge\frac{\norm{f_n}}{\sqrt D}.
 \label{primitive:eq:rank-lower}
\end{equation}
If $[f_n]\in\cK_q$, the left-hand side tends to zero by
Theorem~\ref{primitive:thm:primitive-kernel}; hence $[f_n]=0$.  This proves
\textup{(i)}.  Assertion \textup{(ii)} follows from the simultaneous
replica construction in Theorem~\ref{primitive:thm:separable-realization} below.
\end{proof}

\begin{warning}[Moving tests are necessary]
Let $H=\ell^2(\N)$ and let
$
 f_n=e_n^{\otimes q}/\sqrt{q!}
$.
For every fixed $u\in H^{\odot r}$ with $1\le r<q$,
$
 \norm{f_n\otimes_ru}\to0
$, but
$
 \norm{L_{f_n,r}}_{\op}
$
does not tend to zero.  Thus $[f_n]\notin\cK_q$.  The moving test
$u_n=e_n^{\otimes r}$ detects the defect.  Consequently no family of
fixed spatial probes characterizes primitivity.
\end{warning}

\begin{warning}[There is no literal $I_q(k)$ on the original field]
In general,
\[
 \bigl(H_n^{\odot q}\bigr)_{\mathcal U}
 \supsetneq
 \bigl((H_n)_{\mathcal U}\bigr)^{\odot q}.
\]
A primitive $k=[f_n]\in\cK_q$ may belong to the exotic part of this
inclusion.  The correct object is the sequence $I_q^{(n)}(f_n)$ and its
terminal Gaussian coordinate $W(k)$, not a multiple integral $I_q(k)$
over the original isonormal field.
\end{warning}

%=====================================================================
\section{Joint Gaussianity of primitive families}
%=====================================================================

Let $W_n$ be isonormal over $H_n$ and normalize multiple integrals by
\begin{equation}
 \E[I_p^{(n)}(f)I_q^{(n)}(g)]
 =\1_{\{p=q\}}q!\ip{f}{g}.
 \label{primitive:eq:Wiener-isometry}
\end{equation}

\begin{lemma}[Finite-chaos Stein estimate]
\label{primitive:lem:Stein}
Fix positive integers $q_1,\ldots,q_d$.  Let
$F_{n,a}=I_{q_a}^{(n)}(f_{n,a})$, assume that the kernels are bounded,
and let $C_n=\Cov(F_n)$.  Put
\[
 \Gamma_{n,ab}
 =\ip{DF_{n,a}}{-DL^{-1}F_{n,b}}_{H_n}.
\]
There is a constant $c=c(q_1,\ldots,q_d)<\infty$ such that
\begin{equation}
 \sum_{a,b=1}^d
 \norm{\Gamma_{n,ab}-C_{n,ab}}_2^2
 \le c\sum_{a,b=1}^d
 \sum_{\substack{1\le r\le q_a\wedge q_b\\
 (r,q_a,q_b)\ne(q_a,q_a,q_a)}}
 \norm{f_{n,a}\otimes_rf_{n,b}}^2.
 \label{primitive:eq:Gamma-contraction}
\end{equation}
Moreover, if $Z_n\sim N_d(0,C_n)$, then for every $t\in\R^d$,
\begin{equation}
 \left|\E e^{it\cdot F_n}-\E e^{it\cdot Z_n}\right|
 \le\frac{|t|^2}{2}
 \left(
 \sum_{a,b=1}^d
 \norm{\Gamma_{n,ab}-C_{n,ab}}_2^2
 \right)^{1/2}.
 \label{primitive:eq:Stein-cf}
\end{equation}
\end{lemma}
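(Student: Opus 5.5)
We prove the two estimates separately. The contraction bound \eqref{primitive:eq:Gamma-contraction} comes from the product formula for multiple integrals. The characteristic-function bound \eqref{primitive:eq:Stein-cf} comes from an interpolation argument with Gaussian integration by parts on one side and Malliavin integration by parts on the other.

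\emph{Step 1: the contraction bound.} For $F_{n,a}=I_{q_a}(f_{n,a})$ we have $-DL^{-1}F_{n,b}=q_b^{-1}DF_{n,b}$, so $\Gamma_{n,ab}=q_b^{-1}\ip{DF_{n,a}}{DF_{n,b}}$. Since $DF_{n,a}=q_aI_{q_a-1}(f_{n,a}(\cdot,x))$, the product formula gives
\[
 \Gamma_{n,ab}=q_a\sum_{r=1}^{q_a\wedge q_b}(r-1)!\binom{q_a-1}{r-1}\binom{q_b-1}{r-1}
 I_{q_a+q_b-2r}(f_{n,a}\wtensor_r f_{n,b}).
\]
\begin{itemize}
 \item The term $r=q_a=q_b$ is the constant $q_a!\ip{f_{n,a}}{f_{n,b}}=C_{n,ab}$. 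Every other term has a positive chaos order and hence mean zero. The only exception is the case $q_a=q_b$ with $r=q_a$, which is exactly the constant. If $q_a\ne q_b$, then $C_{n,ab}=0$ and the term $r=q_a\wedge q_b$ is a nonconstant chaos; it is kept in the sum, consistently with the excluded index set in \eqref{primitive:eq:Gamma-contraction}.
 \item By orthogonality of chaoses and the isometry \eqref{primitive:eq:Wiener-isometry}, $\norm{\Gamma_{n,ab}-C_{n,ab}}_2^2$ is a finite sum of constants times $(q_a+q_b-2r)!\,\norm{f_{n,a}\wtensor_rf_{n,b}}^2$.
 \item Symmetrization is a contraction, so each symmetrized contraction norm is at most $\norm{f_{n,a}\otimes_rf_{n,b}}$.
\end{itemize}
This gives \eqref{primitive:eq:Gamma-contraction} with $c$ depending only on $(q_1,\ldots,q_d)$. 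The case $r=q_a<q_b$ with $a\neq b$ is included in the sum, as it must be.

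\emph{Step 2: the characteristic-function bound.} Take $Z_n$ independent of $F_n$. Interpolate with $\phi(s)=\E e^{it\cdot(\sqrt sF_n+\sqrt{1-s}Z_n)}$, so that the quantity to bound is $|\phi(1)-\phi(0)|$. Differentiate in $s$.
\begin{itemize}
 \item On the $F_n$ side, use the Malliavin integration by parts $\E[F_{n,a}g(F_n)]=\sum_b\E[\partial_bg(F_n)\Gamma_{n,ba}]$. This uses $F=\delta(-DL^{-1}F)$ and the centering of $F_{n,a}$.
 \item On the $Z_n$ side, use Gaussian integration by parts $\E[Z_{n,a}g(Z_n)]=\sum_bC_{n,ab}\E[\partial_bg(Z_n)]$.
\end{itemize}
With $g(x)=e^{it\cdot x}$ this yields
\[
 \phi'(s)=-\tfrac12\sum_{a,b}t_at_b\,\E\bigl[(\Gamma_{n,ab}-C_{n,ab})e^{it\cdot(\cdots)}\bigr].
\]
Bound the expectation by $\E|\Gamma_{n,ab}-C_{n,ab}|\le\norm{\Gamma_{n,ab}-C_{n,ab}}_2$, using $|e^{i\cdot}|=1$. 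Cauchy--Schwarz over the pairs $(a,b)$, with $\sum_{a,b}t_a^2t_b^2=|t|^4$, gives $|\phi'(s)|\le\frac{|t|^2}{2}\bigl(\sum\norm{\Gamma_{n,ab}-C_{n,ab}}_2^2\bigr)^{1/2}$. Integrating over $s\in[0,1]$ gives \eqref{primitive:eq:Stein-cf}.

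\emph{Expected obstacle.} The analysis is routine. The main care point is bookkeeping in Step 1: identifying exactly which contraction gives the constant term, and confirming that $\Gamma_{n,ab}$ is not symmetric when $q_a\ne q_b$. This asymmetry is harmless because we sum over all ordered pairs $(a,b)$.
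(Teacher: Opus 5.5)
Your proposal is correct and follows essentially the same route as the paper. For the first bound you use the product-formula expansion of $\Gamma_{n,ab}$, orthogonality of the output chaoses, and contractivity of symmetrization. For the second you interpolate between $F_n$ and an independent Gaussian vector, use Malliavin and Gaussian integration by parts, then $\norm{\cdot}_1\le\norm{\cdot}_2$ and Cauchy--Schwarz. The only difference is that you make the interpolation path $\sqrt{s}F_n+\sqrt{1-s}Z_n$ explicit, whereas the paper leaves it unspecified.
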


\begin{proof}
Since $-L^{-1}I_q=q^{-1}I_q$, the product formula
\cite[Prop.~1.1.3]{Nualart} for multiple Wiener integrals expands $\Gamma_{n,ab}$ into finitely many orthogonal chaoses.
Its scalar branch is $C_{n,ab}$, and every non-scalar branch is a fixed
combinatorial multiple of
$I_{q_a+q_b-2r}(f_{n,a}\wtensor_rf_{n,b})$.  The Wiener isometry,
orthogonality of different output orders, and
$\norm{u\wtensor_rv}\le\norm{u\otimes_rv}$ give
\eqref{primitive:eq:Gamma-contraction}.

This is the vector Malliavin--Stein estimate of
\cite{NourdinPeccatiStein,NourdinPeccatiReveillac,PeccatiTudor}; for
completeness, interpolate
between $F_n$ and a Gaussian vector with covariance $C_n$.  Differentiating the interpolated characteristic
function and applying Malliavin integration by parts gives
\[
 \left|\frac{d}{d\theta}\Psi_n(\theta,t)\right|
 \le\frac12\sum_{a,b}|t_at_b|
 \norm{\Gamma_{n,ab}-C_{n,ab}}_1.
\]
Integrating over $\theta\in[0,1]$, using
$\norm{Y}_1\le\norm{Y}_2$, and applying Cauchy--Schwarz proves
\eqref{primitive:eq:Stein-cf}.
\end{proof}

\begin{corollary}[Primitive central limit theorem]
\label{primitive:cor:primitive-CLT}
Let $k_a=[f_{n,a}]\in\cK_{q_a}$ for $1\le a\le d$, and assume that
the covariances converge along $\mathcal U$ to $C$.  Then
\begin{equation}
 \bigl(I_{q_a}^{(n)}(f_{n,a})\bigr)_{a=1}^d
 \weak N_d(0,C)
 \quad\text{along }\mathcal U.
 \label{primitive:eq:primitive-CLT}
\end{equation}
Primitive weights which are different give independent terminal
Gaussian subfields.
\end{corollary}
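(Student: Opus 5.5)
The plan is to combine Lemma~\ref{primitive:lem:Stein} with the cross-contraction statement \eqref{primitive:eq:cross-zero} of Theorem~\ref{primitive:thm:primitive-kernel}. Put $F_{n,a}=I_{q_a}^{(n)}(f_{n,a})$. Because each $k_a\in\cK_{q_a}$ and the representatives are bounded in norm (passing to bounded representatives on a $\mathcal U$-large set if necessary), \eqref{primitive:eq:cross-zero} applied to each pair $(a,b)$ gives
\[
 \Ulim\norm{f_{n,a}\otimes_rf_{n,b}}=0
\]
for every contraction other than the total one $r=q_a=q_b$. This is exactly the index set excluded in the right-hand side of \eqref{primitive:eq:Gamma-contraction}. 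One point needs checking. The exclusion in \eqref{primitive:eq:Gamma-contraction} is written as $(r,q_a,q_b)\ne(q_a,q_a,q_a)$. This means the only omitted term is the scalar contraction with $q_a=q_b=r$, which is the case left open in \eqref{primitive:eq:cross-zero}. For $q_a\ne q_b$ and $r=q_a\wedge q_b$, the contraction is non-total and is covered by the case $r=p<q$ of the theorem.

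The right-hand side of \eqref{primitive:eq:Gamma-contraction} is a finite sum with $c=c(q_1,\ldots,q_d)$ fixed. It therefore tends to zero along $\mathcal U$, and so $\sum_{a,b}\norm{\Gamma_{n,ab}-C_{n,ab}}_2^2\to_{\mathcal U}0$. Inserting this into \eqref{primitive:eq:Stein-cf} gives, for each fixed $t$,
\[
 \E e^{it\cdot F_n}-\E e^{it\cdot Z_n}\to_{\mathcal U}0,
 \qquad Z_n\sim N_d(0,C_n).
\]
Since $C_n\to_{\mathcal U}C$, the Gaussian characteristic functions satisfy $\E e^{it\cdot Z_n}=e^{-t^\top C_nt/2}\to_{\mathcal U}e^{-t^\top Ct/2}$. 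Pointwise convergence of characteristic functions along $\mathcal U$ then yields \eqref{primitive:eq:primitive-CLT}. Here one uses L\'evy continuity: every sequence along which the $\mathcal U$-limit is realized has a further subsequence converging weakly, and the limit is identified by its characteristic function.

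For the independence claim, if $q_a\ne q_b$ then $C_{n,ab}=0$ by the isometry \eqref{primitive:eq:Wiener-isometry}, so $C_{ab}=0$. The limiting vector is jointly Gaussian, and coordinates of different primitive weights are uncorrelated. Grouping the coordinates by weight therefore gives block-diagonal $C$, and joint Gaussianity turns this into independence of the terminal subfields of different weights. Applying the argument to arbitrary finite families gives independence of the whole subfields.

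The main obstacle is only bookkeeping. One must confirm that every term appearing in \eqref{primitive:eq:Gamma-contraction} is a non-total contraction in the sense of \eqref{primitive:eq:cross-zero}, including the boundary cases $r=q_a<q_b$, and that the bounded-representative hypothesis of Lemma~\ref{primitive:lem:Stein} holds on a $\mathcal U$-set.
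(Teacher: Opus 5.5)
Your proposal is correct and follows the paper's own argument: \eqref{primitive:eq:cross-zero} kills every term on the right of \eqref{primitive:eq:Gamma-contraction}, the bound \eqref{primitive:eq:Stein-cf} together with $C_n\to_{\mathcal U}C$ gives the Gaussian limit, and vanishing covariance between distinct chaos orders gives independence. Your handling of the excluded index $(r,q_a,q_b)=(q_a,q_a,q_a)$ and of the boundary case $r=q_a<q_b$ is right; for the L\'evy step along $\mathcal U$, the cleanest formulation is that the tight laws have a weak ultralimit, and its characteristic function is the ultralimit of the characteristic functions.
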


\begin{proof}
Every non-total cross-contraction tends to zero by
\eqref{primitive:eq:cross-zero}.  Hence \eqref{primitive:eq:Gamma-contraction} tends to zero.
The characteristic-function estimate \eqref{primitive:eq:Stein-cf} and covariance
convergence prove \eqref{primitive:eq:primitive-CLT}.  Different chaos orders have
zero covariance; the corresponding components of the Gaussian limit are
therefore independent.
\end{proof}

%=====================================================================
\section{The minimal primitive support of a packet}
%=====================================================================

Let $E=\bigoplus_{s=1}^Q E_s$ be a graded real Hilbert space.  For an
occupation vector $\bm m=(m_1,\ldots,m_Q)$ write
\[
 |\bm m|=\sum_sm_s,
 \qquad \operatorname{wt}(\bm m)=\sum_ssm_s,
\]
and put $m=|\bm m|$.  We identify the occupation tensor product with its
closed sector in $E^{\odot m}$ through the isometry
\begin{equation}
 \jmath_{\bm m}
 =\sqrt{\frac{m!}{\prod_s m_s!}}\,\Sym_m.
 \label{primitive:eq:occupation-injection}
\end{equation}
Every tensor product below is understood through
\eqref{primitive:eq:occupation-injection}.  Put
\begin{equation}
 \cW_q(E)
 =\bigoplus_{\operatorname{wt}(\bm m)=q}
 I_{|\bm m|}
 \left(\widehat\bigotimes_{s=1}^QE_s^{\odot m_s}\right).
 \label{primitive:eq:weighted-Fock}
\end{equation}
The symmetric Fock convention is
\begin{equation}
 \norm{I_m(g)}_2^2=m!\norm g^2.
 \label{primitive:eq:Fock-normalization}
\end{equation}

Let $F=(F_1,\ldots,F_d)$ be a finite packet in
$\bigoplus_{q\le Q}\cW_q(E)$.  Write its orthogonal expansion as
\begin{equation}
 F_{a,q}
 =\sum_{\operatorname{wt}(\bm m)=q}I_{|\bm m|}(g_{a,q,\bm m}).
 \label{primitive:eq:F-expansion}
\end{equation}
If $m_s\ge1$, view $g_{a,q,\bm m}$ as a Hilbert--Schmidt operator
\begin{equation}
 \mathscr L_{a,q,\bm m}^{(s)}:
 E_s^{\odot(m_s-1)}
 \widehat\otimes
 \widehat\bigotimes_{t\ne s}E_t^{\odot m_t}
 \longrightarrow E_s
 \label{primitive:eq:one-leg-flattening}
\end{equation}
by contracting every leg except one leg of weight $s$.

\begin{definition}[Minimal primitive support]
\label{primitive:def:minimal-support}
For $1\le s\le Q$, define
\begin{equation}
 \cS_s(F)
 =\ol{\Span}
 \left\{
 \Ran\mathscr L_{a,q,\bm m}^{(s)}:
 1\le a\le d,\ q\le Q,\ m_s\ge1
 \right\}\subset E_s.
 \label{primitive:eq:minimal-support}
\end{equation}
Put $\cS(F)=\bigoplus_{s\le Q}\cS_s(F)$.
\end{definition}

\begin{theorem}[Existence, minimality, and covariance of the support]
\label{primitive:thm:minimal-support}
The family $\cS(F)$ has the following properties.
\begin{enumerate}[label=\textup{(\roman*)}]
 \item Every kernel in \eqref{primitive:eq:F-expansion} satisfies
 \begin{equation}
  g_{a,q,\bm m}
  \in\widehat\bigotimes_{s=1}^Q \cS_s(F)^{\odot m_s}.
  \label{primitive:eq:kernels-supported}
 \end{equation}
 \item If closed graded subspaces $T_s\subset E_s$ also satisfy
 $
 g_{a,q,\bm m}\in\widehat\bigotimes_sT_s^{\odot m_s}
 $
 for every $a,q,\bm m$, then
 \begin{equation}
  \cS_s(F)\subset T_s
  \qquad(1\le s\le Q).
  \label{primitive:eq:support-minimal}
 \end{equation}
 \item Every $\cS_s(F)$ is separable, even when $E_s$ is not separable.
 \item If $U_s:E_s\to E_s'$ are grade-preserving unitaries and
 $\Gamma(U)F$ is the transported packet, then
 \begin{equation}
  \cS_s(\Gamma(U)F)=U_s\cS_s(F).
  \label{primitive:eq:support-natural}
 \end{equation}
\end{enumerate}
Thus $\cS(F)$ is the unique minimal graded one-particle support of the
represented packet.
\end{theorem}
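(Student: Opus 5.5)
The whole argument rests on one fact about Hilbert--Schmidt operators: a tensor lies in a product of closed subspaces if and only if each of its one-leg flattenings has range inside the corresponding factor. For each triple $(a,q,\bm m)$ we regard $g=g_{a,q,\bm m}$ as an element of the Hilbert tensor product of the factors $E_t$. For $m_s\ge1$, the one-leg flattening $\mathscr L^{(s)}$ is the Hilbert--Schmidt operator attached to $g$ under the identification $E_s\otimes X\cong \HS(X,E_s)$, where $X$ is the tensor product of all remaining legs. This identification is well defined because the tensor is symmetric within each weight block, so any choice of distinguished weight-$s$ leg gives the same operator.

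\emph{Item (i).} The basic observation is that $g\in P\otimes X$ for a closed $P\subset E_s$ exactly when $\Ran\mathscr L^{(s)}\subset P$. This holds because $(P_P\otimes\Id)g$ corresponds to $P_P\mathscr L^{(s)}$. Apply this with $P=\cS_s(F)$, using $\Ran\mathscr L^{(s)}\subset\cS_s(F)$. Then $(P_{\cS_s}\otimes\Id)g=g$ on the distinguished leg. By symmetry inside the weight-$s$ block, $g$ is invariant under $P_{\cS_s}$ on every weight-$s$ leg. Doing this for every $s$ with $m_s\ge1$ and composing the commuting projections gives $g=\bigl(\bigotimes_s P_{\cS_s}^{\otimes m_s}\bigr)g$. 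This is exactly \eqref{primitive:eq:kernels-supported}, once it is transported through the injection $\jmath_{\bm m}$, which commutes with these product projections.

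\emph{Item (ii).} Suppose $g\in\widehat\bigotimes_tT_t^{\odot m_t}$. Then $g$ is fixed by $P_{T_s}$ acting on the distinguished leg, so $P_{T_s}\mathscr L^{(s)}=\mathscr L^{(s)}$ and hence $\Ran\mathscr L^{(s)}\subset T_s$. Since $T_s$ is closed, taking the closed span over all $(a,q,\bm m)$ gives $\cS_s(F)\subset T_s$.

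\emph{Item (iii).} A Hilbert--Schmidt operator is compact, so its range is contained in the closed span of the countably many left singular vectors with nonzero singular values. That closed span is separable. The index set of triples $(a,q,\bm m)$ is finite, since $d,Q<\infty$ and each weight has finitely many occupation vectors. A finite closed span of separable subspaces is separable.

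\emph{Item (iv).} The transported kernel is $\bigl(\bigotimes_tU_t^{\otimes m_t}\bigr)g$, and its flattening is $U_s\mathscr L^{(s)}V^*$, where $V$ is the unitary induced on the remaining legs. Hence its range is $U_s\Ran\mathscr L^{(s)}$. Since $U_s$ is a unitary, it commutes with closures of spans, which gives \eqref{primitive:eq:support-natural}.

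\emph{Uniqueness claim.} Items (i) and (ii) together say that $\cS(F)$ is the smallest graded closed family supporting all kernels. It is therefore unique, and by item (iv) it transforms equivariantly under graded unitaries.

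The only delicate step is the symmetry bookkeeping in (i). One must check that the projection onto a support subspace commutes with the symmetrization in $\jmath_{\bm m}$ and with the placement of the occupation sector inside $E^{\odot m}$. One must also check that invariance under the projection on one weight-$s$ leg propagates to all weight-$s$ legs. I would verify both by noting that $\Sym_m$ intertwines $\bigotimes_tP_t^{\otimes m_t}$ on the graded occupation sector, because the $E_t$ are mutually orthogonal.
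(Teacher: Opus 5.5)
Your proposal is correct and follows the paper's proof essentially step by step: (i) comes from the observation that $(P\otimes\Id)g$ corresponds to $P\mathscr L^{(s)}$ together with symmetry in the weight-$s$ legs, (ii) by reading off the ranges of the flattenings, and (iv) by intertwining the flattenings with the grade-preserving unitaries. The only variation is in (iii): you get separability directly from the countable singular-vector expansion of each compact flattening, whereas the paper approximates each kernel by finite sums of elementary tensors, takes the closed span of the countably many factors, and then implicitly uses the minimality in (ii); both arguments are valid.
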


\begin{proof}
Fix one tensor $g=g_{a,q,\bm m}$ and one weight $s$ with $m_s\ge1$.
Let $P_s$ be the orthogonal projection onto the closure of the range of
its one-leg flattening.  If $h\in\Ker P_s$, then
\[
 \ip{h}{\mathscr L_g^{(s)}v}=0
\]
for every vector $v$ in the domain of \eqref{primitive:eq:one-leg-flattening}.
Therefore contracting $(\Id-P_s)g$ against every elementary tensor gives
zero.  Elementary tensors are dense, so
\[
 ((\Id-P_s)\otimes\Id)g=0.
\]
Symmetry in the $m_s$ identical legs shows that every weight-$s$ leg of
$g$ lies in $\Ran P_s$.  Repeating this argument for every weight proves
\eqref{primitive:eq:kernels-supported}, hence \textup{(i)}.

If $g\in\widehat\bigotimes_sT_s^{\odot m_s}$, then the range of each
one-leg flattening of $g$ is contained in the corresponding $T_s$.
Taking spans and closures gives \eqref{primitive:eq:support-minimal}.  This proves
\textup{(ii)}.

Every Hilbert tensor belongs to the tensor product of separable closed
subspaces: approximate it by a sequence of finite sums of elementary
tensors and take the closed span of the countably many factors.  There
are only finitely many components $a,q,\bm m$ at fixed $Q$, so their
combined support is separable.  This proves \textup{(iii)}.  Finally,
one-leg flattenings are intertwined by grade-preserving unitaries, which
gives \eqref{primitive:eq:support-natural} and \textup{(iv)}.
\end{proof}

\begin{remark}[Why full ambient density is unnecessary for laws]
Every finite packet uses only $\cS(F)$, which is separable by
Theorem~\ref{primitive:thm:minimal-support}.  Therefore classification and realization of
the packet do not require the chosen spatial model to be dense in an entire,
possibly nonseparable, ambient $\cK_s$.  Density in all of $\cK_s$ is
needed only for an exhaustive geometric identification of that entire
ambient primitive sector.
\end{remark}

%=====================================================================
\section{Replica calculus}
%=====================================================================

\begin{lemma}[Exact orthogonal replica identities]
\label{primitive:lem:replica}
Let $E$ and $H$ be real Hilbert spaces, let
$J_1,\ldots,J_N:E\to H$ be isometries with mutually orthogonal ranges,
and let $q\ge1$.  For $a\in E^{\odot q}$ define
\begin{equation}
 R_N^{(q)}a
 =\frac1{\sqrt N}\sum_{j=1}^NJ_j^{\odot q}a.
 \label{primitive:eq:replica-map}
\end{equation}
Then, for $a,b\in E^{\odot q}$ and $1\le r<q$,
\begin{align}
 \ip{R_N^{(q)}a}{R_N^{(q)}b}
 &=\ip{a}{b},
 \label{primitive:eq:replica-Gram}\\
 \norm{R_N^{(q)}a\otimes_rR_N^{(q)}b}
 &=\frac1{\sqrt N}\norm{a\otimes_rb},
 \label{primitive:eq:replica-contraction}\\
 \norm{L_{R_N^{(q)}a,r}}_{\op}
 &=\frac1{\sqrt N}\norm{L_{a,r}}_{\op}.
 \label{primitive:eq:replica-op}
\end{align}
\end{lemma}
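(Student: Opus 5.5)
The plan is to reduce everything to one identity: contractions distribute over replicas, and the cross terms vanish. Since $J_j^{\otimes q}$ commutes with symmetrization, we have $J_j^{\odot q}=J_j^{\otimes q}$ on $E^{\odot q}$. For isometries $J,J'$ one gets $\ip{J^{\otimes q}a}{J'^{\otimes q}b}=\ip{(J'^*J)^{\otimes q}a}{b}$. Orthogonality of the ranges gives $J_k^*J_j=\delta_{jk}\Id_E$, so the cross terms in the Gram sum vanish and the $N$ diagonal terms each contribute $\ip{a}{b}$. After dividing by $N$ this is \eqref{primitive:eq:replica-Gram}.

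For \eqref{primitive:eq:replica-contraction}, expand bilinearly:
\[
 R_N^{(q)}a\otimes_rR_N^{(q)}b=\frac1N\sum_{j,k}J_j^{\otimes q}a\otimes_rJ_k^{\otimes q}b .
\]
The contraction pairs $r\ge1$ legs through inner products $\ip{J_j x}{J_k y}=\delta_{jk}\ip xy$, so all terms with $j\ne k$ vanish. On the diagonal, the free legs are simply transported, which gives
\[
 J_j^{\otimes q}a\otimes_rJ_j^{\otimes q}b=J_j^{\otimes(2q-2r)}(a\otimes_rb).
\]
The sum therefore becomes $\frac1N\sum_j J_j^{\otimes(2q-2r)}(a\otimes_rb)$. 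Because $r<q$, the output order $2q-2r$ is at least $1$, so these summands again have mutually orthogonal ranges and are isometric images. The squared norm is $N\cdot N^{-2}\norm{a\otimes_rb}^2$, which is the claim. This is the only place where $r<q$ is used: for $r=q$ the scalar outputs would add coherently.

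For \eqref{primitive:eq:replica-op}, the same computation at the operator level gives
\[
 L_{R_Na,r}=N^{-1/2}\sum_j J_j^{\otimes(q-r)}L_{a,r}(J_j^{\otimes r})^*.
\]
Here $L_{a,r}$ acts on $E^{\otimes r}$, and I extend it by symmetry if needed, noting that the operator norm is unchanged. The operators $V_j=J_j^{\otimes(q-r)}$ and $U_j=J_j^{\otimes r}$ are isometries. The $U_j$ have mutually orthogonal ranges, and so do the $V_j$, since both $r\ge1$ and $q-r\ge1$.

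The sum $T=\sum_jV_jL_{a,r}U_j^*$ is therefore block diagonal. It maps $\Ran U_j$ to $\Ran V_j$ via a unitary conjugate of $L_{a,r}$, and it vanishes on $(\bigoplus_j\Ran U_j)^\perp$. Hence $\norm{T}_{\op}=\max_j\norm{L_{a,r}}_{\op}=\norm{L_{a,r}}_{\op}$, and the factor $N^{-1/2}$ gives the identity.

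The main obstacle is bookkeeping: the symmetric-versus-full tensor domains, and checking that the definition of $L_{f,r}$ on $H^{\odot r}$ does not alter the operator norm. I would handle this by working on full tensor powers, where $L$ restricted to the symmetric part is determined by the symmetric kernel. Alternatively, one can observe directly that $J_j^{\odot r}$ maps symmetric tensors to symmetric tensors, so the block-diagonal argument runs verbatim inside $H^{\odot r}$.
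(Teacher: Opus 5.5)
Your proposal is correct and follows essentially the same route as the paper: cross-replica contractions vanish because $J_k^*J_j=\delta_{jk}\Id_E$, the diagonal terms are isometric transports $J_j^{\otimes(2q-2r)}(a\otimes_r b)$ with mutually orthogonal ranges, and the flattening is block diagonal over the spaces $J_j(E)^{\odot r}$. Your factorization $L_{R_N^{(q)}a,r}=N^{-1/2}\sum_j V_jL_{a,r}U_j^*$ also makes explicit that the output blocks are orthogonal, which the paper leaves implicit. That orthogonality needs $q-r\ge1$, so $r<q$ is used in the operator-norm identity as well, not only in the contraction identity as your remark suggests.
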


\begin{proof}
The tensors $J_i^{\odot q}a$ and $J_j^{\odot q}b$ are orthogonal when
$i\ne j$, which gives \eqref{primitive:eq:replica-Gram}.  In a contraction of
$J_i^{\odot q}a$ with $J_j^{\odot q}b$, every contracted scalar product
vanishes when $i\ne j$.  Hence
\[
 R_N^{(q)}a\otimes_rR_N^{(q)}b
 =\frac1N\sum_{j=1}^N
 J_j^{\otimes(2q-2r)}(a\otimes_rb).
\]
The $N$ terms on the right are orthogonal, so their squared norms add.
This proves \eqref{primitive:eq:replica-contraction}.

For \eqref{primitive:eq:replica-op}, decompose the domain of the flattening into
the mutually orthogonal spaces $J_j(E)^{\odot r}$ and their orthogonal
complement.  On $J_j(E)^{\odot r}$ the operator is the conjugate of
$N^{-1/2}L_{a,r}$; it vanishes on every component which has no pure
$J_j(E)^{\odot r}$ part.  Its operator norm is therefore exactly
$N^{-1/2}\norm{L_{a,r}}_{\op}$.
\end{proof}

\begin{corollary}[Realization of an arbitrary finite Gram matrix]
\label{primitive:cor:finite-Gram}
Let $H$ be an infinite-dimensional real Hilbert space, let
$G=(G_{ab})_{1\le a,b\le d}$ be positive semidefinite, and let $q\ge1$.
There are kernels $u_{N,a}\in H^{\odot q}$ such that
\begin{align}
 q!\ip{u_{N,a}}{u_{N,b}}&=G_{ab},
 \label{primitive:eq:finite-Gram-exact}\\
 \norm{u_{N,a}\otimes_ru_{N,b}}&=O(N^{-1/2})
 \qquad(1\le r<q).
 \label{primitive:eq:finite-Gram-contract}
\end{align}
Consequently $(I_q(u_{N,a}))_{a\le d}\weak N_d(0,G)$.
\end{corollary}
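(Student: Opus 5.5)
The plan is to build the kernels from a fixed low-dimensional Gram realization and then spread it over many orthogonal copies with the replica map of Lemma~\ref{primitive:lem:replica}.

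First, realize $G$ exactly at the level of a fixed finite-dimensional space. Since $G$ is positive semidefinite, I would write $G=V^\top V$ with columns $v_1,\ldots,v_d\in\R^d$. Choose an orthonormal family $e_1,\ldots,e_d$ and set
\[
 a_b=\frac1{\sqrt{q!}}\sum_{c=1}^d V_{cb}\,e_c^{\otimes q}\in E^{\odot q},
 \qquad E=\Span\{e_1,\ldots,e_d\}.
\]
The tensors $e_c^{\otimes q}$ are orthonormal, so
\[
 q!\ip{a_a}{a_b}=\sum_cV_{ca}V_{cb}=G_{ab}.
\]
Any other finite-dimensional $E$ and any kernels $a_b\in E^{\odot q}$ with Gram matrix $G/q!$ would serve equally well.

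Next, replicate. Because $H$ is infinite-dimensional, it contains infinitely many mutually orthogonal copies of the finite-dimensional $E$. For each $N$ pick isometries $J_1,\ldots,J_N:E\to H$ with mutually orthogonal ranges, and put $u_{N,b}=R_N^{(q)}a_b$. Then \eqref{primitive:eq:replica-Gram} gives \eqref{primitive:eq:finite-Gram-exact} exactly. For $1\le r<q$, \eqref{primitive:eq:replica-contraction} gives
\[
 \norm{u_{N,a}\otimes_ru_{N,b}}=N^{-1/2}\norm{a_a\otimes_ra_b},
\]
which is $O(N^{-1/2})$ because the $a_b$ are fixed. This is \eqref{primitive:eq:finite-Gram-contract}.

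Finally, take the limit. Apply Lemma~\ref{primitive:lem:Stein} with $q_1=\cdots=q_d=q$. The covariance is $C_N=G$ for every $N$. The right-hand side of \eqref{primitive:eq:Gamma-contraction} involves only contractions with $1\le r<q$; the excluded case $r=q$ is the total contraction, which is the covariance itself. That right-hand side is therefore $O(N^{-1})$. By \eqref{primitive:eq:Stein-cf}, the characteristic functions differ from that of $N_d(0,G)$ by at most $O(|t|^2N^{-1/2})$. Lévy's continuity theorem then gives $(I_q(u_{N,a}))_{a\le d}\weak N_d(0,G)$.

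The main obstacle is essentially bookkeeping. One must check that the replica contraction identity applies to cross-contractions $a\ne b$, which it does since the lemma is stated for a pair $a,b$. One must also handle the degenerate case $q=1$: there are no contractions with $r<q$, and the random vector is already exactly Gaussian with covariance $G$.
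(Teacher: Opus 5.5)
Your proposal is correct and follows the paper's proof: you use the same kernels $a_b=(q!)^{-1/2}\sum_c V_{cb}e_c^{\otimes q}$ to realize $G$, and then spread them with the replica map of Lemma~\ref{primitive:lem:replica}. The only difference is in the last step, where you apply the Stein estimate of Lemma~\ref{primitive:lem:Stein} directly instead of citing Corollary~\ref{primitive:cor:primitive-CLT}; this gives ordinary convergence with the explicit rate $O(|t|^2N^{-1/2})$ without passing through an ultrafilter.
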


\begin{proof}
Choose vectors $c_a=(c_{a\alpha})_{\alpha\le m}\in\R^m$ with
$\ip{c_a}{c_b}=G_{ab}$.  For an orthonormal family
$(e_\alpha)_{\alpha\le m}$ put
\[
 a_a=\frac1{\sqrt{q!}}
 \sum_{\alpha=1}^mc_{a\alpha}e_\alpha^{\otimes q}.
\]
Then $q!\ip{a_a}{a_b}=G_{ab}$.  Apply Lemma~\ref{primitive:lem:replica} to the finite
family $(a_a)$ and put $u_{N,a}=R_N^{(q)}a_a$.  This gives
\eqref{primitive:eq:finite-Gram-exact}--\eqref{primitive:eq:finite-Gram-contract}.  The last
claim follows from Corollary~\ref{primitive:cor:primitive-CLT}.
\end{proof}

%=====================================================================
\section{Collision-null spatial realization}
%=====================================================================

We now construct a single concrete spatial model which realizes every
separable graded primitive support.  The construction is packetwise and
canonical only up to orthogonal gauge, which is the maximal possible
statement.

Choose smooth normalized functions
\[
 \phi_{s,\alpha,j,\ell}\in C_c^\infty(\R),
 \qquad
 s,\alpha,j\ge1,\quad1\le\ell\le s,
\]
whose supports are pairwise disjoint and at mutual distance at least
$2$.  Put
\begin{equation}
 h_{s,\alpha,j}
 =\Sym_s\bigl(
 \phi_{s,\alpha,j,1}\otimes\cdots\otimes
 \phi_{s,\alpha,j,s}
 \bigr)
 \in L^2(\R)^{\odot s}.
 \label{primitive:eq:collision-null-microblock}
\end{equation}
Since the one-particle functions are orthonormal and distinct,
\begin{equation}
 s!\norm{h_{s,\alpha,j}}^2=1.
 \label{primitive:eq:microblock-norm}
\end{equation}
For $N\ge1$, define
\begin{equation}
 u_{N,s,\alpha}
 =\frac1{\sqrt N}\sum_{j=1}^Nh_{s,\alpha,j}.
 \label{primitive:eq:spatial-replicas}
\end{equation}

\begin{lemma}[Collision-null diffuse basis]
\label{primitive:lem:collision-null-basis}
The family in \eqref{primitive:eq:spatial-replicas} satisfies:
\begin{enumerate}[label=\textup{(\roman*)}]
 \item
 \begin{equation}
  s!\ip{u_{N,s,\alpha}}{u_{N,s,\beta}}
  =\1_{\{\alpha=\beta\}};
  \label{primitive:eq:spatial-Gram}
 \end{equation}
 \item for every $1\le r<s$,
 \begin{equation}
  \norm{u_{N,s,\alpha}\otimes_ru_{N,s,\beta}}
  \le \frac{C_{s,r}}{\sqrt N}
  \1_{\{\alpha=\beta\}};
  \label{primitive:eq:spatial-contract}
 \end{equation}
 \item every $u_{N,s,\alpha}$ vanishes whenever two of its spatial
 variables are at distance less than $1$.
\end{enumerate}
\end{lemma}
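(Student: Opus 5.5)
The plan is to reduce everything to the structure of the microblocks $h_{s,\alpha,j}$: the supports of the $\phi$'s are pairwise disjoint, so distinct one-particle functions are orthogonal, and any two distinct factors are supported at distance at least $2$.

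\textbf{Step (i): the Gram identity.} First I compute $\ip{h_{s,\alpha,j}}{h_{s,\beta,k}}$. When $(\alpha,j)\ne(\beta,k)$ the two symmetrizations involve disjoint sets of one-particle functions, so every term of the expansion contains an inner product of orthogonal functions and vanishes. When $(\alpha,j)=(\beta,k)$, expanding both symmetrizations gives $s!^{-2}\sum_{\sigma,\tau}\prod_\ell\ip{\phi_{\sigma(\ell)}}{\phi_{\tau(\ell)}}$. Only the $s!$ diagonal pairs $\sigma=\tau$ survive, which gives $1/s!$ and hence \eqref{primitive:eq:microblock-norm}. Summing over $j$ and dividing by $N$ then yields \eqref{primitive:eq:spatial-Gram}.

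\textbf{Step (ii): the contraction bound.} Here I would not recompute but invoke Lemma~\ref{primitive:lem:replica}. The plan is as follows.
\begin{itemize}
\item Fix $s$. Let $E$ be a model space with orthonormal functions $\psi_{\alpha,\ell}$, and let $J_j$ be the isometry $\psi_{\alpha,\ell}\mapsto\phi_{s,\alpha,j,\ell}$ into $L^2(\R)$.
\item The ranges of the $J_j$ are mutually orthogonal, because the supports are disjoint.
\item We have $h_{s,\alpha,j}=J_j^{\odot s}a_\alpha$ with $a_\alpha=\Sym_s(\psi_{\alpha,1}\otimes\cdots\otimes\psi_{\alpha,s})$, so $u_{N,s,\alpha}=R_N^{(s)}a_\alpha$.
\end{itemize}
Then \eqref{primitive:eq:replica-contraction} gives $\norm{u_{N,s,\alpha}\otimes_ru_{N,s,\beta}}=N^{-1/2}\norm{a_\alpha\otimes_ra_\beta}$. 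For $\alpha\ne\beta$, $a_\alpha$ and $a_\beta$ use disjoint orthonormal families, so every contracted pairing vanishes and $a_\alpha\otimes_ra_\beta=0$. For $\alpha=\beta$, the norm $\norm{a_\alpha\otimes_ra_\alpha}$ is a constant depending only on $(s,r)$, since all the $a_\alpha$ are unitarily equivalent. We may take $C_{s,r}\le\norm{a_\alpha}^2=1/s!$ by Cauchy--Schwarz. This gives \eqref{primitive:eq:spatial-contract}.

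\textbf{Step (iii): collision-nullity.} Each $h_{s,\alpha,j}$ is a sum of products $\prod_\ell\phi_{\sigma(\ell)}(x_\ell)$. Such a product is nonzero only when every $x_\ell$ lies in the support of a distinct factor. Two distinct supports are at distance at least $2$, so all coordinates are then pairwise at distance at least $2>1$. Hence each $h$ vanishes whenever two variables are at distance less than $1$, and so does the finite sum $u_{N,s,\alpha}$.

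\textbf{Main obstacle.} The only delicate point is the case $\alpha\ne\beta$ in (ii). It needs the exact vanishing $a_\alpha\otimes_ra_\beta=0$ rather than a mere bound. I will check this carefully: every one of the $r$ contracted pairings is an inner product $\ip{\psi_{\alpha,\ell}}{\psi_{\beta,\ell'}}=0$.
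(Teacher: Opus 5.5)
Your proof is correct and follows the paper's argument. For (i), distinct microblocks are orthogonal and $s!\norm{h_{s,\alpha,j}}^2=1$. For (ii), all cross-microblock contractions vanish, leaving $N$ orthogonal outputs of a common norm depending only on $(s,r)$. For (iii), distinct supports are separated by at least $2$.

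The only difference is packaging. You obtain (ii) by citing Lemma~\ref{primitive:lem:replica} with an explicit model space, which also gives the explicit constant $C_{s,r}=\norm{a_\alpha\otimes_ra_\alpha}\le 1/s!$. The paper instead redoes that same computation inline.
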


\begin{proof}
Different pairs $(\alpha,j)$ use orthogonal one-particle subspaces.  Thus
the microblocks are orthogonal, and \eqref{primitive:eq:microblock-norm} gives
\eqref{primitive:eq:spatial-Gram}.  Every contraction between different
microblocks is zero.  Therefore
\[
 u_{N,s,\alpha}\otimes_ru_{N,s,\beta}
 =\frac{\1_{\{\alpha=\beta\}}}{N}
 \sum_{j=1}^N
 h_{s,\alpha,j}\otimes_rh_{s,\alpha,j}.
\]
The $N$ outputs are orthogonal and have a common finite norm depending
only on $(s,r)$, so their sum has norm $C_{s,r}/\sqrt N$.  Finally, each
summand of \eqref{primitive:eq:collision-null-microblock} places distinct variables
in supports separated by at least $2$.  It is therefore zero in the
fixed unit neighbourhood of every partial diagonal.
\end{proof}

\begin{theorem}[Simultaneous realization of every separable support]
\label{primitive:thm:separable-realization}
Let $(S_s)_{s\ge1}$ be any countable family of separable real Hilbert
spaces.  There are linear isometries
\begin{equation}
 J_s:S_s\longrightarrow
 \cK_s^{\mathrm{sp}}
 \subset
 \bigl(L^2(\R)^{\odot s},s!\ip{\cdot}{\cdot}\bigr)_{\mathcal U}
 \label{primitive:eq:Js}
\end{equation}
such that all non-total contractions between their ranges vanish.
The ranges have representatives which are smooth, collision-null, and
have proper contraction norms $O(N^{-1/2})$ on every finite-dimensional
part of the support.
\end{theorem}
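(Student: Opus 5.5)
Fix orthonormal bases $(e_{s,\alpha})_{\alpha\ge1}$ of each $S_s$ (finite or countable; pad the index set if needed). The natural candidate is
\[
 J_s e_{s,\alpha}=[u_{N,s,\alpha}]_{\mathcal U},
\]
where $N=N(n)\to\infty$ is the index along the ultraproduct (e.g.\ $N=n$), extended linearly and by continuity. By Lemma~\ref{primitive:lem:collision-null-basis}(i) the classes $[u_{N,s,\alpha}]$ are orthonormal in $\bigl(L^2(\R)^{\odot s},s!\ip{\cdot}{\cdot}\bigr)_{\mathcal U}$. Hence $J_s$ is an isometry on finite linear combinations, and it extends to all of $S_s$ because the ultraproduct is complete. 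A general vector $x=\sum_\alpha x_\alpha e_{s,\alpha}$ has representative $\sum_\alpha x_\alpha u_{N,s,\alpha}$. This series converges in $L^2$ for each $N$, since the microblocks for different $\alpha$ lie on disjoint one-particle supports and are therefore orthogonal. The representatives are smooth and collision-null on every finite partial sum, and every term of the full sum vanishes near the partial diagonals, so they are collision-null as well. Smoothness holds in the $L^2$ sense for finite-dimensional parts, which is all the statement claims.

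Next I verify that the ranges lie in $\cK_s^{\mathrm{sp}}$ and that all non-total contractions vanish. For a finite-dimensional part, i.e.\ finite combinations $f_N=\sum_{\alpha\le m}x_\alpha u_{N,s,\alpha}$ and $g_N=\sum_{\beta\le m}y_\beta u_{N,t,\beta}$, expand bilinearly.

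\emph{Different weights.} If $s\ne t$, or $s=t$ with $\alpha\ne\beta$, the underlying one-particle supports are disjoint, so every contraction $u_{N,s,\alpha}\otimes_r u_{N,t,\beta}$ with $r\ge1$ is exactly zero.

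\emph{Equal indices.} The diagonal terms with $r<s$ are $O(N^{-1/2})$ by \eqref{primitive:eq:spatial-contract}.

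Together these give $\norm{f_N\otimes_r g_N}\le C(m,s,r)\,|x|\,|y|\,N^{-1/2}$, which is the claimed $O(N^{-1/2})$ bound. Taking $g=f$ and applying criterion (iv) of Theorem~\ref{primitive:thm:primitive-kernel} gives $[f_N]\in\cK_s$ for finite combinations. Since $\cK_s$ is closed and $J_s$ is continuous, the whole range lies in $\cK_s$.

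For general, infinite-dimensional elements of the ranges I cannot bound the contraction directly, so I use approximation. Contractions are bilinear with $\norm{f\otimes_r g}\le\norm f\norm g$. Truncating $x$ to finitely many coordinates costs $\norm{x-x^{(m)}}\,\norm{y}$, uniformly in $N$. Taking $\Ulim$ and then letting $m\to\infty$ gives $\Ulim\norm{f_N\otimes_r g_N}=0$ for every non-total $(r,s,t)$. The case $s\ne t$ is exact zero even without truncation, because the supports are disjoint.

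The main obstacle is bookkeeping rather than analysis. One must make sure that a single choice of the function family $\phi_{s,\alpha,j,\ell}$, indexed over all $s,\alpha,j$ simultaneously, yields pairwise disjoint supports across every weight and basis index, so that the cross-weight contractions vanish identically. This is exactly what the construction preceding Lemma~\ref{primitive:lem:collision-null-basis} provides, since all supports are pairwise disjoint and at mutual distance at least $2$. The same simultaneous construction also yields Proposition~\ref{primitive:prop:rank-capacity}(ii).
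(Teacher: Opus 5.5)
Your construction of $J_s$, the isometry argument, the exact vanishing of cross-weight and cross-index contractions, and the conclusion $\Ulim\norm{f_N\otimes_rg_N}=0$ for all non-total contractions are correct and follow the paper. The gap is in the quantitative last sentence of the theorem.

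\emph{The gap.} You prove the rate $O(N^{-1/2})$ only for finite combinations of the chosen basis vectors, with a constant $C(m,s,r)$ that may depend on the number $m$ of modes. For every other vector, your truncation argument gives only $\Ulim\norm{f_N\otimes_rg_N}=0$, with no rate. A finite-dimensional part of the support need not be spanned by finitely many $e_{s,\alpha}$. Already the line through $h=\sum_\alpha c_\alpha e_{s,\alpha}$ with infinitely many nonzero $c_\alpha$ is such a part. The paper's proof explicitly treats arbitrary finite families $h,k\in S_s$, and for these your argument does not deliver $O(N^{-1/2})$.

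\emph{The missing observation.} The bound is uniform in the number of modes. After the exact cancellation of the terms with $\alpha\ne\beta$,
\[
 f_N\otimes_rg_N=\sum_\alpha x_\alpha y_\alpha\,u_{N,s,\alpha}\otimes_ru_{N,s,\alpha}.
\]
The summands for distinct $\alpha$ are built from disjoint one-particle functions, so they are mutually orthogonal. Hence
$\norm{f_N\otimes_rg_N}^2\le C_{s,r}^2N^{-1}\sum_\alpha|x_\alpha y_\alpha|^2\le C_{s,r}^2N^{-1}\norm{x}^2\norm{y}^2$.
Even the triangle inequality together with $\sum_\alpha|x_\alpha y_\alpha|\le\norm{x}\norm{y}$ would suffice.

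\emph{How to finish.} Use, as the paper does, the representatives $h_N=\sum_{\alpha\le M(N)}c_\alpha u_{N,s,\alpha}$ with $M(N)\to\infty$, choosing one common $M(N)$ for a finite family. These representatives have three properties:
\begin{enumerate}
 \item They represent $J_sh$, because the $\ell^2$ tails vanish.
 \item They are finite sums, so they are genuinely smooth and collision-null. This replaces your unclear appeal to smoothness ``in the $L^2$ sense''.
 \item They satisfy $\norm{h_N\otimes_rk_N}\le C_{s,r}N^{-1/2}\norm{h}\norm{k}$ for every pair $h,k$.
\end{enumerate}
Your separate truncation-in-$m$ argument then becomes unnecessary.
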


\begin{proof}
Choose an orthonormal basis $(e_{s,\alpha})_{\alpha\ge1}$ of each
$S_s$.  Define
\[
 J_se_{s,\alpha}=[u_{N,s,\alpha}]_{\mathcal U}
\]
and extend linearly and continuously.  Equation
\eqref{primitive:eq:spatial-Gram} shows that $J_s$ is an isometry.  Equation
\eqref{primitive:eq:spatial-contract} and Theorem~\ref{primitive:thm:primitive-kernel} show that its
range is contained in $\cK_s^{\mathrm{sp}}$.  Different weights use
orthogonal one-particle functions, so their positive-order contractions
are zero.

For explicit representatives of
$h=\sum_{\alpha\ge1}c_\alpha e_{s,\alpha}$ and
$k=\sum_{\alpha\ge1}d_\alpha e_{s,\alpha}$, choose
$M(N)\uparrow\infty$ sufficiently slowly and use
\[
 h_N=\sum_{\alpha\le M(N)}c_\alpha u_{N,s,\alpha},
 \qquad
 k_N=\sum_{\alpha\le M(N)}d_\alpha u_{N,s,\alpha}.
\]
Then $[h_N]=J_sh$ and $[k_N]=J_sk$, because the omitted $\ell^2$ tails
tend to zero.
The orthogonality of the $\alpha$-blocks and
\eqref{primitive:eq:spatial-contract} give, for $1\le r<s$,
\[
 \norm{h_N\otimes_rk_N}^2
 \le\frac{C_{s,r}^2}{N}
 \sum_{\alpha\le M(N)}|c_\alpha d_\alpha|^2
 \le\frac{C_{s,r}^2}{N}\norm h^2\norm k^2.
\]
For any finite family, one chooses a single $M(N)$ which controls all its
$\ell^2$ tails; all cross-contractions then vanish along one common
diagonal.  Thus every proper contraction is $O(N^{-1/2})$.  Smoothness and the
collision-null support property follow term by term from
Lemma~\ref{primitive:lem:collision-null-basis}.
\end{proof}

\begin{remark}[A version inside varying carriers]
If $\dim H_n\to_{\mathcal U}\infty$, choose integers
$Q_n,M_n,N_n\to_{\mathcal U}\infty$ with
$Q_nM_nN_n\le\dim H_n$ and choose orthonormal vectors
$e_{n,s,\alpha,j}$ for
$s\le Q_n$, $\alpha\le M_n$, $j\le N_n$.  The kernels
\[
 \frac1{\sqrt{N_ns!}}
 \sum_{j=1}^{N_n}e_{n,s,\alpha,j}^{\otimes s}
\]
give the same simultaneous embedding.  This proves
Proposition~\ref{primitive:prop:rank-capacity}\textup{(ii)} without changing the ambient
carriers.
\end{remark}

%=====================================================================
\section{Weighted Fock representation of the terminal}
%=====================================================================

\begin{lemma}[Graphical Cauchy--Schwarz]
\label{primitive:lem:graphical-CS}
Consider a finite loop-free tensor network whose vertices carry Hilbert tensors
$(z_v)_{v\in V}$ and whose edges contract pairs of Hilbert legs.  If all
legs are contracted, its scalar value satisfies
\begin{equation}
 \left|\operatorname{Contr}_G((z_v)_{v\in V})\right|
 \le\prod_{v\in V}\norm{z_v}.
 \label{primitive:eq:graphical-CS}
\end{equation}
If exactly $r\ge1$ edges join two distinguished vertices carrying $x$
and $y$, one may first perform all these $r$ contractions and obtains the sharper
bound
\begin{equation}
 \left|\operatorname{Contr}_G((z_v)_{v\in V})\right|
 \le\norm{x\otimes_ry}
 \prod_{v\ne x,y}\norm{z_v}.
 \label{primitive:eq:graphical-CS-edge}
\end{equation}
No edge is allowed to join two legs of the same vertex.  The same
inequalities hold for a network with open output legs, with the
absolute value replaced by the Hilbert norm of the output tensor.
\end{lemma}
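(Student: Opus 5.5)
The plan is to prove the open-leg version first, by induction on the number of vertices $|V|$, and then obtain both displayed inequalities from it. I will not use any acyclicity of the underlying multigraph. The induction removes one whole vertex at a time, so cycles such as a triangle computing $\Tr(ABC)$ cause no difficulty. The only hypothesis used is that no edge joins two legs of the same vertex, which the lemma calls loop-free.

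\emph{Basic estimate.} For Hilbert tensors $a,b$ and any pairing of $r$ legs of $a$ with $r$ legs of $b$,
\[
 \norm{a\otimes_r b}\le\norm a\,\norm b .
\]
Permuting legs is unitary, so I may assume the contracted legs are the last $r$ legs of $a$ and the first $r$ legs of $b$. View $a$ as a Hilbert--Schmidt operator $A:\cH^{\otimes r}\to\cH^{\otimes(p-r)}$ and $b$ as $B:\cH^{\otimes(q-r)}\to\cH^{\otimes r}$. Then $a\otimes_rb$ corresponds to $AB$, and
\[
 \norm{AB}_{\HS}\le\norm A_{\HS}\norm B_{\op}\le\norm A_{\HS}\norm B_{\HS}.
\]
The case $r=0$ is the tensor product, where equality holds.

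\emph{Open-leg statement.} The claim is that for every loop-free network on $V$ the output tensor $T_G$ satisfies $\norm{T_G}\le\prod_{v\in V}\norm{z_v}$. If $|V|=1$ there are no edges, because self-edges are forbidden, so $T_G=z_v$. For $|V|\ge2$, fix a vertex $x$ and let $G'$ be the network on $V\setminus\{x\}$. In $G'$, every leg formerly contracted with a leg of $x$ becomes an open leg; all other edges and open legs are kept.

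Then $G'$ is still loop-free. Since $x$ carries no self-edges, every leg of $x$ is either open or joined to a leg of $G'$. Hence $T_G$ is, up to a unitary permutation of output legs, the single contraction $z_x\otimes_k T_{G'}$, where $k$ is the number of edges between $x$ and $V\setminus\{x\}$. The basic estimate and the induction hypothesis give
\[
 \norm{T_G}\le\norm{z_x}\,\norm{T_{G'}}\le\prod_{v\in V}\norm{z_v}.
\]
When all legs of $G$ are contracted, $T_G$ is a scalar and this is \eqref{primitive:eq:graphical-CS}.

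\emph{Sharper bound.} Suppose exactly $r\ge1$ edges join the distinguished vertices carrying $x$ and $y$. Replace these two vertices by one vertex carrying $w=x\otimes_ry$, performing precisely those $r$ contractions. The remaining legs of $w$ inherit their edges from $x$ or $y$, and the network value is unchanged.

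The merged network is again loop-free. A self-edge at $w$ would have to be a self-edge of $x$, a self-edge of $y$, or an $x$--$y$ edge, and all $x$--$y$ edges have been consumed. Applying the general bound to the merged network gives $\norm w\prod_{v\ne x,y}\norm{z_v}$, which is \eqref{primitive:eq:graphical-CS-edge}. The open-leg version follows identically.

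\emph{Main obstacle.} The only delicate point is bookkeeping: checking that deleting or merging vertices preserves loop-freeness, and that each step is a single pairwise contraction up to unitary leg permutations. The merging step must absorb \emph{all} $r$ edges between $x$ and $y$; this is exactly what the statement ``exactly $r$ edges'' provides, and it is why the merged network has no self-edge.
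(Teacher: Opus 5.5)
Your proof is correct and is essentially the paper's argument: an induction on the number of vertices driven by $\norm{a\otimes_r b}\le\norm a\,\norm b$, with the sharper bound obtained by first performing the prescribed $x$--$y$ fusion and checking that no self-edge appears. The only difference is cosmetic: you peel off one vertex and contract it against the output of the remaining subnetwork, so the open-leg version is your induction statement from the start, whereas the paper fuses two adjacent vertices and handles open legs in a closing remark.
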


\begin{proof}
Work component by component.  Choose two adjacent vertices, contract all
the parallel edges between them, and fuse the resulting tensor into one
vertex.  The elementary Hilbert contraction estimate
\[
 \norm{x\otimes_ry}\le\norm x\,\norm y
\]
shows that this fusion does not increase the product of the vertex
norms.  It also preserves the absence of self-loops.  Induction on the
number of vertices proves \eqref{primitive:eq:graphical-CS}, including networks
with cycles.  For \eqref{primitive:eq:graphical-CS-edge}, perform the prescribed
fusion first and continue the same induction.  If output legs remain,
the induction terminates in the output tensor rather than a scalar and
gives its Hilbert norm directly.
\end{proof}

\begin{lemma}[Block-pairing principle]
\label{primitive:lem:block-pairing}
Let $k_i=[k_{i,n}]\in\cK_{s_i}$, $1\le i\le m$, and
$\ell_j=[\ell_{j,n}]\in\cK_{t_j}$, $1\le j\le \ell$, with
$\sum_i s_i=\sum_jt_j=q$.  Then
\begin{align}
 &\Ulim q!\ip{\Sym\bigotimes_i k_{i,n}}
                     {\Sym\bigotimes_j\ell_{j,n}}
 \notag\\
 &\qquad=
 \1_{\{m=\ell\}}
 \sum_{\substack{\pi\in\mathfrak S_m\\s_i=t_{\pi(i)}}}
 \prod_{i=1}^m
 \ip{k_i}{\ell_{\pi(i)}}_{\cK_{s_i}}.
 \label{primitive:eq:block-pairing}
\end{align}
Here
\begin{equation}
 \ip{k_i}{\ell_j}_{\cK_s}
 =s!\,\Ulim\ip{k_{i,n}}{\ell_{j,n}},
 \label{primitive:eq:primitive-variance-inner-product}
\end{equation}
so the full multiplicity $\prod_i s_i!$ of the matched internal legs is
already contained in the right-hand side.
\end{lemma}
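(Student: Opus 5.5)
We expand both symmetrizations into sums over permutations, compute each term as a contraction diagram, and let the primitive criterion kill every term that does not match whole blocks. Write $\Sym\bigotimes_ik_{i,n}=\frac1{q!}\sum_{\sigma\in\mathfrak S_q}\sigma\cdot(k_{1,n}\otimes\cdots\otimes k_{m,n})$. Since $\Sym$ is an orthogonal projection, we get
\[
 q!\ip{\Sym\textstyle\bigotimes_i k_{i,n}}{\Sym\textstyle\bigotimes_j\ell_{j,n}}
 =\sum_{\sigma\in\mathfrak S_q}\ip{\textstyle\bigotimes_ik_{i,n}}{\sigma\cdot\textstyle\bigotimes_j\ell_{j,n}}.
\]
Each summand is the scalar value of a loop-free bipartite tensor network. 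Its vertices are the $k_{i,n}$ and the $\ell_{j,n}$, and $\sigma$ wires the $q$ legs on the left to the $q$ legs on the right. The bipartite structure rules out self-loops, so Lemma~\ref{primitive:lem:graphical-CS} applies. The number of summands depends only on $q$, and all representatives are bounded. It therefore suffices to find the $\mathcal U$-limit of each diagram separately.

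\textbf{Non-matching diagrams vanish.} Suppose some vertex $k_{i,n}$ is joined to some $\ell_{j,n}$ by $r\ge1$ edges, but the two are not fully matched, meaning $r<s_i$ or $r<t_j$. This is a non-total contraction between $k_i\in\cK_{s_i}$ and $\ell_j\in\cK_{t_j}$. The cross-contraction statement \eqref{primitive:eq:cross-zero} of Theorem~\ref{primitive:thm:primitive-kernel} gives $\Ulim\norm{k_{i,n}\otimes_r\ell_{j,n}}=0$. Bound \eqref{primitive:eq:graphical-CS-edge} then gives
\[
 \abs{\text{diagram}}\le\norm{k_{i,n}\otimes_r\ell_{j,n}}\prod_{v\ne i,j}\norm{z_v}\longrightarrow_{\mathcal U}0,
\]
using the uniform bounds on the other vertices.

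\textbf{Surviving diagrams.} The remaining diagrams are those in which every connected pair is fully matched, so $r=s_i=t_j$. Then every component is a single pair $(k_i,\ell_{\pi(i)})$ with all $s_i$ legs joined. This forces $m=\ell$ and a bijection $\pi$ with $s_i=t_{\pi(i)}$. The value of such a diagram factorizes as $\prod_i\ip{k_{i,n}}{\tau_i\cdot\ell_{\pi(i),n}}$, where $\tau_i$ is a permutation of $s_i$ legs. Symmetry of the kernels turns this into $\prod_i\ip{k_{i,n}}{\ell_{\pi(i),n}}$.

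\textbf{Counting.} For a fixed admissible $\pi$, we count the $\sigma\in\mathfrak S_q$ realizing it. Each block of $\sigma$ maps the $s_i$ legs of $k_i$ onto the legs of $\ell_{\pi(i)}$, giving $\prod_is_i!$ choices. Distinct $\pi$ give disjoint sets of $\sigma$, since equal-weight blocks are still distinct vertices; this is why the sum is over $\pi\in\mathfrak S_m$ with no symmetry-factor quotient. The contribution of $\pi$ is then
\[
 \prod_is_i!\,\ip{k_{i,n}}{\ell_{\pi(i),n}}\longrightarrow_{\mathcal U}\prod_i\ip{k_i}{\ell_{\pi(i)}}_{\cK_{s_i}},
\]
by \eqref{primitive:eq:primitive-variance-inner-product}. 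Summing over $\pi$ gives \eqref{primitive:eq:block-pairing}, and the indicator $\1_{\{m=\ell\}}$ records that no $\pi$ exists when $m\ne\ell$.

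I expect the main obstacle to be the combinatorial bookkeeping in the counting step. The normalization of $\Sym$, the factor $q!$, and the $s_i!$ leg matchings must combine to exactly $\prod_i s_i!$ per bijection $\pi$. I would verify this first in the case $m=\ell=2$ with $s_1=s_2$.
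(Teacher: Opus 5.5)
Your proposal is correct and follows essentially the same route as the paper. Both expand over the $q!$ leg permutations, kill every non-block wiring using graphical Cauchy--Schwarz together with the non-total cross-contraction vanishing \eqref{primitive:eq:cross-zero}, and count exactly $\prod_i s_i!$ leg permutations per admissible block bijection $\pi$. The paper encodes each wiring by its contingency matrix $(a_{ij})$, which is the same observation as your ``every connected pair is fully matched'' argument.
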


\begin{proof}
Expand the left-hand side as a finite sum over permutations of the $q$
labeled legs.  Such a permutation determines a contingency matrix
$A=(a_{ij})$ whose row sums are $(s_i)$ and whose column sums are
$(t_j)$; $a_{ij}$ is the number of legs paired between $k_i$ and
$\ell_j$.

Suppose that $A$ is not a block-permutation matrix.  Then some positive
entry $a_{ij}$ is strictly smaller than $s_i$ or strictly smaller than
$t_j$.  Indeed, otherwise every positive entry would exhaust both its
row and its column.  By Lemma~\ref{primitive:lem:graphical-CS}, the corresponding
network is bounded by
\[
 \norm{k_{i,n}\otimes_{a_{ij}}\ell_{j,n}}
 \prod_{i'\ne i}\norm{k_{i',n}}
 \prod_{j'\ne j}\norm{\ell_{j',n}}.
\]
The displayed contraction is non-total, so it tends to zero by
Theorem~\ref{primitive:thm:primitive-kernel}.  Thus every non-block contingency matrix
vanishes.

For a block-permutation matrix we have
$a_{i,\pi(i)}=s_i=t_{\pi(i)}$ and all other entries are zero.  Its value
is the product of the total representative scalar products.  For each
fixed block permutation
$\pi$ there are exactly $\prod_i s_i!$ labeled leg permutations which
induce it.  Their combined contribution is therefore
\[
 \prod_i\left(s_i!\,\Ulim
 \ip{k_{i,n}}{\ell_{\pi(i),n}}\right)
 =\prod_i\ip{k_i}{\ell_{\pi(i)}}_{\cK_{s_i}}.
\]
Summing over the admissible block permutations gives
\eqref{primitive:eq:block-pairing}.
\end{proof}

\begin{theorem}[Intrinsic primitive--Fock unitary]
\label{primitive:thm:intrinsic-unitary}
For every $q\ge1$, symmetric evaluation of primitive blocks extends
uniquely to a unitary
\begin{equation}
 \mathbb U_q:
 \cW_q\left(\bigoplus_{s=1}^q\cK_s\right)
 \xrightarrow{\ \simeq\ }\cH_q.
 \label{primitive:eq:intrinsic-unitary}
\end{equation}
On a simple Wick monomial with primitive blocks $k_i\in\cK_{s_i}$,
$\sum_i s_i=q$, it is given by
\begin{equation}
 \mathbb U_q\bigl(:W(k_1)\cdots W(k_m):\bigr)
 =\left[\Sym\bigotimes_{i=1}^m k_{i,n}\right]_{\mathcal U}.
 \label{primitive:eq:intrinsic-evaluation}
\end{equation}
\end{theorem}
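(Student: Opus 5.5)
We prove that the block-evaluation map is isometric on a dense subspace, then extend it, and finally show that its range is dense. The isometry comes from Lemma~\ref{primitive:lem:block-pairing}; density comes from an induction on $q$ based on the orthogonal decomposition $\cH_q=\cD_q\oplus\cK_q$.

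\emph{Isometry on Wick monomials.} Let $\mathcal M_q$ be the linear span of the simple Wick monomials $:W(k_1)\cdots W(k_m):$ with $k_i\in\cK_{s_i}$ and $\sum_i s_i=q$. Such monomials are dense in $\cW_q(\bigoplus_s\cK_s)$. The inner product of two monomials in the weighted Fock space, with the normalization \eqref{primitive:eq:Fock-normalization} and the occupation injection \eqref{primitive:eq:occupation-injection}, is the permanent-type sum
\[
 \1_{\{m=\ell\}}\sum_{\pi:\,s_i=t_{\pi(i)}}\prod_i\ip{k_i}{\ell_{\pi(i)}}_{\cK_{s_i}}.
\]
This uses the Wick/Gaussian pairing rule for $W$ over the graded space: the blocks of different grades are orthogonal, so only grade-preserving pairings survive. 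This sum is exactly the right-hand side of \eqref{primitive:eq:block-pairing}. Hence the prescription \eqref{primitive:eq:intrinsic-evaluation} preserves inner products on generators.

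\emph{Well-definedness and extension.} A linear combination of monomials might vanish in the Fock space while admitting two different descriptions. Because the Gram matrices agree, the image $\sum c_j[\Sym\otimes k^{(j)}_{i,n}]$ has norm equal to the Fock norm, and so it vanishes too. The map is therefore well defined and isometric on $\mathcal M_q$. The right-hand side of \eqref{primitive:eq:intrinsic-evaluation} is independent of the chosen representatives, by multilinearity and the product bound \eqref{primitive:eq:product-bound}. The map then extends uniquely by continuity to an isometry $\mathbb U_q$. Uniqueness of the extension follows from density of $\mathcal M_q$.

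\emph{Surjectivity by induction on $q$.} This is the main step. For $q=1$ we have $\cH_1=\cK_1$ and $\mathbb U_1$ is the identity. Assume $\mathbb U_p$ is onto for all $p<q$. The range of $\mathbb U_q$ is closed, since $\mathbb U_q$ is an isometry, and it contains $\cK_q$ through the one-block monomials $W(k)$, $k\in\cK_q$. It therefore suffices to show that the range contains every generator $u\wtensor v$ of $\cD_q$ with $u\in\cH_p$, $v\in\cH_{q-p}$.

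By the induction hypothesis we can approximate $u$ and $v$ in norm by images of finite combinations of monomials, namely finite sums of $\Sym\bigotimes k_{i,n}$. The product $\wtensor$ is continuous by \eqref{primitive:eq:product-bound}. Moreover, $\Sym$ of two block symmetrizations is the full block symmetrization: $[\Sym\otimes k]\wtensor[\Sym\otimes k']=[\Sym(\otimes k\otimes k')]$ by associativity of $\wtensor$. This is again $\mathbb U_q$ of a monomial, namely the concatenated Wick product. Hence $u\wtensor v$ lies in the closed range, so $\cD_q\subset\Ran\mathbb U_q$, and therefore $\cH_q=\cD_q\oplus\cK_q\subset\Ran\mathbb U_q$.

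The expected obstacle is the bookkeeping of normalizations. We must check that the Fock inner product of Wick monomials over the graded space $E$, with the factor $\sqrt{m!/\prod m_s!}$, reproduces exactly the permutation sum with the multiplicities $\prod s_i!$ already absorbed as in \eqref{primitive:eq:primitive-variance-inner-product}. We would verify this first on monomials with repeated blocks, for example $k_1=k_2$, where the count of the permutations $\pi$ must match $m_s!$. A second point to check is that, in the surjectivity step, the normalization of $\wtensor$ in $\cH_q$ agrees with the unnormalized symmetrization used in \eqref{primitive:eq:intrinsic-evaluation}. Both use the $\Sym$ projection, so no constants should appear.
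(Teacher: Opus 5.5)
Your proposal is correct and follows the paper's proof essentially step for step. Isometry on simple Wick monomials comes from the block-pairing lemma, the map extends to an isometry by density, and surjectivity follows by induction on $q$, using $\cH_q=\cK_q\oplus\cD_q$, closedness of the range, and the identity $[\Sym\otimes k]\wtensor[\Sym\otimes k']=[\Sym(\otimes k\otimes k')]$ to place every generator of $\cD_q$ in the range. Your explicit check that linear relations among monomials are respected, via equality of the Gram matrices, is a small point that the paper leaves implicit.
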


\begin{proof}
The algebraic span of the simple Wick monomials is dense in the left-hand
side of \eqref{primitive:eq:intrinsic-unitary}, because a fixed physical weight has
only finitely many occupation vectors and algebraic symmetric tensors are
dense in each occupation sector.  By Lemma~\ref{primitive:lem:block-pairing},
\eqref{primitive:eq:intrinsic-evaluation} preserves exactly the Wiener inner
product on this dense subspace.  It is independent of the chosen
representatives by \eqref{primitive:eq:product-bound}.  Hence it extends uniquely
to an isometry $\mathbb U_q$ with closed range $\mathcal R_q$.

We prove surjectivity by induction on $q$.  For $q=1$,
$\cK_1=\cH_1$.  Assume the result through weight $q-1$.  The
one-particle sector of weight $q$ maps identically onto $\cK_q$, so
$\cK_q\subset\mathcal R_q$.  A generator of $\cD_q$ has the form
$x\wtensor y$ with $x\in\cH_s$, $y\in\cH_{q-s}$ and $1\le s<q$.
By the induction hypothesis, approximate $x$ and $y$ by finite sums of
evaluated primitive monomials.  Associativity of labeled tensor products
and the final symmetrization show that every product of these
approximants belongs to $\mathcal R_q$.  The continuity estimate
\eqref{primitive:eq:product-bound} sends the products to $x\wtensor y$.  Since
$\mathcal R_q$ is closed, it contains every generator of $\cD_q$, hence all of
$\cD_q$.  Finally
\[
 \cH_q=\cK_q\oplus\cD_q\subset\mathcal R_q,
\]
so $\mathbb U_q$ is onto.
\end{proof}

\begin{proposition}[Open-network Fock morphism]
\label{primitive:prop:open-network-morphism}
Fix a finite loop-free labeled contraction network $G$ with input vertices
$v\in V$, where vertex $v$ has physical degree $q_v$, and with $L\ge0$
open output legs.  Write $C_{G,n}((x_{v,n})_{v\in V})\in H_n^{\otimes L}$
for the raw labeled output tensor; for $L=0$ it is a scalar.  Put
\begin{equation}
 \cT_L^{\mathcal U}=(H_n^{\otimes L})_{\mathcal U},
 \qquad \cT_0^{\mathcal U}=\mathbb R,
 \label{primitive:eq:raw-output-ultraproduct}
\end{equation}
with the raw tensor Hilbert norm.  Graphical Cauchy--Schwarz and a
telescoping expansion make
\begin{equation}
 C_G^{\mathcal U}((x_v)_{v\in V})
 :=[C_{G,n}((x_{v,n})_{v\in V})]_{\mathcal U}
 \label{primitive:eq:ultraproduct-network-map}
\end{equation}
a well-defined continuous multilinear map
$\prod_{v\in V}\cH_{q_v}\to\cT_L^{\mathcal U}$, independent of the
bounded representatives.  Transport it through the input Fock unitaries:
\begin{equation}
 C_G^{\mathrm{FF}}
 :=C_G^{\mathcal U}\circ\prod_{v\in V}\mathbb U_{q_v}:
 \prod_{v\in V}\cW_{q_v}\!\left(\bigoplus_{s\le q_v}\cK_s\right)
 \longrightarrow\cT_L^{\mathcal U}.
 \label{primitive:eq:open-network-map}
\end{equation}

On algebraic Fock inputs
$
 P_v=:W(k_{v,1})\cdots W(k_{v,m_v}):
$, define $C_G^{\mathrm{FF}}((P_v)_{v\in V})$ by expanding the labeled
symmetrizations and retaining exactly the complete-block pairings
compatible with $G$: a paired primitive block is contracted in all of its
legs with one block of the same physical weight, with the factorial and
labeled-leg multiplicity prescribed by $G$, while every unpaired complete
block is retained in the raw labeled output.  This algebraic rule extends
uniquely by continuity and agrees with the transported map
\eqref{primitive:eq:open-network-map}.  In particular, for
$x_v=[x_{v,n}]_{\mathcal U}\in\cH_{q_v}$ and any bounded representatives,
one has
\begin{equation}
 \boxed{
 [C_{G,n}((x_{v,n})_{v\in V})]_{\mathcal U}
 =C_G^{\mathcal U}((x_v)_{v\in V})
 =C_G^{\mathrm{FF}}
 \bigl((\mathbb U_{q_v}^{-1}x_v)_{v\in V}\bigr).}
 \label{primitive:eq:open-network-morphism}
\end{equation}
\end{proposition}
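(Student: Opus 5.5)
The proof has three steps: well-definedness of $C_G^{\mathcal U}$, agreement of the algebraic rule with the transported map on simple Wick inputs, and extension by density and continuity.

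\textbf{Step 1: well-definedness and continuity of $C_G^{\mathcal U}$.} Since $G$ is loop-free, Lemma~\ref{primitive:lem:graphical-CS} with open output legs gives
\[
 \norm{C_{G,n}((x_{v,n}))}\le\prod_v\norm{x_{v,n}}.
\]
Here raw norms and variance norms differ by the constants $\sqrt{q_v!}$. Now let $(x_{v,n})$ and $(x'_{v,n})$ be two families of bounded representatives. Telescope the difference, replacing one vertex at a time:
\[
 C_{G,n}(x)-C_{G,n}(x')=\sum_w C_{G,n}(x'_{<w},\,x_{w,n}-x'_{w,n},\,x_{>w}).
\]
Each term is bounded by $\norm{x_{w,n}-x'_{w,n}}$ times a uniformly bounded product, so it tends to zero along $\mathcal U$. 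The same estimate gives multilinearity and continuity of $C_G^{\mathcal U}$ on $\prod_v\cH_{q_v}$. Because each $\mathbb U_{q_v}$ is unitary by Theorem~\ref{primitive:thm:intrinsic-unitary}, the transported map $C_G^{\mathrm{FF}}$ is also well defined and continuous. The second equality in \eqref{primitive:eq:open-network-morphism} is then a tautology.

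\textbf{Step 2: the algebraic rule on simple Wick inputs.} Take $P_v=:W(k_{v,1})\cdots W(k_{v,m_v}):$ with primitive blocks $k_{v,i}=[k_{v,i,n}]\in\cK_{s_{v,i}}$. By \eqref{primitive:eq:intrinsic-evaluation}, $\mathbb U_{q_v}P_v=[\Sym\bigotimes_i k_{v,i,n}]$. Expand every symmetrization into labeled leg permutations. Each term is then the network $G$ refined to block level: its vertices are the blocks $k_{v,i,n}$, and its edges are induced by the edges of $G$ together with the chosen leg assignment. I split these refined networks into two classes, exactly as in Lemma~\ref{primitive:lem:block-pairing}.
\begin{itemize}
 \item[(a)] \emph{Good networks.} Every block touched by a contraction is fully contracted with a single other block of the same weight.
 \item[(b)] \emph{Bad networks.} Some pair of blocks shares $a$ edges with $1\le a$, where either $a$ is less than the weight of one of the two blocks, or a block has some legs contracted and others left open.
\end{itemize}

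In case (b), the first subcase is handled by \eqref{primitive:eq:graphical-CS-edge}: it bounds the whole network by a non-total contraction $\norm{k\otimes_a\ell}$, which vanishes by \eqref{primitive:eq:cross-zero}. The second subcase, a partially contracted block, is the main obstacle. For a block $k$ with $r\ge1$ legs contracted to other blocks and at least one leg open, I would first try to fuse the neighbours of $k$ into the rest of the network. That writes the term as $L_{k,r}$ applied to a bounded tensor, up to a permutation of legs. The bound $\norm{L_{k_n,r}}_{\op}\to0$ from Theorem~\ref{primitive:thm:primitive-kernel}(iii) then kills the term. The difficulty is that $k$'s contracted legs may go to several different blocks. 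I plan to handle this by grouping all contracted legs of $k$ as one $r$-leg input, namely the fused rest of the network. This grouping is legitimate because the network is loop-free, so the rest has finite norm by graphical Cauchy--Schwarz. If a contracted leg of $k$ meets another leg of $k$ only through a chain of other blocks, fusing the intermediate blocks produces a self-contraction of $k$. Loop-freeness of $G$ should exclude this, but I would check it carefully.

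In case (a), the surviving terms give products of total scalar contractions, and unpaired blocks remain as raw open tensors. After counting the $\prod s!$ multiplicities, each scalar contraction becomes $\ip{k}{\ell}_{\cK_s}$, and the result is exactly the prescribed algebraic rule.

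\textbf{Step 3: extension.} Simple Wick monomials span a dense subspace of each $\cW_{q_v}$, by the density argument in the proof of Theorem~\ref{primitive:thm:intrinsic-unitary}. The transported map is continuous and multilinear by Step 1. Therefore the algebraic rule, which agrees with it on the dense span by Step 2, extends uniquely by continuity, and the two maps coincide everywhere. Taking arbitrary bounded representatives of $x_v$ and applying Step 1 yields \eqref{primitive:eq:open-network-morphism}.
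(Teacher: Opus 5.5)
Your proposal is correct and follows the paper's proof essentially step by step. The paper also uses graphical Cauchy--Schwarz plus telescoping for well-definedness, expands the symmetrizations on simple Wick monomials, kills the partial-block terms with Theorem~\ref{primitive:thm:primitive-kernel}, evaluates the survivors by block pairing, and extends by density and continuity.

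Your ``main obstacle'' is not a real one, for two reasons. First, a block of weight $s$ with $1\le r<s$ contracted legs shares some number $a$ of edges with a neighbour, where $1\le a\le r<s$; so \eqref{primitive:eq:graphical-CS-edge} and \eqref{primitive:eq:cross-zero} already dispose of it. Second, Lemma~\ref{primitive:lem:graphical-CS} allows cycles and excludes only self-loops, which neither $G$ nor its block refinement has, so fusing the complement of $k$ never creates a self-contraction of $k$.
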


\begin{proof}
Graphical Cauchy--Schwarz, together with the fixed factorial
normalizations, gives a constant $c_G<\infty$ such that, for bounded
representatives,
\[
 \norm{C_{G,n}((x_{v,n})_v)}
 \le c_G\prod_{v\in V}(q_v!)^{1/2}\norm{x_{v,n}}.
\]
A telescoping expansion gives the same bound for a difference in any one
input.  Passing to the ultraproduct proves that
\eqref{primitive:eq:ultraproduct-network-map} is well defined and
continuous, hence so is its transport \eqref{primitive:eq:open-network-map}.

Now take simple Wick monomials and use their primitive representatives
from \eqref{primitive:eq:intrinsic-evaluation}.  Expand the finitely many
labeled symmetrizations in $C_{G,n}$.  If a term contracts only part of a
primitive block, Theorem~\ref{primitive:thm:primitive-kernel} and the open-output
form of Lemma~\ref{primitive:lem:graphical-CS} make its raw output norm tend to
zero along $\mathcal U$.  Every surviving term therefore pairs whole
primitive blocks of equal physical weight;
Lemma~\ref{primitive:lem:block-pairing} gives its total primitive inner product,
with precisely the prescribed multiplicity, and its unpaired blocks give
the asserted raw labeled output.  Thus the transported map has the stated
complete-block formula on algebraic inputs.  Density and its continuity
give the unique extension and \eqref{primitive:eq:open-network-morphism}.
\end{proof}

\begin{theorem}[Transfer from a represented sequence to its Fock terminal]
\label{primitive:thm:represented-transfer}
Fix $Q,d<\infty$.  For every $n$, let $W_n$ be isonormal over $H_n$ and
let
\begin{equation}
 Z_{n,a}=\sum_{q=1}^{Q}I_q^{W_n}(z_{n,a,q}),
 \qquad
 \sup_n\sum_{a,q}q!\norm{z_{n,a,q}}^2<\infty.
 \label{primitive:eq:represented-packet}
\end{equation}
Put
$
 \zeta_{a,q}=[z_{n,a,q}]_{\mathcal U}\in\cH_q
$
and, on the Gaussian space over $\bigoplus_{s\le Q}\cK_s$, define
\begin{equation}
 Z_a^{\mathrm{FF}}
 =\sum_{q=1}^{Q}\mathbb U_q^{-1}\zeta_{a,q}.
 \label{primitive:eq:represented-terminal}
\end{equation}
Then the represented packet itself, and not merely some spatial
realization of it, satisfies
\begin{equation}
 \Ulim \E\varphi(Z_n)=\E\varphi(Z^{\mathrm{FF}})
 \label{primitive:eq:represented-law-transfer}
\end{equation}
for every bounded Lipschitz function $\varphi:\R^d\to\R$.

The transfer is compatible with the finite forest register.  More
precisely, let $G$ be such a network with $L$ open legs and put
$x_v=\zeta_{a_v,q_v}$ when its input at $v$ is $z_{n,a_v,q_v}$.  Then
\begin{equation}
 [C_{G,n}((z_{n,a_v,q_v})_{v\in V})]_{\mathcal U}
 =C_G^{\mathrm{FF}}
 \bigl((\mathbb U_{q_v}^{-1}\zeta_{a_v,q_v})_{v\in V}\bigr)
 \quad\text{in }\cT_L^{\mathcal U}.
 \label{primitive:eq:represented-network-transfer}
\end{equation}
For $L=0$ this is the scalar complete-block pairing value; for $L>0$
the unpaired complete primitive blocks form the raw labeled output.  The
assertion is multilinear and therefore also holds for finite linear
combinations of networks.
\end{theorem}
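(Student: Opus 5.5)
The network identity \eqref{primitive:eq:represented-network-transfer} is immediate. It is exactly \eqref{primitive:eq:open-network-morphism} of Proposition~\ref{primitive:prop:open-network-morphism} applied to the bounded representatives $x_{v,n}=z_{n,a_v,q_v}$, whose classes are $x_v=\zeta_{a_v,q_v}$. Multilinearity then extends it to finite linear combinations of networks. All the work is therefore in the law transfer \eqref{primitive:eq:represented-law-transfer}. I will prove it by the method of moments, with an approximation step because the terminal is non-Gaussian.

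\emph{Step 1 (moments).} For a multi-index $(a_1,\dots,a_p)$ and orders $(q_1,\dots,q_p)$, expand $\E\prod_i I_{q_i}^{W_n}(z_{n,a_i,q_i})$ with the iterated product formula. This writes the moment as a finite, $n$-independent linear combination of fully contracted loop-free networks $C_{G,n}$ with inputs $z_{n,a_i,q_i}$. Loop-freeness holds because no leg of a single kernel is contracted with another leg of the same kernel. The same Gaussian diagram formula applied on the Fock side, over $\bigoplus_{s\le Q}\cK_s$, expresses $\E\prod_i\mathbb U_{q_i}^{-1}\zeta_{a_i,q_i}$ through Wick products of primitive blocks. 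The complete-block pairing rule is precisely the one defining $C_G^{\mathrm{FF}}$ for $L=0$. To match the two sides, note that both are continuous multilinear in the inputs. It therefore suffices to check equality on algebraic Wick monomials, where it is the statement of Proposition~\ref{primitive:prop:open-network-morphism}: each physical diagram $G$ converges to the sum of its admissible block pairings. Summing over $G$, one must verify that the combinatorial weights from the physical product formula regroup into the Fock Wick/diagram weights. I expect this bookkeeping to be the main obstacle. I would handle it by working with labeled legs throughout, so that both sides become sums over perfect matchings of labeled legs with no self-pairings within a vertex. On the Fock side a matching pairs labeled primitive-block legs, and the surviving physical matchings are exactly those that contract whole blocks. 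This gives $\Ulim\E\prod_i Z_{n,a_i}=\E\prod_i Z^{\mathrm{FF}}_{a_i}$ for every mixed moment.

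\emph{Step 2 (from moments to laws).} $Z^{\mathrm{FF}}$ lies in a finite sum of chaoses, so it is not determined by moments in general. Here, however, hypercontractivity gives uniform bounds $\sup_n\E\abs{Z_n}^{p}\le C_p$ for every $p$. The sequence is therefore tight, and every limit along $\mathcal U$ has the same moments as $Z^{\mathrm{FF}}$. Chaos of order at most $Q$ only satisfies $\E e^{c\abs{Z}^{2/Q}}<\infty$, which fails Carleman's condition for $Q\ge3$. To avoid needing moment determinacy, I will approximate. Using Theorem~\ref{primitive:thm:intrinsic-unitary}, choose algebraic Wick polynomials $P_{a,q}$ in finitely many primitive blocks with $\norm{\mathbb U_q^{-1}\zeta_{a,q}-P_{a,q}}\le\eps$. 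Their images $\mathbb U_q P_{a,q}$ have representatives $p_{n,a,q}$ built as $\Sym\bigotimes_i k_{i,n}$, with $\Ulim q!\norm{z_{n,a,q}-p_{n,a,q}}^2\le\eps^2$. By the Wiener isometry, $\Ulim\E\abs{Z_n-P_n}^2\le d\eps^2$ and $\E\abs{Z^{\mathrm{FF}}-P}^2\le d\eps^2$, and these bounds cost at most $O(\eps)$ for bounded Lipschitz $\varphi$.

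\emph{Step 3 (polynomials in primitives).} For the approximants, the Wiener integrals of the $p_{n,a,q}$ are polynomials in the finitely many primitive chaos variables $I^{(n)}_{s_i}(k_{i,n})$, up to lower-order contraction corrections. These corrections are non-total contractions of primitives, so they vanish in $L^2$ along $\mathcal U$ by Theorem~\ref{primitive:thm:primitive-kernel} and Lemma~\ref{primitive:lem:graphical-CS}. By Corollary~\ref{primitive:cor:primitive-CLT}, the vector $(I^{(n)}_{s_i}(k_{i,n}))_i$ converges to the jointly Gaussian vector $(W(k_i))_i$ with the $\cK$-Gram covariance. The continuous mapping theorem, together with uniform integrability from hypercontractivity to handle Wick renormalisation, gives $P_n\weak P$. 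Combining with Step 2 and letting $\eps\to0$ yields \eqref{primitive:eq:represented-law-transfer}. This route bypasses moment determinacy, so Step 1 is needed only as a consistency check; the genuinely delicate points are the Wick renormalisation identification in Step 3 and the combinatorial weights in Step 1.
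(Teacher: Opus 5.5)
Your proposal is correct and follows essentially the same route as the paper. The network identity is read off from Proposition~\ref{primitive:prop:open-network-morphism}, and the law transfer combines algebraic approximation through $\mathbb U_q$ and the Wiener isometry with the primitive CLT and a product-formula expansion, in which partial block contractions vanish and complete-block contractions supply the Wick subtractions. Your Step~1 moment computation is superfluous, and the paper does not use it; obtaining joint convergence by continuous mapping on one common finite primitive vector is an equivalent substitute for the paper's Cram\'er--Wold step.
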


\begin{proof}
We first treat one algebraic Fock monomial
\[
 P=:W(k_1)\cdots W(k_m):,
 \qquad k_i=[k_{i,n}]_{\mathcal U}\in\cK_{s_i},
 \qquad \sum_i s_i=q.
\]
Its image under $\mathbb U_q$ is represented by
$
 p_n=\Sym\bigotimes_i k_{i,n}
$.
The primitive central limit theorem gives joint convergence, along
$\mathcal U$, of the finite family
$
 (I_{s_i}^{W_n}(k_{i,n}))_i
$
to the Gaussian family $(W(k_i))_i$.  The product formula expands their
products into finitely many contraction diagrams.  Every diagram which
cuts a primitive block only partially tends to zero by
Theorem~\ref{primitive:thm:primitive-kernel} and
Lemma~\ref{primitive:lem:graphical-CS}; the
surviving diagrams pair complete blocks of equal physical weight.
Fixed-chaos hypercontractivity \cite[Thm.~5.10]{Janson} supplies uniform
moments of every order
needed in this finite product recursion, so all polynomial terms are
uniformly integrable.
Induction on $m$ in the product formula therefore gives
\begin{equation}
 I_q^{W_n}(p_n)\Longrightarrow P
 \quad\text{along }\mathcal U.
 \label{primitive:eq:algebraic-transfer}
\end{equation}
 Indeed, the complete-block contractions are exactly the covariance
 terms subtracted by the Wick recursion.  To obtain the joint statement,
 fix algebraic Fock polynomials $P^{(1)},\ldots,P^{(r)}$ and
 $c\in\mathbb R^r$.  Apply the same finite product-formula expansion and
 uniform-integrability argument to the scalar algebraic polynomial
 $\sum_{j=1}^r c_jP^{(j)}$ and to the corresponding linear combination of
 its representatives.  Thus every real linear combination converges.
 The Cram\'er--Wold theorem now yields joint convergence of the family.

For each $M$, use the density of algebraic symmetric tensors and the
unitarity of the maps $\mathbb U_q$ to choose algebraic packets
$P^{(M)}=(P_{a,q}^{(M)})$ such that
\begin{equation}
 \sum_{a,q}
 \norm{\zeta_{a,q}-\mathbb U_qP_{a,q}^{(M)}}_{\cH_q}^2
 \longrightarrow0.
 \label{primitive:eq:algebraic-density-transfer}
\end{equation}
Choose the coordinate representatives $p_{n,a,q}^{(M)}$ supplied by
the primitive blocks used in $P^{(M)}$.  The Wiener isometry gives
\begin{align}
 &\Ulim\E\left|Z_n-
 \sum_qI_q^{W_n}(p_{n,\cdot,q}^{(M)})\right|^2 \notag\\
 &\qquad=
 \sum_{a,q}
 \norm{\zeta_{a,q}-\mathbb U_qP_{a,q}^{(M)}}_{\cH_q}^2.
 \label{primitive:eq:L2-transfer}
\end{align}
For a bounded Lipschitz test function, insert the algebraic packet
between $Z_n$ and $Z^{\mathrm{FF}}$, use
\eqref{primitive:eq:algebraic-transfer} at fixed $M$, and then use
Cauchy--Schwarz and \eqref{primitive:eq:L2-transfer} before sending
$M\to\infty$.  This proves
\eqref{primitive:eq:represented-law-transfer}.

For a fixed contraction network, take
$x_v=\zeta_{a_v,q_v}$ in
\eqref{primitive:eq:open-network-morphism}.  This gives both the scalar
complete-block value and the represented open output for the actual
packet.  Multilinearity gives the assertion for every finite linear
combination of networks and proves the forest-register statement.
\end{proof}

\begin{theorem}[Packetwise spatial primitive--Fock realization]
\label{primitive:thm:packetwise-realization}
Let $F$ be a finite packet in
$\bigoplus_{q\le Q}\cW_q(E)$ and let $\cS(F)$ be its minimal primitive
support.  Apply
Theorem~\ref{primitive:thm:separable-realization} to $\cS(F)$.  Then:
\begin{enumerate}[label=\textup{(\roman*)}]
 \item for every $q\le Q$, labeled primitive evaluation extends to a
 unitary
 \begin{equation}
  \mathbb U_q^{\mathrm{sp}}:
  \cW_q(\cS(F))\xrightarrow{\ \simeq\ }\cA_q^{\mathrm{sp}},
  \label{primitive:eq:spatial-unitary}
 \end{equation}
 where $\cA_q^{\mathrm{sp}}$ is the closed graded subsystem generated by
 the replica fibres;
 \item the primitive sector of this minimal subsystem is exactly
 \begin{equation}
  \cK_s(\cA^{\mathrm{sp}})=J_s\cS_s(F);
  \label{primitive:eq:primitive-exact-minimal}
 \end{equation}
 \item there is a sequence of finite-chaos spatial Wiener polynomials,
 homogeneous grade by grade, whose joint law converges to the law of
 $F$;
 \item every finite loop-free labeled contraction network, with scalar
 or tensor output, agrees asymptotically with the corresponding
 primitive--Fock block-pairing network.
\end{enumerate}
Any two such minimal spatial realizations are related by the unique
grade-preserving unitary which identifies the labeled primitive support.
\end{theorem}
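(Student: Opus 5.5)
The plan is to work inside the fixed spatial carrier $L^2(\R)$ with the isometries $J_s:\cS_s(F)\to\cK_s^{\mathrm{sp}}$ of Theorem~\ref{primitive:thm:separable-realization}. Write $\cA_q^{\mathrm{sp}}$ for the closed subspace of $\cH_q^{\mathrm{sp}}=(L^2(\R)^{\odot q},q!\ip{\cdot}{\cdot})_{\mathcal U}$ spanned by the evaluated products $\bigl[\Sym\bigotimes_i k_{i,N}\bigr]$ with $k_i\in J_{s_i}\cS_{s_i}(F)$ and $\sum_i s_i=q$. Since $J_s$ lands in the primitive sector, Lemma~\ref{primitive:lem:block-pairing} applies verbatim to these blocks. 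Hence labeled evaluation $:W(e_1)\cdots W(e_m):\mapsto[\Sym\bigotimes_i (J_{s_i}e_i)_N]$ preserves the Wiener inner product on algebraic Wick monomials over $\cS(F)$. It therefore extends to an isometry from $\cW_q(\cS(F))$ whose range is closed and contains all generators of $\cA_q^{\mathrm{sp}}$, so it is onto $\cA_q^{\mathrm{sp}}$. This gives (i). The argument is the same as in the first half of Theorem~\ref{primitive:thm:intrinsic-unitary}, with $\cK_s$ replaced by the separable image $J_s\cS_s(F)$.

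For (ii), define the primitive sector of the subsystem internally: $\cD_s(\cA^{\mathrm{sp}})$ is the closed span of $\cA_p^{\mathrm{sp}}\wtensor\cA_{s-p}^{\mathrm{sp}}$, and $\cK_s(\cA^{\mathrm{sp}})$ is its orthogonal complement in $\cA_s^{\mathrm{sp}}$. Under $\mathbb U_s^{\mathrm{sp}}$, products of evaluated monomials correspond to Wick products with $|\bm m|\ge2$. So $\cD_s(\cA^{\mathrm{sp}})$ is the image of the sectors $|\bm m|\ge2$, and its complement is the image of the one-particle sector $I_1(\cS_s(F))$, namely $J_s\cS_s(F)$. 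One point needs care. The tensors $\mathbb U(I_1(e))=J_se$ are orthogonal to all evaluated products in $\cH_s^{\mathrm{sp}}$ by Theorem~\ref{primitive:thm:primitive-kernel}. This is exactly the block-pairing statement with $m=1\ne\ell$, so the internal and ambient notions agree on $J_s\cS_s(F)$.

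For (iii), pull back $F$ along the unitary. Each $F_{a,q}$ lies in $\cW_q(\cS(F))$ by Theorem~\ref{primitive:thm:minimal-support}(i), so set $\zeta_{a,q}=\mathbb U^{\mathrm{sp}}_qF_{a,q}\in\cA_q^{\mathrm{sp}}\subset\cH^{\mathrm{sp}}_q$. Choose bounded representatives $z_{N,a,q}\in L^2(\R)^{\odot q}$ and apply Theorem~\ref{primitive:thm:represented-transfer} to $Z_{N,a}=\sum_q I_q(z_{N,a,q})$, which is homogeneous grade by grade. It gives law convergence along $\mathcal U$ to $\sum_q(\mathbb U_q)^{-1}\zeta_{a,q}$ on the Gaussian space over $\bigoplus\cK_s^{\mathrm{sp}}$. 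By the compatibility of $\mathbb U_q$ with $\mathbb U_q^{\mathrm{sp}}$ (both are labeled evaluation, and $J$ is isometric into $\cK^{\mathrm{sp}}$), this terminal is the Gaussian-space image of $F$ under $\Gamma(J)$, so it has the law of $F$.

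The main obstacle is converting convergence along $\mathcal U$ into an honest sequence. Here I would use the explicit diagonal representatives of Theorem~\ref{primitive:thm:separable-realization}: truncate each kernel to finitely many basis directions $M(N)$, with all contractions $O(N^{-1/2})$. Because the law and the contraction register are controlled by a countable family of quantities, one can choose $M(N)\uparrow\infty$ slowly so that the convergence holds along the full sequence, not only along $\mathcal U$. Statement (iv) then follows from \eqref{primitive:eq:represented-network-transfer} and Proposition~\ref{primitive:prop:open-network-morphism} applied to these representatives. Uniqueness follows because two minimal realizations $J,J'$ differ by the grade-preserving unitary $J'_sJ_s^{-1}$ on $J_s\cS_s(F)$. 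Its second quantization intertwines the evaluations, and it is unique because it is determined on the one-particle support.
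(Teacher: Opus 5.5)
Your proposal is correct and follows essentially the paper's route. Block pairing gives the isometry in (i), and the split into one-particle and multi-particle sectors gives (ii). Finite Wick truncations with a slow diagonal give (iii), the complete-block expansion gives (iv), and second quantization of $J'_sJ_s^{-1}$ gives uniqueness.

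One point should be made explicit. The represented-transfer and network-morphism results give only $\mathcal U$-limits. Your diagonal therefore needs ordinary convergence as $N\to\infty$ at each fixed truncation level. This follows from the constant Gram data and the $O(N^{-1/2})$ contraction bounds, or from the fact that every free ultrafilter yields the same terminal. The paper's direct finite-Wick-polynomial argument uses exactly this.
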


\begin{proof}
Equation \eqref{primitive:eq:block-pairing} is exactly the Wiener inner product of
the corresponding Wick monomials over $\cS(F)$.  Primitive evaluation is
therefore isometric on the algebraic weighted Fock space, and it extends
to a unitary onto its closed range $\cA_q^{\mathrm{sp}}$.  This proves
\textup{(i)}.

Inside the generated subsystem, the one-particle sector of physical
weight $s$ is $J_s\cS_s(F)$, while every sector with at least two particles
is a product of lower physical weights.  These sectors are orthogonal in
the weighted Fock space.  Conjugating by
$\mathbb U_s^{\mathrm{sp}}$ proves
\eqref{primitive:eq:primitive-exact-minimal} and \textup{(ii)}.

For a finite Wick polynomial, replace each primitive coordinate by its
replica kernel.  The joint primitive central limit theorem and the Wiener
product formula show, simultaneously for the finitely many coordinates
and physical weights, that the resulting physical kernels converge in
law to the prescribed Wick polynomial.  For a general packet $F$, choose
finite Wick-polynomial packets $F^{(M)}\to F$ in
$L^2(\Omega;\R^d)$.  Joint convergence of each finite packet provides
$N_M$ such that its spatial realization $Y_N^{(M)}$ satisfies
\[
 d_{\mathrm{BL}}\bigl(\Law(Y_N^{(M)}),\Law(F^{(M)})\bigr)\le M^{-1}
 \qquad(N\ge N_M).
\]
Choose one increasing diagonal $M=M(N)\uparrow\infty$ with
$N\ge N_{M(N)}$.  Since
\[
 |\E e^{it\cdot Y}-\E e^{it\cdot Z}|
 \le |t|\norm{Y-Z}_{L^2(\Omega;\R^d)},
\]
the $L^2$ approximation controls the remaining bounded--Lipschitz and
characteristic-function errors.  Thus the full vector laws converge,
which proves \textup{(iii)}.

For \textup{(iv)}, expand any fixed labeled contraction diagram into its
finitely many leg matchings.  If a matching contracts only a proper part
of one primitive block, Lemma~\ref{primitive:lem:graphical-CS} bounds the corresponding
open output tensor by a non-total primitive contraction, so that term
tends to zero.  Every surviving matching contracts complete primitive
blocks of equal weight; its coefficient is a product of their Gram
entries, and its open output is the forest of the uncontracted complete
blocks.  Thus the value of every fixed diagram is determined solely by
the graded primitive Gram matrices and agrees in the two realizations.
If $J_s$ and $J_s'$ are two realizations, then
$U_s(J_sh)=J_s'h$ defines a unique unitary between their closed ranges.
Its second quantization intertwines the two maps in
\eqref{primitive:eq:spatial-unitary}, which proves the final assertion.
\end{proof}

\begin{proof}[Proof of Theorem~\ref{overview:thm:weak-closure}]
Let $\mu$ belong to the weak closure of $\mathfrak C_{q,d}(R)$ and
choose a realizing sequence
$
 X_{n,a}=I_q^{W_n}(f_{n,a})
$
whose laws converge weakly to $\mu$.  Fix a free ultrafilter and put
$\xi_a=[f_{n,a}]_{\mathcal U}\in\cH_q$.  By
Theorem~\ref{primitive:thm:represented-transfer}, the same laws converge along
$\mathcal U$ to the law of
\[
 F=(\mathbb U_q^{-1}\xi_a)_{a\le d}
 \in\cW_q\left(\bigoplus_{s\le q}\cK_s\right)^d.
\]
By Theorem~\ref{primitive:thm:minimal-support}, this finite packet is carried by
its separable graded support $\cS(F)$, so it belongs to one of the
separable spaces occurring on the right of
\eqref{overview:eq:weak-closure}.
An ordinarily convergent sequence has the same limit along every free
ultrafilter, so $\mu=\Law(F)$.  Moreover, the Wiener isometry gives
\[
 \E|F|^2
 =\sum_a\norm{\xi_a}_{\cH_q}^2
 =\Ulim\sum_aq!\norm{f_{n,a}}^2
 \le R^2.
\]
This proves the inclusion from left to right in
\eqref{overview:eq:weak-closure}.

Conversely, let $F\in\cW_q(E)^d$ with $\E|F|^2\le R^2$.  Replace $E$
by the separable minimal support $\cS(F)$.  Choose finite-rank orthogonal
approximations $F^{(M)}\to F$ in $L^2$ with
$\E|F^{(M)}|^2\le\E|F|^2$.  The collision-null replicas in
Theorem~\ref{primitive:thm:packetwise-realization} realize each $F^{(M)}$ by
homogeneous $q$th-chaos kernels whose quadratic norms converge to that
of $F^{(M)}$, and make
every prescribed finite family of loop-free contraction diagrams
converge to its block-pairing value.  The collection of such finite
diagrams is countable: their graphs, packet labels, labeled legs, and
contraction orders all range over finite discrete data.  Fix once and for
all an enumeration $(G_j)_{j\ge1}$.  Continuity of the network morphisms
gives, for every fixed $j$,
\[
 C_{G_j}^{\mathrm{FF}}(F^{(M)})\longrightarrow
 C_{G_j}^{\mathrm{FF}}(F).
\]
Here the packet entries are inserted according to the discrete labels of
$G_j$.
At stage $M$, choose one sufficiently advanced spatial realization
$Y^{(M)}$ of $F^{(M)}$ so that simultaneously (with $d_{\mathrm{BL}}$
denoting the bounded--Lipschitz distance)
\begin{equation}
 d_{\mathrm{BL}}(\Law(Y^{(M)}),\Law(F^{(M)}))\le M^{-1},
 \qquad
 \left|\E|Y^{(M)}|^2-\E|F^{(M)}|^2\right|\le M^{-1},
 \label{primitive:eq:realization-diagonal-control}
\end{equation}
and, for every $j\le M$, the $G_j$-diagram error is at most $M^{-1}$,
measured in absolute value for a scalar output and in the raw output
Hilbert norm for an open network.  The resulting diagonal converges in
law to $F$, its energies converge to $\E|F|^2$, and every fixed diagram
converges to its value at $F$.

Relabel the diagonal kernels as $(f_{N,a})$ and set
$E_N=\sum_aq!\norm{f_{N,a}}^2$.  If a selected spatial approximant has
energy larger than $R^2$, rescale all its component kernels by
\[
 c_N=
 \begin{cases}
  1,&E_N=0,\\[0.2em]
  \min\{1,R/E_N^{1/2}\},&E_N>0.
 \end{cases}
\]
Since $E_N\to\E|F|^2\le R^2$, one has $c_N\to1$ whenever the correction
is active.  The corrected sequence belongs to $\mathfrak C_{q,d}(R)$ and
has the same limiting law and the same finite-register limits.
This gives the asserted finite-register convergence.  The final
minimality and uniqueness statements are
Theorem~\ref{primitive:thm:minimal-support} and
Corollary~\ref{primitive:cor:relative-torelli}.
\end{proof}

\begin{corollary}[Minimal-support realization criterion]
\label{primitive:cor:R1-weakened}
To realize a fixed represented packet and its complete primitive
block-pairing register, it is enough to realize isometrically the separable
spaces $\cS_s(F)$.  Density in every ambient $\cK_s$ is unnecessary.
\end{corollary}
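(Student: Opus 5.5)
The statement to prove is Corollary~\ref{primitive:cor:R1-weakened}: realizing the separable supports $\cS_s(F)$ isometrically already realizes the packet and its whole block-pairing register. The plan is to read this off from Theorems~\ref{primitive:thm:minimal-support} and~\ref{primitive:thm:packetwise-realization}. The logic is that every datum attached to the packet depends only on the graded primitive Gram structure of the blocks actually used by $F$. By Theorem~\ref{primitive:thm:minimal-support}(i), each kernel $g_{a,q,\bm m}$ already lies in $\widehat\bigotimes_s\cS_s(F)^{\odot m_s}$. Hence $F$ belongs to $\bigoplus_{q\le Q}\cW_q(\cS(F))$, and neither its law nor any contraction value involves vectors of $E_s\ominus\cS_s(F)$. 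Theorem~\ref{primitive:thm:minimal-support}(iii) says $\cS(F)$ is separable, so it is a legitimate input to Theorem~\ref{primitive:thm:separable-realization}.

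The argument then runs in three steps. First, take any grade-preserving linear isometries $J_s:\cS_s(F)\to\cK_s^{\mathrm{sp}}$ whose ranges have vanishing non-total cross-contractions. Theorem~\ref{primitive:thm:separable-realization} supplies one such family, but any isometric family with this property will do. Second, apply Lemma~\ref{primitive:lem:block-pairing}. The inner product of evaluated Wick monomials, $\Ulim q!\langle\Sym\bigotimes k_{i,n},\Sym\bigotimes \ell_{j,n}\rangle$, equals the sum over admissible block permutations of products of the primitive inner products $\langle k_i,\ell_{\pi(i)}\rangle_{\cK_{s_i}}$. Since each $J_s$ is isometric, these products coincide with the inner products $\langle h_i,h'_{\pi(i)}\rangle_{\cS_{s_i}(F)}$. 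So primitive evaluation is isometric on algebraic $\cW_q(\cS(F))$ and extends to the unitary $\mathbb U_q^{\mathrm{sp}}$ of Theorem~\ref{primitive:thm:packetwise-realization}(i). Third, parts (iii) and (iv) of that theorem give two things: law convergence of homogeneous spatial Wiener polynomials to $F$, and agreement of every finite loop-free network with its primitive--Fock block-pairing value.

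The block-pairing value is computed only from the Gram entries of complete primitive blocks. Those Gram entries are the inner products in $\cS_s(F)$, transported isometrically, so the realization never queries anything outside $J_s\cS_s(F)$. This is why density of the ranges in the ambient $\cK_s$ is unnecessary.

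The only point needing care, and the expected main obstacle, is checking that no hidden dependence on the ambient space enters through the limiting procedure. Two such steps need attention. One is the $L^2$ approximation $F^{(M)}\to F$ by finite Wick packets. The other is the diagonal choice of representatives. Both can be carried out inside $\cW_q(\cS(F))$, because algebraic symmetric tensors over $\cS(F)$ are dense there. The network maps are continuous in the inputs by Proposition~\ref{primitive:prop:open-network-morphism}, so register values pass to the limit without ever leaving the support. Finally, the uniqueness clause of Theorem~\ref{primitive:thm:packetwise-realization} shows the result is independent of the chosen isometries up to grade-preserving unitary gauge.
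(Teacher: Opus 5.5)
Your proposal is correct and follows essentially the paper's own route: the corollary is read off from Theorem~\ref{primitive:thm:minimal-support} (the packet lives in $\bigoplus_{q\le Q}\cW_q(\cS(F))$, and $\cS(F)$ is separable) combined with Theorem~\ref{primitive:thm:packetwise-realization} applied to $\cS(F)$, which is exactly the content of the remark on why full ambient density is unnecessary. One small point: your aside that any isometric family into $\cK_s^{\mathrm{sp}}$ would do holds along $\mathcal U$, since non-total cross-contractions vanish automatically by \eqref{primitive:eq:cross-zero} and Theorem~\ref{primitive:thm:represented-transfer} identifies the law; but parts (iii)--(iv) of Theorem~\ref{primitive:thm:packetwise-realization} are proved for the replica embedding, so for a general family you would need an additional subsequence extraction to obtain an ordinary realizing sequence.
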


\begin{corollary}[Relative tensorial Torelli theorem]
\label{primitive:cor:relative-torelli}
Fix a represented tensorial terminal, including its labeled occupation
kernels.  Any two minimal graded presentations of this terminal are related
by a unique grade-preserving unitary between their primitive supports.  The
unitary intertwines the one-leg flattenings, the weighted Fock evaluation,
and every finite block-pairing contraction diagram.
\end{corollary}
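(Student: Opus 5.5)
The plan is to derive the corollary from Theorem~\ref{primitive:thm:minimal-support} together with the second-quantization part of Theorem~\ref{primitive:thm:packetwise-realization}. A \emph{minimal graded presentation} of the fixed represented terminal is a graded Hilbert space $E=\bigoplus_sE_s$ carrying labeled occupation kernels $g_{a,q,\bm m}$. It reproduces the terminal and satisfies $E_s=\cS_s(F)$. Given two such presentations $(E,g)$ and $(E',g')$ of the same terminal, I will first fix what "the same terminal" means. Following the tensorial setting, I take it to mean that the two presentations are identified with the same represented object through the primitive--Fock unitaries $\mathbb U_q$ of Theorem~\ref{primitive:thm:intrinsic-unitary}. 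Concretely, there are grade-preserving isometries $\iota_s:E_s\to\cK_s$ and $\iota'_s:E'_s\to\cK_s$ whose second quantizations carry $F$ and $F'$ to the same packet $\mathbb U_q^{-1}\zeta_{a,q}$.

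Existence of the unitary comes from two applications of Theorem~\ref{primitive:thm:minimal-support}\textup{(iv)}. Naturality \eqref{primitive:eq:support-natural} gives $\iota_s\cS_s(F)=\cS_s(\Gamma(\iota)F)$ and $\iota'_s\cS_s(F')=\cS_s(\Gamma(\iota')F')$. The two transported packets coincide, so these images agree; call the common closed subspace $S_s\subset\cK_s$. Minimality of both presentations then makes $\iota_s:E_s\to S_s$ and $\iota'_s:E'_s\to S_s$ unitary. I set $U_s=(\iota'_s)^{-1}\iota_s$. Its second quantization $\Gamma(U)$ maps $F$ to $F'$ because $\Gamma(\iota')\Gamma(U)=\Gamma(\iota)$. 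Hence $U$ intertwines the occupation kernels, $g'_{a,q,\bm m}=\bigl(\bigotimes_sU_s^{\odot m_s}\bigr)g_{a,q,\bm m}$.

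The intertwining claims then follow in turn. For the one-leg flattenings, $\mathscr L'^{(s)}=U_s\mathscr L^{(s)}(\text{tensor of }U^*)$. Intertwining of the weighted Fock evaluation is the identity $\Gamma(U)$ on $\cW_q$. For block-pairing diagrams, Proposition~\ref{primitive:prop:open-network-morphism} and Theorem~\ref{primitive:thm:packetwise-realization}\textup{(iv)} show that every diagram value depends only on the graded primitive Gram data and on unpaired blocks. Unitaries preserve the Gram data, and they map open outputs by the tensor power of $U$.

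Uniqueness is the delicate step, since a Gaussian law alone does not see the gauge, as Proposition~\ref{primitive:prop:marginal-non-torelli} warns. It is here that the fixed labeled kernels are used. Suppose $V_s$ is another grade-preserving unitary $E_s\to E'_s$ with $\Gamma(V)g=g'$ for all labels. Then $W=V^{-1}U$ fixes every kernel $g_{a,q,\bm m}$. Fixing a kernel forces $W_s$ to act compatibly on the range of its one-leg flattening: $W_s\mathscr L^{(s)}v=\mathscr L^{(s)}(\Gamma(W)^{-1}\text{-image of }v)$. I would resolve the resulting circularity by an inductive/fixed-point argument on ranges, and I expect this to be the main obstacle. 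If it fails for general $W$, I would fall back on the intended reading, in which the presentation fixes the labeled embedding into the intrinsic $\cK_s$. On that reading uniqueness is immediate: $U_s$ is forced on $\iota_s(E_s)=S_s$ by $U_s=(\iota'_s)^{-1}\iota_s$, and $S_s=\cS_s$ is the closed span of the flattening ranges, so no freedom remains off the support.
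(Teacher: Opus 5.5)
Your existence and intertwining argument is essentially the paper's. The paper also builds the unitary from the labeled identification of the primitive supports, $U_s(J_sh)=J_s'h$ as in the final assertion of Theorem~\ref{primitive:thm:packetwise-realization}. It defines this map on the dense span of the one-leg flattening ranges and extends by continuity, and it gets the intertwining properties from naturality in Theorem~\ref{primitive:thm:minimal-support}.

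Your ``fallback'' reading of uniqueness is not a fallback but the only correct one, so the fixed-point step should be dropped. The stronger uniqueness you hoped to prove fails, namely uniqueness among all grade-preserving unitaries $V$ with $\Gamma(V)g=g'$. The reason is that the stabilizer of the labeled kernels can be nontrivial. For instance, take $d=1$, $q=2$ and $g=e_1\odot e_1+e_2\odot e_2$. Then $\cS_1(F)=\Span\{e_1,e_2\}$, and every $W\in O(2)$ on this support satisfies $(W\otimes W)g=g$. Hence $UW$ relates the two presentations just as well as $U$ does.

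So state uniqueness relative to the fixed labeled embeddings $\iota_s,\iota'_s$. The relation $\iota'_sU_s=\iota_s$ then forces $U_s$ on all of $E_s$. Note that your step is forced on $E_s$, not on $\iota_s(E_s)$ as you wrote. Also record that you need naturality for isometries rather than unitaries; the proof of Theorem~\ref{primitive:thm:minimal-support}\textup{(iv)} gives this verbatim.
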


\begin{proof}
The ranges of the one-leg flattenings span $\cS_s(F)$ by
Definition~\ref{primitive:def:minimal-support}.  Hence the identification of the
labeled kernels defines an isometry on a dense subspace of each minimal
support and extends uniquely to a unitary.  Naturality in
Theorem~\ref{primitive:thm:minimal-support} and the final assertion of
Theorem~\ref{primitive:thm:packetwise-realization} give all the intertwining
properties.
\end{proof}

The qualification ``tensorial'' cannot be removed.

\begin{proposition}[Failure of marginal Torelli reconstruction]
\label{primitive:prop:marginal-non-torelli}
Let $G_1,G_2$ be independent standard Gaussian variables and set
\begin{equation}
 X=G_1(G_1^2+G_2^2),
 \qquad
 Y=G_1^3-3G_1G_2^2.
 \label{primitive:eq:marginal-counterexample}
\end{equation}
Then $X$ and $Y$ have the same law, but their defining polynomials are not
related by an orthogonal change of variables.  More precisely, with the
probabilists' Hermite polynomials,
\begin{equation}
 X=H_3(G_1)+G_1H_2(G_2)+4G_1,
 \qquad
 Y=H_3(G_1)-3G_1H_2(G_2),
 \label{primitive:eq:marginal-chaos-splits}
\end{equation}
so, writing $(P_t)_{t\ge0}$ for the Ornstein--Uhlenbeck semigroup, their
correlation profiles are respectively
\begin{equation}
 \ip{X}{P_tX}_{L^2}=16e^{-t}+8e^{-3t},
 \qquad
 \ip{Y}{P_tY}_{L^2}=24e^{-3t}.
 \label{primitive:eq:marginal-OU-profiles}
\end{equation}
Thus a marginal law does not determine the terminal chaos grading or its
minimal graded support.
\end{proposition}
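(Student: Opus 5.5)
The plan has four steps: prove equality in law by a polar-coordinate argument, obtain the Hermite expansions by direct substitution, read off the Ornstein--Uhlenbeck profiles from the chaos norms, and then compare the profiles to rule out an orthogonal change of variables.

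\emph{Equality in law.} Put $Z=G_1+iG_2=\rho e^{i\Theta}$. Since $(G_1,G_2)$ is a standard planar Gaussian vector, $\rho$ and $\Theta$ are independent and $\Theta$ is uniform on $[0,2\pi)$. Then
\[
 X=\rho^2\cdot\rho\cos\Theta=\rho^3\cos\Theta,
 \qquad
 Y=\operatorname{Re}Z^3=\rho^3\cos3\Theta .
\]
The reduction of $3\Theta$ modulo $2\pi$ is again uniform on $[0,2\pi)$, because the map $\theta\mapsto3\theta$ pushes normalized Lebesgue measure on the circle forward to itself. It is also independent of $\rho$. Hence $(\rho,3\Theta\bmod 2\pi)$ and $(\rho,\Theta)$ have the same joint law, and therefore $X$ and $Y$ have the same law.

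\emph{Chaos splits and OU profiles.} Substitute $x^3=H_3(x)+3x$ and $x^2=H_2(x)+1$, where $H_3(x)=x^3-3x$ and $H_2(x)=x^2-1$. For $X$ this gives $X=H_3(G_1)+3G_1+G_1(H_2(G_2)+1)$, and for $Y$ it gives $Y=H_3(G_1)+3G_1-3G_1(H_2(G_2)+1)$. Collecting terms yields \eqref{primitive:eq:marginal-chaos-splits}. The semigroup $P_t$ acts on the $p$th chaos by multiplication by $e^{-pt}$, and distinct chaoses are orthogonal, so $\ip{X}{P_tX}_{L^2}=\sum_p e^{-pt}\norm{J_pX}_2^2$, where $J_p$ denotes the projection onto the $p$th chaos. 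The norms needed are
\[
 \E H_3(G)^2=6,\qquad \E\bigl[G_1^2H_2(G_2)^2\bigr]=2 .
\]
The two third-chaos terms $H_3(G_1)$ and $G_1H_2(G_2)$ are orthogonal to each other. This gives $\norm{J_1X}_2^2=16$ and $\norm{J_3X}_2^2=6+2=8$. For $Y$ it gives $\norm{J_3Y}_2^2=6+9\cdot2=24$, with no first-chaos part. As a consistency check, both second moments equal $24$, which they must since $X$ and $Y$ have the same law.

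\emph{Non-equivalence.} This is the only conceptual step. Suppose $U$ is an orthogonal map of $\R^2$ with $Y=X\circ U$. The Gaussian law is $U$-invariant, and the Hermite chaos decomposition is preserved by second quantization $\Gamma(U)$. So $\Gamma(U)$ maps each $J_p$-component of $X$ to the corresponding component of $Y$ and preserves its norm. In particular $P_t$ commutes with $\Gamma(U)$, and the correlation profiles of $X$ and $Y$ would coincide. They do not: $X$ has a nonzero first-chaos component of energy $16$, while $Y$ has none. Therefore no orthogonal change of variables relates the two polynomials.

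\emph{Interpretation in the terminal grading.} In the language of $\cW_3(E)$, the variable $X$ needs a nonzero weight-three one-particle component $I_1(E_3)$, whereas $Y$ lives in $I_3(E_1^{\odot3})$ only. Both have $\cS_1$ equal to two-dimensional $E_1$, but $\cS_3(X)\ne0=\cS_3(Y)$. Thus the two terminals have the same marginal law but different minimal graded supports in the sense of Theorem~\ref{primitive:thm:minimal-support}, which is the stated conclusion.
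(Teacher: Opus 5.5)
\textbf{Non-equivalence: a different route.} Your equality-in-law argument, the Hermite substitutions and the profile computation are the same as in the paper. The genuinely different step is non-equivalence. The paper observes that $x^3-3xy^2$ is harmonic while $\Delta\bigl(x(x^2+y^2)\bigr)=8x$. Since the Laplacian commutes with orthogonal pullback, this is a one-line algebraic invariant that does not use the chaos computation; the expansions are then added as extra information. You instead note that an orthogonal substitution acts as a second quantization. It therefore preserves each chaos and its norm, so the Ornstein--Uhlenbeck profile is an orthogonal invariant; already the first-chaos energy, $16$ versus $0$, distinguishes $X$ from $Y$.

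Both routes are correct, and they use the same invariant in disguise. By Euler's identity, $L=\Delta-x\cdot\nabla=\Delta-3$ on homogeneous cubics. Hence the first-chaos component of a homogeneous cubic $P$ is exactly $\tfrac12\Delta P$. This gives $4G_1=\tfrac12\Delta X$ for $X$, and no first-chaos part for the harmonic $Y$. The paper's route is more elementary. Yours makes the ``more precisely'' clause carry the proof, and it exhibits directly the tensorial datum that the law forgets.

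\textbf{The final paragraph is wrong.} In $\cW_3(E)$ the sector $I_1(E_3)$ is built on $E_3\perp E_1$, so it is independent of all weight-one coordinates. But your $4G_1$ is the very coordinate that appears in $H_3(G_1)$. In fact $X$ lies in no $\cW_3(E)$. Its cubic kernel $e_1^{\otimes3}+\Sym(e_1\otimes e_2\otimes e_2)$ would have to lie in $E_1^{\odot3}$, and its one-leg flattening then forces $e_1\in E_1$. At the same time the linear kernel $4e_1$ would have to lie in $E_3$, which is impossible.

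Moving the linear term to an independent weight-three coordinate also changes the law. With $G_3$ independent, $H_3(G_1)+G_1H_2(G_2)+4G_3$ has fourth moment $5760$, whereas $\E X^4=\tfrac38\E\rho^{12}=17280$. So the claim $\cS_3(X)\ne0=\cS_3(Y)$ has no meaning here.

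The consistent graded reading places $4G_1$ at physical weight one, so that $X$ is a mixed packet in $\cW_1(E)\oplus\cW_3(E)$ with $E=E_1=\R^2$. In that reading both $X$ and $Y$ have $\cS_1=\R^2$ and nothing at higher weight. What separates them is the chaos (physical-weight) grading, which is exactly what your profiles detect. Draw the proposition's final sentence from that observation. Your proof of the main claims is unaffected.
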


\begin{proof}
Write $(G_1,G_2)=(R\cos\Theta,R\sin\Theta)$ in polar coordinates.  Then
\[
 X=R^3\cos\Theta,
 \qquad
 Y=R^3\cos(3\Theta).
\]
Conditionally on $R$, both $\Theta$ and $3\Theta$ modulo $2\pi$ are
uniform, which proves equality in law.  On the other hand,
$x^3-3xy^2$ is harmonic, whereas
\[
 \Delta\bigl(x(x^2+y^2)\bigr)=8x.
\]
Harmonicity is preserved by orthogonal pullback, so the two polynomials
are not orthogonally equivalent.  Expanding $x^3=H_3(x)+3x$ and
$y^2=H_2(y)+1$ gives \eqref{primitive:eq:marginal-chaos-splits}.
Orthogonality of Wiener chaoses then gives
\eqref{primitive:eq:marginal-OU-profiles}.
\end{proof}

%=====================================================================

\part{Quantitative Hilbert--Stein extraction}

We now return to finite maximal degree $Q$.  This part is independent of
the collision-null realization.  It constructs a positive feedback on the
kernel packet itself and keeps separate the branchwise positive register
from its coherent Malliavin--Stein aggregation.

\section{The oriented Malliavin--Stein ledger}
\label{hs:sec:ledger}
%====================================================================

For $f\in H^{\odot p}$ and $g\in H^{\odot q}$, the product formula
\cite[Prop.~1.1.3]{Nualart} is
\begin{equation}
 I_p(f)I_q(g)
 =\sum_{s=0}^{p\wedge q}s!\binom ps\binom qs
 I_{p+q-2s}(f\wtensor_sg),
 \label{hs:eq:product}
\end{equation}
where
$
 f\wtensor_sg=\Sym(f\otimes_sg)
$.
Since symmetrization is an orthogonal projection,
\begin{equation}
 \norm{f\wtensor_sg}
 \le\norm{f\otimes_sg}
 \le\norm f\,\norm g.
 \label{hs:eq:contraction-bound}
\end{equation}

For centered finite-chaos variables define the oriented bracket
\begin{equation}
 \Gamma(F,G)=\ip{DF}{-DL^{-1}G}_H.
 \label{hs:eq:Gamma}
\end{equation}
It satisfies $\E\Gamma(F,G)=\E[FG]$, but it is not symmetric when the
two arguments contain different chaos orders.

\begin{lemma}[Exact mixed-degree formula]
\label{hs:lem:mixed-gamma}
For $p,q\ge1$, $f\in H^{\odot p}$, and $g\in H^{\odot q}$,
\begin{equation}
 \Gamma(I_p(f),I_q(g))
 =\sum_{s=1}^{p\wedge q}c_{p,q,s}
 I_{p+q-2s}(f\wtensor_sg),
 \label{hs:eq:mixed-gamma}
\end{equation}
where
\begin{equation}
 c_{p,q,s}
 =p(s-1)!\binom{p-1}{s-1}\binom{q-1}{s-1}.
 \label{hs:eq:c-pqs}
\end{equation}
When $p=q=s$, the term in \eqref{hs:eq:mixed-gamma} is the constant
$q!\ip fg$.
\end{lemma}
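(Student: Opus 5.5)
The plan is a direct computation. Write $D$ and $L^{-1}$ on a single chaos, apply the product formula \eqref{hs:eq:product} in the frozen variable, and then integrate that variable out. I work in an $L^2(T,\mu)$ model of $H$, which is legitimate because every identity involved is unitarily invariant. For $f\in H^{\odot p}$ and $g\in H^{\odot q}$ I use the standard formulas
\[
 D_xI_p(f)=p\,I_{p-1}(f(\cdot,x)),\qquad -L^{-1}I_q(g)=q^{-1}I_q(g).
\]
The second gives $-D_xL^{-1}I_q(g)=I_{q-1}(g(\cdot,x))$. Therefore
\[
 \Gamma(I_p(f),I_q(g))=p\int_T I_{p-1}(f(\cdot,x))\,I_{q-1}(g(\cdot,x))\,\mu(dx).
\]

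Next, for fixed $x$, I expand the integrand with \eqref{hs:eq:product}, using the contraction index $t\in\{0,\dots,(p-1)\wedge(q-1)\}$:
\[
 I_{p-1}(f(\cdot,x))I_{q-1}(g(\cdot,x))
 =\sum_{t}t!\binom{p-1}{t}\binom{q-1}{t}\,
 I_{p+q-2-2t}\bigl(f(\cdot,x)\wtensor_t g(\cdot,x)\bigr).
\]
Integrating in $x$ adds one more contracted pair of legs to the $t$ already contracted. Hence
\[
 \int_T f(\cdot,x)\otimes_t g(\cdot,x)\,\mu(dx)=f\otimes_{t+1}g,
\]
up to the placement of the free legs. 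That placement is immaterial, because $I_{p+q-2-2t}$ only sees the symmetrization. Setting $s=t+1$, the kernel becomes $f\wtensor_s g$ and the coefficient becomes $p\,(s-1)!\binom{p-1}{s-1}\binom{q-1}{s-1}=c_{p,q,s}$. The index $s$ ranges over $1,\dots,p\wedge q$, which is \eqref{hs:eq:mixed-gamma} and \eqref{hs:eq:c-pqs}.

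The only step needing care is interchanging $\int\mu(dx)$ with the multiple integral $I_{p+q-2s}$. I would justify it by linearity and the Wiener isometry. The map $x\mapsto f(\cdot,x)\wtensor_{s-1}g(\cdot,x)$ lies in $L^1(\mu;H^{\odot(p+q-2s)})$, since by Cauchy--Schwarz and \eqref{hs:eq:contraction-bound} its Bochner norm is at most $\norm f\,\norm g$. Moreover, $I_{p+q-2s}$ is a bounded linear map into $L^2(\Omega)$, so it commutes with Bochner integrals. Alternatively, one first checks the identity for elementary tensors and concludes by density, using the bilinear continuity of both sides in $(f,g)$.

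Finally, I treat the degenerate case. When $p=q=s$ the output order is $0$, and the term is $c_{q,q,q}\,f\otimes_q g=q\,(q-1)!\,\ip fg=q!\ip fg$. This matches the stated constant and also $\E\Gamma=\E[FG]$.
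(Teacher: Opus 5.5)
Your proposal is correct and follows the paper's proof. Both compute $DI_p(f)=pI_{p-1}(f(\cdot,*))$ and $-DL^{-1}I_q(g)=I_{q-1}(g(\cdot,*))$, apply the product formula at orders $p-1$ and $q-1$, treat the integrated derivative variable as the $s$-th contracted pair, and obtain $c_{q,q,q}=q!$ in the degenerate case. The one addition is your Bochner-integral (or density) justification for exchanging $\int\mu(dx)$ with $I_{p+q-2s}$, which the paper leaves implicit.
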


\begin{proof}
One has
\[
 DI_p(f)=pI_{p-1}(f(\cdot,*)),
 \qquad
 -DL^{-1}I_q(g)=I_{q-1}(g(\cdot,*)).
\]
Apply \eqref{hs:eq:product} to the integrals of orders $p-1$ and $q-1$.
A contraction of $s-1$ variables, followed by integration of the common
derivative variable, is the contraction of $s$ variables of $f$ and
$g$.  Its coefficient is exactly \eqref{hs:eq:c-pqs}.  For $p=q=s$,
this coefficient is $q(q-1)!=q!$.
\end{proof}

%====================================================================
\begin{remark}[Orientation and normalization]
Formula \eqref{hs:eq:c-pqs} is the coefficient of
$\ip{DF}{-DL^{-1}G}$.  The raw bracket $\ip{DF}{DG}$ contains an
additional factor $q$ in the mixed-degree channel.
\end{remark}

For normalized kernels introduce
\begin{equation}
 \beta_{p,q,s}
 =c_{p,q,s}\sqrt{\frac{(p+q-2s)!}{p!q!}},
 \qquad
 b_{p,q,s}=\beta_{p,q,s}^2.
 \label{hs:eq:beta-b}
\end{equation}
Thus $\sqrt{b_{p,q,s}}$ is the exact $L^2$-coefficient of the
normalized branch $(p,q,s)$.  Direct simplification gives
\begin{equation}
 b_{p,q,s}
 =\frac pq\binom{p-1}{s-1}\binom{q-1}{s-1}
   \binom{p+q-2s}{p-s}.
 \label{hs:eq:b-closed}
\end{equation}

\begin{proposition}[Exact active and aggregation constants]
\label{hs:prop:constants}
For $Q\ge2$,
\begin{align}
 \max_{p,q\le Q}
 \sum_{\substack{1\le s\le p\wedge q\\\neg(p=q=s)}}
 b_{p,q,s}
 &=a_Q,\label{hs:eq:aQ-max}\\
 \max_{\ell\ge0}\#\{(p,q,s):p,q\le Q,\ s\le p\wedge q,
 \ p+q-2s=\ell,\ \neg(p=q=s)\}
 &=\mu_Q.\label{hs:eq:muQ-max}
\end{align}
In particular $a_2=2,a_3=14,a_4=92,a_5=638$.
Moreover,
\begin{equation}
 a_Q\sim\frac{3\sqrt3}{4\pi}\frac{9^{Q-1}}{Q-1}.
 \label{hs:eq:aQ-asymptotic}
\end{equation}
\end{proposition}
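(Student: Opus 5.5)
The plan is to reduce all four assertions to explicit finite sums in the closed form \eqref{hs:eq:b-closed} and to treat them in turn. For \eqref{hs:eq:aQ-max} I first evaluate the diagonal $p=q=Q$. There the excluded term is $s=Q$. Substituting $r=Q-s$ with $1\le r\le Q-1$ gives $\binom{Q-1}{s-1}=\binom{Q-1}{r}$ and $\binom{2Q-2s}{Q-s}=\binom{2r}{r}$, while $p/q=1$. So the branch sum at $(Q,Q)$ is exactly $a_Q$ as defined in \eqref{overview:eq:constants}, and $a_Q$ is attained. The values $a_2=2$, $a_3=14$, $a_4=92$, $a_5=638$ then follow by direct evaluation, for instance $a_3=\binom21^2\binom21+\binom22^2\binom42=8+6$.

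It remains to show that $S(p,q)=\sum_{s}b_{p,q,s}$, taken over the admissible $s$, is at most $S(Q,Q)$. I would write $S(p,q)=\frac pq\,T(p,q)$ with
\[
 T(p,q)=\sum_s\binom{p-1}{s-1}\binom{q-1}{s-1}\binom{p+q-2s}{p-s},
\]
which is symmetric in $(p,q)$. I then argue in three steps.
\begin{enumerate}[label=\textup{(\alph*)}]
 \item For fixed $q$, every summand and the prefactor $p$ are nondecreasing in $p$. The admissible range of $s$ only grows when $p$ passes $q$, because the excluded term $p=q=s$ disappears. Hence $S(p,q)\le S(Q,q)$.
 \item It remains to compare $S(Q,q)$ with $S(Q,Q)$ for $q<Q$. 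Here all terms $s\le q$ are admissible, and I would compare termwise after the reindexing $r=Q-s$. The aim is an injection of the terms of $S(Q,q)$ into those of $S(Q,Q)$ dominating the factor $Q/q$. I intend to use $\binom{q-1}{s-1}\frac Qq=\frac Qs\binom{q}{s}\cdot\frac{s}{q}\le\frac Qs\binom{Q-1}{s-1}\cdot\frac sQ\binom Qs/\binom{Q-1}{s-1}$. Failing a clean termwise comparison, I would fall back on a generating-function identity for $T$, namely the coefficient of $x^{p-1}y^{q-1}$ in a rational series.
 \item This maximization, the asymmetric prefactor $p/q$ in step (b), is the step I expect to be the main obstacle. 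I would sanity-check it numerically for $Q\le6$ before committing to an argument.
\end{enumerate}

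For \eqref{hs:eq:muQ-max} I parametrize an admissible triple by $p=s+i$, $q=s+j$ with $i,j\ge0$ and $i+j=\ell$. The constraints become $1\le s\le Q-\max(i,j)$. For $\ell=0$ the only triples have $p=q=s$, all excluded, so the count is $0$. For $\ell\ge1$ nothing is excluded, and the count is
\[
 N_Q(\ell)=\sum_{i=0}^{\ell}\bigl(Q-\max(i,\ell-i)\bigr)_+ .
\]
This is a piecewise quadratic function of $\ell$. Splitting by the parity of $\ell$ and evaluating the arithmetic sums, I would locate the maximizer near $\ell\approx 2Q/3$. Checking the three residues of $Q\bmod 3$ should then yield $\max_\ell N_Q(\ell)=\lfloor (Q^2+Q+1)/3\rfloor$. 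I would verify this against $Q=2,3$, which should give $2$ and $4$.

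For the asymptotic \eqref{hs:eq:aQ-asymptotic} I put $n=Q-1$, so that $a_Q=\sum_{r=0}^n\binom nr^2\binom{2r}r-1$. I would apply the Laplace method. Using Stirling's formula, the summand is maximized at $r\approx 2n/3$, where the exponential rate is $9^n$. The Gaussian width of the peak is of order $\sqrt n$, and collecting the polynomial prefactors from the three binomials gives $\frac{3\sqrt3}{4\pi}\frac{9^n}{n}$. As a cross-check, the sum is the known sequence $\sum_r\binom nr^2\binom{2r}r$, the number of closed walks on the hexagonal lattice, with the stated asymptotic. The subtracted $1$ is negligible.
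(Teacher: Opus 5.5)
There is a genuine gap in the maximization step of the identity for $a_Q$. Your step (a) is sound: for fixed $q$ and $s$, every factor of $b_{p,q,s}$ is nondecreasing in $p$, and the admissible set of $s$ only grows, so $S(p,q)\le S(Q,q)$. But the whole content of the maximization then lies in step (b), the inequality $S(Q,q)\le S(Q,Q)$ for $q<Q$, and you do not prove it. The diagonal evaluation gives only $\max\ge a_Q$.

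Your displayed chain cannot supply step (b):
\begin{itemize}
 \item Its first equality is wrong for $s<q$: the middle term equals $\frac Qs\binom{q-1}{s-1}$, not $\frac Qq\binom{q-1}{s-1}$.
 \item Its right-hand side simplifies to $\binom Qs$.
 \item More fundamentally, it only compares $\binom{q-1}{s-1}$ with $\binom{Q-1}{s-1}$. At $s=1$ both equal $1$, so no manipulation of the first two binomial factors can absorb the prefactor $Q/q>1$.
\end{itemize}
An unspecified generating-function identity and numerical checks for $Q\le6$ do not replace the argument.

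The missing idea is that $Q/q$ is absorbed by the third factor, comparing the central binomial with the off-central one at the same $s$. With $i=Q-s>j=q-s\ge0$,
\[
 \frac{\binom{2i}{i}}{\binom{i+j}{i}}=\prod_{v=j+1}^{i}\frac{i+v}{v}\ge\frac{i+j+1}{j+1}\ge\frac Qq .
\]
The first inequality holds because the product is nonempty and each factor is at least $1$. The last is $(i+j+1)(s+j)-(j+1)(s+i)=i(s-1)+j(j+1)\ge0$. Combined with $\binom{q-1}{s-1}\le\binom{Q-1}{s-1}$, this gives $b_{Q,q,s}\le b_{Q,Q,s}$ for each $s\le q$. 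These are non-scalar terms of $S(Q,Q)$ because $s\le q<Q$, hence $S(Q,q)\le a_Q$.

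This is the paper's key inequality. The paper proves it for arbitrary $p>q$ and then combines it with $b_{p,q,s}=(p/q)^2b_{q,p,s}$ and the monotonicity of $a_P$ in $P$. Once it is inserted into your argument, your step (a) is a valid substitute for those last two steps.

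The remaining parts of your plan follow the paper:
\begin{itemize}
 \item the branch count via $i=p-s$, $j=q-s$, with a parity split and a check modulo $3$ (you must also confirm that the count decreases for $\ell\ge Q-1$);
 \item the explicit values $a_2,\dots,a_5$;
 \item the Laplace asymptotic, where your honeycomb-walk cross-check is correct.
\end{itemize}
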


\begin{proof}
Suppose first $p\ge q$, and write $i=p-s$, $j=q-s$, so that $i\ge j\ge0$.
If $i=j$ then $p=q$ and $b_{p,q,s}=b_{p,p,s}$; assume therefore $i>j$.
From \eqref{hs:eq:b-closed},
\[
 \frac{\binom{2i}{i}}{\binom{i+j}{i}}
 =\prod_{v=j+1}^{i}\frac{i+v}{v}
 \ge\frac{i+j+1}{j+1}
 \ge\frac pq.
\]
The first inequality holds because the product is nonempty, its factor at
$v=j+1$ equals $(i+j+1)/(j+1)$, and every remaining factor $(i+v)/v$ is at
least $1$.  The second inequality is
\[
 (i+j+1)(s+j)-(j+1)(s+i)=i(s-1)+j(j+1)\ge0.
\]
Together with
$\binom{q-1}{s-1}\le\binom{p-1}{s-1}$, this gives
$b_{p,q,s}\le b_{p,p,s}$.  If $p<q$, use
$b_{p,q,s}=(p/q)^2b_{q,p,s}$ and interchange the degrees.  Hence the
sum for $(p,q)$ is bounded by $a_{\max(p,q)}$.  The sequence
$(a_P)$ is increasing: for every old index $k$,
$\binom{P-1}{k}\le\binom Pk$, and the next sum contains one additional
positive term.  Hence the maximum is attained at $p=q=Q$.  Setting
$k=Q-s$ gives
\[
 b_{Q,Q,s}=\binom{Q-1}{k}^2\binom{2k}{k};
\]
deleting $s=Q$, equivalently $k=0$, proves \eqref{hs:eq:aQ-max}.

For \eqref{hs:eq:muQ-max}, put $i=p-s$, $j=q-s$.  At output order
$\ell=i+j\ge1$, the exact number of branches is
\begin{equation}
 m_Q(\ell)=
 \sum_{\substack{i+j=\ell\\0\le i,j\le Q-1}}
 (Q-\max(i,j)).
 \label{hs:eq:mQ}
\end{equation}
For $1\le\ell\le Q-1$, one obtains
\[
 \begin{aligned}
 m_Q(2t)&=(2t+1)Q-3t^2-2t,\\
 m_Q(2t+1)&=(2t+2)Q-3t^2-5t-2.
 \end{aligned}
\]
Their successive differences are $Q-3t-2$ and $Q-3t-3$.  For
$\ell\ge Q-1$, putting $h=2Q-2-\ell$ gives
\[
 m_Q(\ell)=\sum_{u=0}^{h}(1+\min(u,h-u)),
\]
which decreases as $\ell$ increases.  Checking the three residue
classes of $Q$ modulo $3$ yields
$
 \max_\ell m_Q(\ell)=\lfloor(Q^2+Q+1)/3\rfloor
$.

Finally set $N=Q-1$ and
$t_{N,k}=\binom Nk^2\binom{2k}{k}$.  Stirling's formula at
$k=xN$ gives the phase
\[
 \phi(x)=2[-x\log x-(1-x)\log(1-x)]+x\log4.
\]
Its unique maximum is $x=2/3$, with
$\phi(2/3)=\log9$ and $\phi''(2/3)=-9$.  The discrete Laplace
method, including the Stirling prefactor, gives
\[
 \sum_{k=0}^{N}t_{N,k}
 \sim\frac{3\sqrt3}{4\pi}\frac{9^N}{N}.
\]
The omitted term $k=0$ is negligible, proving
\eqref{hs:eq:aQ-asymptotic}.
\end{proof}

%====================================================================
\section{Separated positive registers}
\label{hs:sec:register}
%====================================================================

Let
\[
 \cK_Q^{(d)}(H)=\bigoplus_{a=1}^{d}\bigoplus_{q=1}^{Q}H^{\odot q},
 \qquad
 \norm x^2=\sum_{a,q}\norm{x_{a,q}}^2.
\]
A common graded module is a family of closed spaces
$
 \mathbf M=(M_q)_{q\le Q}
$, acting on every vector component by the same grade-wise projection.
Write
\[
 A=P_{\mathbf M}x,
 \qquad
 r=P_{\mathbf M^\perp}x,
 \qquad
 \norm A^2+\norm r^2=\norm x^2.
\]
Associate
\[
 \mathcal A_a=\sum_{p=1}^{Q}I_p\left(\frac{A_{a,p}}{\sqrt{p!}}\right),
 \qquad
 R_a=\sum_{q=1}^{Q}I_q\left(\frac{r_{a,q}}{\sqrt{q!}}\right).
\]

\begin{definition}[Separated and coherent defects]
\label{hs:def:defects}
Define
\begin{align}
 \mathfrak R_Q(r)
 &=\sum_{a,b}\sum_{p,q\le Q}
   \sum_{\substack{1\le s\le p\wedge q\\\neg(p=q=s)}}
   b_{p,q,s}\norm{r_{a,p}\wtensor_sr_{b,q}}^2,
 \label{hs:eq:R-register}\\
 \mathfrak I_Q(A,r)
 &=\sum_{a,b}\sum_{p,q\le Q}\sum_{1\le s\le p\wedge q}
   b_{p,q,s}\norm{A_{a,p}\wtensor_sr_{b,q}}^2,
 \label{hs:eq:I-register}\\
 \mathfrak H_Q(A,r)&=\mathfrak R_Q(r)+\mathfrak I_Q(A,r).
 \label{hs:eq:H-register}
\end{align}
The coherent defects are
\begin{align}
 \Delta_{\mathrm{res}}(r)
 &=\sum_{a,b}\norm{\Gamma(R_a,R_b)-\E[R_aR_b]}_2^2,
 \label{hs:eq:D-res}\\
 \Delta_{\mathrm{mix}}(A,r)
 &=\sum_{a,b}\norm{\Gamma(\mathcal A_a,R_b)}_2^2.
 \label{hs:eq:D-mix}
\end{align}
\end{definition}

The scalar residual branch is deleted because it equals the covariance
subtracted in \eqref{hs:eq:D-res}.  In the mixed register the same scalar
branch vanishes because $A_{a,q}\perp r_{b,q}$.

\begin{theorem}[Separated-to-coherent comparison]
\label{hs:thm:sep-coh}
For every common-module split,
\begin{equation}
 \Delta_{\mathrm{res}}(r)\le\mu_Q\mathfrak R_Q(r),
 \qquad
 \Delta_{\mathrm{mix}}(A,r)\le\mu_Q\mathfrak I_Q(A,r).
 \label{hs:eq:sep-coh}
\end{equation}
Consequently
$
 \Delta_{\mathrm{res}}+\Delta_{\mathrm{mix}}\le\mu_Q\mathfrak H_Q
$.
\end{theorem}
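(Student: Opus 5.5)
The plan is to expand each bracket with Lemma~\ref{hs:lem:mixed-gamma}, group the branches by output chaos order, and use the Wiener isometry to turn each coherent $L^2$ norm into a sum of squared norms of finitely many kernels, one per output order. Let me write this out for $\Delta_{\mathrm{res}}$; the mixed defect is handled identically. By bilinearity,
\[
 \Gamma(R_a,R_b)=\sum_{p,q\le Q}\Gamma\Bigl(I_p\bigl(r_{a,p}/\sqrt{p!}\bigr),I_q\bigl(r_{b,q}/\sqrt{q!}\bigr)\Bigr)
 =\sum_{p,q}\sum_{s}\frac{c_{p,q,s}}{\sqrt{p!q!}}\,I_{p+q-2s}(r_{a,p}\wtensor_sr_{b,q}).
\]
The branches with $p=q=s$ are the constants $\ip{r_{a,p}}{r_{b,p}}$, which sum exactly to $\E[R_aR_b]$ and cancel in \eqref{hs:eq:D-res}. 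The centred difference $\Gamma(R_a,R_b)-\E[R_aR_b]$ is then a sum over output orders $\ell\ge0$ of $I_\ell(K_{ab,\ell})$. Each kernel $K_{ab,\ell}$ is the sum of $c_{p,q,s}(p!q!)^{-1/2}\,r_{a,p}\wtensor_sr_{b,q}$ over the branches in
\[
 B_\ell=\{(p,q,s):p,q\le Q,\ 1\le s\le p\wedge q,\ \neg(p=q=s),\ p+q-2s=\ell\}.
\]
For $\ell=0$ only the excluded total branches could contribute, so $B_0$ is empty.

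Orthogonality of distinct chaoses gives
\[
 \norm{\Gamma(R_a,R_b)-\E[R_aR_b]}_2^2=\sum_{\ell\ge1}\ell!\,\norm{K_{ab,\ell}}^2 .
\]
Each $K_{ab,\ell}$ is a sum of $\#B_\ell\le\mu_Q$ terms, by the counting identity \eqref{hs:eq:muQ-max} of Proposition~\ref{hs:prop:constants}. Cauchy--Schwarz on this finite sum gives
\[
 \ell!\,\norm{K_{ab,\ell}}^2\le\mu_Q\sum_{B_\ell}\frac{c_{p,q,s}^2\,\ell!}{p!q!}\norm{r_{a,p}\wtensor_sr_{b,q}}^2 .
\]
By \eqref{hs:eq:beta-b} with $\ell=p+q-2s$, the coefficient $c_{p,q,s}^2\,\ell!/(p!q!)$ is exactly $b_{p,q,s}$. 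The sets $B_\ell$ partition the nontotal branches, so summing over $\ell$ and then over $a,b$ reproduces $\mathfrak R_Q(r)$ and proves $\Delta_{\mathrm{res}}\le\mu_Q\mathfrak R_Q$.

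For $\Delta_{\mathrm{mix}}$, expand $\Gamma(\mathcal A_a,R_b)$ in the same way with kernels $A_{a,p}\wtensor_s r_{b,q}$. The total branch $p=q=s$ gives $\ip{A_{a,q}}{r_{b,q}}=0$, because the common module acts on every component by the same grade-wise projection, so $A_{a,q}\in M_q$ and $r_{b,q}\in M_q^\perp$. That branch therefore drops out, and the surviving branches per output order again lie in $B_\ell$, of cardinality at most $\mu_Q$. The same Cauchy--Schwarz step yields $\Delta_{\mathrm{mix}}\le\mu_Q\mathfrak I_Q$; the $p=q=s$ terms included in \eqref{hs:eq:I-register} vanish, so no discrepancy arises. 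Adding the two inequalities gives the consequence.

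The only delicate point is the bookkeeping. First, the counting in \eqref{hs:eq:muQ-max} is over ordered triples $(p,q,s)$, which matches the oriented, nonsymmetric bracket; the pairs $(a,b)$ stay fixed inside each norm, so no extra multiplicity enters. Second, one must confirm that the normalization $1/\sqrt{p!}$ in $\mathcal A_a,R_a$ combines with $\ell!$ from the isometry to give precisely $b_{p,q,s}$, which is what \eqref{hs:eq:beta-b} asserts.
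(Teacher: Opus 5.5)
Your proposal is correct and is essentially the paper's own argument: expand each bracket by Lemma~\ref{hs:lem:mixed-gamma}, use orthogonality of distinct output orders, apply Cauchy--Schwarz to the at most $\mu_Q$ branches at each fixed order, and recognize the squared normalized coefficients as $b_{p,q,s}$. Your explicit bookkeeping fills in details that the paper's short proof leaves implicit: the total branches cancel against $\E[R_aR_b]$, the mixed total branch vanishes because $A_{a,q}\perp r_{b,q}$, and $B_0=\emptyset$.
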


\begin{proof}
By Lemma~\ref{hs:lem:mixed-gamma}, a normalized branch $(p,q,s)$ has
$L^2$-norm
\[
 \sqrt{b_{p,q,s}}\,
 \norm{r_{a,p}\wtensor_sr_{b,q}},
\]
or the analogous norm with $A_{a,p}$.  Different output orders
$p+q-2s$ are orthogonal.  At a fixed output order at most $\mu_Q$
branches add.  Thus
\[
 \norm{\sum_{j=1}^{N}Z_j}_2^2
 \le N\sum_{j=1}^{N}\norm{Z_j}_2^2
 \le\mu_Q\sum_{j=1}^{N}\norm{Z_j}_2^2.
\]
Summation over output orders and component pairs proves
\eqref{hs:eq:sep-coh}.
\end{proof}

\begin{corollary}[Sharp pure-chaos constant]
\label{hs:cor:pure-sharp}
Let $q\ge2$, $f=(f_a)_{a\le d}\subset H^{\odot q}$, and define
\[
 T_s(f)h=(f_a\otimes_sh)_{a\le d},
 \qquad
 \rho(f)=\max_{1\le s<q}\norm{T_s(f)}_{\op},
 \qquad
 E(f)=\sum_a\norm{f_a}^2.
\]
Then
\begin{align}
 &\sum_{a,b}
 \norm{\Gamma(I_q(f_a),I_q(f_b))-q!\ip{f_a}{f_b}}_2^2
 \le (q!)^2a_qE(f)\rho(f)^2.
 \label{hs:eq:pure-sharp}
\end{align}
The constant $(q!)^2a_q$ is optimal.
\end{corollary}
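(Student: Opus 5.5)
The plan is to compute the left-hand side of \eqref{hs:eq:pure-sharp} exactly with Lemma~\ref{hs:lem:mixed-gamma}, bound the resulting contraction norms by $\rho(f)^2E(f)$ with a Hilbert--Schmidt factorization, and then check that a rank-one kernel attains equality.

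\textbf{Step 1: exact branch expansion.} Take $p=q$ in Lemma~\ref{hs:lem:mixed-gamma}. The branch $s=q$ is the constant $q!\ip{f_a}{f_b}$, and it is removed by the subtraction. This gives
\[
 \Gamma(I_q(f_a),I_q(f_b))-q!\ip{f_a}{f_b}
 =\sum_{s=1}^{q-1}c_{q,q,s}I_{2q-2s}(f_a\wtensor_sf_b).
\]
The output orders $2q-2s$ are pairwise distinct, so the branches are exactly orthogonal in $L^2$. No aggregation factor such as $\mu_Q$ appears, which is why the pure-chaos constant can be sharp. By the Wiener isometry and the definition \eqref{hs:eq:beta-b}, $c_{q,q,s}^2(2q-2s)!=(q!)^2b_{q,q,s}$. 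Therefore the left-hand side of \eqref{hs:eq:pure-sharp} equals
\[
 (q!)^2\sum_{s=1}^{q-1}b_{q,q,s}\sum_{a,b}\norm{f_a\wtensor_sf_b}^2 .
\]

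\textbf{Step 2: contraction bound.} By \eqref{hs:eq:contraction-bound}, $\norm{f_a\wtensor_sf_b}\le\norm{f_a\otimes_sf_b}$. Fix $b$ and $s<q$. Expand $f_b=\sum_j g_j\otimes e_j$, where $(e_j)$ is an orthonormal basis of $H^{\otimes(q-s)}$ and $g_j\in H^{\otimes s}$ are the slices along the contracted legs. Then $f_a\otimes_sf_b=\sum_j(f_a\otimes_sg_j)\otimes e_j$, and these terms are orthogonal in $j$. Hence
\[
 \sum_a\norm{f_a\otimes_sf_b}^2=\sum_j\norm{T_s(f)g_j}^2\le\rho(f)^2\sum_j\norm{g_j}^2=\rho(f)^2\norm{f_b}^2 .
\]
Two points need care here. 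First, the slices $g_j$ need not be symmetric, so $T_s(f)$ must be read on $H^{\otimes s}$. Since $f_a$ is symmetric, contracting against $g_j$ only sees $\Sym g_j$, and $\norm{\Sym g_j}\le\norm{g_j}$, so the operator norm does not increase. Second, the slicing must be done with the correct leg convention. Summing over $b$ gives $\sum_{a,b}\norm{f_a\wtensor_sf_b}^2\le\rho(f)^2E(f)$ for each $s<q$. By \eqref{hs:eq:aQ-max} with $p=q$ and the definition \eqref{overview:eq:constants}, $\sum_{s<q}b_{q,q,s}=a_q$, and this proves \eqref{hs:eq:pure-sharp}.

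\textbf{Step 3: optimality.} Take $d=1$ and $f=e^{\otimes q}$ for a unit vector $e$. Then:
\begin{itemize}
 \item $f\otimes_sf=e^{\otimes(2q-2s)}$ is already symmetric with norm $1$, so every inequality in Step~2 holds with equality.
 \item $T_s(f)h=\ip{h}{e^{\otimes s}}e^{\otimes(q-s)}$ has operator norm $1$, so $\rho(f)=1$.
 \item $E(f)=1$.
\end{itemize}
Both sides of \eqref{hs:eq:pure-sharp} therefore equal $(q!)^2a_q$, so the constant cannot be lowered. This is the Hermite case $I_q(f)=H_q(W(e))$.

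The only step needing real care is Step~2, namely the slicing identity that turns $\sum_a\norm{f_a\otimes_sf_b}^2$ into a Hilbert--Schmidt sum over the columns of $f_b$.
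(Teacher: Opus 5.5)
Your proof is correct and follows the paper's argument. Both use exact orthogonality of the distinct output orders $2q-2s$, the symmetrization contraction bound, the summation $\sum_{s<q}b_{q,q,s}=a_q$, and equality at $d=1$, $f=e^{\otimes q}$. Your column-wise slicing in Step~2 is the unpacked form of the paper's one-line estimate $\sum_{a,b}\norm{f_a\otimes_sf_b}^2=\norm{T_s(f)T_s(f)^*}_{\HS}^2\le\norm{T_s(f)}_{\op}^2\norm{T_s(f)}_{\HS}^2=\norm{T_s(f)}_{\op}^2E(f)$.
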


\begin{proof}
For fixed $q$, the output orders $2q-2s$, $1\le s<q$, are
distinct, so the Stein square is the exact sum of the corresponding
squared branch norms.  Moreover,
\[
 \sum_{a,b}\norm{f_a\otimes_sf_b}^2
 =\norm{T_s(f)T_s(f)^*}_{\HS}^2
 \le\norm{T_s(f)}_{\op}^2\norm{T_s(f)}_{\HS}^2
 =\norm{T_s(f)}_{\op}^2E(f).
\]
Use symmetrization contraction and sum the coefficients.  Equality holds
for $d=1$ and $f_1=e^{\otimes q}$, so the constant is optimal.
\end{proof}

\section{Positive feedback and quantitative extraction}
\label{hs:sec:feedback}
%====================================================================

For a packet $U=(U_{a,p})$ and an input weight $q$, define
\[
 (\cL_{U,q}h)_{a,p,s}
 =\sqrt{b_{p,q,s}}\,U_{a,p}\wtensor_sh.
\]
Let $\cL_{U,q}^{\circ}$ omit all scalar channels $p=q=s$.  For a
common-module split $x=A+r$, put
\begin{equation}
 \Lambda_q=
 \begin{pmatrix}
  \cL_{r,q}^{\circ}\\
  \cL_{A,q}
 \end{pmatrix}P_{M_q^\perp}.
 \label{hs:eq:Lambda}
\end{equation}
The scalar channel in the second row vanishes on $M_q^\perp$.  Therefore
Proposition~\ref{hs:prop:constants} and \eqref{hs:eq:contraction-bound} give
\begin{align}
 \norm{\Lambda_q}_{\op}^2
 &\le a_Q\bigl(\norm A^2+\norm r^2\bigr)
 \le a_QR^2,\label{hs:eq:Lambda-bound}\\
 \mathfrak H_Q(A,r)
 &=\sum_{b=1}^{d}\sum_{q=1}^{Q}\norm{\Lambda_qr_{b,q}}^2.
 \label{hs:eq:H-Lambda}
\end{align}

The tempting projection onto the support of
$\Lambda_q^*\Lambda_q$ destroys the size information.

\begin{proposition}[Collapse of the support projection]
\label{hs:prop:support-collapse}
For every $q\ge2$ and every residual component $r_{b,q}$,
\[
 r_{b,q}\in\overline{\Ran\Lambda_q^*}.
\]
Consequently,
$
 P_{\overline{\Ran\Lambda_q^*}}r_{b,q}=r_{b,q}
$;
the support projection promotes the whole nonlinear residual in one step.
\end{proposition}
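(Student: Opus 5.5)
The plan is to prove the equivalent statement $r_{b,q}\perp\Ker\Lambda_q$, since $\overline{\Ran\Lambda_q^*}=(\Ker\Lambda_q)^\perp$. The argument uses a single nonscalar channel of $\Lambda_q$, namely $(p,q,s)=(q,q,q-1)$ with $U=r$, together with a trace identity for contractions of $q-1$ legs.

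\emph{Reduction to $M_q^\perp$.} Let $h\in\Ker\Lambda_q$ and split $h=h_M+h_\perp$ with $h_M\in M_q$ and $h_\perp\in M_q^\perp$. By \eqref{hs:eq:Lambda}, $\Lambda_q h=\Lambda_q h_\perp$, so $\Lambda_q h_\perp=0$. The residual $r_{b,q}$ lies in $M_q^\perp$, hence $\ip{r_{b,q}}{h}=\ip{r_{b,q}}{h_\perp}$. It therefore suffices to show that every $h\in M_q^\perp$ with $\Lambda_qh=0$ is orthogonal to $r_{b,q}$.

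\emph{Extracting one channel.} Since $q\ge2$, the index $s=q-1$ satisfies $1\le s<q$, so the branch $(p,q,s)=(q,q,q-1)$ is not a scalar channel. It is therefore retained in $\cL^\circ_{r,q}$, the first row of $\Lambda_q$. Its coefficient, from \eqref{hs:eq:b-closed}, is
\[
 b_{q,q,q-1}=\binom{q-1}{q-2}^2\binom21=2(q-1)^2>0 .
\]
Hence the vanishing of the component $(b,q,q-1)$ of $\Lambda_qh$ gives
\[
 r_{b,q}\wtensor_{q-1}h=0\qquad\text{in }H^{\odot2}.
\]

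\emph{Trace identity.} Let $K=r_{b,q}\otimes_{q-1}h$, regarded as an operator on $H$. It factors as $K=L_{r_{b,q},q-1}^{*}L_{h,q-1}$ (up to the canonical identification of legs), where $L_{\cdot,q-1}:H\to H^{\odot(q-1)}$. Both factors are Hilbert--Schmidt, so $K$ is trace class and
\[
 \Tr K=\sum_i\ip{r_{b,q}\otimes_{q-1}h}{e_i\otimes e_i}=\ip{r_{b,q}}{h}
\]
for any orthonormal basis $(e_i)$. Symmetrization replaces $K$ by $\tfrac12(K+K^{\mathsf T})$, which is again trace class with the same trace. Therefore
\[
 \ip{r_{b,q}}{h}=\Tr\bigl(r_{b,q}\wtensor_{q-1}h\bigr)=0 .
\]
This gives $r_{b,q}\perp\Ker\Lambda_q$, i.e.\ $r_{b,q}\in\overline{\Ran\Lambda_q^*}$. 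The identity $P_{\overline{\Ran\Lambda_q^*}}r_{b,q}=r_{b,q}$ is then immediate.

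\emph{Expected difficulty.} The only delicate point is the trace step in infinite dimension. The trace is not a bounded functional on Hilbert--Schmidt operators, so the argument must use the factorization of $K$ through the flattenings to get trace-class membership. Once that is in place, one checks that symmetrizing the two output legs does not change the trace. The case $r_{b,q}=0$ is trivial.
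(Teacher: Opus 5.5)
Your proposal is correct and follows the paper's proof. For $h\in\Ker\Lambda_q$, the non-scalar self-channel $(p,q,s)=(q,q,q-1)$ forces $r_{b,q}\wtensor_{q-1}P_{M_q^\perp}h=0$, and taking the trace of this symmetric two-tensor gives $\ip{r_{b,q}}{h}=0$, hence $r_{b,q}\perp\Ker\Lambda_q$. Your explicit check that $b_{q,q,q-1}=2(q-1)^2>0$ and your trace-class justification make explicit what the paper leaves implicit; note only that, in the paper's convention, the flattening $H\to H^{\odot(q-1)}$ you use is $L_{\cdot,1}$ rather than $L_{\cdot,q-1}$.
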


\begin{proof}
Let $h\in\Ker\Lambda_q$.  The self-channel $p=q,s=q-1$ gives
\[
 r_{b,q}\wtensor_{q-1}P_{M_q^\perp}h=0.
\]
The output is a symmetric two-tensor.  Taking its trace contracts the last
two legs and gives
\[
 0=\Tr\bigl(
 r_{b,q}\wtensor_{q-1}P_{M_q^\perp}h
 \bigr)
 =\ip{r_{b,q}}{P_{M_q^\perp}h}
 =\ip{r_{b,q}}h.
\]
Thus $r_{b,q}\perp\Ker\Lambda_q$, and
$
 (\Ker\Lambda_q)^\perp=\overline{\Ran\Lambda_q^*}
$.
\end{proof}

\begin{remark}[Why the support projection must be replaced by $\cN_q$]
The support projection replaces every positive singular value by one.  The
correct object is the positive feedback
\[
 \cN_q=\Lambda_q^*\Lambda_q,
\]
which retains the squared singular values.
\end{remark}

\begin{lemma}[Positive feedback projection]
\label{hs:lem:positive-feedback}
Let $K\ge0$ be a nonzero bounded operator on a Hilbert space and let
$z$ be a vector.  With the convention $P_{\{0\}}=0$, one has
\begin{equation}
 \norm{P_{\R Kz}z}^2
 \ge\frac{\ip z{Kz}}{\norm K_{\op}}.
 \label{hs:eq:positive-feedback}
\end{equation}
If $K=0$, its contribution is declared to be zero and no quotient is
formed.
\end{lemma}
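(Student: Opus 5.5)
The inequality is a one-dimensional projection computation followed by a Cauchy--Schwarz estimate for the quadratic form of $K$. First dispose of the degenerate cases. If $K=0$ there is nothing to prove by convention. If $K\ne0$ but $Kz=0$, then $\R Kz=\{0\}$, so the left-hand side is $0$; the right-hand side is also $0$, since $\ip z{Kz}=0$. Hence we may assume $Kz\neq0$. The projection onto the line $\R Kz$ is then explicit:
\[
 P_{\R Kz}z=\frac{\ip z{Kz}}{\norm{Kz}^2}\,Kz,
 \qquad
 \norm{P_{\R Kz}z}^2=\frac{\ip z{Kz}^2}{\norm{Kz}^2}.
\]
So the claim reduces to
\[
 \norm{Kz}^2\le\norm K_{\op}\,\ip z{Kz}.
\]

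This is the only step with content, and it uses positivity essentially. Since $K\ge0$, it has a bounded positive square root $K^{1/2}$, with $\norm{K^{1/2}}_{\op}^2=\norm K_{\op}$. Writing $Kz=K^{1/2}(K^{1/2}z)$ gives
\[
 \norm{Kz}^2\le\norm{K^{1/2}}_{\op}^2\norm{K^{1/2}z}^2=\norm K_{\op}\ip z{Kz}.
\]
Substituting this bound into the projection formula yields \eqref{hs:eq:positive-feedback}.

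Alternatively, the square root can be avoided: apply the Cauchy--Schwarz inequality for the positive semidefinite form $(x,y)\mapsto\ip x{Ky}$ with $x=Kz$ and $y=z$. This gives
\[
 \ip{Kz}{Kz}^2\le\ip{Kz}{K^2z}\,\ip z{Kz}\le\norm K_{\op}\norm{Kz}^2\ip z{Kz},
\]
and dividing by $\norm{Kz}^2$ gives the same bound. There is no real obstacle; the only care needed is the degenerate case $Kz=0$ handled at the start.
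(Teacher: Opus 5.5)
Your proof is correct and matches the paper's: both reduce to $\norm{Kz}^2=\ip z{K^2z}\le\norm K_{\op}\ip z{Kz}$, handle $Kz=0$ separately, and finish with the explicit projection $\norm{P_{\R Kz}z}^2=\ip z{Kz}^2/\norm{Kz}^2$. The only difference is that the paper obtains the key inequality from the spectral theorem ($K^2\le\norm K_{\op}K$) rather than from the square root $K^{1/2}$ or the Cauchy--Schwarz inequality for the form $\ip{\cdot}{K\,\cdot}$, which is immaterial.
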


\begin{proof}
Positivity and the spectral theorem give
\[
 \norm{Kz}^2=\ip z{K^2z}
 \le\norm K_{\op}\ip z{Kz}.
\]
If $Kz=0$, positivity gives $\ip z{Kz}=0$ and the conclusion is
immediate.  If $Kz\ne0$, then
$
 \norm{P_{\R Kz}z}^2=\ip z{Kz}^2/\norm{Kz}^2
$,
the result follows.
\end{proof}

\begin{theorem}[Exact one-step feedback extraction]
\label{hs:thm:one-step}
Assume $R>0$ and $\norm x\le R$.
Define
\[
 \cN_q=\Lambda_q^*\Lambda_q,\qquad
 w_{b,q}=\cN_qr_{b,q},\qquad
 W_q=\Span_{1\le b\le d}\{w_{b,q}\}.
\]
Then $W_q\subset M_q^\perp$, $\dim W_q\le d$, and
\begin{equation}
 \sum_{b,q}\norm{P_{W_q}r_{b,q}}^2
 \ge\frac{\mathfrak H_Q(A,r)}{a_QR^2}.
 \label{hs:eq:feedback-capture}
\end{equation}
If $M_q'=M_q\oplus W_q$, $A'=P_{\mathbf M'}x$, and
$r'=P_{(\mathbf M')^\perp}x$, then
\begin{equation}
 \norm r^2-\norm{r'}^2
 \ge\frac{\mathfrak H_Q(A,r)}{a_QR^2}.
 \label{hs:eq:energy-drop}
\end{equation}
Moreover,
\begin{equation}
 \mathfrak H_Q(A,r)=0
 \quad\Longleftrightarrow\quad
 \cN_qr_{b,q}=0\quad\text{for every }b,q.
 \label{hs:eq:nullity}
\end{equation}
\end{theorem}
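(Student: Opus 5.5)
The plan is to apply Lemma~\ref{hs:lem:positive-feedback} once per grade and component, with $K=\cN_q$ and $z=r_{b,q}$, and then add the resulting bounds using \eqref{hs:eq:H-Lambda}.

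First, the structural claims. By \eqref{hs:eq:Lambda} we have $\Lambda_q=(\cdots)P_{M_q^\perp}$, so $\cN_q=P_{M_q^\perp}(\cdots)^*(\cdots)P_{M_q^\perp}$. Hence $\Ran\cN_q\subset M_q^\perp$, which gives $W_q\subset M_q^\perp$. The space $W_q$ is spanned by $d$ vectors, so $\dim W_q\le d$. The operator $\cN_q$ is positive. Its norm satisfies $\norm{\cN_q}_{\op}=\norm{\Lambda_q}_{\op}^2\le a_QR^2$ by \eqref{hs:eq:Lambda-bound}. Moreover $\ip{r_{b,q}}{\cN_qr_{b,q}}=\norm{\Lambda_qr_{b,q}}^2$.

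Second, the capture bound \eqref{hs:eq:feedback-capture}. Since $\R w_{b,q}\subset W_q$, monotonicity of projections gives $\norm{P_{W_q}r_{b,q}}^2\ge\norm{P_{\R\cN_qr_{b,q}}r_{b,q}}^2$. If $\cN_q\ne0$, Lemma~\ref{hs:lem:positive-feedback} bounds the right side below by
\[
 \frac{\norm{\Lambda_qr_{b,q}}^2}{\norm{\cN_q}_{\op}}\ge\frac{\norm{\Lambda_qr_{b,q}}^2}{a_QR^2}.
\]
If $\cN_q=0$, then $\Lambda_q=0$, both sides vanish, and the inequality is trivial. Summing over $b$ and $q$ and invoking \eqref{hs:eq:H-Lambda} gives \eqref{hs:eq:feedback-capture}.

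Third, the energy drop \eqref{hs:eq:energy-drop}. Since $W_q\subset M_q^\perp$, we have $(M_q')^\perp=M_q^\perp\ominus W_q$. Also $r_{b,q}=P_{M_q^\perp}x_{b,q}$, so $P_{W_q}x_{b,q}=P_{W_q}r_{b,q}$. This yields
\[
 r'_{b,q}=r_{b,q}-P_{W_q}r_{b,q},\qquad \norm{r_{b,q}}^2-\norm{r'_{b,q}}^2=\norm{P_{W_q}r_{b,q}}^2,
\]
and summing over $b,q$ gives \eqref{hs:eq:energy-drop}.

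Finally, the nullity criterion \eqref{hs:eq:nullity}. By \eqref{hs:eq:H-Lambda}, $\mathfrak H_Q=0$ holds if and only if $\Lambda_qr_{b,q}=0$ for all $b,q$. If $\Lambda_qr_{b,q}=0$, then $\cN_qr_{b,q}=\Lambda_q^*\Lambda_qr_{b,q}=0$. Conversely, if $\cN_qr_{b,q}=0$, then $\norm{\Lambda_qr_{b,q}}^2=\ip{r_{b,q}}{\cN_qr_{b,q}}=0$.

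The one step needing care is the claim $\norm{\Lambda_q}_{\op}^2\le a_QR^2$ used above. This is \eqref{hs:eq:Lambda-bound}, which rests on Proposition~\ref{hs:prop:constants} together with $\norm A^2+\norm r^2=\norm x^2\le R^2$. I take it as given, since it is stated earlier. The only remaining subtlety is that the scalar channel of the $A$-row is killed by $P_{M_q^\perp}$, which is exactly why the bound involves $a_Q$ and not $a_Q+1$.
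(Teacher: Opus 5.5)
Your proposal is correct and follows the paper's proof essentially step for step. The shared steps are: the range of $\cN_q$ lies in $M_q^\perp$; Lemma~\ref{hs:lem:positive-feedback} is applied to each pair $(\cN_q,r_{b,q})$ with $\norm{\cN_q}_{\op}=\norm{\Lambda_q}_{\op}^2\le a_QR^2$; the line $\R\cN_qr_{b,q}$ is enlarged to $W_q$; the bounds are summed via \eqref{hs:eq:H-Lambda}; Pythagoras gives the energy drop; and the identity $\ip{r_{b,q}}{\cN_qr_{b,q}}=\norm{\Lambda_qr_{b,q}}^2$ gives the nullity criterion. Your explicit check that $r'_{b,q}=r_{b,q}-P_{W_q}r_{b,q}$, which holds because $W_q\subset M_q^\perp$, just spells out what the paper compresses into the word ``Pythagoras.''
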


\begin{proof}
The range of $\cN_q$ lies in $M_q^\perp$, and
\[
 \mathfrak H_Q(A,r)
 =\sum_{b,q}\ip{r_{b,q}}{\cN_qr_{b,q}}
\]
by \eqref{hs:eq:H-Lambda}.  Also
$
 \norm{\cN_q}_{\op}=\norm{\Lambda_q}_{\op}^2\le a_QR^2
$.
If $\cN_q=0$, all its contributions vanish.  Otherwise apply
Lemma~\ref{hs:lem:positive-feedback} to each pair
$(\cN_q,r_{b,q})$.  The line $\R\cN_qr_{b,q}$ lies in $W_q$, hence
the projection onto $W_q$ captures at least the same energy.  Summation
gives \eqref{hs:eq:feedback-capture}; Pythagoras gives
\eqref{hs:eq:energy-drop}.  Finally,
\[
 \ip{r_{b,q}}{\cN_qr_{b,q}}=\norm{\Lambda_qr_{b,q}}^2,
\]
and all summands in $\mathfrak H_Q$ are nonnegative, proving
\eqref{hs:eq:nullity}.
\end{proof}

\begin{remark}[What feedback does not prove]
The feedback is a finite sum of contraction--insertion loops.  It may act
as a small scalar multiple of the identity on an invariant residual
direction.  For example, if
$
 r_n=n^{-1/2}\sum_{i=1}^{n}e_i^{\otimes2}
$,
the proper one-contraction loop is a scalar multiple of $r_n$, although
its register tends to zero.  Thus Theorem~\ref{hs:thm:one-step} is a
size-sensitive threshold statement, not an automatic descent theorem for
partition complexity.
\end{remark}

\begin{theorem}[Quantitative exhaustion]
\label{hs:thm:quantitative-extraction}
Let $R>0$, $\norm x\le R$, let $\delta>0$, and start from an arbitrary common
graded module.  Repeat Theorem~\ref{hs:thm:one-step} while
$
 \mathfrak H_Q(A_j,r_j)>\delta
$.
The procedure stops after
\begin{equation}
 N_\delta<\frac{a_QR^4}{\delta}
 \label{hs:eq:Ndelta}
\end{equation}
promotions.  Starting from the zero module and using nested thresholds,
one obtains modules satisfying
\begin{align}
 \dim M_q^{[J]}&\le dJ,\label{hs:eq:dim-J}\\
 \mathfrak H_Q(A^{[J]},r^{[J]})
 &\le\frac{a_QR^4}{J},\label{hs:eq:H-J}\\
 \Delta_{\mathrm{res}}(r^{[J]})+\Delta_{\mathrm{mix}}(A^{[J]},r^{[J]})
 &\le\frac{\mu_Qa_QR^4}{J}.\label{hs:eq:D-J}
\end{align}
\end{theorem}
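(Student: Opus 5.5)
The plan is to combine the one-step energy drop of Theorem~\ref{hs:thm:one-step} with the a priori bound $\norm r^2\le\norm x^2\le R^2$, using a telescoping argument.

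\textbf{Stopping time.} Suppose the procedure performs $N$ promotions, each taken while $\mathfrak H_Q(A_j,r_j)>\delta$. By \eqref{hs:eq:energy-drop}, every promotion lowers the residual energy $\norm{r_j}^2$ by more than $\delta/(a_QR^2)$. The residual energy is nonincreasing, starts at most at $R^2$, and remains nonnegative. Summing the drops therefore gives
\[
 N\delta/(a_QR^2)<R^2 .
\]
This yields $N<a_QR^4/\delta$, which is \eqref{hs:eq:Ndelta}. It also shows that the loop terminates, with $\mathfrak H_Q\le\delta$ at the stopping point. One subtlety needs care: the bound $\norm{x}\le R$ is used inside Theorem~\ref{hs:thm:one-step} through $\norm A^2+\norm r^2=\norm x^2\le R^2$. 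This identity holds for every common-module split, so the hypothesis persists along the iteration.

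\textbf{Nested modules.} Start from the zero module and fix $J\ge1$. I would take the threshold $\delta_J=a_QR^4/J$. The count gives $N_{\delta_J}<J$, so at most $J-1$ promotions occur. Each promotion adds $W_q$ with $\dim W_q\le d$, so $\dim M_q^{[J]}\le d(J-1)\le dJ$, which is \eqref{hs:eq:dim-J}.

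At termination $\mathfrak H_Q\le a_QR^4/J$, giving \eqref{hs:eq:H-J}. To make the modules nested in $J$, I would run a single trajectory and continue it: the thresholds $\delta_J$ decrease in $J$, and the procedure is deterministic. Define $M^{[J]}$ as the module reached when the trajectory first satisfies $\mathfrak H_Q\le\delta_J$. The total number of promotions up to that moment is still bounded by the first paragraph's count with $\delta=\delta_J$, because every earlier promotion was taken while $\mathfrak H_Q>\delta_{J'}\ge\delta_J$ for some $J'\le J$. The dimension bound therefore survives, and nestedness holds by construction.

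\textbf{Coherent defects.} Finally, \eqref{hs:eq:D-J} follows from Theorem~\ref{hs:thm:sep-coh}: $\Delta_{\mathrm{res}}+\Delta_{\mathrm{mix}}\le\mu_Q\mathfrak H_Q$.

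The main obstacle is the bookkeeping in the nested construction. The key check is that the energy-drop count applies to the whole prefix of the single trajectory, not just to the last threshold stage. The observation that every earlier step had $\mathfrak H_Q>\delta_J$ resolves this.
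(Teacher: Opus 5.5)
Your proof is correct and follows the paper's argument essentially verbatim. The strict per-step drop of more than $\delta/(a_QR^2)$ against the residual budget $R^2$ gives \eqref{hs:eq:Ndelta}. The thresholds $\delta_J=a_QR^4/J$ along one continued trajectory give fewer than $J$ promotions, each adding at most $d$ dimensions per grade, and Theorem~\ref{hs:thm:sep-coh} then gives \eqref{hs:eq:D-J}. Your observation that every promotion before the end of level $J$ is made with $\mathfrak H_Q>\delta_J$ is exactly the paper's remark that ``before completion of level $J$, every promotion removes more than $R^2/J$.''
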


\begin{proof}
Every active step removes more than
$
 \delta/(a_QR^2)
$
from a residual energy bounded by $R^2$.  The drops are orthogonal, so
\eqref{hs:eq:Ndelta} follows.

For the nested construction use the thresholds
$
 \delta_J=a_QR^4/J
$
and continue the algorithm from the preceding terminal module.  Before
completion of level $J$, every promotion removes more than $R^2/J$;
hence fewer than $J$ promotions have occurred.  Each promotion adds at
most $d$ dimensions to each grade.  This proves \eqref{hs:eq:dim-J} and
\eqref{hs:eq:H-J}; Theorem~\ref{hs:thm:sep-coh} gives \eqref{hs:eq:D-J}.
\end{proof}

\begin{remark}[Order of limits in the quantitative extraction]
For a triangular array the uniform estimate is used in the order
\[
 n\longrightarrow\infty\quad\text{at fixed }J,
 \qquad\text{then}\qquad J\longrightarrow\infty.
\]
No simultaneous quantitative choice $J=J(n)$ is asserted without a
separate uniform threshold condition.
\end{remark}

%====================================================================
\section{Probabilistic closure}
\label{hs:sec:probability}
%====================================================================

We first record the characteristic-function estimate which converts the
two coherent defects into Gaussianity and independence.

\begin{lemma}[Stein interpolation]
\label{hs:lem:stein-interpolation}
Let $A,B$ be centered $\R^d$-valued vectors with components in
$\mathbb D^{1,2}$, and put $C=\Cov(B)$.  Then, for $u,v\in\R^d$,
\begin{align}
 &\left|
 \E e^{i(u\cdot A+v\cdot B)}
 -\E e^{iu\cdot A}e^{-\frac12v^{\mathsf T}Cv}
 \right|\notag\\
 &\quad\le
 |u|\,|v|
 \left(\sum_{a,b}\norm{\Gamma(A_a,B_b)}_2^2\right)^{1/2}
 +\frac{|v|^2}{2}
 \left(\sum_{a,b}
 \norm{\Gamma(B_a,B_b)-C_{ab}}_2^2\right)^{1/2}.
 \label{hs:eq:stein-interpolation}
\end{align}
\end{lemma}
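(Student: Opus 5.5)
The plan is a two-step interpolation that separates the Gaussian replacement of $B$ from its decoupling from $A$. Fix $u,v$ and let $Z\sim N_d(0,C)$ be independent of $(A,B)$, realized on an enlarged probability space; the Malliavin calculus acts only on the original Gaussian field. For $\theta\in[0,1]$ put
\[
 \Psi(\theta)=\E\exp\bigl(i u\cdot A+i v\cdot(\sqrt\theta\,B+\sqrt{1-\theta}\,Z)\bigr).
\]
Then $\Psi(1)=\E e^{i(u\cdot A+v\cdot B)}$ and $\Psi(0)=\E e^{iu\cdot A}e^{-\frac12 v^{\mathsf T}Cv}$. The target is therefore bounded by $\int_0^1|\Psi'(\theta)|\,d\theta$.

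Differentiating gives
\[
 \Psi'(\theta)=\frac{i}{2}\E\Bigl[e^{i(\cdot)}\Bigl(\theta^{-1/2}\,v\cdot B-(1-\theta)^{-1/2}\,v\cdot Z\Bigr)\Bigr].
\]
The $Z$-term is handled by Gaussian integration by parts in $Z$. It produces $-\tfrac{i}{2}\,(1-\theta)^{-1/2}\cdot i\sqrt{1-\theta}\,v^{\mathsf T}Cv\,\E e^{i(\cdot)}$, which equals $\tfrac12 v^{\mathsf T}Cv\,\E e^{i(\cdot)}$.

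The $B$-term is handled by the Malliavin identity $B_b=\delta(-DL^{-1}B_b)$. This requires $B_b$ centered with $L^{-1}$ defined, which holds for centered $\mathbb D^{1,2}$ variables. With $\Phi=e^{i(\cdot)}$ it gives $\E[\Phi B_b]=\E\ip{D\Phi}{-DL^{-1}B_b}$. The chain rule yields $D\Phi=i\Phi\bigl(\sum_a u_aDA_a+\sqrt\theta\sum_a v_aDB_a\bigr)$, since $Z$ has no Malliavin derivative. Hence
\[
 \theta^{-1/2}\E[\Phi\,v\cdot B]=i\,\E\Bigl[\Phi\Bigl(\theta^{-1/2}\sum_{a,b}u_av_b\Gamma(A_a,B_b)+\sum_{a,b}v_av_b\Gamma(B_a,B_b)\Bigr)\Bigr].
\]
Combining the two terms, the $C$-contributions pair up and we obtain
\[
 \Psi'(\theta)=-\frac12\E\Bigl[\Phi\Bigl(\theta^{-1/2}\sum_{a,b}u_av_b\Gamma(A_a,B_b)+\sum_{a,b}v_av_b\bigl(\Gamma(B_a,B_b)-C_{ab}\bigr)\Bigr)\Bigr].
\]

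Next I bound $|\Phi|\le1$, $\norm{\cdot}_1\le\norm{\cdot}_2$, and apply Cauchy--Schwarz over the index pairs. Using $\sum_{a,b}|u_av_b|^2=|u|^2|v|^2$, this gives
\[
 |\Psi'(\theta)|\le\frac{\theta^{-1/2}}{2}|u|\,|v|\Bigl(\sum\norm{\Gamma(A_a,B_b)}_2^2\Bigr)^{1/2}+\frac{|v|^2}{2}\Bigl(\sum\norm{\Gamma(B_a,B_b)-C_{ab}}_2^2\Bigr)^{1/2}.
\]
Integrating with $\int_0^1\theta^{-1/2}d\theta/2=1$ gives exactly \eqref{hs:eq:stein-interpolation}.

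I expect the main obstacle to be technical justification rather than algebra. Three points need care: differentiation under the expectation near the endpoints $\theta=0,1$, where the $\theta^{-1/2}$ and $(1-\theta)^{-1/2}$ factors appear; the chain rule for the bounded smooth function $e^{ix}$ on $\mathbb D^{1,2}$ arguments; and the duality on the product space carrying $Z$. The cleanest route is to work on $(0,1)$, where everything is bounded after the integration-by-parts identities. There the final expression for $\Psi'$ is integrable because $\theta^{-1/2}$ is integrable, and $\Psi$ is continuous on $[0,1]$, so the fundamental theorem of calculus applies. The finiteness of $\norm{\Gamma}_2$ is an assumption implicit in the right-hand side; if it is infinite there is nothing to prove.
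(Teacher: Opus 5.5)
Your argument is correct and is essentially the paper's proof in probabilistic form: integrating out the independent $Z$ gives $\Psi(\theta)=e^{-(1-\theta)c/2}\,\E e^{i(u\cdot A+\sqrt\theta\,v\cdot B)}$ with $c=v^{\mathsf T}Cv$, which under $s=\sqrt\theta$ is exactly the paper's variation-of-constants quantity $e^{(s^2-1)c/2}\Phi(s)$, where $\Phi(s)=\E e^{i(u\cdot A+s\,v\cdot B)}$. Thus your Gaussian integration by parts in $Z$ simply reproduces the paper's integrating factor, and your weights $\tfrac12\theta^{-1/2}\,d\theta$ and $\tfrac12\,d\theta$ on the two defects are the paper's $ds$ and $s\,ds$; both arguments rest on the same identity $B_b=\delta(-DL^{-1}B_b)$ and give identical constants.
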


\begin{proof}
Set
\[
 \Phi(s)=\E e^{i(u\cdot A+s\,v\cdot B)},\qquad
 c=v^{\mathsf T}Cv.
\]
Malliavin integration by parts gives
\begin{align*}
 \Phi'(s)+sc\Phi(s)
 &=-\E\!\left[
 e^{i(u\cdot A+s\,v\cdot B)}\Gamma(A(u),B(v))
 \right]\\
 &\quad-s\E\!\left[
 e^{i(u\cdot A+s\,v\cdot B)}
 \bigl(\Gamma(B(v),B(v))-c\bigr)
 \right].
\end{align*}
Variation of constants between $0$ and $1$, followed by
Cauchy--Schwarz in the component indices, gives
\eqref{hs:eq:stein-interpolation}.
\end{proof}

\begin{theorem}[Subsequential convolution factorization]
\label{hs:thm:law}
Fix $Q,d<\infty$.  Let
\[
 X_{n,a}
 =\sum_{q=1}^{Q}I_q^{W_n}
 \left(\frac{x_{n,a,q}}{\sqrt{q!}}\right),
 \qquad
 \sup_n\sum_{a,q}\norm{x_{n,a,q}}^2\le R^2.
\]
There is a subsequence for which the factorization in
\eqref{overview:eq:noncanonical} holds.  More precisely, the feedback
construction gives
$
 X_n=A_n^J+B_n^J
$
such that, on one limiting probability space,
\begin{equation}
 A_n^J\Longrightarrow A^J\quad(J\text{ fixed}),
 \qquad
 A^J\longrightarrow A_*\quad\text{in }L^2,
 \label{hs:eq:A-limits}
\end{equation}
and, after extracting $\Cov(X_n)\to\Sigma$,
\begin{equation}
 C=\Sigma-\Cov(A_*)\succeq0.
 \label{hs:eq:C}
\end{equation}
For every $t\in\R^d$,
\begin{align}
 &\limsup_{n\to\infty}
 \left|
 \E e^{it\cdot X_n}
 -\E e^{it\cdot A_*}e^{-\frac12t^{\mathsf T}Ct}
 \right|\notag\\
 &\quad\le
 \frac{\sqrt5}{2}|t|^2R^2
 \sqrt{\frac{\mu_Qa_Q}{J}}
 +|t|\norm{A^J-A_*}_2
 +\frac{|t|^2}{2}\norm{C^J-C}_{\op},
 \label{hs:eq:master-bound}
\end{align}
where $C^J=\Sigma-\Cov(A^J)$.
\end{theorem}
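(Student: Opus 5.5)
The plan is to run the nested feedback of Theorem~\ref{hs:thm:quantitative-extraction} at every level $n$, and then pass to limits in the order prescribed by the remark on order of limits: first $n\to\infty$ at fixed $J$, then $J\to\infty$.

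\textbf{Step 1: the split at level $n$.} For each $n$, start from the zero module in $\cK_Q^{(d)}(H_n)$ and build the nested modules $\mathbf M_n^{[J]}$. Write $x_n=A_n^{[J]}+r_n^{[J]}$, and set
\[
 A_n^J=(\mathcal A_a)_a,\qquad B_n^J=(R_a)_a,
\]
built from these kernels. Then
\[
 \Delta_{\mathrm{res}}+\Delta_{\mathrm{mix}}\le\frac{\mu_Qa_QR^4}{J}
\]
uniformly in $n$.

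\textbf{Step 2: limits at fixed $J$.} The module $M^{[J]}_{n,q}$ has dimension at most $dJ$, so $A_n^J$ is a polynomial in finitely many kernels. Its kernels are bounded in $L^2$, and fixed-chaos hypercontractivity makes every moment uniformly bounded. A diagonal argument over $J\in\N$ then gives one subsequence along which, for every $J$:
\begin{itemize}
\item $A_n^J\Rightarrow A^J$;
\item $\Cov(X_n)\to\Sigma$;
\item $\Cov(A_n^J)\to\Cov(A^J)$, by uniform integrability.
\end{itemize}
I would realise the limits $A^J$ on one probability space by representing everything through the ultraproduct terminal of Theorem~\ref{primitive:thm:represented-transfer}. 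Concretely, take $A^J=\sum_q\mathbb U_q^{-1}[A^{[J]}_{n,\cdot,q}/\sqrt{q!}]$ on the Gaussian space over $\bigoplus\cK_s$. This needs a single ultrafilter refining the subsequence, which is fine because laws converge along it.

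The modules are nested, so the projections $P_{\mathbf M^{[J]}_n}x_n$ increase with $J$. Pythagoras gives, for $J<J'$,
\[
 \norm{A^{J'}-A^J}_2^2=\lim\bigl(\norm{A_n^{[J']}}^2-\norm{A_n^{[J]}}^2\bigr).
\]
This is increasing in $J$ and bounded by $R^2$, hence Cauchy. Therefore $A^J\to A_*$ in $L^2$.

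\textbf{Step 3: positivity of $C$.} $A_n^J$ and $B_n^J$ are orthogonal chaos-wise, because the module acts grade-wise. Hence
\[
 \Cov(X_n)=\Cov(A_n^J)+\Cov(B_n^J),
\]
so $C^J=\lim\Cov(B_n^J)\succeq0$. Letting $J\to\infty$ gives $C=\Sigma-\Cov(A_*)\succeq0$.

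\textbf{Step 4: the characteristic-function bound.} Apply Lemma~\ref{hs:lem:stein-interpolation} with $u=v=t$, $A=A_n^J$, $B=B_n^J$. This bounds the error by
\[
 |t|^2\Delta_{\mathrm{mix}}^{1/2}+\tfrac{|t|^2}{2}\Delta_{\mathrm{res}}^{1/2}.
\]
By Cauchy--Schwarz, $x+y/2\le\frac{\sqrt5}{2}\sqrt{x^2+y^2}$. Combined with Step~1 and $\norm A^2+\norm r^2\le R^2$, this gives the first term of \eqref{hs:eq:master-bound}, up to tracking the $R^2$ normalisation.

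Then take $n\to\infty$. The term $\E e^{it\cdot A_n^J}e^{-\frac12t^{\mathsf T}C_n^Jt}$ converges to the corresponding expression with $A^J,C^J$. Replace $A^J$ by $A_*$ at cost $|t|\norm{A^J-A_*}_2$, and $C^J$ by $C$ at cost
\[
 \bigl|e^{-a}-e^{-b}\bigr|\le|a-b|\le\tfrac{|t|^2}{2}\norm{C^J-C}_{\op}.
\]
Letting $J\to\infty$ shows the right-hand side vanishes. Hence $X_n\Rightarrow A_*+G_C$ with $G_C\indep A_*$, which is \eqref{overview:eq:noncanonical}.

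\textbf{Main obstacle.} I expect the hard step to be the coupling in Step~2: making the limits $A^J$ for different $J$ live on one space with $L^2$-Cauchy behaviour. Weak limits alone give no coupling. The ultraproduct representation via $\mathbb U_q$ solves this, since $[A_n^{[J]}]_{\mathcal U}$ are nested projections in $\cH_q$. The normalisation constants in Step~4 should then be a routine check.
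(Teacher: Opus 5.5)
Your proposal is correct. Steps 1, 3 and 4 match the paper's proof, and the constant is exact, not merely correct up to normalisation. Lemma~\ref{hs:lem:stein-interpolation} with $u=v=t$, together with $x+y/2\le\frac{\sqrt5}{2}(x^2+y^2)^{1/2}$, gives the bound $\frac{\sqrt5}{2}|t|^2(\Delta_{\mathrm{res}}+\Delta_{\mathrm{mix}})^{1/2}\le\frac{\sqrt5}{2}|t|^2R^2\sqrt{\mu_Qa_Q/J}$. The Gaussian comparison is legitimate because both $C^J$ and $C$ are positive semidefinite.

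The genuine difference is how you couple the limits $A^J$ in Step~2.

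\emph{The paper's route} stays inside Part~III. Hypercontractivity gives tightness and uniform integrability for every finite family of levels. A diagonal Prokhorov argument with projective consistency then puts all $A^J$ jointly on one space. Finally, the finite-$n$ identity $\E|A_n^K-A_n^J|^2=\E|A_n^K|^2-\E|A_n^J|^2$ is passed to the limit using uniform integrability of the jointly converging pair.

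\emph{Your route} fixes a free ultrafilter on the extracted index set and defines $A^J=\sum_q\mathbb U_q^{-1}[P_{M^{[J]}_{n,q}}x_{n,\cdot,q}/\sqrt{q!}]_{\mathcal U}$. Theorem~\ref{primitive:thm:represented-transfer} identifies its law with the ordinary subsequential limit. The Cauchy property then becomes Pythagoras in $\bigoplus_q\cH_q$, using unitarity of each $\mathbb U_q$ and orthogonality of the sectors $\cW_q$ for distinct $q$. This is valid: the statement only asks for some common limiting space, and \eqref{hs:eq:master-bound} involves only the marginal laws of the $A^J$ and their $L^2$ coupling.

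\emph{What your route buys.} It gives a canonical coupling and an intrinsic description $A_*=\sum_q\mathbb U_q^{-1}a_{*,q}$, where $a_{*,q}$ is the $\cH_q$-limit of the nested ultraproduct projections. This is exactly the form the paper later exploits in Theorem~\ref{bridge:thm:exact-bridge}. It also identifies the limiting covariances by the Wiener isometry alone.

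\emph{What it costs.} The quantitative extraction is no longer self-contained. It now rests on Part~II's primitive central limit theorem, the block-pairing unitary and the product-formula transfer. The paper's argument needs only hypercontractivity, Prokhorov and Kolmogorov extension.

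The paper additionally keeps $u\ne v$ in the Stein lemma to obtain joint convergence of $(A_n^{J(n)},B_n^{J(n)})$ to $(A_*,G_C)$. This goes beyond the law-level factorization you were asked to prove, and your Step~4 would yield it the same way.
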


\begin{proof}
The case $Q=1$ follows by convergence of covariance matrices, so assume
$Q\ge2$.  Apply Theorem~\ref{hs:thm:quantitative-extraction} separately on
each $H_n$, with the same threshold levels.  This gives nested
decompositions
\[
 A_{n,a}^J
 =\sum_{q=1}^{Q}I_q^{W_n}
 \left(\frac{P_{M_{n,q}^{[J]}}x_{n,a,q}}{\sqrt{q!}}\right),
 \qquad
 B_n^J=X_n-A_n^J,
\]
and
\begin{equation}
 \mathcal D_{n,J}^{\mathrm{res}}
 +\mathcal D_{n,J}^{\mathrm{mix}}
 \le\frac{\mu_Qa_QR^4}{J},
 \label{hs:eq:uniform-defect}
\end{equation}
where the two terms are the defects in
\eqref{hs:eq:D-res}--\eqref{hs:eq:D-mix}.

For each fixed finite set of levels, Gaussian hypercontractivity
\cite[Thm.~5.10]{Janson} gives
uniform fourth moments, hence tightness and uniform integrability of
squares.  A diagonal use of Prokhorov's theorem and projective consistency
realizes all limits $A^J$ on one probability space.  If $K\ge J$,
nested grade-wise orthogonality gives
\[
 \E|A_n^K-A_n^J|^2
 =\E|A_n^K|^2-\E|A_n^J|^2.
\]
Passing to the limit is legitimate by uniform integrability.  Therefore
\[
 \E|A^K-A^J|^2=\alpha_K-\alpha_J,
 \qquad \alpha_J=\E|A^J|^2.
\]
The sequence $(\alpha_J)$ is nondecreasing and bounded by $R^2$, so
$(A^J)$ is Cauchy in $L^2$ and defines $A_*$.

Because the module projection is common to all vector components,
$
 \E[A_{n,a}^JB_{n,b}^J]=0
$
for every $a,b$.  Thus
\[
 \Cov(B_n^J)=\Cov(X_n)-\Cov(A_n^J)\longrightarrow
 C^J=\Sigma-\Cov(A^J).
\]
Letting $J\to\infty$ gives \eqref{hs:eq:C}; positivity is preserved in the
limit.

Apply Lemma~\ref{hs:lem:stein-interpolation} with
$
 A=A_n^J
$
and
$
 B=B_n^J
$.
At fixed $J$, let $n\to\infty$.  Cauchy--Schwarz between the two
nonnegative defects improves the error to
\[
 \left(|u|^2|v|^2+\frac{|v|^4}{4}\right)^{1/2}
 R^2\sqrt{\frac{\mu_Qa_Q}{J}}.
\]
Taking $u=v=t$, then comparing $A^J$ with $A_*$ and $C^J$ with
$C$, proves \eqref{hs:eq:master-bound}.  Let $J\to\infty$ and use
L\'evy's continuity theorem.

Keeping $u$ and $v$ distinct shows that the limiting joint
characteristic function is
\[
 \E e^{iu\cdot A_*}e^{-\frac12v^{\mathsf T}Cv}.
\]
It is the characteristic function of $(A_*,G_C)$ with independent
components.  A standard compact-by-compact diagonal choice $J=J(n)$
then yields
\[
 (A_n^{J(n)},B_n^{J(n)})\Longrightarrow(A_*,G_C).
\]
\end{proof}

\begin{remark}[No rate for the two remaining terms]
The explicit $J^{-1/2}$ term in \eqref{hs:eq:master-bound} controls only
the Stein factorization error.  Bounded energy alone gives no rate for
$\norm{A^J-A_*}_2$ or $\norm{C^J-C}_{\op}$.
\end{remark}

%====================================================================

\part{Exact homogeneous identification and ghost modes}

The preceding feedback is quantitative but not automatically canonical.
We first define its $O(d)$-equivariant Gram-reduced form and prove the
uniform singular gap which prevents a vanishing mode from being promoted.
In one homogeneous chaos this repairs the defect and identifies the
decomposable/primitive splitting.  The second-chaos example then explains why
unreduced feedback cannot be used for this purpose.

\section{Wiener ledger and the separated feedback}
%=====================================================================

Let $H$ be a real separable Hilbert space and let $W$ be isonormal over
$H$.  Multiple Wiener integrals are normalized by
\[
 \E[I_p(f)I_q(g)]
 =\1_{\{p=q\}}q!\ip{f}{g},
 \qquad f\in H^{\odot p},\quad g\in H^{\odot q}.
\]
For normalized kernels $x_{a,q}\in H^{\odot q}$ we write
\[
 X_a=\sum_{q=1}^{Q}I_q\!\left(\frac{x_{a,q}}{\sqrt{q!}}\right),
 \qquad
 \norm{x}^2:=\sum_{a=1}^{d}\sum_{q=1}^{Q}\norm{x_{a,q}}^2.
\]
Throughout the quantitative part, $\norm{x}^2\le R^2$.
The cases $R=0$ and $Q=1$ are immediate (the packet is respectively
zero or Gaussian), so every statement involving division by $R$ or
$a_Q$ is made under the standing assumption $R>0$ and $Q\ge2$.

For $1\le s\le p\wedge q$, set
\begin{equation}
 b_{p,q,s}
 =\frac pq
 \binom{p-1}{s-1}\binom{q-1}{s-1}
 \binom{p+q-2s}{p-s}.
 \label{bridge:eq:b-pqs}
\end{equation}
The scalar channel is $p=q=s$.  Put
\begin{equation}
 a_Q
 =\sum_{k=1}^{Q-1}\binom{Q-1}{k}^2\binom{2k}{k},
 \qquad
 \mu_Q=\left\lfloor\frac{Q^2+Q+1}{3}\right\rfloor .
 \label{bridge:eq:aQ-muQ}
\end{equation}
For $Q=1$, set $a_1=0$ and $\mu_1=1$.

A common graded module is a family
$\bm M=(M_q)_{q\le Q}$ of closed subspaces
$M_q\subset H^{\odot q}$.  It acts on every vector component by the same
grade-wise projection.  Write
\[
 A=P_{\bm M}x,
 \qquad r=P_{\bm M^\perp}x.
\]
Define the separated feedback
\begin{align}
 \mathfrak H_Q(A,r)
 &:=\sum_{a,b=1}^{d}\sum_{p,q=1}^{Q}
 \sum_{\substack{1\le s\le p\wedge q\\\neg(p=q=s)}}
 b_{p,q,s}\norm{r_{a,p}\wtensor_s r_{b,q}}^2
 \notag\\
 &\quad+
 \sum_{a,b=1}^{d}\sum_{p,q=1}^{Q}
 \sum_{s=1}^{p\wedge q}
 b_{p,q,s}\norm{A_{a,p}\wtensor_s r_{b,q}}^2.
 \label{bridge:eq:feedback}
\end{align}
For centered finite-chaos variables put
\[
 \Gamma(F,G)=\ip{DF}{-DL^{-1}G}_H,
\]
where $L$ is the Ornstein--Uhlenbeck generator, and associate with the
split the random vectors
\[
 \mathcal A_a=\sum_{p=1}^{Q}I_p\!\left(\frac{A_{a,p}}{\sqrt{p!}}\right),
 \qquad
 \mathcal R_a=\sum_{q=1}^{Q}I_q\!\left(\frac{r_{a,q}}{\sqrt{q!}}\right).
\]
The coherent defects used below are
\begin{align}
 \Delta_{\mathrm{res}}(r)
 &=\sum_{a,b=1}^{d}
 \norm{\Gamma(\mathcal R_a,\mathcal R_b)
       -\E[\mathcal R_a\mathcal R_b]}_2^2,
 \label{bridge:eq:def-Delta-res}\\
 \Delta_{\mathrm{mix}}(A,r)
 &=\sum_{a,b=1}^{d}
 \norm{\Gamma(\mathcal A_a,\mathcal R_b)}_2^2.
 \label{bridge:eq:def-Delta-mix}
\end{align}
For each input weight $q$, let $\mathscr Y_q$ be the branch-labeled
Hilbert direct sum containing the outputs in \eqref{bridge:eq:feedback}, and set
\begin{equation}
 \Lambda_qh
 =\left(
 \bigl(\sqrt{b_{p,q,s}}\,r_{a,p}\wtensor_sP_{M_q^\perp}h\bigr)_{a,p,s},
 \bigl(\sqrt{b_{p,q,s}}\,A_{a,p}\wtensor_sP_{M_q^\perp}h\bigr)_{a,p,s}
 \right).
 \label{bridge:eq:Lambda}
\end{equation}
The scalar residual branch is omitted; the scalar mixed branch vanishes by
orthogonality.  Put
\begin{equation}
 \mathcal N_q=\Lambda_q^*\Lambda_q.
 \label{bridge:eq:Nq}
\end{equation}

\begin{lemma}[Exact positive ledger]
\label{bridge:lem:positive-ledger}
For every common-module split,
\begin{align}
 0\le\mathcal N_q&\le a_QR^2\Id,
 \label{bridge:eq:N-bound}\\
 \mathfrak H_Q(A,r)
 &=\sum_{b=1}^{d}\sum_{q=1}^{Q}
   \ip{r_{b,q}}{\mathcal N_qr_{b,q}}.
 \label{bridge:eq:feedback-trace}
\end{align}
Moreover the coherent residual and mixed Malliavin--Stein defects satisfy
\begin{equation}
 \Delta_{\mathrm{res}}(r)+\Delta_{\mathrm{mix}}(A,r)
 \le\mu_Q\mathfrak H_Q(A,r).
 \label{bridge:eq:coherent-comparison}
\end{equation}
\end{lemma}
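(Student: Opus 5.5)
The three assertions are essentially restatements, in the Part~IV notation, of facts already proved in Part~III, and I will prove them in the order positivity and bound, then the trace identity, then the coherent comparison.

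\textbf{Operator bound \eqref{bridge:eq:N-bound}.}
Positivity of $\mathcal N_q=\Lambda_q^*\Lambda_q$ is automatic, and $\norm{\mathcal N_q}_{\op}=\norm{\Lambda_q}_{\op}^2$. For $h$ with $\norm h=1$ and $k=P_{M_q^\perp}h$, I would bound each branch output by \eqref{hs:eq:contraction-bound}:
\[
\norm{r_{a,p}\wtensor_sk}\le\norm{r_{a,p}}, \qquad
\norm{A_{a,p}\wtensor_sk}\le\norm{A_{a,p}} .
\]
This gives
\[
\norm{\Lambda_qh}^2\le\sum_{a,p}\Bigl(\norm{r_{a,p}}^2+\norm{A_{a,p}}^2\Bigr)\sum_{s}b_{p,q,s},
\]
where the inner sum omits $p=q=s$. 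In the $r$-row the scalar branch is excluded by definition. In the $A$-row it contributes $\ip{A_{a,q}}{k}$, which vanishes because $A_{a,q}\in M_q$. Proposition~\ref{hs:prop:constants}, in the form \eqref{hs:eq:aQ-max}, bounds each inner sum by $a_Q$. Hence $\norm{\Lambda_qh}^2\le a_Q(\norm A^2+\norm r^2)=a_Q\norm x^2\le a_QR^2$, which is \eqref{bridge:eq:N-bound} and matches \eqref{hs:eq:Lambda-bound}.

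\textbf{Trace identity \eqref{bridge:eq:feedback-trace}.}
Since $r_{b,q}\in M_q^\perp$, we have $P_{M_q^\perp}r_{b,q}=r_{b,q}$, so
\[
\ip{r_{b,q}}{\mathcal N_qr_{b,q}}=\norm{\Lambda_qr_{b,q}}^2,
\]
and this is the sum over $(a,p,s)$ of $b_{p,q,s}\norm{r_{a,p}\wtensor_sr_{b,q}}^2$ and $b_{p,q,s}\norm{A_{a,p}\wtensor_sr_{b,q}}^2$. Summing over $b$ and $q$ reproduces both lines of \eqref{bridge:eq:feedback} term by term. In the mixed line the scalar terms are present in the definition, but they vanish by orthogonality, so both sides agree.

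\textbf{Coherent comparison \eqref{bridge:eq:coherent-comparison}.}
I would repeat the proof of Theorem~\ref{hs:thm:sep-coh}. By Lemma~\ref{hs:lem:mixed-gamma}, each of $\Gamma(\mathcal R_a,\mathcal R_b)-\E[\mathcal R_a\mathcal R_b]$ and $\Gamma(\mathcal A_a,\mathcal R_b)$ is a finite sum of normalized branches. The branch $(p,q,s)$ lies in chaos $p+q-2s$ and has $L^2$ norm $\sqrt{b_{p,q,s}}$ times the corresponding contraction norm; this is where the definition \eqref{hs:eq:beta-b} is used.
\begin{itemize}
\item In the residual defect, subtracting the expectation removes exactly the scalar branches.
\item In the mixed defect, the scalar branch is $\E[\mathcal A_a\mathcal R_b]=0$.
\end{itemize}
Different output orders are orthogonal. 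Within one order, Cauchy--Schwarz gives $\norm{\sum_{j\le N}Z_j}_2^2\le N\sum_j\norm{Z_j}_2^2$ with $N\le\mu_Q$ by \eqref{hs:eq:muQ-max}. Summing over output orders and over $a,b$ gives $\Delta_{\mathrm{res}}\le\mu_Q\mathfrak R_Q$ and $\Delta_{\mathrm{mix}}\le\mu_Q\mathfrak I_Q$; adding the two yields the claim.

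\textbf{Main obstacle.}
The only step needing care is the normalization bookkeeping. The kernels here are normalized as $x/\sqrt{q!}$, so I must check that the coefficient $c_{p,q,s}$ of \eqref{hs:eq:c-pqs} combines with the factor $\sqrt{(p+q-2s)!/(p!q!)}$ to give exactly $\sqrt{b_{p,q,s}}$. That factor comes from the isometry $\norm{I_m(g)}_2^2=m!\norm g^2$ together with the rescaling $1/\sqrt{p!q!}$ of the inputs. I also need to confirm that the branch count \eqref{hs:eq:muQ-max} is the right count even though $\Gamma$ is oriented; it is, because the count runs over ordered pairs $(p,q)$.
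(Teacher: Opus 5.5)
Your proposal is correct and follows essentially the same route as the paper. The operator bound comes from the contraction inequality together with the closed identity for $a_Q$, with the scalar mixed branch killed because $A_{a,q}\in M_q$. The trace identity comes from the orthogonal branch-labeled codomain, and the coherent comparison from orthogonality of output orders, the $\mu_Q$ branch count and Cauchy--Schwarz. Your explicit check that $c_{p,q,s}\sqrt{(p+q-2s)!/(p!\,q!)}=\sqrt{b_{p,q,s}}$ is exactly the normalization bookkeeping the paper leaves implicit.
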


\begin{proof}
The contraction inequality
$\norm{u\wtensor_sv}\le\norm u\norm v$ and the closed identity
\[
 \max_{p,q\le Q}
 \sum_{\substack{s\le p\wedge q\\\neg(p=q=s)}}b_{p,q,s}=a_Q
\]
give
\[
 \norm{\Lambda_qh}^2
 \le a_Q(\norm A^2+\norm r^2)
       \norm{P_{M_q^\perp}h}^2
 \le a_QR^2\norm h^2.
\]
This proves \eqref{bridge:eq:N-bound}.  The codomain of $\Lambda_q$ is an
orthogonal branch-labeled sum, hence summing
$\norm{\Lambda_qr_{b,q}}^2$ gives exactly \eqref{bridge:eq:feedback}; this is
\eqref{bridge:eq:feedback-trace}.  Finally, branches of different output orders
are orthogonal.  At any fixed positive output order at most $\mu_Q$
triples $(p,q,s)$ occur.  Applying
$\norm{\sum_{j=1}^{m}z_j}^2\le m\sum_j\norm{z_j}^2$ at each output order
gives \eqref{bridge:eq:coherent-comparison}.
\end{proof}

%=====================================================================
\section{Spectrally reduced positive feedback}
%=====================================================================

For a fixed split define the column operator
\[
 R_q:\R^d\longrightarrow H^{\odot q},
 \qquad R_qc=\sum_{b=1}^{d}c_br_{b,q},
\]
and its positive feedback matrix
\begin{equation}
 E_q=R_q^*\mathcal N_qR_q\in\R^{d\times d}.
 \label{bridge:eq:Eq}
\end{equation}
Then
\begin{equation}
 \mathfrak H_Q(A,r)=\sum_{q=1}^{Q}\Tr E_q.
 \label{bridge:eq:F-trace}
\end{equation}

If $\mathfrak H_Q(A,r)>0$, choose the smallest input weight $q_*$ for which
$\lambda_{q_*}:=\lambda_{\max}(E_{q_*})$ is maximal.  Let
$L_{q_*}\subset\R^d$ be the full top eigenspace and define the reduced
promotion
\begin{equation}
 V_{q_*}^{\mathrm{red}}
 =\mathcal N_{q_*}R_{q_*}L_{q_*},
 \qquad
 V_q^{\mathrm{red}}=0\quad(q\ne q_*).
 \label{bridge:eq:reduced-promotion}
\end{equation}
We promote the full eigenspace, rather than a chosen eigenvector, so the
rule is equivariant under orthogonal changes of the $d$ vector
components.  Indeed, if $O\in O(d)$ and the packet is replaced by
$x'=Ox$, then the branch-labeled sum in \eqref{bridge:eq:Lambda} gives
\[
 \mathcal N_q'=\mathcal N_q,
 \qquad R_q'=R_qO^{\mathsf T},
 \qquad E_q'=OE_qO^{\mathsf T}.
\]
Consequently $L_q'=OL_q$ and
$\mathcal N_q'R_q'L_q'=\mathcal N_qR_qL_q$, so the promoted space is
unchanged.  The smallest-weight tie-break is likewise invariant.

\begin{lemma}[Macroscopic Gram gap and energy capture]
\label{bridge:lem:macroscopic-gap}
Let $\mathfrak H=\mathfrak H_Q(A,r)>0$.  The reduced promotion satisfies
\begin{align}
 \lambda_{q_*}&\ge\frac{\mathfrak H}{dQ},
 \label{bridge:eq:lambda-lower}\\
 \norm{\mathcal N_{q_*}R_{q_*}c}
 &\ge\frac{\lambda_{q_*}}{R}\norm c,
 \qquad c\in L_{q_*},
 \label{bridge:eq:Gram-gap}\\
 \sum_{b,q}\norm{P_{V_q^{\mathrm{red}}}r_{b,q}}^2
 &\ge\frac{\mathfrak H}{dQa_QR^2}.
 \label{bridge:eq:reduced-drop}
\end{align}
In particular, $\mathcal N_{q_*}R_{q_*}$ is injective on $L_{q_*}$,
$\dim V_{q_*}^{\mathrm{red}}\le d$, and
$V_{q_*}^{\mathrm{red}}\subset M_{q_*}^{\perp}$.
\end{lemma}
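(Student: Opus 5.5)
The three bounds are proved in the order stated; each uses only the trace identity \eqref{bridge:eq:F-trace}, positivity of $E_q$, and the operator bound \eqref{bridge:eq:N-bound}.

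\emph{Step 1: the eigenvalue bound \eqref{bridge:eq:lambda-lower}.} Each $E_q=R_q^*\mathcal N_qR_q$ is a positive semidefinite $d\times d$ matrix, so $\Tr E_q\le d\,\lambda_{\max}(E_q)$. Summing over $q\le Q$ and using \eqref{bridge:eq:F-trace} gives
\[
 \mathfrak H\le dQ\max_q\lambda_{\max}(E_q)=dQ\lambda_{q_*}.
\]
In particular $\lambda_{q_*}>0$.

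\emph{Step 2: the Gram gap \eqref{bridge:eq:Gram-gap}.} Take $c\in L_{q_*}$ and write $N=\mathcal N_{q_*}$, $z=R_{q_*}c$. Then
\[
 \ip{z}{Nz}=c^{\mathsf T}E_{q_*}c=\lambda_{q_*}\norm c^2 .
\]
By Cauchy--Schwarz,
\[
 \lambda_{q_*}\norm c^2=\ip z{Nz}\le\norm z\,\norm{Nz}.
\]
To bound $\norm z$, use $\norm{R_{q_*}}_{\op}\le\norm{R_{q_*}}_{\HS}=\bigl(\sum_b\norm{r_{b,q_*}}^2\bigr)^{1/2}\le\norm r\le R$, so $\norm z\le R\norm c$. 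Dividing by $\norm c\neq0$ yields $\norm{Nz}\ge\lambda_{q_*}\norm c/R$. It follows that $NR_{q_*}$ is injective on $L_{q_*}$. Hence $\dim V^{\mathrm{red}}_{q_*}=\dim L_{q_*}\le d$, and $V^{\mathrm{red}}_{q_*}\subset\Ran\mathcal N_{q_*}\subset M_{q_*}^\perp$, because $\Lambda_q$ has $P_{M_q^\perp}$ as its right factor.

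\emph{Step 3: energy capture \eqref{bridge:eq:reduced-drop}.} This is the main point. Only $q_*$ contributes, so it suffices to bound $\sum_b\norm{P_Vr_{b,q_*}}^2$ from below, with $V=NR_{q_*}L_{q_*}$.
\begin{enumerate}[label=\textup{(\alph*)}]
 \item Write the sum as $\norm{P_VR_{q_*}}_{\HS}^2$ and restrict it to $L_{q_*}$:
 \[
  \norm{P_VR_{q_*}}_{\HS}^2\ge\sum_i\norm{P_VR_{q_*}c_i}^2
 \]
 for an orthonormal basis $(c_i)$ of $L_{q_*}$.
 \item For any $c\in L_{q_*}$, the line through $NR_{q_*}c$ lies in $V$. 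Lemma~\ref{hs:lem:positive-feedback}, applied with $\norm N_{\op}\le a_QR^2$, then gives
 \[
  \norm{P_VR_{q_*}c}^2\ge\frac{\ip{R_{q_*}c}{NR_{q_*}c}}{a_QR^2}=\frac{\lambda_{q_*}\norm c^2}{a_QR^2}.
 \]
 \item Taking a single unit vector $c\in L_{q_*}$ already gives
 \[
  \sum_b\norm{P_Vr_{b,q_*}}^2\ge\frac{\lambda_{q_*}}{a_QR^2}\ge\frac{\mathfrak H}{dQa_QR^2},
 \]
 by Step 1.
\end{enumerate}
The only subtlety to check is item (a): passing from the sum over the columns $r_{b,q_*}$ to the sum over the basis $(c_i)$. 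This holds because $\sum_b\norm{Tr_b}^2=\norm{TR}_{\HS}^2\ge\sum_i\norm{TRc_i}^2$ for any orthonormal set $(c_i)$, so it is harmless.
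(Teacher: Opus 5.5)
Your proposal is correct and follows the paper's proof essentially step by step. It uses the trace bound $\Tr E_q\le d\,\lambda_{\max}(E_q)$, then the Cauchy--Schwarz Gram gap via $\norm{R_{q_*}c}\le R\norm c$, and finally energy capture by projecting $R_{q_*}c$ onto the line $\R\,\mathcal N_{q_*}R_{q_*}c\subset V_{q_*}^{\mathrm{red}}$ together with $\norm{P_VR_{q_*}}_{\HS}^2\ge\norm{P_VR_{q_*}c}^2$ for a unit $c\in L_{q_*}$. The only differences are cosmetic: you cite Lemma~\ref{hs:lem:positive-feedback} where the paper redoes that computation inline, and you spell out the bound $\norm{R_{q_*}}_{\op}\le\norm{R_{q_*}}_{\HS}\le R$ that the paper leaves implicit.
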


\begin{proof}
Since $E_q\ge0$,
\[
 \mathfrak H=\sum_q\Tr E_q
 \le dQ\max_q\lambda_{\max}(E_q),
\]
which proves \eqref{bridge:eq:lambda-lower}.  If $c\in L_{q_*}$, then
\[
 \ip{R_{q_*}c}{\mathcal N_{q_*}R_{q_*}c}
 =\lambda_{q_*}\norm c^2.
\]
As $\norm{R_{q_*}c}\le R\norm c$, Cauchy--Schwarz gives
\eqref{bridge:eq:Gram-gap}.

Take a unit vector $c\in L_{q_*}$ and put
$z=R_{q_*}c$, $w=\mathcal N_{q_*}z$.  Since
$0\le\mathcal N_{q_*}\le a_QR^2\Id$,
\[
 \norm w^2
 =\ip{z}{\mathcal N_{q_*}^2z}
 \le a_QR^2\ip{z}{\mathcal N_{q_*}z}
 =a_QR^2\lambda_{q_*}.
\]
Therefore
\[
 \norm{P_{\R w}z}^2
 =\frac{\ip zw^2}{\norm w^2}
 \ge\frac{\lambda_{q_*}}{a_QR^2}.
\]
The line $\R w$ lies in $V_{q_*}^{\mathrm{red}}$.  Also
\[
 \sum_b\norm{P_{V_{q_*}^{\mathrm{red}}}r_{b,q_*}}^2
 =\norm{P_{V_{q_*}^{\mathrm{red}}}R_{q_*}}_{\HS}^2
 \ge\norm{P_{V_{q_*}^{\mathrm{red}}}R_{q_*}c}^2.
\]
Combine the last two displays with \eqref{bridge:eq:lambda-lower} to obtain
\eqref{bridge:eq:reduced-drop}.  Finally,
$\mathcal N_q=P_{M_q^\perp}\mathcal N_qP_{M_q^\perp}$ by
\eqref{bridge:eq:Lambda}; hence its range lies in $M_q^\perp$.
\end{proof}

\begin{theorem}[Gram-reduced Hilbert--Stein extraction]
\label{bridge:thm:reduced-extraction}
Start from $\bm M^{[0]}=0$.  At every active step, use the reduced
promotion \eqref{bridge:eq:reduced-promotion}, enlarge
$M_{q_*}$ by the orthogonal sum with $V_{q_*}^{\mathrm{red}}$, and
recompute $A,r,\Lambda_q,\mathcal N_q$.  Use one common trajectory and,
for $J\ge2$, stop its level $J$ as soon as
\begin{equation}
 \mathfrak H_Q(A,r)
 \le\delta_J,
 \qquad
 \delta_J=\frac{dQa_QR^4}{J}.
 \label{bridge:eq:delta-J}
\end{equation}
Then the terminal modules are nested and satisfy
\begin{align}
 \dim M_q^{[J]}&\le d(J-1),
 \label{bridge:eq:dimension-J}\\
 \mathfrak H_Q(A^{[J]},r^{[J]})
 &\le\frac{dQa_QR^4}{J},
 \label{bridge:eq:F-J}\\
 \Delta_{\mathrm{res}}(r^{[J]})
 +\Delta_{\mathrm{mix}}(A^{[J]},r^{[J]})
 &\le\frac{dQ\mu_Qa_QR^4}{J}.
 \label{bridge:eq:Delta-J}
\end{align}
At every promotion made before the end of level $J$, the synthesis map
in \eqref{bridge:eq:reduced-promotion} has smallest singular value strictly
larger than
\begin{equation}
 \frac{\delta_J}{dQR}
 =\frac{a_QR^3}{J}.
 \label{bridge:eq:no-ghost-gap}
\end{equation}
Thus no Gram mode which vanishes along a fixed level can be promoted after
renormalization.
\end{theorem}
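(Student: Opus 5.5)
The proof mirrors Theorem~\ref{hs:thm:quantitative-extraction}, with Lemma~\ref{bridge:lem:macroscopic-gap} replacing Theorem~\ref{hs:thm:one-step} as the one-step estimate. Run one trajectory from $\bm M^{[0]}=0$ and stop level $J$ at the first time $\mathfrak H_Q\le\delta_J$. Since the thresholds decrease in $J$, the stopping times are nondecreasing along the common trajectory. Each level continues from the previous terminal module, and modules only grow by orthogonal sums. This gives nestedness.

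\textbf{Counting and dimension.} Before level $J$ is reached, every step is active with $\mathfrak H>\delta_J$. By \eqref{bridge:eq:reduced-drop}, each such step removes residual energy strictly greater than
\[
 \frac{\delta_J}{dQa_QR^2}=\frac{R^2}{J}.
\]
The promoted spaces lie in $M_{q_*}^\perp$, so the drops are orthogonal. By Pythagoras they add up to at most $\norm r^2\le\norm x^2\le R^2$. Hence fewer than $J$ promotions occur, that is, at most $J-1$. Each promotion enlarges one grade by $\dim V^{\mathrm{red}}_{q_*}\le d$, which gives \eqref{bridge:eq:dimension-J}. If $\mathfrak H=0$ the procedure has already stopped, so every level terminates.

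\textbf{Defect bounds.} At termination, \eqref{bridge:eq:F-J} is exactly the stopping rule. Then \eqref{bridge:eq:Delta-J} follows from the comparison \eqref{bridge:eq:coherent-comparison} of Lemma~\ref{bridge:lem:positive-ledger}.

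\textbf{Singular gap.} Consider a promotion made before the end of level $J$. Then $\mathfrak H>\delta_J$, and \eqref{bridge:eq:lambda-lower} gives $\lambda_{q_*}>\delta_J/(dQ)$. By \eqref{bridge:eq:Gram-gap}, the synthesis map $c\mapsto\mathcal N_{q_*}R_{q_*}c$ on $L_{q_*}$ has smallest singular value at least
\[
 \frac{\lambda_{q_*}}{R}>\frac{\delta_J}{dQR}=\frac{a_QR^3}{J}.
\]
For the final sentence of the theorem, suppose a Gram mode $c_n$ has $\norm{\mathcal N R c_n}\to0$ along a fixed level. The gap says this is impossible for a unit vector in a promoted eigenspace, so such a mode is never promoted.

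\textbf{Main obstacle.} The only delicate point is the phrase ``after renormalization''. I would make it precise by showing the gap is uniform in the ambient Hilbert space. The gap depends only on $J$, $Q$ and $R$, so it survives passing to sequences $H_n$ at fixed $J$. I would check this and otherwise treat it as an interpretive remark.
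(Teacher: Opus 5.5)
Your proposal is correct and follows the paper's proof essentially step by step. The energy drop of more than $R^2/J$ per promotion, from Lemma~\ref{bridge:lem:macroscopic-gap}, bounds the number of promotions by $J-1$; the stopping rule and Lemma~\ref{bridge:lem:positive-ledger} give the two defect bounds; nestedness comes from the common trajectory; and \eqref{bridge:eq:Gram-gap} with \eqref{bridge:eq:lambda-lower} gives the strict gap $a_QR^3/J$. Your remark on ``after renormalization'' is only interpretive, as in the paper, which uses the $n$-uniformity of this gap later, in Lemma~\ref{bridge:lem:HER}.
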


\begin{proof}
At a step with $\mathfrak H_Q(A,r)>\delta_J$, Pythagoras and
Lemma~\ref{bridge:lem:macroscopic-gap} show that the squared residual norm decreases
by more than
\[
 \frac{\delta_J}{dQa_QR^2}=\frac{R^2}{J}.
\]
The initial residual energy is at most $R^2$, so fewer than $J$ active
promotions occur before the stopping rule is reached.  Each promotion
adds at most $d$ dimensions to one grade, proving
\eqref{bridge:eq:dimension-J}.  The stopping rule gives \eqref{bridge:eq:F-J}, and
Lemma~\ref{bridge:lem:positive-ledger} gives \eqref{bridge:eq:Delta-J}.

The trajectory is common to all levels because decreasing the threshold
only continues the preceding construction.  Finally,
\eqref{bridge:eq:Gram-gap} and \eqref{bridge:eq:lambda-lower}, used while
$\mathfrak H>\delta_J$, give
\[
 \inf_{\substack{c\in L_{q_*}\\\norm c=1}}
 \norm{\mathcal N_{q_*}R_{q_*}c}
 >\frac{\delta_J}{dQR},
\]
which is \eqref{bridge:eq:no-ghost-gap}.
\end{proof}

%=====================================================================

\begin{lemma}[Symmetric feedback closes the primitive criterion]
\label{bridge:lem:cumulant-bridge}
Let $q\ge2$, $x\in H^{\odot q}$ and
$F=I_q(x/\sqrt{q!})$.  Put
$\kappa_4(F)=\E[F^4]-3\E[F^2]^2$.  Then
\begin{align}
 \kappa_4(F)
 &=\sum_{s=1}^{q-1}\binom qs^2\norm{x\otimes_sx}^2
 \notag\\
 &\quad+\sum_{s=1}^{q-1}\frac{q^2}{s^2}\,
 b_{q,q,s}\norm{x\wtensor_sx}^2.
 \label{bridge:eq:positive-kappa4}
\end{align}
If $\norm x\le R$, then
\begin{equation}
 |\kappa_4(F)|
 \le 3^{q+1}R^2
 \left(\sum_{s=1}^{q-1}
 b_{q,q,s}\norm{x\wtensor_sx}^2\right)^{1/2}.
 \label{bridge:eq:kappa-from-symmetric-feedback}
\end{equation}
In fact the carré-du-champ defect and the fourth cumulant satisfy the
exact expansion and comparison
\begin{align}
 \kappa_4(F)
 &=3q\sum_{s=1}^{q-1}\frac{b_{q,q,s}}s
   \norm{x\wtensor_sx}^2,
 \label{bridge:eq:kappa-gamma-expansion}\\
 \frac1{3q}\kappa_4(F)
 &\le
 \Var\bigl(\Gamma(F,F)\bigr)
 \le\frac{q-1}{3q}\kappa_4(F).
 \label{bridge:eq:kappa-gamma-comparison}
\end{align}
Consequently, for a bounded family, vanishing of the symmetric feedback
on the right of \eqref{bridge:eq:kappa-from-symmetric-feedback} forces every
proper unsymmetrized self-contraction to vanish.
\end{lemma}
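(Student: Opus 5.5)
The plan is to prove the exact expansion \eqref{bridge:eq:kappa-gamma-expansion} first, by a chaos-by-chaos comparison of $F^2$ with $\Gamma(F,F)$, and then to read off the comparison, the bound, and the final claim from it. Formula \eqref{bridge:eq:positive-kappa4} is handled separately by rescaling the classical fourth-cumulant identity.

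For \eqref{bridge:eq:positive-kappa4}, I will use the standard identity for $G=I_q(f)$:
\[
\kappa_4(G)=\sum_{s=1}^{q-1}\frac{(q!)^2}{(s!(q-s)!)^2}\Bigl[\norm{f\otimes_sf}^2+\binom{2q-2s}{q-s}\norm{f\wtensor_sf}^2\Bigr].
\]
This follows from the product formula \eqref{hs:eq:product} and the $\mathfrak S_{2q}$-expansion of $\norm{\Sym(f\otimes f)}^2$. Substituting $f=x/\sqrt{q!}$ removes the factor $(q!)^2$ and gives
\[
\kappa_4(F)=\sum_{s=1}^{q-1}\binom qs^2\Bigl[\norm{x\otimes_sx}^2+\binom{2q-2s}{q-s}\norm{x\wtensor_sx}^2\Bigr].
\]
The identity $\tfrac qs\binom{q-1}{s-1}=\binom qs$ turns \eqref{hs:eq:b-closed} at $p=q$ into $\frac{q^2}{s^2}b_{q,q,s}=\binom qs^2\binom{2q-2s}{q-s}$. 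This is exactly the second sum in \eqref{bridge:eq:positive-kappa4}.

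For \eqref{bridge:eq:kappa-gamma-expansion} I will use the integration-by-parts identity $\E[F^4]=3\E[F^2\Gamma(F,F)]$. Together with $\E\Gamma(F,F)=\E F^2$, it gives
\[
\kappa_4(F)=3\,\E\bigl[(F^2-\E F^2)(\Gamma(F,F)-\E\Gamma(F,F))\bigr].
\]
By \eqref{hs:eq:product} and Lemma~\ref{hs:lem:mixed-gamma}, both $F^2$ and $\Gamma(F,F)$ have components only in the chaoses of order $2q-2s$, $1\le s\le q-1$, and these orders are distinct. In each of these chaoses both components are multiples of $I_{2q-2s}(x\wtensor_sx)$. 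The ratio of the coefficients is
\[
\frac{s!\binom qs^2}{c_{q,q,s}}=\frac{s!\binom qs^2}{q(s-1)!\binom{q-1}{s-1}^2}=\frac qs,
\]
which does not depend on the normalization. The $\Gamma$-branch has squared $L^2$ norm $b_{q,q,s}\norm{x\wtensor_sx}^2$ by \eqref{hs:eq:beta-b}. Summing over the orthogonal chaoses then gives \eqref{bridge:eq:kappa-gamma-expansion}, and at the same time
\[
\Var\Gamma(F,F)=\sum_{s=1}^{q-1}b_{q,q,s}\norm{x\wtensor_sx}^2.
\]
The comparison \eqref{bridge:eq:kappa-gamma-comparison} then follows from $1/(q-1)\le 1/s\le1$ for $1\le s\le q-1$.

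For \eqref{bridge:eq:kappa-from-symmetric-feedback}, I start from \eqref{bridge:eq:kappa-gamma-expansion} and use $\norm{x\wtensor_sx}\le R^2$. The resulting weighted Cauchy--Schwarz bound is
\[
\kappa_4\le 3qR^2\Bigl(\sum_s b_{q,q,s}s^{-2}\Bigr)^{1/2}\Bigl(\sum_s b_{q,q,s}\norm{x\wtensor_sx}^2\Bigr)^{1/2}.
\]
The main obstacle is getting the constant $3^{q+1}$. The naive estimate with $\sum_s b_{q,q,s}\le a_q\sim 9^{q-1}/(q-1)$ loses a factor of order $\sqrt q$. The $1/s^2$ weight removes this loss:
\[
\sum_s\frac{b_{q,q,s}}{s^2}=\frac1{q^2}\sum_s\binom qs^2\binom{2q-2s}{q-s}\le\frac1{q^2}\Bigl(\sum_s\binom qs2^{q-s}\Bigr)^2=\frac{9^q}{q^2},
\]
using $\binom{2k}{k}\le4^k$. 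This gives $3q\cdot 3^q/q=3^{q+1}$. Finally, both sums in \eqref{bridge:eq:positive-kappa4} are nonnegative, so each $\norm{x\otimes_sx}^2$ is bounded by $\kappa_4(F)$. Hence vanishing of the symmetric feedback forces every proper unsymmetrized self-contraction to vanish.
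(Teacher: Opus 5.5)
Your proof is correct. For \eqref{bridge:eq:positive-kappa4}, the expansion \eqref{bridge:eq:kappa-gamma-expansion}, the comparison \eqref{bridge:eq:kappa-gamma-comparison} and the final assertion, you follow the paper. Both arguments use the identity $\kappa_4(F)=3\E[F^2(\Gamma(F,F)-\E F^2)]$, the chaos-wise ratio $q/s$ between the components of $F^2$ and of $\Gamma(F,F)$, and positivity in the classical fourth-cumulant identity.

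The genuine difference is in the bound \eqref{bridge:eq:kappa-from-symmetric-feedback}. The paper applies Cauchy--Schwarz in $L^2(\Omega)$ to $3\E[F^2(\Gamma(F,F)-\E F^2)]$, which gives $\kappa_4(F)\le 3\norm{F}_4^2\,\norm{\Gamma(F,F)-\E F^2}_2$. It then uses hypercontractivity, $\norm{F}_4^2\le 3^q\norm{F}_2^2\le 3^qR^2$, so the $3^q$ there is the hypercontractive constant. You instead:
\begin{itemize}
\item apply a weighted Cauchy--Schwarz over the finite index $s$ to the exact expansion;
\item use $\norm{x\wtensor_sx}\le R^2$;
\item bound $\sum_s b_{q,q,s}/s^2$ through $\sum_s\binom qs^2\binom{2q-2s}{q-s}\le 9^q$.
\end{itemize}
Your route is purely algebraic and needs no hypercontractivity. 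It also gives the sharper intermediate constant $3q\bigl(\sum_{s=1}^{q-1}b_{q,q,s}/s^2\bigr)^{1/2}$, which is $6\sqrt2$ rather than $27$ when $q=2$. The paper's route is shorter and does not use the exact expansion. It extends directly to settings such as finite sums of chaoses, where you have the Stein identity and an $L^4$ bound but no exact chaos-wise proportionality between $F^2$ and $\Gamma(F,F)$.

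Two small slips should be corrected.
\begin{itemize}
\item As written, your prefactor $(q!)^2/(s!(q-s)!)^2$ equals $\binom qs^2$, so no factor $(q!)^2$ is left for the substitution $f=x/\sqrt{q!}$ to remove. For $q=2$ and $f=e\otimes e$ it gives $12$ instead of $\kappa_4=48$. The classical coefficient is $(q!)^2\binom qs^2$; with it, your formula for $\kappa_4(F)$ is right.
\item $F^2-\E F^2$ is not carried only by the orders $2q-2s$ with $1\le s\le q-1$. It also has the top component $I_{2q}(f\wtensor f)$ from $s=0$. This does not affect the conclusion: $\Gamma(F,F)$ has no component in chaos $2q$, so this term drops out of the covariance by orthogonality. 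In the paper's formulation it is the case $s=0$ of the ratio $s/q$. Your sentence should say so.
\end{itemize}
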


\begin{proof}
The product formula and the symmetrization identity
\[
 (2q)!\norm{f\wtensor f}^2
 =(q!)^2\sum_{s=0}^{q}\binom qs^2
                 \norm{f\otimes_sf}^2,
 \qquad f=x/\sqrt{q!},
\]
give \eqref{bridge:eq:positive-kappa4}; the coefficient of the $s$-th product
branch is $s!\binom qs^2$, while the coefficient of the same branch in
the carré-du-champ is $s/q$ times this number.  All terms in
\eqref{bridge:eq:positive-kappa4} are nonnegative.

Moreover Malliavin integration by parts gives
\[
 \kappa_4(F)=3\E\!\left[F^2
 \bigl(\Gamma(F,F)-\E F^2\bigr)\right].
\]
If $Y_s$ denotes the component of $F^2$ in chaos order $2q-2s$,
then the corresponding component of $\Gamma(F,F)$ is $(s/q)Y_s$.
Orthogonality therefore gives
\[
 \kappa_4(F)=\frac3q\sum_{s=1}^{q-1}s\norm{Y_s}_2^2,
 \qquad
 \Var(\Gamma(F,F))
 =\frac1{q^2}\sum_{s=1}^{q-1}s^2\norm{Y_s}_2^2.
\]
The normalized branch formula identifies the second sum with
$\sum_sb_{q,q,s}\norm{x\wtensor_sx}^2$ and proves
\eqref{bridge:eq:kappa-gamma-expansion}.  Since
$1\le s\le q-1$, the same two identities give
\eqref{bridge:eq:kappa-gamma-comparison}.
The orthogonality of the distinct output chaoses yields
\[
 \norm{\Gamma(F,F)-\E F^2}_2^2
 =\sum_{s=1}^{q-1}b_{q,q,s}
   \norm{x\wtensor_sx}^2.
\]
Finally $\norm{F}_4^2\le3^q\norm{F}_2^2\le3^qR^2$ by
hypercontractivity.  Cauchy--Schwarz proves
\eqref{bridge:eq:kappa-from-symmetric-feedback}; positivity in
\eqref{bridge:eq:positive-kappa4} proves the last assertion.
\end{proof}

\section{The homogeneous primitive--feedback bridge}
%=====================================================================

We now assume that every component is carried by one fixed chaos order
$q\ge2$.  Thus $Q=q$ in the notation above, but all input weights other than $q$
are zero.  In \eqref{bridge:eq:Lambda}, every non-scalar branch contracts a
proper number $1\le s<q$ of legs.
Multiplication of all kernels by the common scalar $\sqrt{q!}$ changes
neither the promoted spaces nor the stopping decisions (both the
feedback and its threshold scale by the fourth power).  We may therefore
pass freely between the variance normalization
$x_{n,a,q}=\sqrt{q!}\,f_{n,a}$ used above and the Wiener-kernel
normalization $I_q(f_{n,a})$ used below.

For each $n$, run the spectrally reduced trajectory of
Theorem~\ref{bridge:thm:reduced-extraction}.  Choose one subsequence and one free
ultrafilter on its index set.  We first construct the ultralimits at every
fixed level $J$ and then choose an ordinary diagonal subsequence realizing
these limits before sending $J\to\infty$.

\begin{lemma}[Hereditary non-ghost property]
\label{bridge:lem:HER}
For every fixed $J$, define the internal ultraproduct module
\[
 \mathbb M_{q,\mathcal U}^{[J]}
 =\left\{[m_n]_{\mathcal U}:m_n\in M_{n,q}^{[J]},\
       \sup_n\norm{m_n}<\infty\right\}.
\]
Then
\begin{equation}
 \boxed{\mathbb M_{q,\mathcal U}^{[J]}\subset\cD_q.}
 \label{bridge:eq:HER}
\end{equation}
\end{lemma}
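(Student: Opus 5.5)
The plan is to exploit the duality $\cD_q=\cK_q^\perp$ from Theorem~\ref{primitive:thm:primitive-kernel}. It suffices to show that for every bounded sequence $m_n\in M_{n,q}^{[J]}$ and every primitive $k=[k_n]_{\mathcal U}\in\cK_q$ one has $\Ulim\ip{k_n}{m_n}=0$, since $\cD_q$ is closed. The modules are built in fewer than $J$ promotions (Theorem~\ref{bridge:thm:reduced-extraction}). The promotion index, the weight $q_*$ (here always $q$) and the step count take finitely many values, so after restricting to a set in $\mathcal U$ on which they are constant, I argue by induction on the promotion step $j<J$ along the common trajectory. The inductive claim is that the ultraproduct $\mathbb M^{(j)}$ of the modules $M_{n,q}^{(j)}$ after $j$ steps lies in $\cD_q$. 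The case $j=0$ is trivial since the modules are zero.

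First consequence of the induction hypothesis: $\Ulim\norm{P_{M^{(j)}_{n}}k_n}=0$. Indeed $P_{M_n^{(j)}}k_n$ is bounded and lies in $M_n^{(j)}$, so its class lies in $\cD_q\perp k$. Hence
\[
\norm{P_{M_n^{(j)}}k_n}^2=\ip{k_n}{P_{M_n^{(j)}}k_n}\to_{\mathcal U}\ip{k}{[P_{M_n^{(j)}}k_n]}=0 .
\]
Therefore $P_{M^\perp}k_n$ and $k_n$ have the same class. Every branch of $\Lambda_q k_n$ in \eqref{bridge:eq:Lambda} is of the form $\sqrt{b_{q,q,s}}\,u_n\wtensor_s P_{M^\perp}k_n$ with $1\le s<q$ and $u_n\in\{r_{a,q},A_{a,q}\}$ bounded by $R$. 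By \eqref{hs:eq:contraction-bound} and Theorem~\ref{primitive:thm:primitive-kernel}\textup{(iii)}, its norm is at most $\sqrt{b_{q,q,s}}\,R\,\norm{L_{k_n,s}}_{\op}+O(\norm{P_{M_n}k_n})\to_{\mathcal U}0$. Thus $\Ulim\norm{\Lambda_{q,n}k_n}=0$. Consequently, for any bounded $z_n$,
\[
\ip{k_n}{\mathcal N_{q,n}z_n}=\ip{\Lambda_{q,n}k_n}{\Lambda_{q,n}z_n}\to_{\mathcal U}0,
\]
because $\norm{\Lambda_{q,n}z_n}\le\sqrt{a_q}R\norm{z_n}$ by \eqref{bridge:eq:N-bound}.

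Next I control coefficients of the new promoted piece; this is the step I expect to be the real obstacle, and it is exactly where the no-ghost gap enters. A bounded $m_n\in M_n^{(j+1)}=M_n^{(j)}\oplus V_n^{\mathrm{red}}$ splits orthogonally as $m_n=m_n'+v_n$ with both parts bounded by $\norm{m_n}$. We have $v_n=\mathcal N_{q,n}R_{q,n}c_n$ with $c_n\in L_{q,n}\subset\R^d$. By \eqref{bridge:eq:no-ghost-gap} the synthesis map has smallest singular value larger than $a_qR^3/J$, so $\norm{c_n}\le J\norm{v_n}/(a_qR^3)$ is bounded. Then $z_n=R_{q,n}c_n$ is bounded by $R\norm{c_n}$, and the previous paragraph gives $\ip{k_n}{v_n}\to_{\mathcal U}0$. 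The induction hypothesis gives $\ip{k_n}{m_n'}\to_{\mathcal U}0$. Hence $[m_n]\perp\cK_q$, i.e.\ $[m_n]\in\cD_q$, which closes the induction. Taking $j$ equal to the (bounded) number of promotions at level $J$ yields \eqref{bridge:eq:HER}.

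Without the gap, a promoted direction $\mathcal N R c$ with vanishing singular value would force $c_n$ to blow up after renormalization, and $v_n$ could carry a primitive ghost. This is precisely the failure mode the Gram reduction excludes. The only remaining care is the finite partition of $\mathcal U$-index sets, which is harmless since ultralimits commute with finite case splits.
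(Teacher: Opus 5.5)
Your proposal is correct and follows essentially the same route as the paper's proof. Both arguments run an induction over the finitely many promotions, fixed on a $\mathcal U$-large set, and use the operator-norm primitive criterion to get $\lim_{\mathcal U}\norm{\Lambda_{q,n}k_n}=0$. Both then use the uniform singular-value gap $a_qR^3/J$ to keep the coefficient vectors bounded, so that $\ip{k_n}{\mathcal N_{q,n}R_{q,n}c_n}=\ip{\Lambda_{q,n}k_n}{\Lambda_{q,n}R_{q,n}c_n}\to_{\mathcal U}0$. The differences are cosmetic: you control $\Lambda_{q,n}k_n$ by splitting off $P_{M_n}k_n$ instead of applying primitivity to the representative $P_{M_n^\perp}k_n$, and you correctly observe that the eigenspace dimensions need not be fixed.
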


\begin{proof}
For each $n$, pad the history by zero promotions after it stops.  The
number $K_n(J)$ of nonzero promotions belongs to
$\{0,\ldots,J-1\}$.  On a set belonging to $\mathcal U$ it is therefore
constant; changing the history outside that set does not change any
ultraproduct class.  At each of these finitely many positions we may, in
the same way, fix the dimension of the selected eigenspace on a
$\mathcal U$-large set.

We now argue by induction over these promotion positions.  The assertion
is trivial for the zero module.  Suppose it holds for the current module
and consider the next nonzero promotion.  Its threshold is at least
$\delta_J$.  Let $L_n$ be the selected top eigenspace and set
\[
 S_n=\mathcal N_{n,q}R_{n,q}|_{L_n}:L_n\longrightarrow H_n^{\odot q}.
\]
By \eqref{bridge:eq:no-ghost-gap},
\begin{equation}
 \norm{S_nc}\ge\gamma_J\norm c,
 \qquad \gamma_J=\frac{a_qR^3}{J}>0.
 \label{bridge:eq:uniform-lower-S}
\end{equation}
Let $u_n\in\Ran S_n$ be bounded.  Choose the minimum-norm preimage
$c_n\in L_n$.  Then
$\norm{c_n}\le\gamma_J^{-1}\norm{u_n}$ by
\eqref{bridge:eq:uniform-lower-S}.  Let $k=[k_n]_{\mathcal U}\in\cK_q$.
The internal ultraproduct of a sequence of closed subspaces is closed,
and its orthogonal projection is represented by the coordinatewise
projections \cite{Heinrich}.  Hence the induction hypothesis
$\mathbb M_{q,\mathcal U}\subset\cD_q=\cK_q^\perp$ and the identity
$\norm{P_{M_{n,q}}k_n}^2=\ip{k_n}{P_{M_{n,q}}k_n}$ give
\[
 [P_{M_{n,q}}k_n]_{\mathcal U}=0,
 \qquad
 [P_{M_{n,q}^\perp}k_n]_{\mathcal U}=k.
\]
 The class of $(P_{M_{n,q}^{\perp}}k_n)_n$ is therefore still $k$.
 By \eqref{primitive:eq:op-zero}, every proper flattening of this
 representative tends to zero in operator norm.  All packet
 coefficients are bounded, so the contraction inequality gives
\[
 \Ulim\norm{\Lambda_{n,q}k_n}=0.
\]
 (The total active branch is zero exactly, by
 $A_{n,a,q}\perp P_{M_{n,q}^{\perp}}k_n$.)
Writing $z_n=R_{n,q}c_n$, we obtain
\[
 |\ip{k_n}{u_n}|
 =|\ip{\Lambda_{n,q}k_n}{\Lambda_{n,q}z_n}|
 \le\norm{\Lambda_{n,q}k_n}\sqrt{a_q}R\norm{z_n}
 \longrightarrow_{\mathcal U}0.
\]
Hence every bounded class in the new promotion is orthogonal to $\cK_q$
and belongs to $\cD_q$.

The new promotion is orthogonal to the previously extracted module because
$\Ran\mathcal N_{n,q}\subset M_{n,q}^{\perp}$.  Any bounded vector in
their finite orthogonal sum splits into bounded previous and new components.
The induction hypothesis and the preceding paragraph therefore imply
\eqref{bridge:eq:HER} after the promotion.
\end{proof}

\begin{theorem}[Identification of the Hilbert--Stein and primitive--Fock splittings]
\label{bridge:thm:exact-bridge}
Let
\[
 X_{n,a}=I_q(f_{n,a}),
 \qquad
 \sup_n\sum_{a=1}^{d}q!\norm{f_{n,a}}^2<\infty,
\]
choose $R>0$ with
$R^2\ge\sup_n\sum_aq!\norm{f_{n,a}}^2$, and run the spectrally reduced
feedback on $x_{n,a,q}=\sqrt{q!}\,f_{n,a}$ from the zero module.  Along every
subsequence on which the covariance and all the finitely many proper
self-contraction norms converge, one may extract a further subsequence
and choose a sufficiently slow diagonal $J(n)\to\infty$ such that, with
the corresponding active and
residual Wiener polynomials $A_n^{[J(n)]},B_n^{[J(n)]}$,
\[
 (A_n^{[J(n)]},B_n^{[J(n)]})
 \Longrightarrow(A_{\mathrm{FF}},G_{\mathrm{prim}}),
 \qquad X_n\Longrightarrow A_{\mathrm{FF}}+G_{\mathrm{prim}},
 \qquad
 G_{\mathrm{prim}}\indep A_{\mathrm{FF}}.
\]
If $\xi_a=[f_{n,a}]_{\mathcal U}\in\cH_q$, then, in the terminal
primitive--Fock representation,
\begin{align}
 A_{\mathrm{FF},a}
 &=\mathbb U_q^{-1}P_{\cD_q}\xi_a,
 \label{bridge:eq:A-FF}\\
 G_{\mathrm{prim},a}
 &=\mathbb U_q^{-1}P_{\cK_q}\xi_a.
 \label{bridge:eq:G-prim}
\end{align}
The second term belongs to the first Gaussian chaos over $\cK_q$; the
first is the orthogonal sum of all weighted sectors containing at least
two primitive particles.  In particular the split is intrinsic relative
to the represented packet and ultrafilter, and
\begin{equation}
 \Cov(G_{\mathrm{prim}})_{ab}
 =\ip{P_{\cK_q}\xi_a}{P_{\cK_q}\xi_b}_{\cH_q}
 =q!\,\Ulim\ip{p_{n,a}}{p_{n,b}},
 \label{bridge:eq:canonical-covariance-pure}
\end{equation}
where $(p_{n,a})$ is any bounded representative of
$P_{\cK_q}\xi_a$.
\end{theorem}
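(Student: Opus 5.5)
The proof identifies the ultralimits of the active and residual kernels at each fixed level $J$, and then passes to $J\to\infty$. For each fixed $J$, set $\alpha^{[J]}_a=[P_{M_{n,q}^{[J]}}f_{n,a}]_{\mathcal U}$ and $\rho^{[J]}_a=\xi_a-\alpha^{[J]}_a$. By Lemma~\ref{bridge:lem:HER}, $\alpha^{[J]}_a\in\mathbb M^{[J]}_{q,\mathcal U}\subset\cD_q$, so $P_{\cK_q}\alpha^{[J]}_a=0$. Hence
\[
 P_{\cK_q}\rho^{[J]}_a=P_{\cK_q}\xi_a,
 \qquad
 \rho^{[J]}_a=P_{\cK_q}\xi_a+\bigl(P_{\cD_q}\xi_a-\alpha^{[J]}_a\bigr).
\]
The whole theorem therefore reduces to the claim that the decomposable part of the residual tends to zero:
\[
 \eta^{[J]}_a:=P_{\cD_q}\rho^{[J]}_a\to0
 \qquad\text{in }\cH_q\text{ as }J\to\infty .
\]
Once this holds, Theorem~\ref{primitive:thm:represented-transfer} applied to the packet $(A_n^{[J]},B_n^{[J]})$ gives joint convergence along $\mathcal U$ to $(\mathbb U_q^{-1}\alpha^{[J]},\mathbb U_q^{-1}\rho^{[J]})$. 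The Wiener isometry then sends these to $(\mathbb U_q^{-1}P_{\cD_q}\xi,\mathbb U_q^{-1}P_{\cK_q}\xi)$ in $L^2$. By Theorem~\ref{primitive:thm:intrinsic-unitary}, $\mathbb U_q^{-1}$ maps $\cK_q$ onto the one-particle sector, so the second component is a first-chaos Gaussian over $\cK_q$. It is independent of the first, which lives in the Wick sectors with at least two particles over the same Gaussian space. The covariance formula \eqref{bridge:eq:canonical-covariance-pure} is then the isometry on $\cK_q$.

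To prove $\eta^{[J]}\to0$, I would use the feedback bound \eqref{bridge:eq:F-J}, $\mathfrak H_q(A^{[J]},r^{[J]})\le dqa_qR^4/J$, and pass it to the ultralimit. By Proposition~\ref{primitive:prop:open-network-morphism}, each branch norm $\norm{r_{a}\wtensor_s r_b}$ and $\norm{A_a\wtensor_s r_b}$ with $1\le s<q$ converges along $\mathcal U$ to its complete-block Fock value. The key lower bound I would aim for is
\[
 \Ulim\mathfrak H_q\bigl(A_n^{[J]},r_n^{[J]}\bigr)\ \ge\ c_q\,\Phi\bigl(\eta^{[J]}\bigr),
\]
where $\Phi$ is a positive functional on $\cD_q^d$ vanishing only at $0$ and continuous in the norm. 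A natural candidate is $\Phi(\eta)=\sum_{a,b,s}\norm{\eta_a\wtensor_s\eta_b}^2$ computed in Fock form. Because $P_{\cK_q}\rho$ is primitive, every proper contraction involving it vanishes by \eqref{primitive:eq:cross-zero}. The residual self-register therefore reduces to the register of $\eta$ alone.

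Positivity of this register on a nonzero decomposable $\eta$ would follow from Lemma~\ref{bridge:lem:cumulant-bridge} in Fock form. Take a component, or a linear combination $\sum c_a\eta_a$, which is a nonzero element of the at-least-two-particle sectors. Its Wick polynomial is a nonzero element of a chaos of order $\ge2$ over the terminal Gaussian space. Its fourth cumulant is therefore strictly positive, by the Nualart--Peccati fourth-moment theorem in finite dimension, i.e.\ non-Gaussianity of a nonzero higher chaos. This makes the symmetric feedback positive.

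Convergence then comes from monotonicity. The residual energy $\norm{\rho^{[J]}}^2$ is nonincreasing in $J$, and the $\alpha^{[J]}$ are nested, so $\alpha^{[J]}$ converges to some $\alpha^\infty\in\cD_q$. Consequently $\eta^{[J]}\to\eta^\infty$ in norm. Continuity of the register gives $\Phi(\eta^\infty)=0$, hence $\eta^\infty=0$. This compactness-free continuity is where I expect the main obstacle. The register is computed at level $n$ before the ultralimit, and I must check that the Fock block-pairing value of the residual register is exactly the register of the limit $\rho^{[J]}$. This uses the fact that the limiting modules are not assumed closed under the ultraproduct; Lemma~\ref{bridge:lem:HER} only gives an inclusion. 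I would handle it by working with the represented classes, which suffices since all quantities are continuous multilinear functions of $(\alpha^{[J]},\rho^{[J]})$.

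Finally, the ordinary statement is obtained as follows. At each $J$, the convergence along $\mathcal U$ is realized on a subsequence, using the metrizability of bounded-Lipschitz distance together with finitely many contraction norms. A slow diagonal $J(n)$ then yields the stated convergence of $(A_n^{[J(n)]},B_n^{[J(n)]})$, and hence of $X_n$ to $A_{\mathrm{FF}}+G_{\mathrm{prim}}$.
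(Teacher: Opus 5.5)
Your architecture matches the paper's, but the one step that carries the identification, strict positivity of $\Phi$ on nonzero $\eta\in\cD_q^d$, is misjustified. The shared parts are these. Lemma~\ref{bridge:lem:HER} puts the active classes in $\cD_q$. Nestedness and Pythagoras give convergence of $\alpha^{[J]}$. The bound \eqref{bridge:eq:F-J} together with continuity of contractions on the ultraproduct kills the residual self-register in the limit. Finally, the transfer theorem at fixed $J$ and a slow diagonal finish the argument.

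You claim $\mathbb U_q^{-1}\eta_a$ is a nonzero element ``of a chaos of order $\ge2$'' and invoke the Nualart--Peccati theorem. For $q\ge3$, however, $\mathbb U_q^{-1}\cD_q=\bigoplus_{\wt(\bm m)=q,\ |\bm m|\ge2}I_{|\bm m|}(\cdots)$ mixes terminal chaos orders $2,\ldots,q$. Already for $q=3$ it is $I_3(E_1^{\odot3})\oplus I_2(E_1\widehat\otimes E_2)$. You are conflating physical weight with terminal chaos order, which is exactly the distinction the paper is built on.

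On a sum of chaoses the fourth cumulant is not sign-definite. For example, chaos truncations of $\cos(tX)-e^{-t^2/2}$ for large $t$ lie in chaoses $2,4,\ldots,2K$ and are close in law to the arcsine law, so their fourth cumulant is negative. Non-Gaussianity of the terminal therefore gives you nothing.

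Your conclusion is nevertheless true, but only via the primitive criterion. By Lemma~\ref{bridge:lem:cumulant-bridge}, vanishing of the symmetrized self-feedback forces $\Ulim\norm{\eta_{n,a}\otimes_s\eta_{n,a}}=0$ for every $1\le s<q$. Theorem~\ref{primitive:thm:primitive-kernel}(iv) then gives $\eta_a\in\cK_q$, hence $\eta_a\in\cK_q\cap\cD_q=\{0\}$.

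Once you use this, the detour through $\eta$ and $\Phi$ is unnecessary. The paper applies the same two results directly to the limiting residual $r_\infty=\xi-a_\infty$ and gets $r_\infty\in\cK_q$. It then reads off $a_\infty=P_{\cD_q}\xi$ and $r_\infty=P_{\cK_q}\xi$ from uniqueness of the orthogonal decomposition $\cH_q=\cD_q\oplus\cK_q$.

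The ``main obstacle'' you anticipate does not arise. The register depends only on ultraproduct classes and is continuous on bounded sets by \eqref{primitive:eq:product-bound} and graphical Cauchy--Schwarz. Lemma~\ref{bridge:lem:HER} is only ever needed as an inclusion.

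Two smaller corrections:
\begin{enumerate}
 \item Contractions between $P_{\cK_q}\xi$ and the decomposable $\eta$ vanish by Theorem~\ref{primitive:thm:primitive-kernel}(iii), i.e.\ the operator norms of the flattenings, not by \eqref{primitive:eq:cross-zero}, which concerns two primitives.
 \item Independence of the two limit terms does not follow from their lying in different Wick sectors ``over the same Gaussian space''. Uncorrelated sectors over a common field are not independent in general. It holds because the sectors of weight $q$ with at least two particles involve only primitive weights $s<q$, whose one-particle spaces $\cK_s$ are orthogonal to $\cK_q$.
\end{enumerate}
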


\begin{proof}
Write
\[
 \widehat a_{n,a}^{[J]}=P_{M_{n,q}^{[J]}}(\sqrt{q!}\,f_{n,a}),
 \qquad
 \widehat r_{n,a}^{[J]}
 =P_{(M_{n,q}^{[J]})^\perp}(\sqrt{q!}\,f_{n,a}),
\]
for the variance-normalized feedback kernels, and define the
Wiener-normalized ultraproduct classes
\[
 a_a^{[J]}=\left[\frac{\widehat a_{n,a}^{[J]}}{\sqrt{q!}}\right]_{\mathcal U},
 \qquad
 r_a^{[J]}=\left[\frac{\widehat r_{n,a}^{[J]}}{\sqrt{q!}}\right]_{\mathcal U}.
\]
Thus $\xi=a^{[J]}+r^{[J]}$ exactly.  By
Lemma~\ref{bridge:lem:HER},
$a_a^{[J]}\in\cD_q$.  The modules are nested, so Pythagoras gives
\[
 \norm{a^{[K]}-a^{[J]}}^2
 =\norm{a^{[K]}}^2-\norm{a^{[J]}}^2,
 \qquad K\ge J.
\]
Thus $a^{[J]}$ converges in the Hilbert ultraproduct to some
$a_\infty\in\cD_q$, and
$r^{[J]}\to r_\infty=\xi-a_\infty$.

By \eqref{bridge:eq:F-J}, the residual self-contraction part of the feedback tends
to zero as $J\to\infty$.  By Lemma~\ref{bridge:lem:cumulant-bridge}, this also kills
every unsymmetrized proper self-contraction.  Contraction is continuous in
the Hilbert tensor norm, hence
\[
 r_{\infty,a}\otimes_sr_{\infty,a}=0
 \quad\text{in the ultraproduct},
 \qquad1\le s<q.
\]
The primitive criterion gives $r_{\infty,a}\in\cK_q$.  Since
$\xi_a=a_{\infty,a}+r_{\infty,a}$ and
$\cH_q=\cD_q\oplus\cK_q$, uniqueness of the orthogonal decomposition
forces
\[
 a_{\infty,a}=P_{\cD_q}\xi_a,
 \qquad
 r_{\infty,a}=P_{\cK_q}\xi_a.
\]

Apply the unitary $\mathbb U_q^{-1}$.  Its one-particle sector of physical
weight $q$ is precisely $I_1(\cK_q)$, while its remaining sectors are the
partitions of $q$ into at least two positive weights.  Distinct primitive
weight spaces are orthogonal Gaussian subfields; therefore the two terms
are independent.

For completeness, apply the represented-sequence transfer theorem
Theorem~\ref{primitive:thm:represented-transfer}, at each fixed $J$, to the
$2d$-packet formed by the active and residual projections.  This is the
needed assertion about the original feedback trajectory; the spatial
realization theorem alone would only construct another sequence with the
same terminal.  A nested ordinary subsequence gives convergence to
$(\mathbb U_q^{-1}a^{[J]},\mathbb U_q^{-1}r^{[J]})$ simultaneously for
every integer $J$.  Since $a^{[J]}\to a_\infty$ and
$r^{[J]}\to r_\infty$ in $\cH_q$, unitarity gives the same convergence in
$L^2$ on the terminal Fock space.  Choosing a piecewise constant
$J(n)\uparrow\infty$ therefore yields the asserted convergence of the
pair.  The Wiener isometry gives \eqref{bridge:eq:canonical-covariance-pure}.
\end{proof}

\begin{remark}[Order of limits in the homogeneous bridge]
The bridge is projective: first $n\to\mathcal U$ at fixed $J$, then
$J\to\infty$, or equivalently along the sufficiently slow diagonal
constructed above.  It is not asserted for every diagonal.  Indeed, if
$J(n)$ grows faster than the inverse feedback of a primitive diffuse
mode, the finite-$n$ trajectory can eventually promote that mode.  The
fixed-level spectral gap is exactly what prevents this phenomenon in the
projective order of limits.
\end{remark}

\begin{definition}[Represented four-copy functional]
\label{bridge:def:four-copy-functional}
For $q\ge2$ and $\xi=[f_n]_{\mathcal U}\in\cH_q$, define
\begin{equation}
 \mathfrak Q_q(\xi)
 =\Ulim\left\{
  \E[I_q^{W_n}(f_n)^4]
  -3\E[I_q^{W_n}(f_n)^2]^2
 \right\}.
 \label{bridge:eq:four-copy-functional}
\end{equation}
\end{definition}

\begin{proposition}[Intrinsicity of the four-copy functional]
\label{bridge:prop:four-copy-intrinsic}
The quantity in \eqref{bridge:eq:four-copy-functional} is finite,
nonnegative, and independent of the bounded representative $(f_n)$.
It is continuous on bounded subsets of $\cH_q$.  If
$
 T=\mathbb U_q^{-1}\xi
$
is the primitive--Fock terminal, then
\begin{equation}
 \boxed{\mathfrak Q_q(\xi)=\kappa_4(T).}
 \label{bridge:eq:four-copy-terminal}
\end{equation}
\end{proposition}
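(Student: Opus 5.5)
The plan is to express the fourth cumulant at level $n$ as a finite combination of contraction norms, and then pass each term to the ultraproduct. By the first display in the proof of Lemma~\ref{bridge:lem:cumulant-bridge} (with $x=\sqrt{q!}f_n$), $\kappa_4(I_q^{W_n}(f_n))$ is a fixed nonnegative combination of $\norm{f_n\otimes_sf_n}^2$ and $\norm{f_n\wtensor_sf_n}^2$, $1\le s\le q-1$. Each of these is a scalar value of a fixed loop-free contraction network in four copies of $f_n$: the network contracts $s$ legs of two copies and $q-s$ legs across, possibly after a fixed finite sum over leg permutations implementing $\Sym$. Nonnegativity then follows from \eqref{bridge:eq:positive-kappa4}. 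Finiteness follows from $\norm{f_n\otimes_sf_n}\le\norm{f_n}^2$ and boundedness of representatives.

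For independence of the representative and continuity, I will invoke Proposition~\ref{primitive:prop:open-network-morphism} with $L=0$. The map $C_G^{\mathcal U}$ is a well-defined continuous multilinear form on $\cH_q^4$, independent of bounded representatives. Hence $\mathfrak Q_q$ is a finite linear combination of the diagonal restrictions $\xi\mapsto C_G^{\mathcal U}(\xi,\xi,\xi,\xi)$. These restrictions are representative-independent. By multilinearity and the telescoping bound, they are Lipschitz on bounded sets, with constant $\lesssim\rho^3$ on the ball of radius $\rho$.

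For \eqref{bridge:eq:four-copy-terminal}, I will avoid diagram bookkeeping and argue through moments. By Theorem~\ref{primitive:thm:represented-transfer}, $I_q^{W_n}(f_n)\Rightarrow T$ along $\mathcal U$. Fixed-chaos hypercontractivity \cite[Thm.~5.10]{Janson} bounds $\sup_n\E|I_q^{W_n}(f_n)|^{p}$ for every $p$, so the fourth and second moments are uniformly integrable. Hence $\Ulim\E[I_q(f_n)^4]=\E T^4$ and $\Ulim\E[I_q(f_n)^2]=\E T^2$, the latter being also just the Wiener isometry together with the unitarity of $\mathbb U_q$. Subtracting gives $\mathfrak Q_q(\xi)=\E T^4-3(\E T^2)^2=\kappa_4(T)$, since $T$ is centered.

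The main obstacle is making the passage from weak convergence to convergence of moments honest. Theorem~\ref{primitive:thm:represented-transfer} is stated only for bounded Lipschitz test functions, so I must truncate $x\mapsto x^4$ and control the tail using a uniform bound on $\E|I_q(f_n)|^{6}$. The same moment bound must also hold for $T$, which lies in a finite sum of chaoses over $\bigoplus_s\cK_s$ and therefore satisfies hypercontractivity as well. An alternative route is Proposition~\ref{primitive:prop:open-network-morphism} directly: it would identify $\mathfrak Q_q(\xi)$ as the complete-block pairing value and compare it with the cumulant of the Wick polynomial $T$. That comparison requires a combinatorial matching of diagrams, which the moment argument sidesteps.
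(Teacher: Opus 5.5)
Your argument is correct, and your proof of the boxed identity is the paper's: represented transfer, a uniform $L^6$ bound from fixed-chaos hypercontractivity, and uniform integrability of the fourth powers. The truncation is harmless. The function $x\mapsto\min(x^4,K)$ is bounded Lipschitz, and $\E T^4<\infty$ then follows by monotone convergence in $K$, so you need no separate moment bound for $T$.

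You differ from the paper in how you prove representative-independence and continuity. The paper works with the random variables. The Wiener isometry gives $\Ulim\norm{F_n-G_n}_2=0$, hypercontractivity upgrades this to $L^4$, and H\"older applied to the factored difference $F_n^4-G_n^4$ finishes. You work at the kernel level instead. You use the positive contraction identity to write $\kappa_4(I_q^{W_n}(f_n))$ as a fixed combination of four-copy network values, then apply the $L=0$ case of the open-network morphism.

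The paper's route is generic: it applies to any polynomially bounded functional of a bounded fixed-chaos sequence and never uses the cumulant expansion. Yours exhibits $\mathfrak Q_q$ as the diagonal of an explicit bounded $4$-linear form on $\cH_q$, which is what justifies the name ``four-copy''. For that you only need $\norm{u\otimes_sv}\le\norm u\norm v$ plus telescoping, not the full network proposition.

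Both auxiliary arguments are in fact subsumed by the identification itself. Since $\Ulim\E[I_q^{W_n}(f_n)^4]=\E T^4$ holds for every bounded representative, and $T=\mathbb U_q^{-1}\xi$ depends only on $\xi$, independence of the representative is automatic. Continuity on bounded sets then follows from unitarity of $\mathbb U_q$ together with hypercontractivity on the terminal.
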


\begin{proof}
Let $(f_n)$ and $(g_n)$ represent the same element and put
$F_n=I_q(f_n)$, $G_n=I_q(g_n)$.  The Wiener isometry gives
$\Ulim\norm{F_n-G_n}_2=0$.  Fixed-chaos hypercontractivity \cite[Thm.~5.10]{Janson} gives
the same conclusion in $L^4$, while all $L^4$ norms remain bounded.
Factoring $F_n^4-G_n^4$ and applying H\"older therefore shows that the
fourth moments have the same ultralimit; the second moments are even
Hilbert-continuous.  The same estimate proves continuity on bounded
sets.  Nonnegativity follows from the positive identity
\eqref{bridge:eq:positive-kappa4}.

By Theorem~\ref{primitive:thm:represented-transfer}, $F_n$ converges in law
along $\mathcal U$ to $T$.  Hypercontractivity gives a uniform $L^6$
bound, hence uniform integrability of the fourth powers.  The second and
fourth moments consequently converge along $\mathcal U$, proving
\eqref{bridge:eq:four-copy-terminal}.
\end{proof}

\begin{theorem}[Physical fourth-moment theorem: exact zero stratum]
\label{bridge:thm:physical-fourth-moment}
Let $q\ge2$ and
\[
 X_{n,a}=I_q^{W_n}(f_{n,a}),
 \qquad
 \sup_n\sum_{a=1}^dq!\norm{f_{n,a}}^2<\infty.
\]
Write $\Sigma_n=\Cov(X_n)$, fix a free ultrafilter $\mathcal U$, and put
\[
 \xi_a=[f_{n,a}]_{\mathcal U}\in\cH_q,
 \qquad
 \Sigma=\Ulim\Sigma_n,
 \qquad
 \kappa_{n,a}=\E[X_{n,a}^4]-3\E[X_{n,a}^2]^2.
\]
The represented terminal has the intrinsic splitting
\begin{equation}
 T_a=\mathbb U_q^{-1}\xi_a
 =A_a^{\phys}+G_a^{\phys},
 \quad
 A_a^{\phys}=\mathbb U_q^{-1}P_{\cD_q}\xi_a,
 \quad
 G_a^{\phys}=\mathbb U_q^{-1}P_{\cK_q}\xi_a.
 \label{bridge:eq:physical-split}
\end{equation}
The following conditions are equivalent:
\begin{enumerate}[label=\textup{(\roman*)}]
 \item $\displaystyle\sum_{a=1}^d\mathfrak Q_q(\xi_a)=0$,
 equivalently $\Ulim\sum_a\kappa_{n,a}=0$;
 \item for every $a$ and $1\le r<q$,
 $
  \Ulim\norm{f_{n,a}\otimes_rf_{n,a}}=0
 $;
 \item $\xi_a\in\cK_q$ for every $a$, equivalently
 $P_{\cD_q}\xi=0$;
 \item $A^{\phys}=0$ (equivalently, the limiting active part of
 the reduced Hilbert--Stein trajectory vanishes) and the represented
 laws converge along $\mathcal U$ to $N_d(0,\Sigma)$;
 \item for every $a$,
 $
  \Ulim\E[X_{n,a}^4]=3\Sigma_{aa}^2
 $.
\end{enumerate}
When these conditions hold, every mixed proper contraction also
vanishes:
\begin{equation}
 \Ulim\norm{f_{n,a}\otimes_rf_{n,b}}=0
 \qquad(1\le r<q,\ 1\le a,b\le d).
 \label{bridge:eq:physical-cross-vanishing}
\end{equation}
Moreover, there is a finite constant $C_{q,d}$, depending only on
$(q,d)$, such that for every $n$ and $t\in\R^d$,
\begin{equation}
 \left|
  \E e^{it\cdot X_n}
  -e^{-\frac12t^{\mathsf T}\Sigma_nt}
 \right|
 \le C_{q,d}|t|^2
 \left(\sum_{a=1}^d\kappa_{n,a}\right)^{1/2}.
 \label{bridge:eq:physical-fourth-moment-bound}
\end{equation}
Componentwise, the exact carré-du-champ comparison is
\begin{equation}
 \frac1{3q}\kappa_{n,a}
 \le
 \Var\bigl(\Gamma(X_{n,a},X_{n,a})\bigr)
 \le
 \frac{q-1}{3q}\kappa_{n,a}.
 \label{bridge:eq:physical-gamma-comparison}
\end{equation}
Thus the fourth cumulants detect exactly, and not merely sufficiently,
the disappearance of the decomposable physical sector.
\end{theorem}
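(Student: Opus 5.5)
The proof runs around the cycle (i)$\Leftrightarrow$(ii)$\Leftrightarrow$(iii)$\Leftrightarrow$(iv), with (v) attached to (i). The quantitative bounds are then read off from Lemma~\ref{primitive:lem:Stein} and Lemma~\ref{bridge:lem:cumulant-bridge}.

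\textbf{Equivalences.} By Proposition~\ref{bridge:prop:four-copy-intrinsic}, $\mathfrak Q_q(\xi_a)=\Ulim\kappa_{n,a}\ge0$. So the sum vanishes iff each term does, which is the internal equivalence in (i). Condition (v) is the same statement after rewriting $3\E[X_{n,a}^2]^2\to3\Sigma_{aa}^2$.

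For (i)$\Leftrightarrow$(ii) we use the positive identity \eqref{bridge:eq:positive-kappa4}. Each summand $\binom qs^2\norm{x\otimes_sx}^2$ with $1\le s\le q-1$ is nonnegative and bounded by $\kappa_4$, with $x=\sqrt{q!}f_{n,a}$. Hence $\kappa_{n,a}\to_{\mathcal U}0$ forces every proper self-contraction to vanish. Conversely, the same identity, together with $\norm{x\wtensor_sx}\le\norm{x\otimes_sx}$, bounds $\kappa_{n,a}$ by a constant times $\sum_s\norm{x\otimes_sx}^2$.

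Condition (ii)$\Leftrightarrow$(iii) is exactly Theorem~\ref{primitive:thm:primitive-kernel}, (iv)$\Leftrightarrow$(i), applied componentwise. When (iii) holds, \eqref{primitive:eq:cross-zero} with $p=q$ and $r<q$ gives \eqref{bridge:eq:physical-cross-vanishing}.

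For (iii)$\Rightarrow$(iv): $P_{\cD_q}\xi=0$ gives $A^{\phys}=0$ by \eqref{bridge:eq:physical-split}. Then $T=\mathbb U_q^{-1}\xi$ lies in the first chaos over $\cK_q$, so it is Gaussian with covariance $\Ulim\Sigma_n=\Sigma$. Theorem~\ref{primitive:thm:represented-transfer} supplies the convergence of laws along $\mathcal U$. The parenthetical identification with the reduced trajectory is Theorem~\ref{bridge:thm:exact-bridge}.

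For (iv)$\Rightarrow$(i), two routes work. The first is formal: $A^{\phys}=0$ forces $\xi_a=P_{\cK_q}\xi_a$, which is (iii). The second uses the law: Gaussian convergence plus hypercontractive uniform integrability (as in Proposition~\ref{bridge:prop:four-copy-intrinsic}) gives $\kappa_4(T_a)=0$, and $\mathfrak Q_q(\xi_a)=\kappa_4(T_a)$ closes the loop.

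\textbf{Quantitative part.} For \eqref{bridge:eq:physical-fourth-moment-bound}, apply Lemma~\ref{primitive:lem:Stein} with $q_a\equiv q$ and $C_n=\Sigma_n$. This reduces matters to bounding $\sum_{a,b}\sum_{1\le r<q}\norm{f_{n,a}\otimes_rf_{n,b}}^2$ by a constant times $\sum_a\kappa_{n,a}$. Diagonal terms $a=b$ are handled by \eqref{bridge:eq:positive-kappa4}.

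The cross terms $a\ne b$ are the main obstacle, since \eqref{primitive:eq:cross-zero} is only qualitative. I would use the identity $\norm{f\otimes_rg}^2=\ip{f\otimes_{q-r}f}{g\otimes_{q-r}g}$, obtained by reordering the contracted legs. Cauchy--Schwarz then gives $\norm{f_a\otimes_rf_b}^2\le\norm{f_a\otimes_{q-r}f_a}\,\norm{f_b\otimes_{q-r}f_b}$, and AM--GM bounds this by half the sum of the squares, each controlled by the corresponding $\kappa$. This yields a constant $c_{q,d}$ and hence $C_{q,d}|t|^2(\sum_a\kappa_{n,a})^{1/2}$. The kernel-norm bound does not even enter, because the estimate is homogeneous of degree four on both sides.

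Finally, \eqref{bridge:eq:physical-gamma-comparison} is \eqref{bridge:eq:kappa-gamma-comparison} applied to each component at fixed $n$.
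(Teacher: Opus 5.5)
Your proposal is correct and follows essentially the paper's argument. It uses the positive cumulant identity for (i)$\Leftrightarrow$(ii), the exact primitive-kernel theorem for (ii)$\Leftrightarrow$(iii), the represented transfer for (iii)$\Rightarrow$(iv), and the identity $\norm{f\otimes_rg}^2=\ip{f\otimes_{q-r}f}{g\otimes_{q-r}g}$ with Cauchy--Schwarz feeding Lemma~\ref{primitive:lem:Stein}. The differences are minor: you attach (v) directly to (i) rather than routing (iv)$\Rightarrow$(v)$\Rightarrow$(i), and you get the qualitative cross-vanishing from \eqref{primitive:eq:cross-zero}, whereas the paper derives it from the same quantitative cross-cumulant bound it uses for \eqref{bridge:eq:physical-fourth-moment-bound}.
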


\begin{proof}
Apply Lemma~\ref{bridge:lem:cumulant-bridge} with
$x=\sqrt{q!}\,f_{n,a}$.  Its positive identity gives, for every
$1\le r<q$,
\begin{equation}
 \kappa_{n,a}
 \ge(q!)^2\binom qr^2
 \norm{f_{n,a}\otimes_rf_{n,a}}^2.
 \label{bridge:eq:kappa-controls-self}
\end{equation}
Conversely, every term in the same finite positive identity tends to
zero if all the contractions in \textup{(ii)} tend to zero, since
symmetrization is contractive.  This proves
\textup{(i)}$\Longleftrightarrow$\textup{(ii)}.  The exact primitive
kernel theorem gives
\textup{(ii)}$\Longleftrightarrow$\textup{(iii)}, and
\eqref{bridge:eq:physical-split} gives the equivalence between
\textup{(iii)} and $A^{\phys}=0$.

By Theorem~\ref{primitive:thm:represented-transfer}, the represented laws have
terminal $T$.  Under \textup{(iii)}, this terminal lies in the first
Gaussian chaos $I_1(\cK_q)$ and has covariance $\Sigma$, proving
\textup{(iv)}.  Conversely, \textup{(iv)} implies \textup{(v)}:
hypercontractivity and the uniform $L^2$ bound give a uniform
$L^{4+\eps}$ bound, so the fourth powers are uniformly
integrable.  Finally \textup{(v)} implies \textup{(i)}, because
$\Ulim\E[X_{n,a}^2]=\Sigma_{aa}$.

For symmetric $f,g\in H^{\odot q}$ one has the exact Hilbert identity
\begin{equation}
 \norm{f\otimes_rg}^2
 =\ip{f\otimes_{q-r}f}{g\otimes_{q-r}g}.
 \label{bridge:eq:cross-self-identity}
\end{equation}
Therefore Cauchy--Schwarz and
\eqref{bridge:eq:kappa-controls-self} yield
\begin{equation}
 \norm{f_{n,a}\otimes_rf_{n,b}}^2
 \le
 \frac{\sqrt{\kappa_{n,a}\kappa_{n,b}}}
 {(q!)^2\binom qr^2}.
 \label{bridge:eq:cross-kappa-bound}
\end{equation}
This proves \eqref{bridge:eq:physical-cross-vanishing}.  Summing
\eqref{bridge:eq:cross-kappa-bound} over the finitely many components
and contraction orders in the finite-chaos Stein estimate
\eqref{primitive:eq:Gamma-contraction} gives
\[
 \sum_{a,b}
 \norm{\Gamma(X_{n,a},X_{n,b})-\Sigma_{n,ab}}_2^2
 \le C'_{q,d}\sum_a\kappa_{n,a}.
\]
The characteristic-function estimate
\eqref{primitive:eq:Stein-cf} now proves
\eqref{bridge:eq:physical-fourth-moment-bound} after changing the
constant.  Finally, \eqref{bridge:eq:physical-gamma-comparison} is
\eqref{bridge:eq:kappa-gamma-comparison} applied componentwise.
\end{proof}

\begin{corollary}[Ordinary-sequence Gaussian stratum]
\label{bridge:cor:Gaussian-stratum}
In the setting of Theorem~\ref{bridge:thm:physical-fourth-moment}, assume that
$\Sigma_n\to\Sigma$.  Then the following are equivalent as
$n\to\infty$:
\[
 \sum_a\kappa_{n,a}\longrightarrow0
 \quad\Longleftrightarrow\quad
 \norm{f_{n,a}\otimes_rf_{n,a}}\longrightarrow0
 \quad(a\le d,\ 1\le r<q)
\]
\[
 \Longleftrightarrow\quad
 X_n\Longrightarrow N_d(0,\Sigma).
\]
For a fixed represented subsequence,
\begin{equation}
 \Ulim\sum_a\kappa_{n,a}=0
 \quad\Longleftrightarrow\quad
 P_{\cD_q}\xi=0
 \quad\Longleftrightarrow\quad
 A^{\phys}=0.
 \label{bridge:eq:represented-Gaussian-stratum}
\end{equation}
Consequently, the ordinary conditions above are equivalent to
$P_{\cD_q}\xi=0$ (equivalently, $A^{\phys}=0$) for every
represented subsequence.  Hence the fourth moment theorem of \cite{NualartPeccati} and its vector
form \cite{PeccatiTudor} are exactly the one-particle stratum of the
primitive--Fock terminal.
\end{corollary}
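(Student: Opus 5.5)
The plan is to reduce everything to Theorem~\ref{bridge:thm:physical-fourth-moment}, applied along every free ultrafilter, together with the elementary fact that an ordinary real sequence converges to $L$ exactly when its ultralimit equals $L$ along every free ultrafilter (equivalently, along every subsequence). I would first prove the represented equivalence \eqref{bridge:eq:represented-Gaussian-stratum}, and then deduce the ordinary chain from it.

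\emph{Represented statement.} Fix a represented subsequence. Since $\kappa_{n,a}\ge0$ by \eqref{bridge:eq:positive-kappa4}, the condition $\Ulim\sum_a\kappa_{n,a}=0$ is condition (i) of Theorem~\ref{bridge:thm:physical-fourth-moment}. That theorem gives (i)$\Leftrightarrow$(iii), that is, $P_{\cD_q}\xi=0$. The splitting \eqref{bridge:eq:physical-split} together with unitarity of $\mathbb U_q$ gives $P_{\cD_q}\xi=0\Leftrightarrow A^{\phys}=0$. This proves \eqref{bridge:eq:represented-Gaussian-stratum}.

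\emph{Ordinary chain.} First, $\sum_a\kappa_{n,a}\to0$ is equivalent to $\Ulim\sum_a\kappa_{n,a}=0$ for every free ultrafilter $\mathcal U$ on every subsequence, because the sequence is bounded by hypercontractivity. The same holds for the contraction norms. Theorem~\ref{bridge:thm:physical-fourth-moment} (i)$\Leftrightarrow$(ii) then gives the first equivalence ultrafilter by ultrafilter, and hence for the ordinary sequence.

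For the implication from vanishing cumulants to Gaussian convergence, I would use the quantitative bound \eqref{bridge:eq:physical-fourth-moment-bound}: it gives $\E e^{it\cdot X_n}-e^{-\frac12 t^{\mathsf T}\Sigma_nt}\to0$. Since $\Sigma_n\to\Sigma$, Lévy's theorem yields $X_n\Rightarrow N_d(0,\Sigma)$.

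For the converse, suppose $X_n\Rightarrow N_d(0,\Sigma)$. Along any free ultrafilter the represented laws then converge to $N_d(0,\Sigma)$, and hypercontractivity supplies a uniform $L^{4+\eps}$ bound, so fourth moments converge: $\Ulim\E X_{n,a}^4=3\Sigma_{aa}^2$. This is condition (v), which implies (i), so $\Ulim\sum_a\kappa_{n,a}=0$ for every $\mathcal U$ and therefore $\sum_a\kappa_{n,a}\to0$. Alternatively, one can argue directly without ultrafilters, since uniform integrability gives $\E X_{n,a}^4\to3\Sigma_{aa}^2$ and $\E X_{n,a}^2\to\Sigma_{aa}$.

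The statement ``for every represented subsequence'' then follows by combining the two parts. The closing sentence identifying the Nualart--Peccati and Peccati--Tudor theorems is interpretive: by \eqref{bridge:eq:physical-split}, $A^{\phys}=0$ means the terminal lies in the one-particle sector $I_1(\cK_q)$.

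The only step needing care is the passage between ordinary limits and ultralimits, and I expect no real obstacle there.
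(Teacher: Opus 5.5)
Your proposal is correct and follows essentially the paper's argument: the forward implication comes from the characteristic-function bound \eqref{bridge:eq:physical-fourth-moment-bound} together with $\Sigma_n\to\Sigma$, and the converse from hypercontractive uniform integrability of fourth powers. The passage between ordinary limits and ultralimits then uses nonnegativity and boundedness of the cumulant sums. The only cosmetic difference is that you get the first equivalence by applying (i)$\Leftrightarrow$(ii) of Theorem~\ref{bridge:thm:physical-fourth-moment} ultrafilter by ultrafilter, whereas the paper reads it directly off the positive identity \eqref{bridge:eq:positive-kappa4}, which is what underlies that equivalence anyway.
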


\begin{proof}
The first equivalence follows from the positive identity in
Lemma~\ref{bridge:lem:cumulant-bridge}; the forward implication to Gaussian
convergence follows from
\eqref{bridge:eq:physical-fourth-moment-bound}.  Conversely, fixed-chaos
hypercontractivity makes the fourth powers uniformly integrable, so
Gaussian convergence implies convergence of fourth moments and hence
of the cumulants.  The cumulant sums are nonnegative and uniformly
bounded, so they converge ordinarily to zero if and only if their
ultralimit is zero on every represented subsequence.  The last assertion
then follows from Theorem~\ref{bridge:thm:physical-fourth-moment}.
\end{proof}

%=====================================================================
\section{Why the unreduced span is not canonical}
%=====================================================================

\begin{proposition}[Failure of unreduced canonical extraction in the second chaos]
\label{bridge:prop:ghost-counterexample}
The simultaneous promotion
\[
 W_{n,q}=\Span
 \{\mathcal N_{n,q}r_{n,b,q}:1\le b\le d\}
\]
does not satisfy \eqref{bridge:eq:HER}, even for $q=2$ and $d=2$.
\end{proposition}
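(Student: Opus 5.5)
The proof will be an explicit counterexample in the second chaos with $d=2$. The first component carries a persistent primitive (diffuse) mode with non-negligible energy. The second component carries a tiny decomposable-looking piece that produces a feedback direction, and that direction, once promoted, has a nonzero projection onto $\cK_2$. Concretely, take orthonormal vectors $e_0,e_1,\ldots,e_n$ in $H_n$ and set
\[
 k_n=\frac1{\sqrt{2n}}\sum_{i=1}^n e_i^{\otimes2},
 \qquad
 f_{n,1}=k_n,
 \qquad
 f_{n,2}=\eps_n\, e_0^{\otimes2}+\eta\,k_n ,
\]
with a suitably chosen small parameter, possibly $\eps_n$ fixed or with an $n$-dependent scale. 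By Theorem~\ref{primitive:thm:primitive-kernel} and Lemma~\ref{primitive:lem:replica}, $[k_n]\in\cK_2$, because $\norm{L_{k_n,1}}_{\op}=O(n^{-1/2})$. The class $[e_0^{\otimes2}]$ lies in $\cD_2$.

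The key step is to compute the unreduced promotion $W_{n,2}$ at the first step, starting from $\bm M=0$, so that $A=0$ and $r=x$. For $q=2$ only the branch $s=1$ survives, and $\mathcal N_2 h=\sum_a b_{2,2,1}\,(x_a\otimes_1)^*(x_a\wtensor_1 h)$. This is essentially $h\mapsto\sum_a \Sym(X_a^2 h)$ with $X_a$ the symmetric matrix of $x_a$, up to the constants in \eqref{bridge:eq:b-pqs}. Because $k_n$ is a scalar multiple of a projection of rank $n$, $X_1^2=\frac{1}{2n}P_n$, and therefore $\mathcal N_2 k_n$ is a multiple $c/n$ of $k_n$. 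This is exactly the small-scalar phenomenon noted in the remark following Theorem~\ref{hs:thm:one-step}. The vector $w_{n,1}=\mathcal N_2 r_{n,1}$ is nonzero, so the unreduced span $W_{n,2}$ contains the direction $k_n$ (after renormalization), even though its feedback vanishes as $n\to\infty$. More precisely, since $\dim W_{n,2}\le2$ and the two generators have components along $k_n$ and $e_0^{\otimes 2}$ of controllable, non-proportional sizes, I will show that $W_{n,2}$ is spanned by two vectors that together contain $k_n/\norm{k_n}$. This holds provided $w_{n,1}$ and $w_{n,2}$ are linearly independent, which I will check by a $2\times2$ determinant in the coordinates $(e_0^{\otimes2},k_n)$.

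Consequently the bounded sequence $m_n=k_n\in W_{n,2}\subset M_{n,2}^{[J]}$, for any $J$ large enough that this first step is active, defines a class $[k_n]\in\mathbb M^{[J]}_{2,\mathcal U}$ lying in $\cK_2\setminus\{0\}$. That class is not in $\cD_2$, so \eqref{bridge:eq:HER} fails. I also need the first step to be active at a fixed level. The register $\mathfrak H$ is at least $b_{2,2,1}\eps^4\norm{e_0^{\otimes2}\wtensor_1e_0^{\otimes2}}^2$, which is bounded below, so the step happens for $J$ fixed and large, uniformly in $n$.

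The main obstacle is the linear-independence and renormalization bookkeeping. I must keep the reduced rule's gap from hiding the issue: the $k_n$-direction carries eigenvalue $O(1/n)$ in $E_2$, which is why Gram reduction discards it, while the span retains it with unit norm. The cross terms $\mathcal N_2$ applied to mixed vectors must also be checked so that the $e_0^{\otimes 2}$ and $k_n$ coordinates of $w_{n,1},w_{n,2}$ stay non-proportional. If the choice above makes them proportional, I would instead put the diffuse part only in component $1$ and the concentrated part only in component $2$. Then $w_{n,1}\propto k_n$ and $w_{n,2}\propto e_0^{\otimes2}$ exactly, by the orthogonality of the supports.
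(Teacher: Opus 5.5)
Your proposal is correct and is essentially the paper's argument. The paper uses the same second-chaos, $d=2$ packet built from orthogonally supported pieces $e_0^{\otimes2}$ and $n^{-1/2}\sum_{j\le n}e_j^{\otimes2}$. The decomposable piece keeps $\mathfrak H$ bounded below, so the first step is active at a fixed level uniformly in $n$. The primitive piece has a nonzero but $O(1/n)$ feedback vector proportional to itself, so the unreduced span captures a nonzero class of $\cK_2$.

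Your mixed choice $f_{n,2}=\eps e_0^{\otimes2}+\eta k_n$ also works: one computes $w_{n,2}=c\,\eps^3e_0^{\otimes2}+\eta\,w_{n,1}$ with $c>0$ and $w_{n,1}\propto k_n$. You must, however, hold $\eps$ fixed rather than $n$-dependent, since otherwise your lower bound on $\mathfrak H$ fails. Your fallback is exactly the paper's example, up to relabeling the two components.
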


\begin{proof}
Let
\[
 H_n=\Span\{e_0,e_1,\ldots,e_n\},
 \qquad
 A=e_0^{\otimes2},
 \qquad
 D_n=\frac1{\sqrt n}\sum_{j=1}^{n}e_j^{\otimes2},
\]
and take
\[
 x_{n,1,2}=A,
 \qquad x_{n,2,2}=D_n.
\]
The blocks are orthogonal.  Moreover,
\[
 \norm{L_{D_n,1}}_{\op}=n^{-1/2}\longrightarrow0,
\]
so $[D_n]_{\mathcal U}\in\cK_2$, whereas
$[A]_{\mathcal U}\in\cD_2$.

For $q=2$, up to the common positive branch coefficient, the feedback is
the square of the contraction operator.  Orthogonality of the blocks gives
\[
 \mathcal N_{n,2}A\parallel A,
 \qquad
 \mathcal N_{n,2}D_n=\frac{c}{n}D_n\ne0
\]
for a fixed $c>0$.  Consequently
\[
 W_{n,2}=\Span\{A,D_n\}.
\]
The first component makes the total feedback macroscopically positive, so
the simultaneous rule promotes this entire space.  It therefore captures
$D_n$ although the generating feedback vector has norm $O(n^{-1})$.
Thus
\[
 [D_n]_{\mathcal U}\in
 \mathbb M_{2,\mathcal U}^{[J]}\cap\cK_2
\]
at the first active level, contradicting \eqref{bridge:eq:HER}.

Probabilistically,
\[
 I_2\!\left(\frac A{\sqrt2}\right)
 =\frac{Z_0^2-1}{\sqrt2},
 \qquad
 I_2\!\left(\frac{D_n}{\sqrt2}\right)
 =\frac1{\sqrt{2n}}\sum_{j=1}^{n}(Z_j^2-1)
 \Longrightarrow Z',
\]
with $Z'$ independent of $Z_0$.  The canonical primitive--Fock split leaves
$Z'$ in the primitive Gaussian factor.  The unreduced simultaneous feedback
captures it and returns a trivial Gaussian factor.
\end{proof}

\begin{remark}[The obstruction is a discontinuity, not an estimate]
The obstruction is not a failed contraction estimate.  It is the
discontinuity
\[
 \lim_{\mathcal U}\Span\{w_{n,1},\ldots,w_{n,d}\}
 \ne
 \Span\{[w_{n,1}]_{\mathcal U},\ldots,
                         [w_{n,d}]_{\mathcal U}\}.
\]
The spectral gap \eqref{bridge:eq:no-ghost-gap} is precisely what restores
commutation with the ultraproduct.
\end{remark}

%=====================================================================

\begin{corollary}[Reduced feedback constant in a pure chaos]
\label{bridge:cor:pure-rate}
In the setting of Theorem~\ref{bridge:thm:exact-bridge}, the reduced trajectory
may be stopped, after fewer than $J$ promotions, so that
\begin{equation}
 \mathfrak H_q(A^{[J]},r^{[J]})
 =\Delta_{\mathrm{res}}(r^{[J]})+\Delta_{\mathrm{mix}}(A^{[J]},r^{[J]})
 \le \frac{d\,a_qR^4}{J}.
 \label{bridge:eq:pure-improved-rate}
\end{equation}
\end{corollary}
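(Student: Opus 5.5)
The plan is to rerun the proof of Theorem~\ref{bridge:thm:reduced-extraction} while exploiting the two simplifications that a single chaos order brings. First, the factor $Q$ in the Gram gap disappears. Second, the separated-to-coherent comparison of Lemma~\ref{bridge:lem:positive-ledger} becomes an identity.

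\emph{Step 1: the identity $\mathfrak H_q=\Delta_{\mathrm{res}}+\Delta_{\mathrm{mix}}$.}
Every component is carried by the single weight $q$, so $A_{a,p}=r_{a,p}=0$ for $p\neq q$. The only branches surviving in \eqref{bridge:eq:feedback} are therefore $(q,q,s)$ with $1\le s\le q$. By Lemma~\ref{hs:lem:mixed-gamma},
\[
\Gamma(\mathcal R_a,\mathcal R_b)-\E[\mathcal R_a\mathcal R_b]=\sum_{s=1}^{q-1}\beta_{q,q,s}\,I_{2q-2s}\bigl(r_{a,q}\wtensor_s r_{b,q}\bigr),
\]
up to the normalization $1/\sqrt{q!}$ of the kernels, which is already absorbed in $b_{q,q,s}=\beta_{q,q,s}^2$. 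The output orders $2q-2s$ are pairwise distinct, so each output order carries exactly one branch. The Wiener isometry then gives
\[
\norm{\Gamma(\mathcal R_a,\mathcal R_b)-\E[\mathcal R_a\mathcal R_b]}_2^2=\sum_{s<q}b_{q,q,s}\norm{r_{a,q}\wtensor_s r_{b,q}}^2
\]
exactly. In the proof of Lemma~\ref{bridge:lem:positive-ledger}, the inequality $\norm{\sum_j z_j}^2\le m\sum_j\norm{z_j}^2$ is thus used only with $m=1$, where it is an equality. The mixed defect is handled the same way: the scalar branch $s=q$ vanishes because $A_{a,q}\perp r_{b,q}$, and the remaining branches again occupy distinct orders. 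Summing over $a,b$ gives the claimed equality.

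\emph{Step 2: improved gap.}
Since $r_{b,p}=0$ for $p\ne q$, we have $E_p=0$ for $p\ne q$. Hence $q_*=q$, and the argument for \eqref{bridge:eq:lambda-lower} gives $\mathfrak H=\Tr E_q\le d\lambda_q$, that is, $\lambda_q\ge\mathfrak H/d$, with no factor $Q$. The remainder of Lemma~\ref{bridge:lem:macroscopic-gap} goes through unchanged, using $0\le\mathcal N_q\le a_qR^2\Id$ from \eqref{bridge:eq:N-bound} with $Q=q$. The resulting capture bound is
\[
\sum_b\norm{P_{V_q^{\mathrm{red}}}r_{b,q}}^2\ge\frac{\mathfrak H}{d\,a_qR^2}.
\]

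\emph{Step 3: stopping rule.}
Run the reduced trajectory from the zero module and stop level $J$ as soon as $\mathfrak H_q\le\delta'_J:=d\,a_qR^4/J$. While the procedure is active, Step~2 and Pythagoras show that the residual energy drops by more than $\delta'_J/(d\,a_qR^2)=R^2/J$ at each promotion. The initial residual energy is at most $R^2$, so fewer than $J$ promotions occur before the stopping rule is met. The stopping rule itself gives $\mathfrak H_q\le d\,a_qR^4/J$, and Step~1 turns this into \eqref{bridge:eq:pure-improved-rate}.

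Because the new threshold is smaller than $\delta_J$ by the factor $q$, the trajectory is the same reduced trajectory, only run longer. Nestedness and the spectral gap (with $\gamma$ replaced by $a_qR^3/J$) persist, so Lemma~\ref{bridge:lem:HER} and Theorem~\ref{bridge:thm:exact-bridge} remain valid for this stopping rule.

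The main point requiring care is Step~1. One has to check that, in the pure chaos, no two admissible triples $(p,q,s)$ share an output order; this holds because $p=q$ is fixed and $s$ determines $2q-2s$. One must also check that the normalized coefficient $b_{q,q,s}$ in $\mathfrak H_q$ coincides exactly with the squared $L^2$ coefficient of the corresponding branch of $\Gamma$, which is \eqref{hs:eq:beta-b}.
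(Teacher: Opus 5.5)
Your proposal is correct and follows the paper's own proof. A single input weight replaces the bound $\mathfrak H/(dQ)$ by $\mathfrak H_q/d$, which gives a capture of at least $\mathfrak H_q/(d\,a_qR^2)$ per promotion and hence fewer than $J$ promotions at threshold $d\,a_qR^4/J$; the distinct output orders $2q-2s$ then make the separated and coherent registers coincide. One cosmetic point: for $\beta_{q,q,s}$ to be the $L^2$ coefficient, the chaos expansion displayed in Step~1 should read $\beta_{q,q,s}\,I_{2q-2s}\bigl(r_{a,q}\wtensor_s r_{b,q}\bigr)/\sqrt{(2q-2s)!}$, although the norm identity you derive from it is correct.
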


\begin{proof}
In a pure $q$th chaos there is only one input weight.  Hence, with
$E_q=R_q^*N_qR_q$ and $\mathfrak H_q=\Tr E_q$,
\[
 \lambda_{\max}(E_q)\ge \frac{\mathfrak H_q}{d},
\]
rather than the mixed-grade bound $\mathfrak H_Q/(dQ)$.  Since
$0\le N_q\le a_qR^2\Id$, projection on the reduced promoted range removes
at least $\mathfrak H_q/(d a_qR^2)$ of squared residual energy.  Use the
threshold $d a_qR^4/J$ and sum the Pythagorean energy drops.  Fewer than
$J$ promotions can occur.  In a pure chaos the output orders associated
with distinct proper contraction orders are orthogonal, so the separated
and coherent registers agree.  This proves the formula.
\end{proof}

\part{Mixed degrees and canonical Gaussian factors}

Mixed chaos orders create interfaces absent from the homogeneous bridge.
We first record the scale-invariant spectral and raw-factor estimates, and
then exhibit the multigraded ghost which forces lexicographic extraction.
The resulting canonicality is always relative to the represented terminal.

\section{Spectral capture}

For a normalized split $x=A+r$, let $\mathfrak H_Q(A,r)\ge0$ be the separated positive
feedback, and write
\begin{equation}
 N_q=\Lambda_q^*\Lambda_q,\qquad
 R_qc=\sum_{a=1}^{d}c_ar_{a,q},\qquad
 E_q=R_q^*N_qR_q.
 \label{tri:eq:5.1}
\end{equation}
The fixed-degree contraction estimate gives
\begin{equation}
 0\le N_q\le a_QR^2\Id,\qquad
 \mathfrak H_Q(A,r)=\sum_{q=1}^{Q}\Tr E_q,
 \qquad
 a_Q=
 \sum_{k=1}^{Q-1}
 \binom{Q-1}{k}^{\!2}\binom{2k}{k}.
 \label{tri:eq:5.2}
\end{equation}

\begin{theorem}[Scale-invariant reduced capture]
\label{tri:thm:scale-capture}
Assume $\mathfrak H:=\mathfrak H_Q(A,r)>0$.  Choose $q_*$ with maximal
$\lambda_*=\lambda_{\max}(E_{q_*})$, let $L_*$ be its full top eigenspace, and promote
\begin{equation}
 V_*=N_{q_*}R_{q_*}L_*.
 \label{tri:eq:5.3}
\end{equation}
Then
\begin{equation}
 \lambda_*\ge\frac{\mathfrak H}{dQ},\qquad
 \inf_{0\ne c\in L_*}
 \frac{\norm{N_{q_*}R_{q_*}c}}{\norm{c}}
 \ge\frac{\mathfrak H}{dQR},
 \label{tri:eq:5.4}
\end{equation}
and
\begin{equation}
 \sum_{a,q}\norm{P_{V_*}r_{a,q}}^2
 \ge
 \kappa_{Q,d}\frac{\mathfrak H}{R^2},
 \qquad
 \boxed{\kappa_{Q,d}=\frac1{dQa_Q}}.
 \label{tri:eq:5.5}
\end{equation}
\end{theorem}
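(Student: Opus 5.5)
This is essentially Lemma~\ref{bridge:lem:macroscopic-gap} restated without the smallest-weight tie-break, so the plan is to rerun that argument in three steps, using only the positivity bounds \eqref{tri:eq:5.2} and $\norm{r}\le R$.

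First, the eigenvalue bound. Each $E_q=R_q^*N_qR_q$ is a positive semidefinite $d\times d$ matrix, so $\Tr E_q\le d\,\lambda_{\max}(E_q)\le d\lambda_*$. Summing over the $Q$ input weights and using $\mathfrak H=\sum_q\Tr E_q$ from \eqref{tri:eq:5.2} gives $\lambda_*\ge\mathfrak H/(dQ)$. Note that any maximizing $q_*$ works here; the tie-break plays no role.

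Second, the Gram gap. For $c\in L_*$ we have $\ip{R_{q_*}c}{N_{q_*}R_{q_*}c}=\ip{c}{E_{q_*}c}=\lambda_*\norm c^2$. The operator norm of $R_{q_*}$ satisfies
\[
\norm{R_{q_*}c}\le\Bigl(\sum_a\norm{r_{a,q_*}}^2\Bigr)^{1/2}\norm c\le R\norm c .
\]
Cauchy--Schwarz then gives $\lambda_*\norm c^2\le R\norm c\,\norm{N_{q_*}R_{q_*}c}$. Dividing by $R\norm c$ and inserting the first step yields the infimum bound in \eqref{tri:eq:5.4}.

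Third, energy capture. Fix a unit $c\in L_*$ and set $z=R_{q_*}c$ and $w=N_{q_*}z$. From $0\le N_{q_*}\le a_QR^2\Id$ we get
\[
\norm w^2=\ip z{N_{q_*}^2z}\le a_QR^2\ip z{N_{q_*}z}=a_QR^2\lambda_*.
\]
Hence $\norm{P_{\R w}z}^2=\ip zw^2/\norm w^2=\lambda_*^2/\norm w^2\ge\lambda_*/(a_QR^2)$, and $w\neq0$ by the second step. Since $\R w\subset V_*$, we obtain
\[
\sum_a\norm{P_{V_*}r_{a,q_*}}^2=\norm{P_{V_*}R_{q_*}}_{\HS}^2\ge\norm{P_{V_*}z}^2\ge\norm{P_{\R w}z}^2 .
\]
The left-hand side of \eqref{tri:eq:5.5} is at least this weight-$q_*$ sum, provided $V_*$ is read as a subspace of the weight-$q_*$ component, with zero promotion elsewhere. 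Combining with the first step gives the bound $\mathfrak H/(dQa_QR^2)$, that is, $\kappa_{Q,d}=1/(dQa_Q)$.

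The only delicate point is the HS-versus-single-vector comparison in the third step, and it is routine. The main care is bookkeeping: $V_*$ must live in $M_{q_*}^\perp$, which holds because $N_q=P_{M_q^\perp}N_qP_{M_q^\perp}$ from the definition of $\Lambda_q$, and the $R$-normalization must be used consistently. The statement is called scale invariant because, under $x\mapsto tx$, $\mathfrak H$ scales as $t^4$ and $R^2$ as $t^2$, so both sides of \eqref{tri:eq:5.5} scale as $t^2$. This serves as a consistency check on the constants rather than a step of the proof.
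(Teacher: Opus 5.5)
Your proposal is correct and follows the paper's proof essentially line by line: the trace bound gives $\lambda_*\ge\mathfrak H/(dQ)$, Cauchy--Schwarz with $\norm{R_{q_*}c}\le R\norm c$ gives the Gram gap, and projecting $z=R_{q_*}c$ onto the line $\R w\subset V_*$, using $\norm w^2\le a_QR^2\lambda_*$, gives the capture. Your explicit Hilbert--Schmidt comparison $\sum_a\norm{P_{V_*}r_{a,q_*}}^2\ge\norm{P_{V_*}R_{q_*}c}^2$ spells out a step that the paper leaves implicit in this proof and writes out only in Lemma~\ref{bridge:lem:macroscopic-gap}.
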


\begin{proof}
Positivity gives
$\mathfrak H\le dQ\max_q\lambda_{\max}(E_q)$.  If $c\in L_*$, then
\[
 \ip{R_{q_*}c}{N_{q_*}R_{q_*}c}
 =\lambda_*\norm{c}^2,\qquad
 \norm{R_{q_*}c}\le R\norm{c},
\]
which proves the singular-value bound.  For a unit $c$, put
$z=R_{q_*}c$, $w=N_{q_*}z$.  Then
\[
 \norm{w}^2
 \le a_QR^2\ip{z}{N_{q_*}z}
 =a_QR^2\lambda_*,
\]
and therefore
\[
 \norm{P_{\mathbb Rw}z}^2
 =
 \frac{\ip{z}{w}^2}{\norm{w}^2}
 \ge
 \frac{\lambda_*}{a_QR^2}
 \ge
 \frac{\mathfrak H}{dQa_QR^2}.
\]
Since $\mathbb Rw\subset V_*$, this is \eqref{tri:eq:5.5}.
\end{proof}

No absolute scale-free lower bound can hold without normalization, because
residual energy and feedback scale respectively as $t^2$ and $t^4$.

\subsection{Square-root promotion and the four-copy threshold}

The range $N_qR_qL$ in \eqref{tri:eq:5.3} is cubic in the packet and its
autonomous Gram uses six copies.  The following promotion has the same
capture while its internal Gram uses only four copies.

\begin{proposition}[Square-root promotion]
\label{tri:prop:square-root}
Suppose
\[
 0\le N_q\le MR^2\Id,
 \qquad
 E_q=R_q^*N_qR_q,
\]
and let $L$ be a top eigenspace of $E_q$, with eigenvalue $\lambda>0$.  Set
\begin{equation}
 V_q^{1/2}=\Ran(N_q^{1/2}R_qL).
 \label{tri:eq:5.7}
\end{equation}
Then, for every unit $c\in L$,
\begin{equation}
 \norm{P_{V_q^{1/2}}R_qc}^2
 \ge\frac{\lambda}{MR^2},
 \qquad
 \norm{N_q^{1/2}R_qc}^2=\lambda.
 \label{tri:eq:5.8}
\end{equation}
Moreover
\begin{equation}
 \ip{N_q^{1/2}R_qc}{N_q^{1/2}R_qc'}
 =\ip{R_qc}{N_qR_qc'},
 \label{tri:eq:5.9}
\end{equation}
which is a four-source Gram.
\end{proposition}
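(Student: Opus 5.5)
The argument reduces to the one-vector estimate of Theorem~\ref{tri:thm:scale-capture}, with the square-root operator $N_q^{1/2}$ in place of $N_q$. Put $z=R_qc$ for a unit $c\in L$. Since $N_q\ge0$, the spectral theorem gives a positive square root $N_q^{1/2}$ commuting with $N_q$, and $0\le N_q^{1/2}\le (MR^2)^{1/2}\Id$. All statements follow from this functional calculus and from the eigenvalue relation $E_qc=\lambda c$.

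We prove the identities first, since they are immediate. For $c,c'\in L$, self-adjointness gives
\[
\ip{N_q^{1/2}R_qc}{N_q^{1/2}R_qc'}=\ip{R_qc}{N_qR_qc'}=\ip{c}{E_qc'},
\]
which is \eqref{tri:eq:5.9}. Taking $c'=c$ gives $\norm{N_q^{1/2}R_qc}^2=\ip{c}{E_qc}=\lambda\norm c^2=\lambda$, the second part of \eqref{tri:eq:5.8}. The right-hand side of \eqref{tri:eq:5.9} involves two copies of $R_q$ and, through $N_q=\Lambda_q^*\Lambda_q$, two packet-linear factors inside $\Lambda_q$. Each $\Lambda_q$ is linear in the packet, so the Gram is a four-source expression. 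By contrast, the $N_q R_qL$ promotion of \eqref{tri:eq:5.3} has Gram $\ip{z}{N_q^2z'}$, which uses six sources.

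For the capture bound, set $w=N_q^{1/2}z\in V_q^{1/2}$. The line $\R w$ lies in $V_q^{1/2}$, so
\[
\norm{P_{V_q^{1/2}}z}^2\ge\norm{P_{\R w}z}^2=\frac{\ip zw^2}{\norm w^2}.
\]
This requires $w\neq0$, which holds because $\norm w^2=\lambda>0$. We have $\norm w^2=\lambda$. For the numerator, $\ip zw=\ip{z}{N_q^{1/2}z}=\norm{N_q^{1/4}z}^2$. The operator inequality $N_q\le \norm{N_q^{1/2}}_{\op}N_q^{1/2}$ holds by the spectral theorem, since $t\le \sqrt{MR^2}\,t^{1/2}$ on $[0,MR^2]$. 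Hence
\[
\lambda=\ip{z}{N_qz}\le (MR^2)^{1/2}\ip{z}{N_q^{1/2}z},
\]
so $\ip zw\ge \lambda/(MR^2)^{1/2}$. Then $\ip zw^2/\norm w^2\ge \lambda^2/(MR^2\lambda)=\lambda/(MR^2)$, which is the first part of \eqref{tri:eq:5.8}.

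No step is a genuine obstacle. The only point needing care is the choice of comparison: bound $\ip z{N_q^{1/2}z}$ from below by $\ip z{N_qz}$ using the scalar inequality $t\le\sqrt{MR^2}\sqrt t$. Doing so recovers exactly the same constant as the cubic promotion in \eqref{tri:eq:5.5}. One could instead use Cauchy--Schwarz in the form $\ip z{N_qz}\le\norm z\norm{N_qz}$, but that produces a weaker constant depending on $\norm z$.
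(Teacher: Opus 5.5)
Your proof is correct and is essentially the paper's own argument. You set $z=R_qc$ and $w=N_q^{1/2}z$, use $\norm{w}^2=\ip{c}{E_qc}=\lambda$ and the scalar inequality $\sqrt t\ge t/\sqrt{MR^2}$ on $[0,MR^2]$ to get $\ip{z}{w}\ge\lambda/\sqrt{MR^2}$, and then project onto the line $\R w\subset V_q^{1/2}$, with \eqref{tri:eq:5.9} following from self-adjointness of $N_q^{1/2}$, exactly as in the paper.
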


\begin{proof}
Put $z=R_qc$, $w=N_q^{1/2}z$, and $B=MR^2$.  On $[0,B]$,
$\sqrt t\ge t/\sqrt B$.  Functional calculus therefore gives
\[
 \ip{z}{w}
 \ge B^{-1/2}\ip{z}{N_qz}
 =\lambda/\sqrt B,
 \qquad
 \norm{w}^2=\lambda.
\]
Projection onto the line $\mathbb Rw\subset V_q^{1/2}$ proves \eqref{tri:eq:5.8}; polarization gives
\eqref{tri:eq:5.9}.
\end{proof}

If $\Lambda_q=V_q^{\rm pol}N_q^{1/2}$ is the polar decomposition, then
\begin{equation}
 N_q^{1/2}R_qL=(V_q^{\rm pol})^*\Lambda_qR_qL.
 \label{tri:eq:5.10}
\end{equation}
The square-root construction is functorial under reducing isometries.  More
precisely, a compression identity $J^*N'J=N$ is not sufficient for
square-root covariance; the reducing relation
\begin{equation}
 N'J=JN,
 \label{tri:eq:5.11}
\end{equation}
gives $N'^{1/2}J=JN^{1/2}$ by continuous functional calculus.

\subsection{Factor-complete feedback}

The Hilbert--Stein feedback is sufficient for the standard Gaussian Stein
defect.  To retain every raw contraction involving at most four copies, we
use the following branch-labeled feedback before output symmetrization.  It
has the same spectral mechanism.

\begin{definition}[Raw factor feedback]
\label{tri:def:raw-feedback}
For a common graded module $M$, write $A=P_Mx$, $r=P_{M^\perp}x$.  For each input
weight $q$, let $\widehat Y_q$ be the branch-labeled direct sum of unsymmetrized
contraction output spaces and set
\begin{equation}
 \widehat\Lambda_q h
 =
 \left(
 r_{a,p}\otimes_s P_{M_q^\perp}h
 \right)_{\substack{a,p,s\\1\le s\le p\wedge q\$p,q,s)\ne(q,q,q)}}
 \oplus
 \left(
 A_{a,p}\otimes_s P_{M_q^\perp}h
 \right)_{\substack{a,p,s\\1\le s\le p\wedge q}} .
 \label{tri:eq:5.12}
\end{equation}
All indices and contraction orders are separate orthogonal branch labels.  Put
\begin{equation}
 \widehat N_q=\widehat\Lambda_q^*\widehat\Lambda_q,
 \qquad
 \widehat{\mathfrak H}_Q(A,r)
 =
 \sum_{b,q}
 \ip{r_{b,q}}{\widehat N_qr_{b,q}}.
 \label{tri:eq:5.13}
\end{equation}
The omitted residual branch is precisely total covariance; total mixed contractions vanish
by the grade-wise orthogonality of the active and residual modules.
\end{definition}

\begin{theorem}[Factor capture and positive exhaustion]
\label{tri:thm:factor-exhaustion}
One has
\begin{equation}
 0\le\widehat N_q\le QR^2\Id.
 \label{tri:eq:5.14}
\end{equation}
Applying the top-eigenspace rule to
$\widehat E_q=R_q^*\widehat N_qR_q$ gives
\begin{equation}
 \sum_{a,q}\norm{P_{\widehat V_*}r_{a,q}}^2
 \ge
 \widehat\kappa_{Q,d}
 \frac{\widehat{\mathfrak H}_Q(A,r)}{R^2},
 \qquad
 \boxed{\widehat\kappa_{Q,d}=\frac1{dQ^2}}.
 \label{tri:eq:5.15}
\end{equation}
At extraction level $J$, stop when
\begin{equation}
 \widehat{\mathfrak H}_Q(A,r)
 \le
 \widehat\delta_J
 :=
 \frac{dQ^2R^4}{J}.
 \label{tri:eq:5.16}
\end{equation}
Fewer than $J$ promotions occur, and along the nested trajectory
\begin{equation}
 \sum_{j\ge0}
 \widehat{\mathfrak H}_Q(A^{[j]},r^{[j]})
 \le dQ^2R^4,
 \qquad
 \widehat{\mathfrak H}_Q(A^{[J]},r^{[J]})
 \longrightarrow0.
 \label{tri:eq:5.17}
\end{equation}
Consequently every proper residual--residual raw contraction and every
active--residual raw contraction tends to zero, while residual covariance is retained.
\end{theorem}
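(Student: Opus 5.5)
The plan repeats the spectral-capture argument of Theorem~\ref{tri:thm:scale-capture}, with $a_Q$ replaced by the raw bound $QR^2$. Steps: bound $\widehat N_q$, prove the capture inequality by the top-eigenspace rule, then run the same exhaustion bookkeeping as in Theorem~\ref{bridge:thm:reduced-extraction}.

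\emph{Operator bound \eqref{tri:eq:5.14}.} For $h$ of weight $q$ write $h'=P_{M_q^\perp}h$. Expand $\norm{\widehat\Lambda_qh}^2$ as a sum over branch labels $(a,p,s)$ of $\norm{r_{a,p}\otimes_sh'}^2$ and $\norm{A_{a,p}\otimes_sh'}^2$. Each term is at most $\norm{r_{a,p}}^2\norm h^2$ (resp.\ $\norm{A_{a,p}}^2\norm h^2$) by $\norm{u\otimes_sv}\le\norm u\norm v$. For fixed $(a,p)$ there are at most $p\wedge q\le Q$ admissible contraction orders $s$, and there are no branch coefficients here. Summing gives
\[
\norm{\widehat\Lambda_qh}^2\le Q\bigl(\norm A^2+\norm r^2\bigr)\norm h^2\le QR^2\norm h^2 .
\]
Positivity of $\widehat N_q=\widehat\Lambda_q^*\widehat\Lambda_q$ is automatic, and $\Ran\widehat N_q\subset M_q^\perp$ because $\widehat\Lambda_q=\widehat\Lambda_qP_{M_q^\perp}$.

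\emph{Capture \eqref{tri:eq:5.15}.} First, $\widehat{\mathfrak H}_Q=\sum_q\Tr\widehat E_q\le dQ\max_q\lambda_{\max}(\widehat E_q)$, so $\lambda_*\ge\widehat{\mathfrak H}/(dQ)$. Take a unit $c\in L_*$ and set $z=R_{q_*}c$, $w=\widehat N_{q_*}z$. From \eqref{tri:eq:5.14}, $\norm w^2\le QR^2\ip z{\widehat N_{q_*}z}=QR^2\lambda_*$. Hence
\[
\norm{P_{\R w}z}^2=\frac{\lambda_*^2}{\norm w^2}\ge\frac{\lambda_*}{QR^2}\ge\frac{\widehat{\mathfrak H}}{dQ^2R^2}.
\]
Since $\R w\subset\widehat V_*$ and $\sum_a\norm{P_{\widehat V_*}r_{a,q_*}}^2=\norm{P_{\widehat V_*}R_{q_*}}_{\HS}^2\ge\norm{P_{\widehat V_*}z}^2$, the bound follows. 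The square-root variant of Proposition~\ref{tri:prop:square-root} with $M=Q$ would work equally well.

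\emph{Exhaustion \eqref{tri:eq:5.16}--\eqref{tri:eq:5.17}.} While $\widehat{\mathfrak H}>\widehat\delta_J$, each promotion lowers $\norm r^2$ by more than $\widehat\delta_J/(dQ^2R^2)=R^2/J$. The promoted spaces lie in $M^\perp$, so the modules are nested and the drops are Pythagorean. The initial residual energy is at most $R^2$, so fewer than $J$ promotions occur.

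For the summed bound, apply the capture inequality at every step $j$ of the single nested trajectory:
\[
\widehat{\mathfrak H}_Q(A^{[j]},r^{[j]})\le dQ^2R^2\bigl(\norm{r^{[j]}}^2-\norm{r^{[j+1]}}^2\bigr).
\]
These differences telescope to at most $R^2$, giving $\sum_j\widehat{\mathfrak H}\le dQ^2R^4$. Convergence of the series forces $\widehat{\mathfrak H}\to0$ along the trajectory; at level $J$ the stopping rule also gives the bound $dQ^2R^4/J$. If the trajectory stops with $\widehat{\mathfrak H}=0$, the remaining terms are zero.

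\emph{Consequence.} Evaluating the quadratic form at the residual components gives
\[
\widehat{\mathfrak H}=\sum_{b,q}\norm{\widehat\Lambda_qr_{b,q}}^2 .
\]
This is exactly the sum of all squared raw proper residual--residual contractions $\norm{r_{a,p}\otimes_sr_{b,q}}^2$ with $(p,q,s)\ne(q,q,q)$, plus all active--residual contractions $\norm{A_{a,p}\otimes_sr_{b,q}}^2$. So each of these tends to zero. The total residual branch was excluded in Definition~\ref{tri:def:raw-feedback}, so the residual covariance $\ip{r_{a,q}}{r_{b,q}}$ is untouched; the total mixed branch is zero by grade-wise orthogonality.

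The only point needing care is the count of branches behind the constant $QR^2$. One must check that summing over $s\le p\wedge q$ for each fixed $(a,p)$ contributes at most $Q$ times $\norm{x_{a,p}}^2$, uniformly in the input weight $q$, so that the bound uses the total energy $\norm A^2+\norm r^2\le R^2$. Everything else is the argument already given for Theorem~\ref{tri:thm:scale-capture}.
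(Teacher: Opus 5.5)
Your proposal is correct and follows the paper's proof essentially step for step. The ingredients match: the branch count giving $\norm{\widehat\Lambda_qh}^2\le QR^2\norm h^2$, the capture argument of Theorem~\ref{tri:thm:scale-capture} with $a_Q$ replaced by $Q$, Pythagorean telescoping of the residual energy for both the promotion count and the summed bound, and reading the final assertion off $\widehat{\mathfrak H}_Q=\sum_{b,q}\norm{\widehat\Lambda_qr_{b,q}}^2$.
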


\begin{proof}
The contraction inequality and orthogonality of branch labels give
\[
 \norm{\widehat\Lambda_qh}^2
 \le
 \sum_{a,p}\sum_{s\le p\wedge q}
 \bigl(\norm{A_{a,p}}^2+\norm{r_{a,p}}^2\bigr)\norm{h}^2
 \le QR^2\norm{h}^2.
\]
This proves \eqref{tri:eq:5.14}.  The proof of Theorem~\ref{tri:thm:scale-capture}, with $a_Q$ replaced by $Q$, gives
\eqref{tri:eq:5.15}.  At step $j$, Pythagoras and \eqref{tri:eq:5.15} give
\[
 \mathcal R_j-\mathcal R_{j+1}
 \ge
 \frac{\widehat{\mathfrak H}_Q(A^{[j]},r^{[j]})}{dQ^2R^2}.
\]
Summation proves the first part of \eqref{tri:eq:5.17}.  In the threshold formulation,
\eqref{tri:eq:5.15}--\eqref{tri:eq:5.16} remove more than $R^2/J$ of squared residual energy, so fewer than $J$
such steps occur.  Formula \eqref{tri:eq:5.13} is exactly the sum
of the squared raw factors in \eqref{tri:eq:5.12}, evaluated on every residual component, so \eqref{tri:eq:5.16}
gives \eqref{tri:eq:5.17} and the final assertion.
\end{proof}

Two points should be kept apart.  The operator $\widehat\Lambda_q$ of
\eqref{tri:eq:5.12} carries no branch coefficient, whereas $\Lambda_q$ of
\eqref{bridge:eq:Lambda} carries $\sqrt{b_{p,q,s}}$; consequently
$\widehat{\mathfrak H}_Q$ is bounded by $QR^2$ rather than $a_QR^2$, and it is
\emph{not} the Malliavin--Stein defect.  In particular the identification of
$\mathfrak H_Q$ with $\Delta_{\mathrm{res}}+\Delta_{\mathrm{mix}}$ supplied by
Lemma~\ref{bridge:lem:positive-ledger}, and with $\kappa_4$ supplied by
Lemma~\ref{bridge:lem:cumulant-bridge}, is available for $\Lambda_q$ only.  What
$\widehat\Lambda_q$ retains, and $\Lambda_q$ does not, is every unsymmetrised
raw contraction; that is exactly what the triangular argument needs.

The factor feedback does not repair the canonical-factor problem in
Section~\ref{tri:sec:canonical}: its adjoints can still create cross terms between
a nonlinear direction and a linear Gaussian direction.  Its role is to
give a positive, cancellation-free certificate for the scalar four-copy
contraction register.

\section{Canonical Gaussian factors and the failure of simultaneous extraction}
\label{tri:sec:canonical}

Define the nonlinear support
\begin{equation}
 \cN_s(F)
 =
 \ol{\Span}
 \left\{
 \Ran L^{(s)}_{a,q,\mathbf m}:
 |\mathbf m|\ge2,\ m_s\ge1
 \right\}
 \subset\cK_s.
 \label{tri:eq:8.1}
\end{equation}
Let $\ell_{a,s}\in\cK_s$ be the linear kernel in occupation sector $\mathbf e_s$, or zero.
Put
\begin{equation}
\begin{aligned}
 A_a^{\can}
 &=
 \sum_{q,\ |\mathbf m|\ge2}
 I_{|\mathbf m|}(g_{a,q,\mathbf m})
 +
 \sum_s I_1(P_{\cN_s(F)}\ell_{a,s}),\\
 G_a^{\can}
 &=
 \sum_s I_1(P_{\cN_s(F)^\perp}\ell_{a,s}).
\end{aligned}
\label{tri:eq:8.2}
\end{equation}

\begin{proposition}[Maximal canonical Gaussian factor]
\label{tri:prop:maximal-Gaussian-factor}
\begin{equation}
 F=A^{\can}+G^{\can},
 \qquad
 G^{\can}\indep A^{\can}.
 \label{tri:eq:8.3}
\end{equation}
If a Hilbert--Stein split has an independent Gaussian factor and zero proper residual contraction
register, then closed graded spaces $T_s\subset\cN_s(F)^\perp$ exist such that
\begin{equation}
 G_a^{\mathrm{HS}}
 =
 \sum_sI_1(P_{T_s}\ell_{a,s}),
 \qquad
 \Cov(G^{\mathrm{HS}})
 \preccurlyeq
 \Cov(G^{\can}).
 \label{tri:eq:8.4}
\end{equation}
\end{proposition}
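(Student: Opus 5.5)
For the decomposition \eqref{tri:eq:8.3}, I would work entirely in the graded Gaussian space over $E=\bigoplus_s E_s$ carrying the terminal $F$. First, the identity $F=A^{\can}+G^{\can}$ holds by construction, since $\ell_{a,s}=P_{\cN_s(F)}\ell_{a,s}+P_{\cN_s(F)^\perp}\ell_{a,s}$. For independence, split each $E_s$ orthogonally as $\cN_s(F)\oplus\cN_s(F)^\perp$. Then $G^{\can}$ lies in the first chaos over $\bigoplus_s\cN_s(F)^\perp$, and I would show that $A^{\can}$ is measurable with respect to the Gaussian field over $\bigoplus_s\cN_s(F)$. The linear part of $A^{\can}$ clearly is. For a nonlinear kernel $g_{a,q,\mathbf m}$ with $|\mathbf m|\ge2$, I would repeat the argument of Theorem~\ref{primitive:thm:minimal-support}(i): each weight-$s$ leg lies in the closed range of the one-leg flattening $\mathscr L^{(s)}_{a,q,\mathbf m}$. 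That range is contained in $\cN_s(F)$ by definition \eqref{tri:eq:8.1}, so $g_{a,q,\mathbf m}\in\widehat\bigotimes_s\cN_s(F)^{\odot m_s}$. Gaussian fields over orthogonal subspaces are independent, which gives \eqref{tri:eq:8.3}.

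For maximality \eqref{tri:eq:8.4}, I would first fix what a Hilbert--Stein split means on the terminal. It is a common graded module split $F=A^{\mathrm{HS}}+R^{\mathrm{HS}}$, where $R^{\mathrm{HS}}=G^{\mathrm{HS}}$ is Gaussian and independent of $A^{\mathrm{HS}}$, and all proper residual raw contractions vanish. Proper residual contractions include the cross terms with the active part, in the sense of the factor feedback of Theorem~\ref{tri:thm:factor-exhaustion}. The key steps would then be the following.

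(a) A Gaussian factor in the weighted Fock space must lie in the first chaos $\bigoplus_s I_1(E_s)$. The module is common grade-wise, so its weight-$s$ one-particle component is $P_{T_s}$ applied to the weight-$s$ linear kernel, for a closed $T_s\subset E_s$. Here $T_s$ is the residual part of the module in the one-particle occupation sector, and the residual nonlinear sectors must vanish.

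(b) Vanishing of the mixed active--residual contraction $A_{a,p}\otimes_1 P_{T_s}h$ for every nonlinear active kernel would give $T_s\perp\Ran\mathscr L^{(s)}_{a,q,\mathbf m}$. To see this, contracting one weight-$s$ leg of $g_{a,q,\mathbf m}$ against $h\in T_s$ yields exactly $(\mathscr L^{(s)})^*h$, so vanishing for all $h$ means $T_s\perp\Ran\mathscr L^{(s)}$. Taking closed spans gives $T_s\subset\cN_s(F)^\perp$. If the register only controls residual contractions, I would instead use independence: a nonzero component of $P_{T_s}$ meeting $\cN_s(F)$ produces a nonzero covariance between $G^{\mathrm{HS}}$ and a Wick product in $A^{\mathrm{HS}}$ via the product formula, contradicting independence.

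(c) With $T_s\subset\cN_s(F)^\perp$, we have $P_{T_s}=P_{T_s}P_{\cN_s(F)^\perp}$. Hence $\Cov(G^{\mathrm{HS}})_{ab}=\sum_s\ip{P_{T_s}\ell_{a,s}}{\ell_{b,s}}$, and this is dominated in Loewner order by $\sum_s\ip{P_{\cN_s^\perp}\ell_{a,s}}{\ell_{b,s}}=\Cov(G^{\can})_{ab}$, since $P_{\cN_s^\perp}-P_{T_s}\ge0$.

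I expect the main obstacle to be step (b), that is, translating ``independent Gaussian factor with zero proper register'' into the orthogonality $T_s\perp\cN_s(F)$. The delicate point is that the module is common across components and grades, and this must be used to exclude a residual direction that is only partially aligned with a nonlinear flattening range. Its first-chaos coefficient need not vanish, but its contraction with the nonlinear kernel does not vanish either. I would handle this by testing the raw factor feedback $\widehat\Lambda_s$ of \eqref{tri:eq:5.12} on $h=P_{T_s}\ell_{b,s}$ and noting that $\widehat N_s$ annihilates the residual exactly when these one-leg contractions vanish.
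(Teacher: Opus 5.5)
Your proof of \eqref{tri:eq:8.3} is the paper's, with the support argument of Theorem~\ref{primitive:thm:minimal-support}(i) made explicit. For maximality, your main route is correct but differs from the paper's proof.

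The paper applies the finite-chaos independence criterion to $A^{\mathrm{HS}}$: if the Gaussian field over $T$ is independent of a finite-chaos variable $Y$, every kernel of $Y$ lies over $T^\perp$. You instead read $T_s\perp\Ran\mathscr L^{(s)}_{a,q,\mathbf m}$ off the vanishing of the active--residual cuts. By \eqref{tri:eq:8.25}, their occupation projections are positive multiples of the Fock images of $L^{(s)*}_{a,q,\mathbf m}r_{b,s}$. This is the paper's argument for Theorem~\ref{all:thm:canonical-Gaussian-factor} under \eqref{all:eq:4.9b}, and it is the formulation of item~(IV) of Theorem~\ref{overview:thm:master}.

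Your route is elementary and avoids the independence criterion. It even yields independence as a by-product, since $A^{\mathrm{HS}}$ then lives over $\bigoplus_sT_s^\perp$. Its price is that it needs the mixed register to vanish. That is not literally part of ``zero proper residual register'', but it does hold for terminals of the Hilbert--Stein trajectory, where $\mathfrak H_Q=\mathfrak R_Q+\mathfrak I_Q\to0$.

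Two repairs are needed.
First, take $T_s=\Span_b\{r_{b,s}\}$ rather than the whole one-particle part of $M_s^\perp$, because the register only tests the actual residual vectors. You still have $P_{T_s}\ell_{b,s}=r_{b,s}$, since $T_s\subset M_s^\perp\cap\cK_s$ and the multi-particle part of $x_{b,s}$ is orthogonal to $\cK_s$.
Second, in the physical ultraproduct the relevant cut is $\otimes_s$ (a whole primitive block), not $\otimes_1$.

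Your fallback via independence is where the gap lies. No single covariance can detect anything, because $\E[G^{\mathrm{HS}}_bA^{\mathrm{HS}}_a]=0$ identically for a common-module split. What is needed is the conditional-second-moment induction behind the criterion.

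More seriously, independence of the vector $G^{\mathrm{HS}}$ from $A^{\mathrm{HS}}$ only controls the field over $V=\Span_a\{\sum_sP_{T_s}\ell_{a,s}\}$. This $V$ need not be graded when a component has residual at several weights, and then the conclusion can fail. Here is an example.
Take $d=1$, unit vectors $e_1\in E_1$ and $e_2\in E_2$, and $F=W(e_1)+W(e_2)+I_2((e_1-e_2)^{\otimes2})$. Its nonlinear occupation kernels $e_1^{\otimes2}$, $\Sym(e_1\otimes e_2)$, $e_2^{\otimes2}$ sit at physical weights $2,3,4$.
Keep $r_1=e_1$ and $r_2=e_2$ residual, and make everything else active.
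The residual register vanishes by \eqref{primitive:eq:cross-zero}.
$G^{\mathrm{HS}}=W(e_1)+W(e_2)$ is independent of $A^{\mathrm{HS}}=(W(e_1)-W(e_2))^2-2$.
Yet $\cN_1(F)=\R e_1$ and $\cN_2(F)=\R e_2$, so $G^{\can}=0$ while $\Var(G^{\mathrm{HS}})=2$.

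Hence any independence-based argument, the paper's included, must assume independence of the whole graded field over $\bigoplus_sT_s$. Otherwise you have to rely on the one-leg register, as in your main route.
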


\begin{proof}
The nonlinear terms and the projected linear terms of $A^{\can}$ live over
$\bigoplus_s\cN_s(F)$; $G^{\can}$ lives over the orthogonal Gaussian field.
For \eqref{tri:eq:8.4} we use the following independence criterion, which is
the point at which the argument is not elementary.  If a closed Gaussian
subfield generated by $T\subset H$ is independent of a variable $Y$ lying in
a finite sum of Wiener chaoses, then every kernel of $Y$ is supported in
$T^\perp$; see \cite{UstunelZakai} and \cite[Ch.~V]{Janson}.  For a variable
in a fixed chaos this can be seen directly: independence forces
$\mathbb E[Y^2\mid T]$ to be constant, expanding $Y^2$ in the Gaussian
coordinates of $T\oplus T^\perp$ kills every component of positive degree in
$T$, and a descending induction on that degree places all kernels of $Y$ over
$T^\perp$.  For a general $L^2$ variable the implication is false, so the
statement genuinely uses the finite-chaos hypothesis; the all-degree version
in Theorem~\ref{all:thm:canonical-Gaussian-factor} replaces it by the raw one-leg
register condition.  Apply this to the active Hilbert--Stein polynomial.
\end{proof}

For a homogeneous pure-chaos packet, reduced feedback is known to give equality in \eqref{tri:eq:8.4}.
For a general packet, the analogous non-ghost statement is false for both the original and the
raw factor feedback.

\begin{proposition}[Counterexample to simultaneous mixed-degree extraction]
\label{tri:prop:multigraded-ghost}
Let $Q=2,d=1$, and let $e,h$ be orthonormal in $\cK_1$.  Consider
\begin{equation}
 x_1=\alpha e+h,\qquad x_2=e^{\odot2},\qquad 0<\alpha\ll1.
 \label{tri:eq:8.5}
\end{equation}
Then $\cN_1(F)=\mathbb Re$, so $h$ belongs to the canonical Gaussian
factor.  Nevertheless the first reduced
promotion is made at weight two and its range contains a nonzero $e\odot h$ component.
Consequently
\begin{equation}
 \mathbb U_2^{-1}M_{2,\mathcal U}^{[J]}
 \not\subset\cW_2(\cN(F))
 \label{tri:eq:8.6}
\end{equation}
already after the first promotion.  Moreover the defect is not confined to
the module: for this packet the simultaneous reduced trajectory promotes
$h$ itself after finitely many further steps, so that its terminal residual
at weight one is zero, whereas
$G^{\can}=I_1(h)\ne0$.
\end{proposition}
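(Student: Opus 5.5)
The plan is to compute the first steps of the Gram-reduced trajectory of Theorem~\ref{bridge:thm:reduced-extraction} by hand. Because every kernel lies in the finite-dimensional space $\bigoplus_{q\le2}\Span\{e,h\}^{\odot q}$, I will work in one fixed Hilbert space containing $e,h$ and pass to the represented terminal only at the end. Throughout, $d=1$ and $R^2=\norm{x_1}^2+\norm{x_2}^2$.

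\emph{Step 1: support and canonical factor.} The only term with $|\mathbf m|\ge2$ is $x_2=e^{\odot2}$, with occupation $\mathbf m=(2,0)$. Its one-leg flattening has range $\R e$, so Definition \eqref{tri:eq:8.1} gives $\cN_1(F)=\R e$ and $\cN_2(F)=0$. The linear kernel is $\ell_{1,1}=\alpha e+h$, hence \eqref{tri:eq:8.2} gives $G^{\can}=I_1(h)\ne0$. For the same reason $\cW_2(\cN(F))=I_2(\R e^{\odot2})$, so its weight-two one-particle sector is spanned by $e^{\odot2}$ alone.

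\emph{Step 2: the first promotion.} Start from the zero module, so $A=0$ and $r=x$.
\begin{itemize}
\item At weight $1$, the only nonscalar branch of $\Lambda_1$ is $(p,s)=(2,1)$, and $x_2\otimes_1h'=\ip e{h'}e$. Thus $\mathcal N_1=b_{2,1,1}\,|e\rangle\langle e|$ and $E_1=b_{2,1,1}\alpha^2=O(\alpha^2)$.
\item At weight $2$, the branches are $(p,s)=(1,1)$ and $(2,1)$. Their values are $x_1\otimes_1x_2=\alpha e$ and $x_2\otimes_1x_2=e\otimes e$. Hence $E_2=b_{1,2,1}\alpha^2+b_{2,2,1}\norm{e\otimes e}^2$, which is of order one.
\end{itemize}
For $\alpha$ small we get $E_2>E_1$, so $q_*=2$. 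The promoted vector is
\[
 w=\mathcal N_2x_2=b_{1,2,1}\,\Sym\bigl(x_1\otimes(\alpha e)\bigr)+b_{2,2,1}\,(\text{multiple of }e^{\odot2}).
\]
I will compute this adjoint exactly. The key point is that it has a component $b_{1,2,1}\alpha\, e\odot h\ne0$, since $x_1$ contains $h$. The weight-two sector of $\cW_2(\cN(F))$ is spanned by $e^{\odot2}$ alone (Step 1), so $w$ does not lie in it. Applying $\mathbb U_2^{-1}$ to the represented module then gives \eqref{tri:eq:8.6}.

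\emph{Step 3: propagation to $h$.} After this promotion the active weight-two part $A_2=P_{\R w}x_2$ carries an $e\odot h$ component of size $\asymp\alpha$. The mixed channel $A_2\otimes_1h'$ at input weight $1$ therefore contains a term proportional to $\ip h{h'}e$. Consequently $\mathcal N_1$ acquires a nonzero $|h\rangle\langle h|$ part, and $E_1=\ip{r_1}{\mathcal N_1r_1}$ becomes positive along $h$.

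I then argue by a finite-dimensional invariant. Whenever the weight-one residual $r_1\ne0$ while the active module still contains components along both $e^{\odot2}$ and $e\odot h$, the mixed register $\mathfrak I_Q(A,r)$ is strictly positive. This is because $A_2\otimes_1r_1\ne0$ for every nonzero $r_1\in\Span\{e,h\}$. The trajectory has strictly decreasing residual energy, every step removes a definite amount (Lemma~\ref{bridge:lem:macroscopic-gap}), and the thresholds $\delta_J\downarrow0$. Since the ambient space is finite-dimensional, the trajectory must eventually promote the whole weight-one residual, including $h$. Hence the terminal weight-one residual vanishes, while $G^{\can}=I_1(h)\neq 0$.

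\emph{Main obstacle.} I expect Step 3 to be the main obstacle. One must check that weight-two promotions, with their tie-breaks, cannot exhaust the positivity before weight one is promoted. One must also check that each weight-one promotion $\mathcal N_1r_1$ reduces $r_1$ until it is zero, rather than stalling on a direction where $\mathcal N_1$ vanishes. I would first try to track the explicit $2$-dimensional weight-one state and the $3$-dimensional weight-two state $\Span\{e^{\odot2},e\odot h,h^{\odot2}\}$ symbolically, to leading order in $\alpha$. The goal is a monotone quantity showing that $\ker\mathcal N_1\cap\Span\{e,h\}$ shrinks to $\{0\}$ once the $e\odot h$ component is active.
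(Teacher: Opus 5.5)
\textbf{Verdict: the proposal has a genuine gap in Step~3.}

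Your Steps~1 and~2 are correct and coincide with the paper's computation. At the zero module, the $(1,2,1)$ branch gives $\mathcal N_2x_2=(2+\tfrac{\alpha^2}{2})\,e^{\odot2}+\tfrac{\alpha}{2}\,e\odot h$, while $E_1=2\alpha^2$. This yields the first weight-two promotion and \eqref{tri:eq:8.6}.

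The gap is exactly the point you call the main obstacle and leave open. Your invariant is phrased in terms of the module. But the $(2,1,1)$ channel acting on $h$ sees only the active kernel $A_2=P_{M_2}x_2$, and every later weight-two promotion changes that kernel. Suppose the trajectory filled $M_2=\Span\{e,h\}^{\odot2}$ before promoting at weight one. Then $M_2$ would still ``contain components along both'' $e^{\odot2}$ and $e\odot h$. Yet $A_2=e^{\odot2}$, $r_2=0$ and $A_2\otimes_1h=0$. The weight-one promotion would then be $\R\,\mathcal N_1x_1=\R e$, and the trajectory would stop with $\mathfrak H_Q=0$ and $r_1=h$. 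It would thus recover $G^{\can}$, the opposite of the claim.

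Finite dimensionality and $\delta_J\downarrow0$ only show that the trajectory stops after at most $2+3$ promotions, in \emph{some} state with $\mathfrak H_Q=0$. Which state depends on the order of promotions, so that order must actually be determined. The paper's own argument here uses a step-independent lower bound $\varrho$ from the $(2,1,1)$ branch ``at every subsequent step''. This tacitly freezes $A_2$ in the same way. Moreover, the $e\odot h$ coefficient of $A_2$ is $\asymp\alpha$, as you correctly note, so that $\varrho$ is $O(\alpha^2)$ and cannot close this hole.

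\textbf{How to close it.} One comparison suffices.
\begin{enumerate}
\item Write $A_2,r_2$ as symmetric $2\times2$ matrices on $\Span\{e,h\}$, so that $u\otimes_1z$ is a matrix--vector product. The $(1,1,1)$ branches are omitted or vanish, so $\mathcal N_1=2P_{M_1^\perp}(A_2^2+r_2^2)P_{M_1^\perp}$ for every module.
\item After the first promotion, $\norm{r_2}^2\le\alpha^2/32$. Hence $E_2\le a_2R^2\norm{r_2}^2\le(2+\alpha^2)\alpha^2/16$, whereas $E_1\ge2\norm{A_2x_1}^2=(\tfrac{81}{32}+O(\alpha^2))\alpha^2$.
\item So the second promotion is at weight one, along $v_1=\mathcal N_1x_1$. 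Its $e$- and $h$-components are $\approx\tfrac94\alpha$ and $\approx\tfrac5{16}\alpha^2$, both nonzero; the second comes from the $e\odot h$ components of $A_2$ and $r_2$.
\item From then on $M_1\in\{\R v_1,\Span\{e,h\}\}$, so $r_1=P_{M_1^\perp}x_1$ is either $0$ or not parallel to $h$.
\item Whatever $M_2$ has become, $\ip{r_1}{\mathcal N_1r_1}=2\norm{A_2r_1}^2+2\norm{r_2r_1}^2$ vanishes only if $x_2r_1=(A_2+r_2)r_1=0$, i.e.\ only if $r_1\in\R h$.
\item Therefore the terminal state, where $\mathfrak H_Q=0$, has $r_1=0$ and $M_1=\Span\{e,h\}\ni h$.
\end{enumerate}
To leading order, the third promotion is again at weight one and already empties $r_1$. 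Note also that the first weight-one promotion is along $e$ up to $O(\alpha)$, not along $h$. So $h$ is not captured in a single weight-one step.
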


\begin{proof}
At the zero module, consider the $(p,q,s)=(1,2,1)$ residual branch
\[
 T_{x_1}:z\longmapsto x_1\otimes_1z.
\]
For $z=e^{\odot2}$,
\begin{equation}
 T_{x_1}z=\alpha e,
\qquad
 T_{x_1}^*T_{x_1}z
 =
 c\alpha^2e^{\odot2}
 +c\alpha(e\odot h)
 \label{tri:eq:8.7}
\end{equation}
for the positive symmetrization constant $c$.  The proper self-branch of $x_2$ adds a
positive multiple of $e^{\odot2}$, so $N_2x_2$ has a nonzero $e\odot h$ component.
Its spectral energy stays bounded below as $\alpha\downarrow0$, whereas the weight-one
energy is $O(\alpha^2)$.  Thus the top-eigenspace rule selects weight two for small
$\alpha$.

After this promotion, the active weight-two kernel is a unit vector
$A_2=c_1e^{\odot2}+c_2\,e\odot h$ with $c_2\ne0$, and the residual weight-one
kernel is still $r_1=h$ up to the $O(\alpha)$ component along $e$.

We now show that $h$ is eventually promoted.  Consider the mixed branch
$(p,q,s)=(2,1,1)$ of \eqref{bridge:eq:Lambda} at input weight one, that is,
the map $z\mapsto A_2\wtensor_1z$ evaluated at $z=r_1$.  Since
$e^{\odot2}\otimes_1h=0$ and $(e\odot h)\otimes_1h=\tfrac12e$, this branch
contributes
\[
 \sqrt{b_{2,1,1}}\,A_2\wtensor_1r_1
 =\tfrac12\sqrt{b_{2,1,1}}\,c_2\,e+O(\alpha),
\]
whence, writing $\varrho=\tfrac14b_{2,1,1}c_2^2>0$,
\[
 \ip{r_1}{\cN_1r_1}=\norm{\Lambda_1r_1}^2\ge\varrho+O(\alpha)
 \qquad\text{and hence}\qquad
 \mathfrak H_Q(A,r)\ge\varrho+O(\alpha)
\]
at every subsequent step at which $h$ is still residual.  All the quantities
$c_2,b_{2,1,1},\varrho$ are fixed positive numbers depending on neither the
step nor the level.  By \eqref{bridge:eq:F-J} the trajectory satisfies
$\mathfrak H_Q(A^{[J]},r^{[J]})\le dQa_QR^4/J\to0$, so for $J$ large the
inequality above is violated unless $h$ has left the residual.  Since
$d=1$ and $\cN_1r_1\ne0$, the reduced promotion at weight one is the line
$\mathbb R\,\cN_1r_1$, which contains $h$ up to $O(\alpha)$; after that
promotion $r_1=O(\alpha)$.  Letting $\alpha\downarrow0$ along the
construction gives a terminal Hilbert--Stein Gaussian factor of covariance
$O(\alpha^2)$ against $\Cov(G^{\can})=1$.  The spectral gap cannot remove
the algebraic cross term which starts this cascade.
\end{proof}

The mechanism is $T_{y+h}^*T_{y+h}$: even if $T_h$ kills the nonlinear input,
the cross term $T_h^*T_y$ inserts a Gaussian direction.  Therefore the extracted active component must not
be identified with $A^{\can}$ for a general multigraded packet.

\subsection{Triangular raw feedback}

The obstruction disappears if lower primitive weights are completely polarized before they
are allowed to act by total contraction on higher weights.  Put
\begin{equation}
 \mathscr A_q=\cW_q\left(\bigoplus_{s\le Q}\cN_s(F)\right),
 \qquad
 \mathscr G_q=I_1\bigl(\cN_q(F)^\perp\bigr).
 \label{tri:eq:8.8}
\end{equation}
The terminal Fock decomposition is uniquely
\begin{equation}
 \mathbb U_q^{-1}x_{a,q}=y_{a,q}+g_{a,q},
 \qquad
 y_{a,q}\in\mathscr A_q,\qquad
 g_{a,q}\in\mathscr G_q.
 \label{tri:eq:8.9}
\end{equation}

At stage $s$, assume that every weight $p<s$ has already been exhausted, so
\begin{equation}
 A_{a,p}=y_{a,p},\qquad r_{a,p}=g_{a,p}\qquad(p<s),
 \label{tri:eq:8.10}
\end{equation}
while the modules of weights $p>s$ are still zero.  Evolve only $M_s$,
using the raw branch-labeled operator \eqref{tri:eq:5.12} with input weight
$s$.  Put
\begin{equation}
 \widehat E_s=R_s^*\widehat N_sR_s,\qquad
 \widehat{\mathfrak H}_s=\Tr\widehat E_s,
 \label{tri:eq:8.11}
\end{equation}
and, when $\widehat E_s\ne0$, promote the full top eigenspace $L_s$ by
\begin{equation}
 V_s^\triangle
 =\Ran\bigl(\widehat N_s^{1/2}R_sL_s\bigr).
 \label{tri:eq:8.12}
\end{equation}
At the intrinsic level this is ordinary Hilbert functional calculus.  One
could promote $\widehat N_sR_sL_s$ instead, but the square-root form has the
advantage that its Gram data involve only four copies.

\begin{theorem}[Triangular canonical extraction]
\label{tri:thm:canonical-extraction}
Run the stages in the lexicographic order
\begin{equation}
 n\to\mathcal U,\qquad
 J_1\to\infty,\quad J_2\to\infty,\quad\ldots,\quad J_Q\to\infty,
 \label{tri:eq:8.13}
\end{equation}
finishing weight $s$ before starting $s+1$.  Then at every finite internal extraction
level
\begin{equation}
 \mathbb U_s^{-1}M_s^{[J]}\subset\mathscr A_s,
 \label{tri:eq:8.14}
\end{equation}
and at the end of stage $s$
\begin{equation}
 A_{a,s}=y_{a,s},\qquad r_{a,s}=g_{a,s}.
 \label{tri:eq:8.15}
\end{equation}
Quantitatively,
\begin{equation}
 0\le\widehat N_s\le QR^2\Id,\qquad
 \lambda_{\max}(\widehat E_s)\ge\frac{\widehat{\mathfrak H}_s}{d},
 \label{tri:eq:8.16}
\end{equation}
and
\begin{equation}
 \sum_a\norm{P_{V_s^\triangle}r_{a,s}}^2
 \ge
 \frac{\widehat{\mathfrak H}_s}{dQR^2}.
 \label{tri:eq:8.17}
\end{equation}
Thus the scale-invariant per-weight capture constant is
\begin{equation}
 \boxed{\kappa_{Q,d}^\triangle=\frac1{dQ}}.
 \label{tri:eq:8.18}
\end{equation}
With threshold
\begin{equation}
 \delta_{s,J}=\frac{dQR^4}{J},
 \label{tri:eq:8.19}
\end{equation}
fewer than $J$ promotions occur at weight $s$, and
\begin{equation}
 \sum_{j\ge0}\widehat{\mathfrak H}_s^{[j]}\le dQR^4,
 \qquad
 \widehat{\mathfrak H}_s\longrightarrow0.
 \label{tri:eq:8.20}
\end{equation}
After all weights,
\begin{equation}
 A=A^{\can},\qquad
 r=G^{\can},\qquad
 G^{\can}\indep A^{\can}.
 \label{tri:eq:8.21}
\end{equation}
\end{theorem}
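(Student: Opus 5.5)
The argument is an induction on the stage $s=1,\ldots,Q$, carried out entirely at the intrinsic (ultraproduct/Fock) level. At each stage we first check the quantitative claims \eqref{tri:eq:8.16}--\eqref{tri:eq:8.20}, then the invariance \eqref{tri:eq:8.14}, and finally pass to the limit to get \eqref{tri:eq:8.15}.

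\emph{Quantitative claims.} These are the only routine part. The bound $0\le\widehat N_s\le QR^2\Id$ is \eqref{tri:eq:5.14}. Only one input weight is active at stage $s$, so $\widehat{\mathfrak H}_s=\Tr\widehat E_s\le d\,\lambda_{\max}(\widehat E_s)$. Proposition~\ref{tri:prop:square-root} with $M=Q$ then gives the capture bound \eqref{tri:eq:8.17}. With the threshold \eqref{tri:eq:8.19}, every active step removes more than $R^2/J$ of residual energy. Pythagoras then bounds the number of promotions by $J$, and summing the energy drops gives \eqref{tri:eq:8.20}, exactly as in the proof of Theorem~\ref{tri:thm:factor-exhaustion}.

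\emph{Invariance \eqref{tri:eq:8.14}.} This is the heart of the proof. Assume \eqref{tri:eq:8.10} for all weights $p<s$, and assume inductively that $\mathbb U_s^{-1}M_s\subset\mathscr A_s$. Split the weight-$s$ carrier as $\mathscr A_s\oplus\mathscr G_s$; this is an orthogonal decomposition of $\cW_s$, because $\mathscr G_s$ is the one-particle sector over $\cN_s^\perp$. The key claim is that $\widehat N_s$ is \emph{reduced} by this splitting, i.e. it commutes with the orthogonal projection onto $\mathscr A_s$, and moreover annihilates $\mathscr G_s$ modulo the deleted scalar branch. Using Proposition~\ref{primitive:prop:open-network-morphism}, each branch $h\mapsto x_{a,p}\otimes_s P_{M_s^\perp}h$ is evaluated by complete-block pairings, and we treat the three possible sources separately.
\begin{itemize}
\item[(a)] \emph{Lower weights $p<s$.} By \eqref{tri:eq:8.10} the residual sources are $g_{a,p}\in I_1(\cN_p^\perp)$, and $s>p$ forces them to contract into a weight-$p$ block. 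For $h\in\mathscr A_s$, every block of $h$ lies over $\cN$, which is orthogonal to $\cN_p^\perp$, so these pairings vanish. For $h\in\mathscr G_s$, which is a single weight-$s$ block, the pairing is not total and vanishes by Theorem~\ref{primitive:thm:primitive-kernel}. The active sources $y_{a,p}$ contract into blocks over $\cN$ and do not see $\mathscr G_s$ at all.
\item[(b)] \emph{Higher weights $p>s$.} The modules there are still zero, so these sources are the full $x_{a,p}$. Contracting an $h\in\mathscr G_s$ against the weight-$s$ legs of $x_{a,p}$ picks out the one-leg flattening range. For $p>s$ this range lies in $\cN_s$, because a term of weight $p>s$ containing a weight-$s$ leg has $|\bm m|\ge2$. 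Hence these contractions vanish on $\mathscr G_s$.
\item[(c)] \emph{Same weight $p=s$.} Proper contractions of the $s$-sector against a primitive one-particle vector vanish. The only surviving term is the total scalar branch, and it is deleted.
\end{itemize}
Consequently $\widehat\Lambda_s$ vanishes on $\mathscr G_s$ and maps $\mathscr A_s$ into outputs generated by $\cN$-blocks. This gives the reducing relation \eqref{tri:eq:5.11} with respect to the inclusion of $\mathscr A_s$. By functional calculus $\widehat N_s^{1/2}$ is reduced as well, so $V_s^\triangle=\Ran(\widehat N_s^{1/2}R_sL_s)\subset\mathscr A_s$.

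\emph{Main obstacle.} The point to watch is exactly the cross term $T_h^*T_y$ of Proposition~\ref{tri:prop:multigraded-ghost}. It is killed here because lower-weight Gaussian residuals $g_{a,p}$ are never used as total-contraction sources on nonlinear inputs: the contraction of $g_{a,p}$ against $y$ is an inner product between $\cN_p^\perp$ and $\cN_p$, hence zero. This is precisely what the triangular order buys. The remaining risk is non-ghost commutation with the ultraproduct. I would handle it as in Lemma~\ref{bridge:lem:HER}: the square-root synthesis map has Gram $\lambda\norm c^2$, which is bounded below by $\delta_{s,J}/d$ at every active step. This uniform lower bound ensures that bounded classes in the internal ultraproduct of the promoted spaces stay inside $\mathscr A_s$.

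\emph{Limit and conclusion.} Let $J_s\to\infty$. Nested Pythagoras gives convergence $A_{a,s}\to y'$, and \eqref{tri:eq:8.20} gives vanishing of every proper residual contraction and every active--residual raw contraction. Write the limit residual as $r=x-y'$. Its $\mathscr A_s$-component $y_{a,s}-y'$ must vanish, for the following reason. A nonzero nonlinear sector has a nonvanishing proper self-contraction (Theorem~\ref{primitive:thm:primitive-kernel} applied in the Fock picture). A linear component over $\cN_s$ pairs nontrivially, through a raw active--residual branch, with some nonlinear kernel whose flattening range it meets, and we know that the active part already contains those kernels. Both contradict \eqref{tri:eq:8.20}. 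Therefore $A_{a,s}=y_{a,s}$ and $r_{a,s}=g_{a,s}$, which is \eqref{tri:eq:8.15}. After stage $Q$, summing over all weights gives $A=A^{\can}$ and $r=G^{\can}$, and the independence in \eqref{tri:eq:8.21} is Proposition~\ref{tri:prop:maximal-Gaussian-factor}.
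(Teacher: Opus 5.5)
The quantitative part of your proposal and the non-ghost transfer, which uses the lower bound $\norm{\widehat N_s^{1/2}R_sc}^2=\lambda_{\max}(\widehat E_s)\norm{c}^2>\delta_{s,J}/d$, match the paper. The exhaustion step that yields \eqref{tri:eq:8.15}, however, has a genuine gap. You eliminate a linear residual component $z$ over $\cN_s(F)$ by pairing it, through an active--residual branch, with nonlinear kernels that ``the active part already contains''. In the triangular scheme those kernels are not active. The flattenings spanning $\cN_s(F)$ come from occupations with $|\mathbf m|\ge2$ and $m_s\ge1$, hence from total weight at least $s+1$. During stage $s$ every module of weight $>s$ is still zero, so these kernels are entirely residual.

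Nor can any active--residual branch detect $z$. A lower-weight source $y_{a,p}$ with $p<s$ contracts at order $t\le p<s$. That is only a partial contraction of the primitive weight-$s$ block $z$, which vanishes by Theorem~\ref{primitive:thm:primitive-kernel}. The source $A_{a,s}$ meets $z$ totally only through $\ip{A_{a,s}}{z}=0$, because $z\perp M_s$. The contradiction has to come from the residual--residual branch $(p,s,s)$ with $p>s$ and source $r_{b,p}=x_{b,p}$. This branch is present exactly because the higher weights have not been touched. By \eqref{tri:eq:8.25}, its occupation projection is $c_{\mathbf m,s}>0$ times the evaluation of $L^{(s)*}_{b,p,\mathbf m}z\ne0$. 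This is where the lexicographic order is really used for exhaustion. Detecting $\cN_s$-directions through already promoted higher kernels is precisely the simultaneous regime of Proposition~\ref{tri:prop:multigraded-ghost}.

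There is also a flaw in your invariance step. For $s\ge2$ the weight-$s$ carrier is not $\mathscr A_s\oplus\mathscr G_s$. It also contains sectors such as $I_2\bigl((\cK_1\ominus\cN_1(F))\widehat\otimes\cN_{s-1}(F)\bigr)$. So knowing that $\widehat\Lambda_s$ kills $\mathscr G_s$ and sends $\mathscr A_s$ into $\cN$-generated outputs does not give $\widehat N_s\mathscr A_s\subset\mathscr A_s$. For example, when $g_{a,1}\ne0$, the residual branch $(1,s,1)$ with source $g_{a,1}$ sends $:W(g_{a,1})W(n):$, with $n\in\cN_{s-1}(F)$, to a nonzero multiple of $n$, which is an $\cN$-generated output.

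What you need is the adjoint half of \eqref{tri:eq:8.22}, applied branch by branch. After discarding linear primitive source terms at partial order, which vanish, every branch falls into one of two cases:
\begin{itemize}
\item it annihilates $\mathscr A_s$, as for the sources $g_{a,p}$ with $p<s$;
\item its source lies over $\bigoplus_t\cN_t(F)$, and block pairing then gives $C_{u,t}^*C_{u,t}\mathscr A_s\subset\mathscr A_s$.
\end{itemize}
Because the branch labels are orthogonal, this yields the reducing property your argument relies on.
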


\begin{proof}
In the weighted Fock representation, a surviving contraction pairs complete primitive blocks
of equal weight.  If $C_{u,t}h=u\otimes_th$, block pairing gives
\begin{equation}
 u\in\mathscr A_p
 \Longrightarrow
 C_{u,t}\mathscr A_s\subset\mathscr Y_{p,s,t}(\cN(F)),
 \quad
 C_{u,t}^*\mathscr Y_{p,s,t}(\cN(F))\subset\mathscr A_s,
 \label{tri:eq:8.22}
\end{equation}
and, for $g\in\mathscr G_p$,
\begin{equation}
 t<p\Longrightarrow C_{g,t}=0\ \text{in ultraproduct operator norm},
 \qquad
 t=p\Longrightarrow C_{g,p}\mathscr A_s=0.
 \label{tri:eq:8.23}
\end{equation}
Moreover, for $u\in\mathscr A_p$ and $g_s\in\mathscr G_s$,
\begin{equation}
 C_{u,t}g_s=0\qquad(1\le t\le p\wedge s).
 \label{tri:eq:8.24}
\end{equation}
These are vector-valued identities, not only scalar pairing rules.  More precisely, if an
occupation $\mathbf m$ contains a primitive weight-$s$ block, then
\begin{equation}
 \Pi_{\mathbf m-\mathbf e_s}
 \left(
  \mathbb U_pI_{|\mathbf m|}(g_{a,p,\mathbf m})
  \otimes_s \mathbb U_sI_1(z_s)
 \right)
 =c_{\mathbf m,s}
 \mathbb U_{p-s}I_{|\mathbf m|-1}
 \left(L_{a,p,\mathbf m}^{(s)*}z_s\right),
 \quad c_{\mathbf m,s}>0.
 \label{tri:eq:8.25}
\end{equation}
The output occupations are orthogonal.  The raw branch label must therefore
retain $(\mathbf m,\text{cut primitive block})$, or equivalently one must
retain the corresponding occupation projections.

Suppose $M_s\subset\mathscr A_s$.  If $p>s$, every allowed $t\le s<p$, so the
primitive term $g_{a,p}$ acts by zero.  If $p=s$, it acts by zero for $t<s$, while
the residual scalar branch $t=s$ is omitted.  If $p<s$, \eqref{tri:eq:8.10} places
$y_{a,p}$ and $g_{a,p}$ in distinct active and residual branch labels.  Hence the cross
operator $C_g^*C_y$ that caused \eqref{tri:eq:8.7} is absent.  Equations \eqref{tri:eq:8.22}--\eqref{tri:eq:8.24} prove
\begin{equation}
 \widehat N_s\mathscr A_s\subset\mathscr A_s,\qquad
 \widehat N_sg_{a,s}=0.
 \label{tri:eq:8.26}
\end{equation}
Since $\widehat N_s$ is self-adjoint, $\mathscr A_s$ reduces it; functional calculus
therefore gives the same statements for $\widehat N_s^{1/2}$.  This proves \eqref{tri:eq:8.14} at the intrinsic level.

At finite approximating levels the same conclusion requires the quantitative
non-ghost argument of Lemma~\ref{bridge:lem:HER}, which we now transpose.  Let
$\bm M_{n,s}^{[J]}$ be the modules produced at stage $s$ by the trajectory run
on the $n$-th packet, and let
\[
 \mathbb M_{s,\mathcal U}^{[J]}
 =\bigl\{[m_n]_{\mathcal U}:m_n\in M_{n,s}^{[J]},\ \sup_n\norm{m_n}<\infty\bigr\}.
\]
Exactly as in Lemma~\ref{bridge:lem:HER}, the number of nonzero promotions and the
dimension of each selected eigenspace may be fixed on a $\mathcal U$-large
set, so it suffices to treat one promotion, by induction on their finitely
many positions.  Let $L_{n,s}$ be the selected top eigenspace and
\[
 S_n=\widehat N_{n,s}^{1/2}R_{n,s}\big|_{L_{n,s}}.
\]
For a unit $c\in L_{n,s}$ the identity
\begin{equation}
 \norm{\widehat N_s^{1/2}R_sc}^2
 =\lambda_{\max}(\widehat E_s)\norm{c}^2
 \label{tri:eq:8.27}
\end{equation}
holds, and the stopping rule \eqref{tri:eq:8.19} is in force at every
promotion made before the end of level $J$, so that
$\widehat{\mathfrak H}_s>\delta_{s,J}$ there.  Since only one input weight is
optimised, $\lambda_{\max}(\widehat E_s)\ge\widehat{\mathfrak H}_s/d$, whence
the uniform lower bound
\begin{equation}
 \norm{S_nc}\ge\gamma_J\norm c,
 \qquad
 \gamma_J=\left(\frac{\delta_{s,J}}{d}\right)^{1/2}=QR^2J^{-1/2}>0,
 \label{tri:eq:8.27a}
\end{equation}
independent of $n$.  Now let $u_n\in\Ran S_n$ be bounded and let $c_n$ be its
minimum-norm preimage, so that $\norm{c_n}\le\gamma_J^{-1}\norm{u_n}$ is
bounded.  Let $g=[g_n]_{\mathcal U}\in\mathscr G_s$ be a primitive Gaussian
class of weight $s$.  By \eqref{tri:eq:8.23}--\eqref{tri:eq:8.24}, and because
all packet coefficients are bounded, every branch of
$\widehat\Lambda_{n,s}$ evaluated at $P_{M_{n,s}^\perp}g_n$ tends to zero,
that is, $\Ulim\norm{\widehat\Lambda_{n,s}g_n}=0$.  Writing
$z_n=R_{n,s}c_n$ and using $\widehat N=\widehat\Lambda^*\widehat\Lambda$,
\[
 \abs{\ip{g_n}{u_n}}
 =\abs{\ip{\widehat\Lambda_{n,s}g_n}{\widehat\Lambda_{n,s}\widehat N_{n,s}^{-1/2}z_n}}
 \le\norm{\widehat\Lambda_{n,s}g_n}\,\sqrt{Q}\,R\,\norm{c_n}
 \longrightarrow_{\mathcal U}0,
\]
the middle factor being controlled by \eqref{tri:eq:5.14} on the reducing
subspace supplied by \eqref{tri:eq:8.26}.  Hence every bounded class of the
new promotion is orthogonal to $\mathscr G_s$, and, the promotion being
orthogonal to the previously extracted module, the induction gives
$\mathbb M_{s,\mathcal U}^{[J]}\subset\mathscr A_s$, which is
\eqref{tri:eq:8.14} at every finite level.

At the limit of stage $s$, write
$r_{a,s}=z_{a,s}+g_{a,s}$ with $z_{a,s}\in\mathscr A_s$.  Vanishing of the proper
self-branches and \eqref{tri:eq:8.23}--\eqref{tri:eq:8.24} imply
\[
 r_{a,s}\otimes_tr_{a,s}=0\qquad(1\le t<s).
\]
Thus $r_{a,s}\in\cK_s$ (automatically for $s=1$); since
$g_{a,s}\in\cK_s$, the primitive criterion gives
\begin{equation}
 z_{a,s}\in\cK_s\cap\mathscr A_s=\cN_s(F).
 \label{tri:eq:8.28}
\end{equation}
If $z_{a,s}\ne0$, the definition of $\cN_s(F)$ supplies a nonlinear flattening of
some packet component of weight $p>s$ with
$L^{(s)*}_{b,p,\mathbf m}z_{a,s}\ne0$.  Because higher weights are not yet extracted,
the branch $(p,s,s)$ is present and its occupation projection \eqref{tri:eq:8.25} has this nonzero
output, contradicting
$\widehat{\mathfrak H}_s=0$.  Thus $z_{a,s}=0$, proving \eqref{tri:eq:8.15} and the induction in $s$.

The first inequality in \eqref{tri:eq:8.16} is \eqref{tri:eq:5.14}.  Since only one input weight is optimized,
$\Tr\widehat E_s\le d\lambda_{\max}(\widehat E_s)$; Proposition~\ref{tri:prop:square-root} with
$M=Q$ proves \eqref{tri:eq:8.17}.  Above the threshold \eqref{tri:eq:8.19}, Pythagoras removes more than
$R^2/J$, so fewer than $J$ promotions occur.  Summing the energy decrements proves
\eqref{tri:eq:8.20}, and the limit has zero feedback, completing \eqref{tri:eq:8.21}.
\end{proof}

The order \eqref{tri:eq:8.13} is essential.  A common threshold with simultaneous weight selection can
choose weight two before weight one is polarized, exactly as in Proposition~\ref{tri:prop:multigraded-ghost}.  An ordinary
sequence is recovered by a nested slow diagonal: before realizing the first $m$ promotions
of weight $s+1$, approximate the completed weight-$s$ split more accurately than the
relevant singular gaps and operator losses.
The existence of this diagonal is qualitative.  An explicit rate requires a quantitative lower
modulus for the spectral gaps encountered at all later weights.

\subsection{Canonicalization from the terminal density}

If one nevertheless uses a simultaneous extraction, there is a second noncircular correction
after the exact terminal has been constructed.
Define the nonlinear reduced density
\begin{equation}
 D_s^{\mathrm{nl}}(F)
 =
 \sum_{\substack{a,q,\mathbf m\\|\mathbf m|\ge2,\ m_s\ge1}}
 L^{(s)}_{a,q,\mathbf m}L^{(s)*}_{a,q,\mathbf m}.
 \label{tri:eq:8.29}
\end{equation}
Then
\begin{equation}
 \cN_s(F)=\ol{\Ran D_s^{\mathrm{nl}}(F)^{1/2}},
 \qquad
 P_{\cN_s(F)}
 =
 \operatorname*{s-lim}_{\eps\downarrow0}
 D_s^{\mathrm{nl}}(F)
 \bigl(D_s^{\mathrm{nl}}(F)+\eps\Id\bigr)^{-1}.
 \label{tri:eq:8.30}
\end{equation}

\begin{theorem}[Canonicalization from the represented terminal]
\label{tri:thm:terminal-canonicalization}
For every represented terminal, the positive operators
$D_s^{\mathrm{nl}}(F)$ determine the canonical split intrinsically through
\eqref{tri:eq:8.30}.  In particular, the terminal has the decomposition
\begin{equation}
 T=A^{\can}+G^{\can},
 \qquad
 G^{\can}\indep A^{\can},
 \label{tri:eq:8.31}
\end{equation}
even when the feedback trajectory itself has absorbed part of the canonical
Gaussian factor.
\end{theorem}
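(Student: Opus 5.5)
The proof has three steps: first \eqref{tri:eq:8.30}, then the decomposition \eqref{tri:eq:8.31} read off from the definitions \eqref{tri:eq:8.1}--\eqref{tri:eq:8.2}, and finally the independence of the feedback trajectory.

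\textbf{Step 1: the range identity.} I would prove \eqref{tri:eq:8.30} with no probabilistic input. Each flattening $L^{(s)}_{a,q,\mathbf m}$ is Hilbert--Schmidt, and the index set in \eqref{tri:eq:8.29} is finite at fixed $Q$. Hence $D_s^{\mathrm{nl}}(F)$ is a finite sum of positive trace-class operators. For a finite sum $D=\sum_i L_iL_i^*$ one has
\[
 \Ker D=\bigcap_i\Ker L_i^*,
\]
because $\ip{h}{Dh}=\sum_i\norm{L_i^*h}^2$. Taking orthogonal complements gives
\[
 \ol{\Ran D}=\ol{\Span}\bigcup_i\ol{\Ran L_i}=\cN_s(F).
\]
For a positive operator, $\ol{\Ran D^{1/2}}=\ol{\Ran D}$. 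The strong-limit formula then follows from the spectral theorem: $t/(t+\eps)\uparrow\1_{(0,\infty)}(t)$ boundedly, and dominated convergence applied to the spectral measures gives strong convergence to the support projection $P_{\ol{\Ran D}}$.

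\textbf{Step 2: intrinsic determination and the decomposition.} The flattenings are defined from the occupation kernels $g_{a,q,\mathbf m}$ of the represented terminal $T=(\mathbb U_q^{-1}\xi_{a,q})$. These kernels are uniquely determined by the orthogonal weighted Fock decomposition \eqref{primitive:eq:F-expansion}. Consequently $D_s^{\mathrm{nl}}(F)$, and through Step 1 the projection $P_{\cN_s(F)}$, depend only on the represented terminal. The naturality argument of Theorem~\ref{primitive:thm:minimal-support}(iv) shows that they transform covariantly under graded orthogonal gauge. Substituting $P_{\cN_s(F)}$ into \eqref{tri:eq:8.2} gives $A^{\can}$ and $G^{\can}$, and the identity $T=A^{\can}+G^{\can}$ is linearity of $I_1$ applied to $\ell_{a,s}=P_{\cN_s}\ell_{a,s}+P_{\cN_s^\perp}\ell_{a,s}$. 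Independence is Proposition~\ref{tri:prop:maximal-Gaussian-factor}. In more detail, every kernel of $A^{\can}$ lies in tensor powers of $\bigoplus_s\cN_s(F)$: the nonlinear kernels do so by minimal support (Theorem~\ref{primitive:thm:minimal-support}(i)), since their one-leg ranges span $\cN_s$. By construction $G^{\can}$ lies in the first chaos over $\bigoplus_s\cN_s(F)^\perp$. Gaussian subfields over orthogonal subspaces are independent.

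\textbf{Step 3: independence from the trajectory.} The final clause needs only one observation. The terminal $T$ is produced by Theorem~\ref{primitive:thm:represented-transfer} from the classes $\xi_{a,q}$ alone; it does not depend on any module $M^{[J]}$ of the feedback. A simultaneous trajectory only changes how $T$ is split into active and residual parts, as in Proposition~\ref{tri:prop:multigraded-ghost}, and Steps 1--2 never consult that split.

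The main obstacle I expect is a subtlety in Step 2. The flattenings must be taken with the occupation-sector identification \eqref{primitive:eq:occupation-injection} and with the cut primitive block labeled, as discussed after \eqref{tri:eq:8.25}. Otherwise the ranges of different occupations could be mixed, and $\cN_s(F)$ might not sit inside $\cK_s$. I would handle this by working directly in $\bigoplus_s\cK_s$ through $\mathbb U$, so that all flattenings are ordinary Hilbert--Schmidt maps between Fock sectors.
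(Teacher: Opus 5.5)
Your proposal is correct and follows the paper's own proof. Spectral calculus gives the strong limit $D(D+\varepsilon\Id)^{-1}\to P_{(\Ker D)^\perp}$, the resulting projections are substituted grade by grade into \eqref{tri:eq:8.2}, and independence follows from orthogonal Gaussian factorization; your identification $\Ker D_s^{\mathrm{nl}}(F)=\bigcap\Ker L^{(s)*}_{a,q,\mathbf m}$ is the same argument the paper uses in Lemma~\ref{all:lem:summable-density}, and your concern about labeling cut blocks does not arise here, since each occupation kernel's one-leg flattening already lands in $E_s=\cK_s$.
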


\begin{proof}
For a positive operator $D$, spectral calculus gives
\[
 D(D+\eps\Id)^{-1}
 \xrightarrow[\eps\downarrow0]{\mathrm{s}}
 P_{(\ker D)^\perp}
 =
 P_{\ol{\Ran D^{1/2}}}.
\]
Apply this identity grade by grade to \eqref{tri:eq:8.29} and substitute
the resulting projections in \eqref{tri:eq:8.2}.  Orthogonal Gaussian
factorization proves independence.
\end{proof}

Thus the reduced density gives a direct terminal construction, whereas the
triangular mechanism \eqref{tri:eq:8.13} discovers the same split without
knowing the projectors \eqref{tri:eq:8.30} in advance.

\section{Slow exhaustion}

\begin{lemma}[Slow diagonal]
\label{tri:lem:slow-diagonal}
If $\Lambda_m<\infty$ and $\eps_{n,m}\to0$ for every fixed $m$, then there is a
nondecreasing $m(n)\to\infty$ such that
\begin{equation}
 \Lambda_{m(n)}\eps_{n,m(n)}\longrightarrow0.
 \label{tri:eq:9.1}
\end{equation}
One diagonal handles any countable list of fixed-level requirements.
\end{lemma}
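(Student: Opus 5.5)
We construct the nondecreasing diagonal directly, by choosing level-wise thresholds. For each fixed $m$ we have $\eps_{n,m}\to0$ and $\Lambda_m<\infty$, so $\Lambda_m\eps_{n,m}\to0$ as $n\to\infty$. Replacing $\Lambda_m$ by $\max(\Lambda_m,1)$ changes nothing essential. With this convention, choose integers $N_1<N_2<\cdots$ recursively so that
\[
 \Lambda_m\,\abs{\eps_{n,m}}\le\frac1m\qquad\text{for all }n\ge N_m .
\]
Such $N_m$ exist by the fixed-level convergence. Enlarging each $N_m$ if necessary makes the sequence strictly increasing.

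Next, set $m(n)=1$ for $n<N_1$, and $m(n)=\max\{m: N_m\le n\}$ for $n\ge N_1$. This $m(n)$ is finite because $N_m\uparrow\infty$. It is nondecreasing in $n$, and it tends to infinity because every $m$ satisfies $m(n)\ge m$ once $n\ge N_m$.

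For $n\ge N_1$ we have $n\ge N_{m(n)}$ by definition, hence $\Lambda_{m(n)}\abs{\eps_{n,m(n)}}\le 1/m(n)\to0$. This proves \eqref{tri:eq:9.1}. The finitely many $n<N_1$ are irrelevant for the limit.

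For the final sentence, take a countable family of requirements $(\Lambda^{(i)}_m,\eps^{(i)}_{n,m})_{i\ge1}$. Apply the same construction to the combined quantity
\[
 \eps_{n,m}:=\max_{i\le m}\Lambda^{(i)}_m\abs{\eps^{(i)}_{n,m}} ,\qquad \Lambda_m:=1 .
\]
At fixed $m$ this is a finite maximum of null sequences, so $\eps_{n,m}\to0$. The resulting single diagonal $m(n)$ gives $\Lambda^{(i)}_{m(n)}\abs{\eps^{(i)}_{n,m(n)}}\le \eps_{n,m(n)}\to0$ for each fixed $i$, since eventually $m(n)\ge i$.

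The only point requiring care is ensuring $m(n)\to\infty$ while keeping it monotone. The strictly increasing thresholds $N_m$ handle both. No step is a real obstacle.
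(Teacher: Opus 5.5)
Your proof is correct and is essentially the paper's argument: increasing thresholds $N_m$ beyond which the level-$m$ error is small, the diagonal $m(n)=\max\{m:N_m\le n\}$, and a countable list handled by folding the first $m$ requirements into stage $m$. The paper imposes the slightly stronger condition $\max_{1\le j\le m}\Lambda_j\eps_{n,j}\le 2^{-m}$ for $n\ge N_m$, but only the level $m(n)$ is ever evaluated, so your bound $\Lambda_m\abs{\eps_{n,m}}\le 1/m$ suffices.
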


\begin{proof}
Choose increasing $N_m$ so that
\[
 n\ge N_m
 \quad\Longrightarrow\quad
 \max_{1\le j\le m}\Lambda_j\eps_{n,j}\le2^{-m},
\]
and set $m(n)=\max\{m:N_m\le n\}$.  Enumerate countably many requirements and include
the first $m$ at stage $m$.
\end{proof}

This lemma passes from countably many fixed-level statements to one ordinary
subsequence while absorbing every prescribed finite-level loss.  It does
not supply convergence or compactness that is absent at fixed level.

\part{Unbounded chaos degree}

The direct sum over all physical weights introduces a genuine compactness
question.  Uniform $L^2$ boundedness does not prevent energy from escaping
to chaos degree infinity.  Degree tightness is exactly the hypothesis used
below to rule out that sector; completed four-copy ledgers would require
stronger weighted tails and are not asserted here.

\section{Degree tightness and the missing sector at infinity}

For $Q\ge1$, let $P_{\le Q}$ be the chaos truncation and define
\begin{equation}
 \eta_0(Q)^2
 =
 \sup_n\sum_{q>Q}E_{n,q}.
 \label{all:eq:2.1}
\end{equation}

\begin{definition}[Chaos-degree tightness]
\label{all:def:degree-tightness}
The packet is degree-tight if
\begin{equation}
 \boxed{\eta_0(Q)\longrightarrow0.}
 \tag{$\mathrm{QT}_2$}
\end{equation}
\end{definition}

\begin{lemma}[No mass at chaos degree infinity]
\label{all:lem:no-infinite-degree-mass}
Let $\mathcal U$ be a free ultrafilter and put
\[
 \cH_q=(H_n^{\odot q},\ip{\cdot}{\cdot})_{\mathcal U}.
\]
Here the factorial normalization has already been absorbed into
$x_{n,a,q}=\sqrt{q!}\,f_{n,a,q}$.
All tensor norms below are the ordinary tensor norms of these scaled coordinates.  The
product transported by the primitive--Fock unitary is the normalized product
\begin{equation}
 u\wtensor_{\mathrm{nor}}v
 =
 \sqrt{\frac{q!}{p!(q-p)!}}\,
 \Sym(u\otimes v),
 \qquad
 u\in H^{\odot p},\quad v\in H^{\odot(q-p)}.
 \label{all:eq:2.1a}
\end{equation}
The raw contractions of Section~\ref{all:sec:infinite-feedback} are ordinary
Hilbert tensor contractions of the scaled coordinates.  The dictionary
between the two normalisations used in this paper is therefore
\[
 \underbrace{(H_n^{\odot q},q!\ip{\cdot}{\cdot})_{\mathcal U}}
 _{\text{Parts II--IV, kernels }f_{n,a,q}}
 \;\xrightarrow{\ f\,\mapsto\,x=\sqrt{q!}f\ }\;
 \underbrace{(H_n^{\odot q},\ip{\cdot}{\cdot})_{\mathcal U}}
 _{\text{Parts V--VI, coordinates }x_{n,a,q}},
\]
an isometry which carries $\wtensor$ to $\wtensor_{\mathrm{nor}}$ and
multiplies an $s$-fold contraction of a weight-$p$ by a weight-$q$ coordinate
by $\sqrt{p!\,q!/(p+q-2s)!}$.  No statement below depends on the choice, but
all displayed constants refer to the scaled coordinates.
Under $\mathrm{QT}_2$, the represented coordinates
$x_{a,q}=[x_{n,a,q}]_{\mathcal U}$ satisfy
\begin{equation}
 \sum_{a,q}\norm{x_{a,q}}_{\cH_q}^2
 =
 \lim_{\mathcal U}\sum_{a,q}\norm{x_{n,a,q}}^2.
 \label{all:eq:2.2}
\end{equation}
Consequently the represented packet belongs to
$\bigoplus_{q\ge1}\cH_q$ and has no additional component at an infinite chaos grade.
\end{lemma}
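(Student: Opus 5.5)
The plan is to interchange the ultralimit with the infinite sum over chaos degrees $q$, using $\mathrm{QT}_2$ as a uniform tail bound, much like dominated convergence for series.

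First, the finite-degree identity. Fix $Q$. For each fixed $q$ and $a$, the definition of the ultraproduct norm gives $\norm{x_{a,q}}_{\cH_q}^2=\Ulim\norm{x_{n,a,q}}^2$. This is finite because the packet is uniformly $L^2$-bounded, with $\sup_n\sum_{a,q}\norm{x_{n,a,q}}^2\le R^2$. An ultralimit of bounded real sequences is additive over finitely many terms, so
\[
 \sum_{a}\sum_{q\le Q}\norm{x_{a,q}}_{\cH_q}^2
 =\Ulim\sum_a\sum_{q\le Q}\norm{x_{n,a,q}}^2 .
\]

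Second, the tail comparison. Write $S_n=\sum_{a,q}\norm{x_{n,a,q}}^2$ and $S_n^{\le Q}$ for the truncated sum. Then $0\le S_n-S_n^{\le Q}\le\eta_0(Q)^2$ uniformly in $n$, since in the scaled coordinates $E_{n,q}$ is exactly $\sum_a\norm{x_{n,a,q}}^2$. Both $S_n$ and $S_n^{\le Q}$ are bounded sequences, so their ultralimits exist and satisfy
\[
 0\le\Ulim S_n-\Ulim S_n^{\le Q}\le\eta_0(Q)^2 .
\]
On the represented side, the partial sums $\sum_a\sum_{q\le Q}\norm{x_{a,q}}^2$ are nondecreasing in $Q$ and bounded by $\Ulim S_n\le R^2$. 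The full series therefore converges, and its value lies within $\eta_0(Q)^2$ of the $Q$-truncation, up to the monotone remainder. Letting $Q\to\infty$ and using $\eta_0(Q)\to0$ gives \eqref{all:eq:2.2}.

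Third, the interpretation. Since $\sum_{a,q}\norm{x_{a,q}}^2<\infty$, the family $(x_{a,q})_q$ defines an element of the Hilbert direct sum $\bigoplus_q\cH_q$ for each $a$. The equality of norms says that the represented packet loses no energy. Equivalently, the ultraproduct class of $(x_{n,a,\cdot})_n$ in $(\bigoplus_qH_n^{\odot q})_{\mathcal U}$ has no component orthogonal to $\bigoplus_q\cH_q$.

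To make the last point precise, consider the natural isometric embedding $\bigoplus_q\cH_q\hookrightarrow(\bigoplus_qH_n^{\odot q})_{\mathcal U}$. The ultraproduct class and its image under this embedding have the same norm. Moreover, the inner product of the class with the image equals $\sum_q\norm{x_{a,q}}^2$; this again uses $\mathrm{QT}_2$ to pass to the limit in the cross term. Hence the distance between them is zero.

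The only delicate step is this interchange. Without $\mathrm{QT}_2$, mass escaping to $q=\infty$ would make the two sides differ, with strict inequality ``$\le$'', so the uniform tail bound is exactly what is needed. The remaining steps are routine, and the normalization dictionary plays no role beyond identifying $E_{n,q}$ with the scaled norms.
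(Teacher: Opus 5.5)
Your proof is correct and follows the paper's argument. You commute the ultralimit with the finite sum over $q\le Q$, bound the discarded tails by $\eta_0(Q)^2$ uniformly in $n$, and let $Q\to\infty$. Your extra paragraph on the embedding $\bigoplus_q\cH_q\hookrightarrow(\bigoplus_qH_n^{\odot q})_{\mathcal U}$ makes explicit the ``no component at infinite grade'' consequence that the paper only asserts; note that the cross-term identity there needs only continuity of the isometry, and $\mathrm{QT}_2$ enters solely through the norm equality.
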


\begin{proof}
For fixed $Q$, ultralimits commute with the finite orthogonal sum, so
\[
 \sum_{a,q\le Q}\norm{x_{a,q}}^2
 =
 \lim_{\mathcal U}\sum_{a,q\le Q}\norm{x_{n,a,q}}^2.
\]
The two full sums differ from these truncated sums by at most $\eta_0(Q)^2$.
Letting $Q\to\infty$ gives \eqref{all:eq:2.2}.
\end{proof}

\begin{proposition}[Sharp escape obstruction]
\label{all:prop:escape-obstruction}
The natural isometry
\begin{equation}
 \bigoplus_{q\ge1}(H_n^{\odot q})_{\mathcal U}
 \longrightarrow
 \left(\bigoplus_{q\ge1}H_n^{\odot q}\right)_{\mathcal U}
 \label{all:eq:2.3}
\end{equation}
is not onto.  Indeed, choose
\[
 \norm{x_{n,n}}=1,
 \qquad x_{n,q}=0\quad(q\ne n).
\]
Then every fixed-grade ultralimit and every fixed-$Q$ terminal is zero, whereas the full norm
is one.
\end{proposition}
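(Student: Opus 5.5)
The statement to prove is Proposition~\ref{all:prop:escape-obstruction}. The plan has two parts. First check that the map \eqref{all:eq:2.3} is a well-defined isometry. Then verify that the displayed sequence is not in its range. For the isometry, take an element $(\xi_q)_{q\ge1}$ of the left side with $\xi_q=[x_{n,q}]_{\mathcal U}$ and $\sum_q\norm{\xi_q}^2<\infty$. Map it first on finitely supported families, sending $(\xi_q)_{q\le Q}$ to $[(x_{n,q})_{q\le Q}]_{\mathcal U}$. Since an ultralimit commutes with a finite orthogonal sum,
\[
\Ulim\sum_{q\le Q}\norm{x_{n,q}}^2=\sum_{q\le Q}\norm{\xi_q}^2,
\]
so the map is isometric on this dense subspace and does not depend on the chosen representatives. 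It then extends uniquely to an isometry on the whole Hilbert direct sum. This is the same argument as the first line of the proof of Lemma~\ref{all:lem:no-infinite-degree-mass}, without the tightness step.

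For non-surjectivity, consider $x_n=(x_{n,q})_q$ with $\norm{x_{n,n}}=1$ and all other components zero. This is bounded, so $\Xi=[x_n]_{\mathcal U}$ is a unit vector of the right-hand ultraproduct. Fix a grade $q$. Then $x_{n,q}=0$ for every $n\ne q$, and since $\mathcal U$ is free, $\{n: n\ne q\}\in\mathcal U$. Hence every fixed-grade class $[x_{n,q}]_{\mathcal U}$ vanishes, and so does every fixed-$Q$ truncation, and with it the fixed-$Q$ terminal built from these classes.

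To conclude, I compare $\Xi$ with an arbitrary image vector $\iota(\eta)$ with $\eta=(\eta_q)_q$. The coordinate projection onto grades $\le Q$ commutes with the ultraproduct construction, so the grade-$\le Q$ part of $\Xi$ is $0$. Hence
\[
\ip{\Xi}{\iota(\eta)}=\lim_{Q\to\infty}\ip{\Xi}{\iota(\eta_{\le Q})}=0
\]
by continuity, using that finitely supported $\eta$ are dense. So $\Xi\perp\Ran\iota$ and $\Xi\ne0$, and the range is a proper closed subspace.

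The main point to handle carefully is the identity $\ip{\Xi}{\iota(\eta_{\le Q})}=\Ulim\sum_{q\le Q}\ip{x_{n,q}}{y_{n,q}}$. This must be checked directly with representatives, because it only involves finitely many grades. Each term vanishes along $\mathcal U$ since $x_{n,q}=0$ for all $n>Q$. Once that holds, the remaining argument is purely formal.
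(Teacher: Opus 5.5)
Your proof is correct and follows the same route as the paper. The paper simply notes that $P_{\le Q}x_n=0$ for $n>Q$, so every coordinate on the left vanishes while the class of $(x_n)_n$ has norm one; your verification that the map is a well-defined isometry and your check that $\ip{\Xi}{\iota(\eta_{\le Q})}=0$ (hence $\Xi\perp\Ran\iota$) spell out the step from ``all fixed-grade coordinates vanish'' to ``$\Xi$ is not in the range'', which the paper leaves implicit.
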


\begin{proof}
For $n>Q$, one has $P_{\le Q}x_n=0$.  Hence every coordinate on the left of \eqref{all:eq:2.3} is zero,
but the class of $(x_n)_n$ on the right has norm one.
\end{proof}

This escaping sector cannot be declared Gaussian: sequences whose chaos orders diverge may
have inequivalent subsequential laws.  Condition $\mathrm{QT}_2$ is the exact \emph{uniform}
no-escape condition, equivalently the condition excluding such a sector along every
subsequence and every free ultrafilter.  For one fixed ultrafilter it may be weakened to
\begin{equation}
 \lim_{Q\to\infty}
 \lim_{\mathcal U}\sum_{q>Q}E_{n,q}=0.
 \label{all:eq:2.3a}
\end{equation}
Along one prescribed ordinary subsequence $(n_j)$, the corresponding exact condition is
\begin{equation}
 \lim_{Q\to\infty}\limsup_{j\to\infty}
 \sum_{q>Q}E_{n_j,q}=0.
 \label{all:eq:2.3b}
\end{equation}

\subsection{Compact Hilbert scales}

Let $(\omega_q^{(r)})_{r\ge0,q\ge1}$ satisfy
\begin{equation}
 1\le\omega_q^{(r)}\le\omega_q^{(r+1)},
 \qquad
 \rho_r(Q):=\sup_{q>Q}
 \frac{\omega_q^{(r)}}{\omega_q^{(r+1)}}\longrightarrow0.
 \label{all:eq:2.4}
\end{equation}
For a graded Hilbert space $E=\bigoplus_qE_q$, put
\begin{equation}
 \norm{z}_r^2=\sum_q\omega_q^{(r)}\norm{z_q}_{E_q}^2,
 \qquad E^{(\infty)}=\bigcap_{r\ge0}E^{(r)}.
 \label{all:eq:2.5}
\end{equation}

\begin{lemma}[Compact-scale tail]
\label{all:lem:compact-scale-tail}
If
\[
 R_{r+1}^2
 =\sup_n\sum_q\omega_q^{(r+1)}E_{n,q}<\infty,
\]
then
\begin{equation}
 \sup_n\sum_{q>Q}\omega_q^{(r)}E_{n,q}
 \le R_{r+1}^2\rho_r(Q)\longrightarrow0.
 \label{all:eq:2.6}
\end{equation}
\end{lemma}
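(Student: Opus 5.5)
The bound is a pointwise comparison of weights on the tail $q>Q$, followed by summation, so no structural results from earlier sections are needed. Fix $n$ and $Q$. The definition \eqref{all:eq:2.4} of $\rho_r(Q)$ as a supremum over $q>Q$ gives, for every $q>Q$,
\[
 \omega_q^{(r)}\le\rho_r(Q)\,\omega_q^{(r+1)} .
\]
Multiply this by the nonnegative energy $E_{n,q}$ and sum over $q>Q$. This yields
\[
 \sum_{q>Q}\omega_q^{(r)}E_{n,q}\le\rho_r(Q)\sum_{q>Q}\omega_q^{(r+1)}E_{n,q}.
\]

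Next, bound the truncated sum on the right. The weights $\omega_q^{(r+1)}$ and the energies $E_{n,q}$ are nonnegative, so the tail sum is at most the full sum $\sum_q\omega_q^{(r+1)}E_{n,q}$. That full sum is at most $R_{r+1}^2$ by hypothesis. Hence the left side is bounded by $R_{r+1}^2\rho_r(Q)$, and this bound does not depend on $n$. Taking the supremum over $n$ gives the inequality in \eqref{all:eq:2.6}.

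The convergence to zero as $Q\to\infty$ then follows from the hypothesis $\rho_r(Q)\to0$ in \eqref{all:eq:2.4}, since $R_{r+1}^2<\infty$ is a fixed constant.

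The only point needing care is finiteness of $\rho_r(Q)$. It is at most $1$ because the scale is monotone, $\omega_q^{(r)}\le\omega_q^{(r+1)}$, so the product $R_{r+1}^2\rho_r(Q)$ is meaningful. There is no real obstacle: the argument is a single Hölder-free weight comparison. As a consequence, the $r$-th weighted tail is uniformly small, which is the weighted analogue of $\mathrm{QT}_2$. In particular the case $\omega^{(0)}\equiv1$ recovers $\eta_0(Q)\to0$.
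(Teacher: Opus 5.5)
Your proof is correct and is essentially the paper's argument. The paper's proof is the one-line instruction to multiply and divide each summand by $\omega_q^{(r+1)}$ and apply \eqref{all:eq:2.4}, which is exactly your pointwise weight comparison on $q>Q$ followed by bounding the tail sum by the full sum.
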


\begin{proof}
Multiply and divide every summand by $\omega_q^{(r+1)}$ and use \eqref{all:eq:2.4}.
\end{proof}

This gives a convenient projective Fr\'echet formulation.  For any fixed
finite contraction register, a single admissible weight is enough.

\section{The countable intrinsic primitive--Fock terminal}

For every $q\ge1$, define
\begin{equation}
 \cD_q
 =
 \ol{\Span}\{u\wtensor v:
 u\in\cH_p, v\in\cH_{q-p}, 1\le p<q\},
 \qquad
 \cK_q=\cD_q^\perp.
 \label{all:eq:3.1}
\end{equation}
Since the scalar normalization in \eqref{all:eq:2.1a} is nonzero, the same closed subspace is obtained by
replacing $u\wtensor v$ in \eqref{all:eq:3.1} with
$u\wtensor_{\mathrm{nor}}v$; the latter is the normalization used by $\mathbb U_q$.
The fixed-degree primitive--Fock unitary is
\begin{equation}
 \mathbb U_q:
 \cW_q\left(\bigoplus_{s\le q}\cK_s\right)
 \xrightarrow{\ \simeq\ }\cH_q.
 \label{all:eq:3.2}
\end{equation}

\begin{theorem}[All-degree intrinsic primitive--Fock unitary]
\label{all:thm:unitary}
The direct sum
\begin{equation}
 \mathbb U=\bigoplus_{q\ge1}\mathbb U_q
 \label{all:eq:3.3}
\end{equation}
is a unitary
\begin{equation}
 \mathbb U:
 \bigoplus_{q\ge1}
 \cW_q\left(\bigoplus_{s\le q}\cK_s\right)
 \xrightarrow{\ \simeq\ }
 \bigoplus_{q\ge1}\cH_q,
 \qquad P_{\le Q}\mathbb U=\mathbb U P_{\le Q}.
 \label{all:eq:3.4}
\end{equation}
It is an isometry at every level of the compact Hilbert scale for which the corresponding
weighted norm is finite.  Condition $\mathrm{QT}_2$ is used only to ensure that the represented
packet belongs to the grade-wise direct sum on the right-hand side.
\end{theorem}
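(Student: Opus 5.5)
The argument assembles the fixed-degree unitaries of Theorem~\ref{primitive:thm:intrinsic-unitary}, transported to the scaled normalization \eqref{all:eq:2.1a}, into a Hilbert direct sum. The first step is to check that this change of normalization is harmless. The map $f\mapsto\sqrt{q!}\,f$ is an isometry between $(H_n^{\odot q},q!\ip{\cdot}{\cdot})_{\mathcal U}$ and $(H_n^{\odot q},\ip{\cdot}{\cdot})_{\mathcal U}$. It carries $\wtensor$ to a nonzero scalar multiple of $\wtensor_{\mathrm{nor}}$. Hence it carries $\cD_q$ and $\cK_q$ of Part~II onto the spaces defined in \eqref{all:eq:3.1}, as already remarked after that display. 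Conjugating the Part~II unitary by this isometry therefore gives a unitary $\mathbb U_q$ as in \eqref{all:eq:3.2}. On simple Wick monomials it is given by normalized symmetric evaluation of the primitive blocks, and the block-pairing identity of Lemma~\ref{primitive:lem:block-pairing} is simply rewritten in the scaled coordinates.

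Next I assemble the grades. The domain sectors $\cW_q(\bigoplus_{s\le q}\cK_s)$ for distinct $q$ should be mutually orthogonal. I realize all of them inside the single Gaussian space over the graded space $\bigoplus_{s\ge1}\cK_s$. Since $\cK_s\subset\bigoplus_{t\ge1}\cK_t$ for every $s\le q$, each $\cW_q$ sits in this common Fock space. The sector with physical weight $q$ is the closed span of Wick products whose occupation vectors satisfy $\wt(\bm m)=q$. Distinct weights require distinct occupation vectors, and distinct occupation vectors give orthogonal sectors of the weighted Fock space. So the left side of \eqref{all:eq:3.4} is an orthogonal Hilbert direct sum, and the right side is one by definition. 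A direct sum of unitaries between orthogonal direct sums is unitary. It is defined on finite sums, isometric, and has dense range containing every finitely supported vector, so it extends to a surjective isometry. Commutation with $P_{\le Q}$ holds because $\mathbb U$ is block-diagonal in the physical weight.

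For the compact Hilbert scale, the weighted norm $\norm{z}_r^2=\sum_q\omega_q^{(r)}\norm{z_q}^2$ depends only on the grade-wise norms. Because $\mathbb U$ preserves each grade and each grade-wise norm, it preserves $\norm{\cdot}_r$ whenever either side is finite. It therefore restricts to isometries $E^{(r)}\to E'^{(r)}$ and to $E^{(\infty)}$. Finally, Lemma~\ref{all:lem:no-infinite-degree-mass} shows that under $\mathrm{QT}_2$ the represented packet has coordinates $(x_{a,q})_q$ in $\bigoplus_q\cH_q$ with full norm. Proposition~\ref{all:prop:escape-obstruction} shows this is the only place the hypothesis is needed.

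I expect the main obstacle to be the bookkeeping of the normalization. Before appealing to Theorem~\ref{primitive:thm:intrinsic-unitary} unchanged, one must verify that the Fock normalization \eqref{primitive:eq:Fock-normalization} and the occupation injection \eqref{primitive:eq:occupation-injection} match $\wtensor_{\mathrm{nor}}$ exactly, with no leftover factorial.
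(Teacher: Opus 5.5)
Your proposal is correct and follows essentially the paper's route: unitarity of each $\mathbb U_q$, the weighted norm identity with weights $\omega_q^{(r)}$ (which the paper obtains from finite-$Q$ sums and monotone convergence in $Q$), and commutation with truncation because $\mathbb U$ acts sector by sector. The normalization step you flag is already settled by the dictionary in Lemma~\ref{all:lem:no-infinite-degree-mass}, under which $f\mapsto\sqrt{q!}\,f$ carries $\wtensor$ exactly to $\wtensor_{\mathrm{nor}}$; also note that Proposition~\ref{all:prop:escape-obstruction} shows why $\mathrm{QT}_2$ is needed for the packet to lie in the direct sum, whereas the claim that it is used \emph{only} there follows from your unitarity argument never invoking it.
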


\begin{proof}
For every finite $Q$, unitarity of the sector maps gives
\[
 \sum_{q\le Q}\omega_q^{(r)}\norm{\mathbb U_qg_q}^2
 =
 \sum_{q\le Q}\omega_q^{(r)}\norm{g_q}^2.
\]
Monotone convergence in $Q$ proves the assertion.  Truncation commutes with $\mathbb U$ because it
does so sector by sector.
\end{proof}

\begin{corollary}[All-degree collision-null spatial approximation]
\label{all:cor:spatial-approximation}
There exists a sequence of smooth collision-null finite-chaos spatial packets whose laws
converge to the law of the represented all-degree terminal.  Every fixed labeled primitive
network is asymptotically preserved, and the spatial approximants have uniformly vanishing
$L^2$ chaos-degree tails under $\mathrm{QT}_2$.
\end{corollary}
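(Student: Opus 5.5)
The plan is to reduce to the fixed-degree packetwise realization of Theorem~\ref{primitive:thm:packetwise-realization}, run it on finite-degree truncations of the terminal, and patch the truncations together along one slow diagonal (Lemma~\ref{tri:lem:slow-diagonal}).

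\emph{Step 1: truncation.} Let $F=\mathbb U^{-1}x$ be the represented all-degree terminal, with $x=(x_{a,q})$ in $\bigoplus_q\cH_q$. This makes sense by Lemma~\ref{all:lem:no-infinite-degree-mass} and Theorem~\ref{all:thm:unitary}. For each $Q$ put $F^{(Q)}=P_{\le Q}F=\mathbb U^{-1}P_{\le Q}x$. By \eqref{all:eq:2.2}, $\norm{F-F^{(Q)}}_{L^2}^2=\sum_{a,q>Q}\norm{x_{a,q}}^2$. Under $\mathrm{QT}_2$ this is at most $\eta_0(Q)^2\to0$, uniformly in the represented data. Each $F^{(Q)}$ is a finite packet in $\bigoplus_{q\le Q}\cW_q(\bigoplus_s\cK_s)$. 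Its minimal support $\cS(F^{(Q)})$ is separable by Theorem~\ref{primitive:thm:minimal-support}.

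\emph{Step 2: realization at fixed $Q$.} Apply Theorem~\ref{primitive:thm:separable-realization} once to the countable family $\bigl(\ol{\Span}_Q\cS_s(F^{(Q)})\bigr)_{s\ge1}$. This gives one coherent set of collision-null isometries $J_s$ serving every truncation level. Theorem~\ref{primitive:thm:packetwise-realization}(iii)--(iv) then gives, for each $Q$, smooth collision-null finite-chaos spatial packets $Y_N^{(Q)}$ with the following properties:
\begin{itemize}
\item $\Law(Y_N^{(Q)})\to\Law(F^{(Q)})$ as $N\to\infty$;
\item every fixed labeled primitive network converges to its block-pairing value;
\item the grade-wise energies $E_{N,q}(Y^{(Q)})$ converge to $\norm{x_{a,q}}^2$ for $q\le Q$, and vanish for $q>Q$.
\end{itemize}
The energy statement follows from the Wiener isometry and the exact Gram identity \eqref{primitive:eq:spatial-Gram}, after finite-rank approximation.

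\emph{Step 3: diagonal.} Enumerate the countably many requirements. For each $Q$, these are the bounded-Lipschitz error, the grade-wise energy errors for $q\le Q$, and the first $Q$ networks in an enumeration $(G_j)$. Lemma~\ref{tri:lem:slow-diagonal} then selects $N(Q)$, and we take $Y^{(Q)}:=Y^{(Q)}_{N(Q)}$ with all errors at most $Q^{-1}$.

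Convergence in law of $Y^{(Q)}$ to $F$ follows from the triangle inequality
\[
 d_{\mathrm{BL}}(Y^{(Q)},F)\le Q^{-1}+\norm{F^{(Q)}-F}_{L^2}.
\]
The last term tends to $0$ by Step 1. Network preservation follows because a fixed network involves only finitely many grades. For $Q$ beyond those grades it is a network of $F^{(Q)}$, whose values agree with those of $F$ since $P_{\le Q}$ commutes with $\mathbb U$. So those values are reached up to $Q^{-1}$.

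\emph{Main obstacle: the uniform degree tail of the approximants.} I will bound $\sup_Q\sum_{q>Q'}E_q(Y^{(Q)})$ by splitting into grades $q\le Q$ and $q>Q$. Grades $q>Q$ contribute nothing. For $Q'<q\le Q$, the diagonal energy control gives
\[
 \sum_{Q'<q\le Q}E_q(Y^{(Q)})\le\sum_{a,\,q>Q'}\norm{x_{a,q}}^2+Q^{-1}.
\]
The first term is at most $\eta_0(Q')^2$. The stray $Q^{-1}$ must be handled for small $Q$; I expect this to be the delicate point. I would absorb it by making the energy tolerance at stage $Q$ at most $2^{-Q}\min_{Q'\le Q}\eta_0(Q')^2$ when that minimum is positive. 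When it is zero, exact energies hold: the realization can be taken exactly isometric, since \eqref{primitive:eq:spatial-Gram} is an exact identity for finite-rank truncations. In either case the tail is controlled by a multiple of $\eta_0(Q')^2$ plus a vanishing quantity, uniformly in $Q$. This is the asserted uniformly vanishing $L^2$ degree tail.
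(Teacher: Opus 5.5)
Your Steps 1--3 follow the paper's proof. You truncate the represented terminal at degree $Q$, realize each $F^{(Q)}$ by the collision-null replicas of Theorem~\ref{primitive:thm:packetwise-realization}, choose one diagonal over the countably many bounded--Lipschitz, energy and network requirements, and pass to the full law through $\norm{F-F^{(Q)}}_{L^2}^2\le\eta_0(Q)^2$. Your separate treatment of the approximants' own degree tails is more explicit than the paper's one-line remark. However, your last paragraph rests on a false claim and addresses an obstacle that does not exist.

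The claim that, in the degenerate case, ``the realization can be taken exactly isometric'' is wrong. Identity \eqref{primitive:eq:spatial-Gram} is exact for the one-particle replica kernels, but not for Wick monomials with a repeated primitive block. Take $k\in\cK_2$ of unit norm with replica kernel $u$, so that $2!\norm{u}^2=1$. Then
\[
 4!\norm{u\wtensor u}^2=2+16\norm{u\otimes_1u}^2 .
\]
This exceeds the Fock value $\norm{:W(k)^2:}_2^2=2$ by a strictly positive $O(N^{-1})$ at every finite $N$.

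Neither this claim nor the $2^{-Q}\min_{Q'\le Q}\eta_0(Q')^2$ device is needed. Instead, impose at stage $Q$ that the summed energy error satisfies
\[
 \sum_{q\le Q}\Bigl|E_q(Y^{(Q)})-\sum_a\norm{x_{a,q}}^2\Bigr|\le Q^{-1}.
\]
Bounding each grade separately by $Q^{-1}$, as your Step~3 literally says, would only give an $O(1)$ total. Since $Y^{(Q)}$ has no chaos above grade $Q$, its tail above $Q'$ vanishes when $Q\le Q'$. When $Q>Q'$, the tail is at most $\eta_0(Q')^2+Q^{-1}<\eta_0(Q')^2+Q'^{-1}$. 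Hence
\[
 \sup_Q\sum_{q>Q'}E_q(Y^{(Q)})\le\eta_0(Q')^2+\frac1{Q'}\longrightarrow0 .
\]
This is the asserted uniformly vanishing tail, with no case distinction.
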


\begin{proof}
At cutoff $Q$, apply the finite-degree replica construction to the finitely many primitive
weights $s\le Q$.  Enumerate the finite networks and choose all replica numbers by one slow
diagonal.  Every fixed network is eventually contained in the truncation and its replica error
tends to zero.  The discarded tail has squared norm at most $\eta_0(Q)^2$.  Finite-head
convergence in law and the uniform $L^2$ tail give convergence in law of the full spatial
packets.  No $L^2$ convergence between random variables on different Wiener spaces is
asserted.
\end{proof}

No blow-up indexed by an infinite partition lattice is needed.  The intrinsic all-chaos
object is the projective limit of finite collision problems.

\section{Supports, summable reduced densities, and canonical Gaussian factors}

Write
\begin{equation}
 \mathbb U_q^{-1}x_{a,q}
 =
 \sum_{\operatorname{wt}(\mathbf m)=q}
 I_{|\mathbf m|}(g_{a,q,\mathbf m}),
 \qquad
 \operatorname{wt}(\mathbf m)=\sum_{s\ge1}s\,m_s.
 \label{all:eq:4.1}
\end{equation}
Let $L^{(s)}_{a,q,\mathbf m}$ be the one-weight-$s$ flattening.  Define
\begin{align}
 \cS_s(F)
 &=\ol{\Span}\{\Ran L^{(s)}_{a,q,\mathbf m}:m_s\ge1\},
 \label{all:eq:4.2}\\
 \cN_s(F)
 &=\ol{\Span}\{\Ran L^{(s)}_{a,q,\mathbf m}:
 |\mathbf m|\ge2, m_s\ge1\}.
 \label{all:eq:4.3}
\end{align}

The unweighted infinite analogue of the finite reduced density need not be bounded.  The
following normalization preserves its range exactly.

\begin{lemma}[Summably normalized reduced density]
\label{all:lem:summable-density}
Enumerate the flattenings entering \eqref{all:eq:4.3} as $(L_{s,\nu})_{\nu\ge1}$ and choose
$\beta_{s,\nu}>0$ with $\sum_\nu\beta_{s,\nu}<\infty$.  Then
\begin{equation}
 \overline D_s^{\,\mathrm{nl}}(F)
 =
 \sum_{\nu\ge1}\beta_{s,\nu}
 \frac{L_{s,\nu}L_{s,\nu}^*}
 {1+\norm{L_{s,\nu}}_{\HS}^2}
 \label{all:eq:4.4}
\end{equation}
converges in trace norm and
\begin{equation}
 \boxed{
 \cN_s(F)
 =\ol{\Ran\bigl(\overline D_s^{\,\mathrm{nl}}(F)\bigr)^{1/2}}.}
 \label{all:eq:4.5}
\end{equation}
The analogous density obtained from all flattenings has range $\cS_s(F)$.
\end{lemma}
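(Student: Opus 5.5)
We need to prove Lemma all:lem:summable-density: trace-norm convergence and the range identity.

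For trace-norm convergence, each summand is positive and of trace $\beta_{s,\nu}\norm{L_{s,\nu}}_{\HS}^2/(1+\norm{L_{s,\nu}}_{\HS}^2)\le\beta_{s,\nu}$. The flattenings are Hilbert--Schmidt because the kernels $g_{a,q,\mathbf m}$ are Hilbert tensors, and $\norm{L}_{\HS}$ equals the tensor norm. The partial sums are therefore increasing positive trace-class operators whose traces are bounded by $\sum_\nu\beta_{s,\nu}<\infty$. The series converges absolutely in the trace-class Banach space, so $\overline D_s^{\,\mathrm{nl}}(F)$ is a well-defined positive trace-class operator on $\cK_s$.

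For the range identity, write $D=\overline D_s^{\,\mathrm{nl}}(F)$ and $\gamma_\nu=\beta_{s,\nu}/(1+\norm{L_{s,\nu}}_{\HS}^2)>0$. I will use the standard facts
\[
\ol{\Ran D^{1/2}}=\ol{\Ran D}=(\Ker D)^\perp
\]
for positive $D$. For any vector $h$,
\[
\ip{h}{Dh}=\sum_\nu\gamma_\nu\norm{L_{s,\nu}^*h}^2 ,
\]
where strong convergence is enough to exchange sum and inner product. Since every $\gamma_\nu>0$, this gives
\[
\Ker D=\{h:\ip h{Dh}=0\}=\bigcap_\nu\Ker L_{s,\nu}^*=\bigcap_\nu(\Ran L_{s,\nu})^\perp .
\]
Taking orthogonal complements yields
\[
(\Ker D)^\perp=\ol{\Span}\bigcup_\nu\Ran L_{s,\nu}=\cN_s(F)
\]
by \eqref{all:eq:4.3}, which is \eqref{all:eq:4.5}. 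The same argument applied to the enumeration of all flattenings with $m_s\ge1$ gives range $\cS_s(F)$ by \eqref{all:eq:4.2}.

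The only point needing care is that the enumeration is countable. There are countably many triples $(a,q,\mathbf m)$ with $\operatorname{wt}(\mathbf m)=q$, since each $q$ admits finitely many occupations, so the index set is countable. I also need the flattenings to land in $\cK_s$, which follows from the kernels living in the weighted Fock space over $\bigoplus_s\cK_s$. Everything else is routine; the main obstacle is only to make sure the kernel computation uses positivity of each weight $\gamma_\nu$, which is exactly why the normalization preserves the range and no weight can be allowed to vanish.
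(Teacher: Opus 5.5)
Your proposal is correct and follows the paper's proof. You bound each summand's trace by $\beta_{s,\nu}$ to get trace-norm convergence, identify $\Ker D$ with $\bigcap_\nu\Ker L_{s,\nu}^*$ through the quadratic form, and conclude with $\ol{\Ran D^{1/2}}=(\Ker D)^\perp$; your remarks on the countability of the enumeration and on where the flattenings land are points the paper leaves implicit.
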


\begin{proof}
Every summand in \eqref{all:eq:4.4} is positive and has trace at most $\beta_{s,\nu}$, so the series
converges in trace norm.  Moreover
\[
 \ip{\overline D_s^{\,\mathrm{nl}}h}{h}=0
 \quad\Longleftrightarrow\quad
 L_{s,\nu}^*h=0\text{ for every }\nu.
\]
Thus its kernel is the orthogonal complement of the closed span of the ranges of the
$L_{s,\nu}$.  For a positive operator $D$,
$\ol{\Ran D^{1/2}}=(\ker D)^\perp$, proving \eqref{all:eq:4.5}.
\end{proof}

Let $\overline D_{s,Q}^{\,\mathrm{nl}}$ retain only total grades $q\le Q$.  Then
\begin{equation}
 \|\overline D_{s,Q}^{\,\mathrm{nl}}
 -\overline D_s^{\,\mathrm{nl}}\|_1\longrightarrow0,
 \qquad
 P_{\cN_s(P_{\le Q}F)}
 \xrightarrow[Q\to\infty]{\rm s}
 P_{\cN_s(F)}.
 \label{all:eq:4.6}
\end{equation}
The second convergence follows because the spaces increase and their union is dense in
$\cN_s(F)$.

Let $\ell_{a,s}\in\cK_s$ be the linear occupation kernel at weight $s$.  Define
\begin{align}
 A_a^{\can}
 &=
 \sum_{q,\,|\mathbf m|\ge2}
 I_{|\mathbf m|}(g_{a,q,\mathbf m})
 +
 \sum_{s\ge1}I_1(P_{\cN_s(F)}\ell_{a,s}),
 \label{all:eq:4.7}\\
 G_a^{\can}
 &=
 \sum_{s\ge1}I_1(P_{\cN_s(F)^\perp}\ell_{a,s}).
 \label{all:eq:4.8}
\end{align}

\begin{theorem}[All-degree canonical Gaussian factor and raw-register maximality]
\label{all:thm:canonical-Gaussian-factor}
The series \eqref{all:eq:4.7}--\eqref{all:eq:4.8} converge in $L^2$ and
\begin{equation}
 F=A^{\can}+G^{\can},
 \qquad
 G^{\can}\indep A^{\can}.
 \label{all:eq:4.9}
\end{equation}
Let $T=\bigoplus_sT_s$ be a closed graded subspace and define the Gaussian factor extracted
from the linear part of $F$ by
\begin{equation}
 G_{T,a}=\sum_{s\ge1}I_1(P_{T_s}\ell_{a,s}).
 \label{all:eq:4.9a}
\end{equation}
Assume that its one-leg raw register against the nonlinear terminal vanishes:
\begin{equation}
 L_{s,\nu}^*t=0
 \qquad(t\in T_s,\ \nu\ge1).
 \label{all:eq:4.9b}
\end{equation}
Then $T_s\subset\cN_s(F)^\perp$ for every $s$, and
\begin{equation}
 \operatorname{Cov}(G_T)
 \preccurlyeq\operatorname{Cov}(G^{\can}).
 \label{all:eq:4.9c}
\end{equation}
\end{theorem}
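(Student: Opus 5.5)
The proof has three parts: $L^2$ convergence of the series, independence, and maximality.

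\emph{Convergence of the series.} Work in the all-degree terminal provided by Theorem~\ref{all:thm:unitary}. Under $\mathrm{QT}_2$ and Lemma~\ref{all:lem:no-infinite-degree-mass}, the packet $\mathbb U^{-1}x_a=\sum_q\sum_{\mathbf m}I_{|\mathbf m|}(g_{a,q,\mathbf m})$ has finite total energy. The summands are mutually orthogonal, since distinct occupations or weights give orthogonal Fock sectors. Hence the nonlinear part $\sum_{|\mathbf m|\ge2}I_{|\mathbf m|}(g_{a,q,\mathbf m})$ converges unconditionally in $L^2$, and so does the linear part $\sum_sI_1(\ell_{a,s})$. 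Each projection $P_{\cN_s(F)}$ and $P_{\cN_s(F)^\perp}$ is a contraction. Orthogonality of the weights therefore gives convergence of both $\sum_sI_1(P_{\cN_s}\ell_{a,s})$ and $\sum_sI_1(P_{\cN_s^\perp}\ell_{a,s})$, dominated by $\sum_s\norm{\ell_{a,s}}^2$. The identity $F=A^{\can}+G^{\can}$ follows because $P_{\cN_s}+P_{\cN_s^\perp}=\Id$ in each weight.

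\emph{Independence.} The plan is to show that every kernel of $A^{\can}$ lives over $\cN(F):=\bigoplus_s\cN_s(F)$, while $G^{\can}$ lies in the first chaos over $\cN(F)^\perp$. For the nonlinear kernels, the argument of Theorem~\ref{primitive:thm:minimal-support}(i) applies verbatim. Take $g=g_{a,q,\mathbf m}$ with $|\mathbf m|\ge2$ and $m_s\ge1$. The range of its one-leg weight-$s$ flattening lies in $\cN_s(F)$ by definition~\eqref{all:eq:4.3}. Hence $((\Id-P_{\cN_s})\otimes\Id)g=0$, and symmetry in the weight-$s$ legs places every such leg in $\cN_s(F)$. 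Doing this for each $s$ puts $g$ in $\widehat\bigotimes_s\cN_s(F)^{\odot m_s}$. The projected linear terms lie over $\cN(F)$ by construction.

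The Gaussian space over $E=\cN(F)\oplus\cN(F)^\perp$ then factorizes as the tensor product of the Gaussian spaces over the two summands. The sigma-algebra generated by $W|_{\cN(F)}$ is independent of that generated by $W|_{\cN(F)^\perp}$, and $L^2$ limits of measurable functionals of each remain measurable with respect to it. This gives~\eqref{all:eq:4.9}. Nothing beyond finite-level approximation is needed, since independence is preserved under $L^2$ limits.

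\emph{Maximality.} This step is nearly tautological with respect to~\eqref{all:eq:4.9b}. Let $t\in T_s$. The hypothesis $L_{s,\nu}^*t=0$ for all $\nu$ means that $t$ is orthogonal to $\Ran L_{s,\nu}$ for every $\nu$, hence to the closed span of these ranges, which is $\cN_s(F)$. Equivalently, one can invoke Lemma~\ref{all:lem:summable-density}: $t\in\ker\overline D_s^{\,\mathrm{nl}}=\cN_s(F)^\perp$. This gives $T_s\subset\cN_s(F)^\perp$, so $P_{T_s}=P_{T_s}P_{\cN_s^\perp}$. For $c\in\R^d$, put $v_s=\sum_ac_a\ell_{a,s}$. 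Then
\[
c^{\mathsf T}\Cov(G_T)c=\sum_s\norm{P_{T_s}v_s}^2\le\sum_s\norm{P_{\cN_s^\perp}v_s}^2=c^{\mathsf T}\Cov(G^{\can})c,
\]
which is~\eqref{all:eq:4.9c}. Here the variance normalization of the weight-$s$ linear sector is the same on both sides, so the factorial conventions cancel.

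\emph{Main obstacle.} I expect the only delicate point to be the infinite-degree bookkeeping in the independence step. With infinitely many occupations, the support argument must be run for each kernel separately, and it must be checked that the closed span in~\eqref{all:eq:4.3} over countably many flattenings is still the correct support. Both facts follow from the kernelwise argument, and Lemma~\ref{all:lem:summable-density} handles the unbounded unweighted density. I would also verify that the normalization dictionary between Parts V and VI does not change the ranges of the flattenings. It does not, since those maps differ only by nonzero scalars.
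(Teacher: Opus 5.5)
Your proposal is correct and follows the paper's proof. It uses orthogonality of occupation sectors for $L^2$ convergence, places every active kernel over $\bigoplus_s\cN_s(F)$ and the Gaussian part linearly over its orthogonal complement for independence, and uses $\bigcap_\nu\ker L_{s,\nu}^*=\cN_s(F)^\perp$ followed by $P_{T_s}\le P_{\cN_s(F)^\perp}$ for the covariance comparison; the only difference is that you spell out, via the argument of Theorem~\ref{primitive:thm:minimal-support}(i), the kernel-support step that the paper asserts in one line.
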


\begin{proof}
The occupation sectors are orthogonal and
$\sum_{a,s}\norm{\ell_{a,s}}^2\le R^2$, so both linear series converge.  The nonlinear series is
an orthogonal subseries of the primitive--Fock expansion.  The active part is measurable with
respect to the isonormal field over $\bigoplus_s\cN_s(F)$, whereas the Gaussian part is linear
over its orthogonal complement; this proves independence.  If \eqref{all:eq:4.9b} holds, then
\[
 T_s\subset\bigcap_{\nu\ge1}\ker L_{s,\nu}^*
 =\cN_s(F)^\perp
\]
by \eqref{all:eq:4.5}.  Hence $P_{T_s}\le P_{\cN_s(F)^\perp}$ and, for every $c\in\mathbb R^d$,
\[
 \norm{P_{T_s}\sum_ac_a\ell_{a,s}}^2
 \le
 \norm{P_{\cN_s(F)^\perp}\sum_ac_a\ell_{a,s}}^2.
\]
Summation in $s$ proves \eqref{all:eq:4.9c}.  Independence alone is deliberately not used for maximality:
for arbitrary $L^2$ functions, independence from a Gaussian subfield does not imply
measurability over its orthogonal complement.
\end{proof}

\subsection{Why canonicalization must follow the limit}

\begin{proposition}[Discontinuity of exact support projectors]
\label{all:prop:projector-discontinuity}
Let $e,h$ be orthonormal, put $Z=I_1(h)$, and let $Y_n=I_n(y_n)$ be normalized with
\[
 y_n\propto\Sym(h\otimes e^{\otimes(n-1)}).
\]
For
\begin{equation}
 F_n=Z+e^{-n^2}Y_n,
 \label{all:eq:4.10}
\end{equation}
one has $F_n\to Z$ in every fixed analytic Fock norm
$\sum_q\rho^{2q}(1+q)^m\norm{P_qF}_2^2$.  Nevertheless
\begin{equation}
 h\in\cN_1(F_n)\quad\text{for every }n,
 \qquad
 \cN_1(Z)=\{0\}.
 \label{all:eq:4.11}
\end{equation}
Thus $P_{\cN_1(F_n)}h=h$ while $P_{\cN_1(Z)}h=0$.
\end{proposition}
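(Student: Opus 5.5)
Three separate things need checking: the norm convergence $F_n\to Z$, the membership $h\in\cN_1(F_n)$, and the equality $\cN_1(Z)=\{0\}$. In every case we work in a fixed finite-dimensional Gaussian space, $E=\R e\oplus\R h$, placed at physical weight one. No ultraproduct is involved. Each $F_n$ is a finite packet in the one-particle Fock space over $E$, written in the expansion \eqref{all:eq:4.1} with every block of weight $s=1$. Nonlinear occupations therefore reduce to $\mathbf m=m\,\mathbf e_1$ with $m\ge2$, and $\cN_1$ is the closed span of the ranges of the one-leg flattenings of the chaos kernels of order $\ge2$.

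\emph{Step 1 (convergence).} Since $n\ge2$, the term $Z=I_1(h)$ lies in chaos $1$ and $Y_n$ lies in chaos $n$. Hence $\norm{P_q(F_n-Z)}_2^2=e^{-2n^2}\1_{\{q=n\}}$, using $\norm{Y_n}_2=1$. The analytic Fock norm of the difference is therefore
\[
\rho^{2n}(1+n)^m e^{-2n^2}.
\]
This tends to $0$ for every fixed $\rho,m$, because $e^{-2n^2}$ beats any geometric-times-polynomial growth.

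\emph{Step 2 ($h\in\cN_1(F_n)$).} The only nonlinear kernel of $F_n$ is $g_n=c_n e^{-n^2}y_n$ with $c_n\ne0$. Its one-leg flattening is $L:E^{\odot(n-1)}\to E$, $v\mapsto\ip{g_n}{v\otimes\cdot}$, applied over the remaining $n-1$ legs. Take $v=e^{\otimes(n-1)}$. Expanding $\Sym(h\otimes e^{\otimes(n-1)})$ over its $n$ leg positions, two kinds of term survive the pairing with $v$:
\begin{itemize}
\item terms with $h$ in the free leg contribute a positive multiple of $h$;
\item terms with $h$ among the contracted legs vanish, since $\ip he=0$.
\end{itemize}
So $Lv$ is a nonzero multiple of $h$, which gives $h\in\Ran L\subset\cN_1(F_n)$. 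The scalar prefactor $e^{-n^2}$ is irrelevant here, since ranges do not see positive scalars. This is exactly the point of the example.

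\emph{Step 3 ($\cN_1(Z)=\{0\}$).} The terminal $Z$ has only the linear occupation $\mathbf e_1$. The index set in \eqref{all:eq:4.3} is therefore empty, and $\cN_1(Z)=\{0\}$. The two projector statements follow immediately: $P_{\cN_1(F_n)}h=h$ for every $n$, while $P_{\cN_1(Z)}h=0$.

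There is no real obstacle. The only care needed is in the normalisation, and in confirming that $y_n$ is a genuine nonlinear kernel for $n\ge2$. The discontinuity itself simply reflects that ranges of flattenings are scale-invariant, whereas the Fock norms see the vanishing coefficient $e^{-n^2}$.
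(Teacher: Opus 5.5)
Your proposal is correct and matches the paper's proof. It uses the same computation $\rho^{2n}(1+n)^m e^{-2n^2}\to0$ for the weighted norm of the second term, the scale-invariance of the range of the nonlinear weight-one flattening, and the fact that $Z$ has no nonlinear occupation; you also add two details the paper leaves implicit, namely the explicit check that the flattening sends $e^{\otimes(n-1)}$ to a nonzero multiple of $h$, and the remark that $n\ge2$ is needed for $Y_n$ to be nonlinear.
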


\begin{proof}
The weighted norm of the second term in \eqref{all:eq:4.10} is
$\rho^{2n}(1+n)^me^{-2n^2}\to0$.  Its nonlinear weight-one flattening has $h$ in its range,
and multiplication by a nonzero scalar does not change that range.  The limit $Z$ has no
nonlinear occupation.
\end{proof}

For the reduced density, the disappearing eigenvalue is of order $e^{-2n^2}$.  Hence one must
take the represented limit before the resolvent limit
\begin{equation}
 P_{\cN_s(F)}
 =\operatorname*{s-lim}_{\eps\downarrow0}
 \overline D_s^{\,\mathrm{nl}}
 (\overline D_s^{\,\mathrm{nl}}+\eps\Id)^{-1}.
 \label{all:eq:4.12}
\end{equation}
For convergence only of the canonical linear components, it is sufficient to realize
$\overline D_{n,s}^{\,\mathrm{nl}}\to\overline D_s^{\,\mathrm{nl}}$ in operator norm in one
common Hilbert realization, to have $\ell_{n,a,s}\to\ell_{a,s}$, and to impose the zero-small-spectrum
condition
\begin{equation}
 \lim_{\delta\downarrow0}\sup_n
 \sum_a
 \|\1_{(0,\delta]}(\overline D_{n,s}^{\,\mathrm{nl}})
 \ell_{n,a,s}\|^2=0.
 \label{all:eq:4.13}
\end{equation}
Convergence of the full exact support projectors is stronger: it is equivalent to strong
convergence of the projections, and must be supplied, for example, by Mosco convergence of
the closed ranges.  Condition \eqref{all:eq:4.13} alone does not imply convergence of the full projectors.

\section{Infinite raw feedback and harmonic exhaustion}
\label{all:sec:infinite-feedback}

Fix one primitive input weight $s$.  At the triangular stage $s$, let $A$ be the active
packet and $r$ the residual packet.  Define the branch-labeled analysis
\begin{equation}
 \widehat\Lambda_s^{(\infty)}h
 =
 \left(r_{a,p}\otimes_tP_{M_s^\perp}h\right)_{
 \substack{a,p\ge1,\ 1\le t\le p\wedge s\$p,s,t)\ne(s,s,s)}}
 \oplus
 \left(A_{a,p}\otimes_tP_{M_s^\perp}h\right)_{
 \substack{a,p\ge1,\ 1\le t\le p\wedge s}}.
 \label{all:eq:5.1}
\end{equation}
Occupation, cut-block, and contraction-order labels are orthogonal, but repeated identical
cuts are not copied with coefficient one.  If a multiplicity class contains $m$ identical
cuts, use
\[
 D_mz=m^{-1/2}(z,\ldots,z),
 \qquad D_m^*D_m=\Id.
\]
Together with the orthogonal occupation projections this gives the quadratic coisometry
\begin{equation}
 \sum_{\mathbf m,\mathrm{cut}}
 \norm{\Pi_{\mathbf m,\mathrm{cut}}(u\otimes_t h)}^2
 \le \norm{u\otimes_t h}^2.
 \label{all:eq:5.1a}
\end{equation}
Tensoring the diagonal isometries over distinct multiplicity classes proves \eqref{all:eq:5.1a}.  The Wick
multiplicities removed from the diagonal analysis are reinserted by the adjoint synthesis and
the terminal diagram coefficients, so the underlying raw contraction register is unchanged.
Put
\begin{equation}
 \widehat N_s^{(\infty)}
 =\widehat\Lambda_s^{(\infty)*}\widehat\Lambda_s^{(\infty)}.
 \label{all:eq:5.2}
\end{equation}

\begin{lemma}[All-degree feedback at fixed input weight]
\label{all:lem:fixed-weight-feedback}
Assume that the split is grade-wise orthogonal,
\begin{equation}
 A_{a,p}=P_{M_p}x_{a,p},
 \qquad
 r_{a,p}=P_{M_p^\perp}x_{a,p},
 \qquad
 \norm{A_{a,p}}^2+\norm{r_{a,p}}^2=\norm{x_{a,p}}^2,
 \label{all:eq:5.2a}
\end{equation}
and that
\[
 \sum_{a,p}\norm{x_{a,p}}^2\le R^2.
\]
Then
\begin{equation}
 \norm{\widehat\Lambda_s^{(\infty)}h}^2
 \le sR^2\norm{h}^2,
 \qquad
 \boxed{0\le\widehat N_s^{(\infty)}\le sR^2\Id.}
 \label{all:eq:5.3}
\end{equation}
Moreover,
\begin{equation}
 \sum_{a,p>Q}
 \bigl(\norm{A_{a,p}}^2+\norm{r_{a,p}}^2\bigr)
 =\sum_{a,p>Q}\norm{x_{a,p}}^2
 \le\eta_0(Q)^2.
 \label{all:eq:5.3a}
\end{equation}
If the ambient grades $p>Q$ are omitted, then
\begin{align}
 \norm{\widehat\Lambda_s^{(\infty)}-\widehat\Lambda_s^{[Q]}}
 &\le\sqrt{s}\,\eta_0(Q),
 \label{all:eq:5.4}\\
 \norm{\widehat N_s^{(\infty)}-\widehat N_s^{[Q]}}
 &\le2sR\eta_0(Q).
 \label{all:eq:5.5}
\end{align}
\end{lemma}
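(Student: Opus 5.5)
The bound $\norm{\widehat\Lambda_s^{(\infty)}h}^2\le sR^2\norm h^2$ will come from summing squared branch norms. The codomain is an orthogonal branch-labeled sum, and by the coisometry \eqref{all:eq:5.1a} the occupation/cut refinement never exceeds the raw contraction norm. Each branch is then controlled by the elementary bound $\norm{u\otimes_t h'}\le\norm u\,\norm{h'}$ with $h'=P_{M_s^\perp}h$, so $\norm{h'}\le\norm h$. Hence
\[
 \norm{\widehat\Lambda_s^{(\infty)}h}^2
 \le\sum_{a,p}\sum_{t=1}^{p\wedge s}
 \bigl(\norm{r_{a,p}}^2+\norm{A_{a,p}}^2\bigr)\norm h^2 .
\]
The count of contraction orders is $p\wedge s\le s$, uniformly in $p$. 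This is the point where fixing the input weight $s$ replaces the factor $Q$ of \eqref{tri:eq:5.14}. Using \eqref{all:eq:5.2a} to combine $\norm{A_{a,p}}^2+\norm{r_{a,p}}^2=\norm{x_{a,p}}^2$ and summing gives $sR^2\norm h^2$. The operator bound $0\le\widehat N_s^{(\infty)}\le sR^2\Id$ follows immediately, since $\widehat N=\widehat\Lambda^*\widehat\Lambda$. Convergence of the infinite direct sum is part of this same estimate: the partial sums are monotone and bounded, so the analysis map is well defined and bounded.

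The tail identity \eqref{all:eq:5.3a} is pure Pythagoras, via \eqref{all:eq:5.2a} grade by grade. Summing over $p>Q$ and applying the definition \eqref{all:eq:2.1} of $\eta_0(Q)$ gives the inequality, since the represented coordinates inherit the tail bound as in Lemma~\ref{all:lem:no-infinite-degree-mass}.

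For \eqref{all:eq:5.4}, I regard $\widehat\Lambda_s^{[Q]}$ as the same analysis map with every branch of ambient grade $p>Q$ set to zero. Then $\widehat\Lambda_s^{(\infty)}-\widehat\Lambda_s^{[Q]}$ is exactly the analysis map restricted to the branches with $p>Q$. Repeating the first estimate on those branches only bounds its squared norm by $s\sum_{a,p>Q}\norm{x_{a,p}}^2\,\norm h^2\le s\,\eta_0(Q)^2\norm h^2$.

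For \eqref{all:eq:5.5}, I write $\Lambda=\widehat\Lambda_s^{(\infty)}$ and $\Lambda_Q=\widehat\Lambda_s^{[Q]}$, and use the telescoping identity
\[
 \Lambda^*\Lambda-\Lambda_Q^*\Lambda_Q=\Lambda^*(\Lambda-\Lambda_Q)+(\Lambda-\Lambda_Q)^*\Lambda_Q .
\]
Both $\norm\Lambda$ and $\norm{\Lambda_Q}$ are at most $\sqrt sR$, the latter by the same estimate on fewer branches. This gives the bound $2\sqrt sR\cdot\sqrt s\,\eta_0(Q)=2sR\eta_0(Q)$.

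The only delicate step is the multiplicity bookkeeping. One must check that splitting a raw contraction $u\otimes_t h$ into occupation and cut labels, through the diagonal maps $D_m$, does not inflate norms. Otherwise a factor such as $m_s$ would spoil the uniform constant $s$. This is exactly what \eqref{all:eq:5.1a} supplies: the diagonal maps $D_m$ are isometries and the occupation projections are orthogonal. So I will invoke \eqref{all:eq:5.1a} branchwise, before applying the contraction inequality.
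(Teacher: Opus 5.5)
Your proposal is correct and follows the paper's proof essentially step for step. The ingredients are the same: the coisometry \eqref{all:eq:5.1a} applied before the contraction bound, at most $p\wedge s\le s$ contraction orders per grade, Pythagoras for the tail, restriction to the branches $p>Q$ for \eqref{all:eq:5.4}, and $\norm{B^*B-C^*C}\le(\norm B+\norm C)\norm{B-C}$ for \eqref{all:eq:5.5}. Your remarks on the well-definedness of the infinite analysis map, and on passing the tail bound to the represented coordinates, are harmless additions that the paper leaves implicit.
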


\begin{proof}
For every contraction, $\norm{u\otimes_th}\le\norm{u}\norm{h}$.  At fixed $p$ there are at most
$s$ orders $t$.  The quadratic coisometry \eqref{all:eq:5.1a}, followed by orthogonality of the unrefined
contraction-order labels, gives \eqref{all:eq:5.3}.  Restricting the same sum to $p>Q$ gives \eqref{all:eq:5.4}.  Finally use
\[
 \norm{B^*B-C^*C}\le(\norm{B}+\norm{C})\norm{B-C}.
\]
\end{proof}

The square-root inequality for positive operators yields
\begin{equation}
 \norm{\widehat N_s^{[Q],1/2}-\widehat N_s^{(\infty),1/2}}
 \le(2sR\eta_0(Q))^{1/2}.
 \label{all:eq:5.6}
\end{equation}
Thus the positive square roots are compatible with $Q\to\infty$.  This does \emph{not}
imply convergence of top eigenspaces: vanishing rank-one operators can have oscillating
ranges.  Spectral selection is therefore made only after the all-chaos
operator has been formed, as follows.

Let
\begin{equation}
 R_sc=\sum_{a=1}^dc_ar_{a,s},
 \qquad
 E_s=R_s^*\widehat N_s^{(\infty)}R_s,
 \qquad
 \widehat{\mathfrak H}_s=\Tr E_s.
 \label{all:eq:5.7}
\end{equation}
These quantities are recomputed at every promotion.  At the $j$-th split, if
$\widehat{\mathfrak H}_s^{[j]}>0$, put
\begin{equation}
 \begin{aligned}
 \lambda_{s,j}&=\max\operatorname{spec}(E_s^{[j]}),\\
 \theta_{s,j}&\in(\lambda_{s,j}/3,\lambda_{s,j}/2)
 \setminus\operatorname{spec}(E_s^{[j]}),\\
 L_{s,j}&=\1_{[\theta_{s,j},\infty)}(E_s^{[j]})\mathbb R^d.
 \end{aligned}
 \label{all:eq:5.7a}
\end{equation}
The cluster $L_{s,j}$ contains the top eigenspace, and for $c\in L_{s,j}$,
\begin{equation}
 \norm{\widehat N_s^{(\infty)\,1/2}R_s^{[j]}c}^2
 =\ip{c}{E_s^{[j]}c}
 \ge\theta_{s,j}\norm{c}^2
 >\frac{\lambda_{s,j}}3\norm{c}^2
 \ge\frac{\widehat{\mathfrak H}_s^{[j]}}{3d}\norm{c}^2.
 \label{all:eq:5.7b}
\end{equation}
For a fixed split, norm convergence $E_s^{[Q]}\to E_s$ implies convergence
of the corresponding spectral projections because
$\theta_{s,j}\notin\operatorname{spec}(E_s^{[j]})$.  Promote
\begin{equation}
 V_{s,j}=\Ran(\widehat N_s^{(\infty)\,1/2}R_s^{[j]}L_{s,j}).
 \label{all:eq:5.8}
\end{equation}

\begin{theorem}[Fixed-weight capture and global harmonic budget]
\label{all:thm:harmonic-budget}
At the $j$-th split of weight $s$, with the separated cluster \eqref{all:eq:5.7a},
\begin{equation}
 \lambda_{\max}(E_s^{[j]})\ge\frac{\widehat{\mathfrak H}_s^{[j]}}{d},
 \qquad
 \sum_a\norm{P_{V_{s,j}}r_{a,s}^{[j]}}^2
 \ge\frac{\widehat{\mathfrak H}_s^{[j]}}{dsR^2}.
 \label{all:eq:5.9}
\end{equation}
Hence
\begin{equation}
 \boxed{\kappa_{s,d}^{\triangle}=\frac1{ds}.}
 \label{all:eq:5.10}
\end{equation}
For the successive promotions at all weights,
\begin{equation}
 \boxed{
 \sum_{s\ge1}\sum_{j\ge0}
 \frac{\widehat{\mathfrak H}_s^{[j]}}{s}
 \le dR^4.}
 \label{all:eq:5.11}
\end{equation}
At one fixed $s$,
\begin{equation}
 \sum_{j\ge0}\widehat{\mathfrak H}_s^{[j]}
 \le dsR^2\sum_a\norm{r_{a,s}^{[0]}}^2.
 \label{all:eq:5.12}
\end{equation}
\end{theorem}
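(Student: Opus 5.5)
The proof has three parts: the spectral lower bound, the per-step capture, and the two budget bounds, each following Theorem~\ref{tri:thm:scale-capture} and Proposition~\ref{tri:prop:square-root} with the constant $a_Q$ replaced by $s$ from Lemma~\ref{all:lem:fixed-weight-feedback}.

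\emph{Spectral lower bound.} Only the single input weight $s$ is optimised at a split. Since $E_s^{[j]}=R_s^*\widehat N_s^{(\infty)}R_s\ge0$ is a $d\times d$ matrix, $\widehat{\mathfrak H}_s^{[j]}=\Tr E_s^{[j]}\le d\lambda_{\max}(E_s^{[j]})$, which is the first inequality of \eqref{all:eq:5.9}.

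\emph{Capture.} I would not restrict to the top eigenspace; it is cleaner to work with a top eigenvector $c\in L_{s,j}$ of unit norm, which exists because the cluster contains the top eigenspace. Put $z=R_s^{[j]}c$ and $w=\widehat N_s^{(\infty)1/2}z\in V_{s,j}$, and set $B=sR^2$. By \eqref{all:eq:5.3} we have $0\le\widehat N_s^{(\infty)}\le B\,\Id$. Functional calculus with $\sqrt t\ge t/\sqrt B$ on $[0,B]$ gives
\[
\ip zw\ge B^{-1/2}\ip z{\widehat N_s^{(\infty)}z}=\lambda_{s,j}B^{-1/2},
\qquad
\norm w^2=\lambda_{s,j}.
\]
Hence $\norm{P_{\R w}z}^2\ge\lambda_{s,j}/B$. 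Moreover,
\[
\sum_a\norm{P_{V_{s,j}}r_{a,s}}^2=\norm{P_{V_{s,j}}R_s}_{\HS}^2\ge\norm{P_{V_{s,j}}R_sc}^2.
\]
Combining these with the first inequality gives the second bound in \eqref{all:eq:5.9}, and \eqref{all:eq:5.10} is a restatement of it.

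\emph{Budgets.} Promotions are orthogonal to the current module because $V_{s,j}\subset\Ran\widehat N_s^{(\infty)1/2}\subset M_s^\perp$. Pythagoras therefore says that the squared weight-$s$ residual energy $\mathcal R_s^{[j]}=\sum_a\norm{r_{a,s}^{[j]}}^2$ drops at step $j$ by at least $\widehat{\mathfrak H}_s^{[j]}/(dsR^2)$. Summing over $j$ telescopes to \eqref{all:eq:5.12}. Dividing by $s$ gives
\[
\sum_j\frac{\widehat{\mathfrak H}_s^{[j]}}{s}\le dR^2\mathcal R_s^{[0]}.
\]
Promotions at weight $s$ only touch grade $s$, so the initial residual energies at distinct weights are disjoint pieces of the total packet energy; in the triangular trajectory, $r^{[0]}_{a,s}$ is the untouched $x_{a,s}$ or a projection of it. Hence $\sum_s\mathcal R_s^{[0]}\le\sum_{a,s}\norm{x_{a,s}}^2\le R^2$, and summing over $s$ yields \eqref{all:eq:5.11}.

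\emph{Main obstacle.} The delicate point is the uniform bound $B=sR^2$ for $\widehat N_s^{(\infty)}$ with all ambient grades $p$ present. The crude count over $p$ would diverge, so one must use the quadratic coisometry \eqref{all:eq:5.1a} and the fact that each $p$ contributes at most $s$ contraction orders, weighted by $\norm{A_{a,p}}^2+\norm{r_{a,p}}^2=\norm{x_{a,p}}^2$. That argument is exactly Lemma~\ref{all:lem:fixed-weight-feedback}, which I will invoke rather than redo. I should also check that the residual norms entering the budget are recomputed after each promotion, since $E_s^{[j]}$ is recomputed at every split.
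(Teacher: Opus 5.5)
Your proposal is correct and follows the paper's own argument. It uses the trace bound $\Tr E_s^{[j]}\le d\lambda_{\max}(E_s^{[j]})$, then the square-root promotion inequality applied to a top eigenvector in $L_{s,j}$ with $\widehat N_s^{(\infty)}\le sR^2\Id$ from Lemma~\ref{all:lem:fixed-weight-feedback}, then Pythagorean telescoping at fixed $s$, and finally summation over weights using $\sum_s\sum_a\norm{r_{a,s}^{[0]}}^2\le R^2$. Your explicit check that $V_{s,j}\subset\Ran\widehat N_s^{(\infty)\,1/2}\subset M_s^\perp$, which is what makes the Pythagorean drop exact, fills in a step the paper leaves implicit.
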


\begin{proof}
The matrix $E_s^{[j]}$ acts on $\mathbb R^d$, so
$\Tr E_s^{[j]}\le d\lambda_{\max}(E_s^{[j]})$.  Since $L_{s,j}$ contains a top eigenvector, apply the
square-root promotion inequality with
$\widehat N_s^{(\infty)}\le sR^2\Id$ to obtain \eqref{all:eq:5.9}.  If $\mathcal R_j$ is the total residual
energy before a promotion, Pythagoras gives
\[
 \mathcal R_j-\mathcal R_{j+1}
 \ge\frac{\widehat{\mathfrak H}_s^{[j]}}{dsR^2}.
\]
Summation at fixed $s$ proves \eqref{all:eq:5.12}.  Summing the energy decrements over every stage and using
$\sum_s\sum_a\norm{r_{a,s}^{[0]}}^2\le R^2$ gives \eqref{all:eq:5.11}.
\end{proof}

There is no positive unweighted $\kappa_{\infty,d}$ under the sole $L^2$ reserve.  The factor
$s^{-1}$ is sharp for the raw ledger: $s$ mutually orthogonal contraction-order branches of
equal energy attain it.  A uniform constant reappears under a number-operator moment.

\begin{corollary}[Uniform capture in $\mathbb D^{1,2}$]
\label{all:cor:D12-capture}
If
\begin{equation}
 \sum_{a,p}p(\norm{A_{a,p}}^2+\norm{r_{a,p}}^2)
 \le M_1R^2,
 \label{all:eq:5.13}
\end{equation}
then
\begin{equation}
 \widehat N_s^{(\infty)}\le M_1R^2\Id,
 \qquad
 \sum_a\norm{P_{V_{s,j}}r_{a,s}^{[j]}}^2
 \ge\frac{\widehat{\mathfrak H}_s^{[j]}}{dM_1R^2},
 \label{all:eq:5.14}
\end{equation}
and
\begin{equation}
 \sum_{s,j}\widehat{\mathfrak H}_s^{[j]}
 \le dM_1R^4.
 \label{all:eq:5.15}
\end{equation}
\end{corollary}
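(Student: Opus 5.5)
The plan is to repeat the proof of Lemma~\ref{all:lem:fixed-weight-feedback} with the number-operator moment \eqref{all:eq:5.13} in place of the crude bound, and then run the capture and energy-budget arguments of Theorem~\ref{all:thm:harmonic-budget} with the constant $sR^2$ replaced by $M_1R^2$.

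\textbf{Operator bound.} For $h$ in the input space of weight $s$, I would expand $\norm{\widehat\Lambda_s^{(\infty)}h}^2$ branch by branch. By the quadratic coisometry \eqref{all:eq:5.1a} and orthogonality of the contraction-order labels, it is at most
\[
\sum_{a,p}\ \sum_{1\le t\le p\wedge s}\bigl(\norm{r_{a,p}\otimes_tP_{M_s^\perp}h}^2+\norm{A_{a,p}\otimes_tP_{M_s^\perp}h}^2\bigr).
\]
The elementary bound $\norm{u\otimes_th}\le\norm u\norm h$ then gives at most
\[
\sum_{a,p}(p\wedge s)\bigl(\norm{A_{a,p}}^2+\norm{r_{a,p}}^2\bigr)\norm h^2 .
\]
The earlier lemma bounded $p\wedge s$ by $s$; here I would instead bound it by $p$. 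With \eqref{all:eq:5.13} this yields $\norm{\widehat\Lambda_s^{(\infty)}h}^2\le M_1R^2\norm h^2$, and hence $0\le\widehat N_s^{(\infty)}\le M_1R^2\Id$.

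\textbf{Capture.} I would take the separated cluster $L_{s,j}$ of \eqref{all:eq:5.7a} and a unit top eigenvector $c$ of $E_s^{[j]}$. Applying Proposition~\ref{tri:prop:square-root} with $MR^2$ replaced by $M_1R^2$ gives
\[
\norm{P_{V_{s,j}}R_s^{[j]}c}^2\ge\frac{\lambda_{s,j}}{M_1R^2}.
\]
The same functional-calculus inequality $\sqrt t\ge t/\sqrt B$ on $[0,B]$ applies, because the promoted range \eqref{all:eq:5.8} contains the line through $\widehat N_s^{(\infty)1/2}R_s^{[j]}c$. Combining this with $\sum_a\norm{P_Vr_{a,s}}^2=\norm{P_VR_s}_{\HS}^2\ge\norm{P_VR_sc}^2$ and with $\lambda_{s,j}\ge\widehat{\mathfrak H}_s^{[j]}/d$ (since $E_s^{[j]}$ is a $d\times d$ positive matrix) proves the capture bound in \eqref{all:eq:5.14}.

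\textbf{Budget.} Each promotion is orthogonal to the current module, so by Pythagoras the total residual energy drops by at least $\widehat{\mathfrak H}_s^{[j]}/(dM_1R^2)$. Summing over all stages and weights, the total drop is at most the initial residual energy, which is at most $R^2$. This gives \eqref{all:eq:5.15}.

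\textbf{Main obstacle.} The delicate point is that the hypothesis must hold at every split along the trajectory, not just at the start. This follows because the split is grade-wise orthogonal: $\norm{A_{a,p}}^2+\norm{r_{a,p}}^2=\norm{x_{a,p}}^2$ for each $p$, so the left side of \eqref{all:eq:5.13} equals $\sum_{a,p}p\norm{x_{a,p}}^2$ and does not change under promotions. It therefore suffices to assume it once for the packet. A second point to check is that the coisometry \eqref{all:eq:5.1a} really prevents repeated identical cuts from inflating the count beyond $p\wedge s$ per grade.
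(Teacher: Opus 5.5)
Your proposal is correct and follows the paper's own argument. The paper likewise replaces $p\wedge s$ by $p$ in the branch count behind \eqref{all:eq:5.3}, then reruns the square-root capture and Pythagorean budget of Theorem~\ref{all:thm:harmonic-budget} with $sR^2$ replaced by $M_1R^2$; your remark that under a grade-wise orthogonal split the left side of \eqref{all:eq:5.13} equals $\sum_{a,p}p\norm{x_{a,p}}^2$, and is therefore invariant along the trajectory, is a valid clarification that the paper leaves implicit.
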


\begin{proof}
In \eqref{all:eq:5.3}, replace $p\wedge s$ by $p$ and use \eqref{all:eq:5.13}.  Repeat the proof of
Theorem~\ref{all:thm:harmonic-budget}.
\end{proof}

\subsection{Countable triangular canonicalization}

\begin{theorem}[All-chaos triangular extraction]
\label{all:thm:triangular-extraction}
Run the stages in the projective lexicographic order
\begin{equation}
 n\to\mathcal U,
 \qquad
 Q\to\infty,
 \qquad
 J_1\to\infty,\quad J_2\to\infty,\quad\cdots,
 \label{all:eq:5.16}
\end{equation}
completing every fixed weight $s$ before opening $s+1$.  Then every fixed weight is exhausted,
no lower-weight Gaussian ghost is promoted, and the terminal is
\begin{equation}
 A=A^{\can},
 \qquad
 r=G^{\can},
 \qquad
 G^{\can}\indep A^{\can}.
 \label{all:eq:5.17}
\end{equation}
\end{theorem}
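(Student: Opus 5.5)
The plan is to reduce Theorem~\ref{all:thm:triangular-extraction} to the finite-degree triangular theorem, Theorem~\ref{tri:thm:canonical-extraction}, applied at each cutoff $Q$. The reduction uses three ingredients: the degree-tail control of Lemma~\ref{all:lem:fixed-weight-feedback}, the capture estimates of Theorem~\ref{all:thm:harmonic-budget}, and the projector approximation \eqref{all:eq:4.6}.

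\textbf{Step 1: fix the terminal objects.} First I fix the represented terminal. By Lemma~\ref{all:lem:no-infinite-degree-mass} under $\mathrm{QT}_2$, the packet lies in $\bigoplus_q\cH_q$, and Theorem~\ref{all:thm:unitary} transports it to the all-degree Fock space. Theorem~\ref{all:thm:canonical-Gaussian-factor} supplies $A^{\can}$, $G^{\can}$ and their independence. It then remains to show that the triangular trajectory reproduces the spaces
\[
 \mathscr A_s=\cW_s\Bigl(\bigoplus_{t}\cN_t(F)\Bigr),
 \qquad
 \mathscr G_s=I_1\bigl(\cN_s(F)^\perp\bigr)
\]
weight by weight.

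\textbf{Step 2: induction on $s$.} I proceed by induction on $s$, assuming the weights $p<s$ are exhausted, so that \eqref{tri:eq:8.10} holds. The block-pairing identities \eqref{tri:eq:8.22}--\eqref{tri:eq:8.25} only involve finitely many blocks at a time. Since ambient grades $p>Q$ enter $\widehat\Lambda_s^{(\infty)}$ only with norm $\sqrt s\,\eta_0(Q)$ by \eqref{all:eq:5.4}, they remain valid for $\widehat N_s^{(\infty)}$. This gives the reducing relations
\[
 \widehat N_s^{(\infty)}\mathscr A_s\subset\mathscr A_s,
 \qquad
 \widehat N_s^{(\infty)}g_{a,s}=0 .
\]
Functional calculus then transfers these relations to the square root. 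The cluster projection $L_{s,j}$ in \eqref{all:eq:5.7a} is a spectral projection of $E_s$ with a gap. So the promotion $V_{s,j}$ in \eqref{all:eq:5.8} lies in the range of $\widehat N_s^{(\infty)1/2}$ applied to $R_sL_{s,j}$. The lower bound \eqref{all:eq:5.7b} plays the role of $\gamma_J$ in \eqref{tri:eq:8.27a}, and repeating the non-ghost argument of Lemma~\ref{bridge:lem:HER} gives $\mathbb M_{s,\mathcal U}^{[J]}\subset\mathscr A_s$ at every finite level.

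\textbf{Step 3: exhaustion at weight $s$.} The harmonic budget \eqref{all:eq:5.12} gives $\widehat{\mathfrak H}_s^{[j]}\to0$. The nested modules converge in $\cH_s$ by Pythagoras, exactly as in Theorem~\ref{bridge:thm:exact-bridge}. I write the limit residual as $r_{a,s}=z_{a,s}+g_{a,s}$. Vanishing feedback kills the proper self-contractions, which gives
\[
 z_{a,s}\in\cK_s\cap\mathscr A_s=\cN_s(F).
\]
If $z_{a,s}\ne0$, some nonlinear flattening satisfies $L_{s,\nu}^*z_{a,s}\ne0$. By \eqref{all:eq:4.5} this flattening lives at some finite grade $p$, and the occupation-projected branch \eqref{tri:eq:8.25} at that grade then has nonzero output. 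This contradicts $\widehat{\mathfrak H}_s=0$, so $z_{a,s}=0$. Summing over all $s$ yields $A=A^{\can}$ and $r=G^{\can}$, with convergence of the infinite sum controlled by $\sum_{a,s}\norm{x_{a,s}}^2\le R^2$.

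\textbf{Main obstacle: the limit $Q\to\infty$ and spectral selection.} I expect the interchange of $Q\to\infty$ with spectral selection to be the main difficulty. Top eigenspaces need not converge, which is why the paper uses the separated cluster with $\theta_{s,j}\notin\operatorname{spec}$. My first attempt would use \eqref{all:eq:5.5} and \eqref{all:eq:5.6} to get norm convergence $E_s^{[Q]}\to E_s$ at each fixed split. This gives convergence of the cluster projections and hence of the promoted spaces, one promotion at a time, for finitely many promotions. Finally I would combine the countably many fixed-level requirements with the slow diagonal of Lemma~\ref{tri:lem:slow-diagonal}, respecting the projective order \eqref{all:eq:5.16}.
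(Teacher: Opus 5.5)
Your proposal is correct and follows the paper's proof in substance. The paper's steps are exactly yours: the reducing relations $\widehat N_s^{(\infty)}\mathscr A_s\subset\mathscr A_s$ and $\widehat N_s^{(\infty)}g_{a,s}=0$ from block pairing, exhaustion via $\cK_s\cap\mathscr A_s=\cN_s(F)$ together with a nonvanishing nonlinear flattening at a finite grade $p>s$, and the $\mathrm{QT}_2$ tail bound as $S\to\infty$. The differences are only in emphasis. The paper performs spectral selection on the all-chaos operator from the start, so your $Q$-interchange step is a harmless extra. In Step~3 you leave implicit something the paper makes explicit: since the module projections converge only strongly, zero feedback passes to the limit residual branch by branch, because each fixed branch converges by tensor continuity and is bounded by $\widehat{\mathfrak H}_s^{[j]}\to0$.
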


\begin{proof}
Let $\mathscr A_s=\cW_s(\bigoplus_t\cN_t(F))$ and let $\mathscr G_s$ be the linear primitive
Gaussian sector at weight $s$.  Once all weights $p<s$ have been completed, their active and Gaussian
branches occur in distinct raw labels.  If $p>s$, every allowed contraction order satisfies
$t\le s<p$, so the primitive weight-$p$ Gaussian sector acts by zero on $\mathscr A_s$.  If $p=s$, all
proper branches kill the primitive Gaussian sector and the total residual covariance branch is omitted.  Thus
\begin{equation}
 \widehat N_s^{(\infty)}\mathscr A_s\subset\mathscr A_s,
 \qquad
 \widehat N_s^{(\infty)}g_{a,s}=0.
 \label{all:eq:5.18}
\end{equation}
Self-adjointness makes $\mathscr A_s$ reducing, so \eqref{all:eq:5.18} also holds for the square root.

At the end of stage $s$, write the residual as
$r_{a,s}=z_{a,s}+g_{a,s}$ with $z_{a,s}\in\mathscr A_s$.  Vanishing of all proper
self-contractions puts $z_{a,s}$ in
$\cK_s\cap\mathscr A_s=\cN_s(F)$.  If $z_{a,s}\ne0$, definition \eqref{all:eq:4.3} supplies one finite
grade $p>s$ and one nonlinear flattening with
$L_{b,p,\mathbf m}^{(s)*}z_{a,s}\ne0$.  That branch occurs in \eqref{all:eq:5.1}, contradicting exhaustion.
Hence $z_{a,s}=0$.

At fixed $s$, let $M_{s,j}$ be the increasing active modules.  Their projections converge
strongly to the projection onto
\[
 M_{s,\infty}=\overline{\bigcup_jM_{s,j}},
\]
and the active and residual packet vectors converge strongly in every fixed chaos grade.
One must not infer operator-norm convergence of the full analyses from this strong
convergence.  Instead, fix one raw branch of \eqref{all:eq:5.1}.  Tensor continuity and strong convergence
of the relevant projections give convergence of that branch evaluated on the residual packet
vectors.  By \eqref{all:eq:5.12},
$\widehat{\mathfrak H}_s^{[j]}\to0$.  The squared norm of every fixed evaluated branch is
bounded by $\widehat{\mathfrak H}_s^{[j]}$; hence every branch of the limiting residual
vanishes.  Thus the limiting residual has zero raw feedback, which is precisely the exhaustion
used above.  No operator-norm convergence of $\widehat\Lambda_s^{[j]}$ or
$\widehat N_s^{[j]}$ is asserted.

Induct on $s$ to obtain the nested completed stages.  Finally let $S\to\infty$.  The part not
yet decided after the first $S$ completed weights includes every nonlinear occupation of total
weight greater than $S$, not merely the linear tail.  Its squared norm is bounded by
\begin{equation}
 \sum_{q>S}E_q,
 \label{all:eq:5.18a}
\end{equation}
which tends to zero by $\mathrm{QT}_2$.  A Cantor diagonal over the finitely many accuracy
requirements at weights at most $S$ gives \eqref{all:eq:5.17}.  This is a nested strong-limit construction;
no stage is incorrectly treated as a finite sequence of promotions.
\end{proof}

For an explicit finite approximation, stop the extraction at weight $s\le K$ as soon as
\begin{equation}
 \widehat{\mathfrak H}_s\le\delta_K,
 \qquad
 \delta_K=\frac{2^{-K}}K.
 \label{all:eq:5.19}
\end{equation}
While this threshold is not reached, every promotion removes more than
$\delta_K/(dsR^2)$ of squared residual energy.  Since the initial residual energy at weight
$s$ is at most $R^2$, fewer than
\begin{equation}
 J_{s,K}=\left\lceil dsKR^4\,2^K\right\rceil
 \label{all:eq:5.20}
\end{equation}
promotions occur.  At the stopping configuration,
$\sum_{s\le K}\widehat{\mathfrak H}_s\le2^{-K}$, and the total number of promotions is at most
\begin{equation}
 \frac d2R^4K^2(K+1)2^K+K.
 \label{all:eq:5.21}
\end{equation}
An ordinary sequence is recovered by a slow diagonal over
$(Q,K,J_{1,K},\ldots,J_{K,K})$.

\section*{Use of generative artificial intelligence}
During July--August 2026, the author used ChatGPT Work/Codex,
principally GPT-5.6 Sol Ultra, for critical reading, stress-testing of
arguments, detection of errors and counterexamples, and bibliographic orientation.  All
mathematical claims and the final text were checked and approved by the
author.  AI output is neither cited nor treated as mathematical
evidence.  The originality of the work and full responsibility for its
contents remain with the author.

\begin{funding}
No funding was received for this work.
\end{funding}

\section*{Declaration of competing interests}
The author declares no competing interests.

\section*{Data and code availability}
Data and code availability are not applicable to this theoretical article.

\end{document}